\definecolor{darkblue}{rgb}{0,0,.5}
\definecolor{darkgreen}{rgb}{.2,0.5,.2}
\numberwithin{equation}{section}
\font\tencyr=wncyr10 
\font\tencyi=wncyi10 
\font\tencysc=wncysc10 
\def\rus{\tencyr\cyracc}
\def\rusi{\tencyi\cyracc}
\def\rusc{\tencysc\cyracc}
\newtheorem{thm}{Theorem}[section]
\newtheorem{lm}[thm]{Lemma}
\newtheorem{cl}[thm]{Corollary}
\newtheorem{prop}[thm]{Proposition}
\theoremstyle{remark}
\newtheorem{ex}[thm]{Example}
\newtheorem{rmk}[thm]{Remark}
\newtheorem{qn}[thm]{Question}
\newtheorem*{rem}{Remark}
\theoremstyle{definition}
\newcommand{\gt}{\mathfrak}
\newcommand{\GL}{{\rm GL}}
\newcommand{\id}{{\rm id}}
\newcommand{\diag}{{\rm diag}}
\newcommand{\rk}{\mathrm{rk\,}}
\newcommand{\Lie}{\mathrm{Lie\,}}
\newcommand{\gr}{\mathrm{gr\,}}
\newcommand{\ad}{\mathrm{ad}}
\newcommand{\esi}{\varepsilon}
\newcommand {\ca}{{\mathcal A}}
\newcommand {\cS}{{\mathcal S}}
\newcommand {\cG}{{\mathcal G}}
\newcommand {\Gz}{{\mathcal G}(\bar z)}
\newcommand{\ta}{{\tt a}}
\newcommand{\tb}{{\tt b}}
\newcommand{\tc}{{\tt c}}
\newcommand{\tal}{{\tt \alpha}}
\newcommand{\tbe}{{\tt \beta}}
\newcommand{\tga}{{\tt \gamma}}
\newcommand {\mK}{{\mathbb C}}
\newcommand {\Z}{{\mathbb Z}}
\renewcommand{\le}{\leqslant}
\renewcommand{\ge}{\geqslant}
\font\euszw=eusm10 scaled 1200%
\font\eusac=eusm7 scaled 1200%
\font\eusacc=eusm7 scaled 1000%
\newcommand{\ts}{\,}
\newcommand{\U}{{\mathcal U}}
\newcommand{\wg}{{\widehat{\gt g}}}
\newcommand {\eus}{\EuScript}
\newcommand{\lf}{{\boldsymbol f}}
\newcommand{\lb}{{\boldsymbol b}}
\newcommand{\lH}{{\boldsymbol H}}
\newcommand{\bb}{{\boldsymbol B}}
\newcommand{\gl}{\mathfrak{gl}}
\newcommand{\ggot}{\mathfrak{g}}
\newcommand{\z}{\mathfrak{z}}
\begin{document}
\hfill {\scriptsize May 6, 2020} 
\vskip1ex

\title{Symmetrisation and the Feigin--Frenkel centre}
\author{Oksana Yakimova}
\address{Institut f\"ur Mathematik, Friedrich-Schiller-Universit\"at Jena, Jena, 07737, Deutschland} 
\email{oksana.yakimova@uni-jena.de}
\thanks{This work is 
funded by the Deutsche Forschungsgemeinschaft (DFG, German Research Foundation) --- project number  404144169.} 
\subjclass[2010]{16S30, 17B67, 17B20, 17B35, 17B63}
\maketitle

\begin{abstract}
For complex simple Lie algebras of types {\sf B}, {\sf C}, and {\sf D}, we provide new explicit formulas for the generators of
the  commutative 
subalgebra $\gt z(\widehat{\gt g})\subset\U(t^{-1}\gt g[t^{-1}])$ known as the  
{\it Feigin--Frenkel centre}. These formulas make use of the symmetrisation map as well as of some well-chosen symmetric invariants of $\gt g$.  There are some general results on the r\^ole of the symmetrisation map in the explicit description of the Feigin--Frenkel  centre. Our method  reduces questions about elements of $\gt z(\widehat{\gt g})$ to questions on the structure of the symmetric invariants in a type-free way. As an illustration, we deal with type 
{\sf G}$_2$ by hand. One of our technical tools is  the map ${\sf m}\!\!: \cS^{k}(\gt g)\to \Lambda^2\gt g \otimes \cS^{k-3}(\gt g)$ introduced here. As the results show, a better understanding of this map will lead to a better understanding of 
$\gt z(\widehat{\gt g})$. 
\end{abstract} 

\section*{Introduction}

Let $G$ be a  complex reductive group.  
Set $\gt g=\Lie G$.  
As is well-known, the algebra $\cS(\gt g)^{\gt g}$ of {\it symmetric} $\gt g$-invariants and
the centre ${\mathcal Z}(\gt g)$ of the enveloping algebra $\U(\gt g)$ are polynomial algebras 
with $\rk\gt g$ generators. Therefore there are several isomorphisms between them. 
Two of these isomorphisms can be distinguished, the one given by the {\it symmetrisation map}, which is a homomorphism of $\gt g$-modules,  and the Duflo 
isomorphism, which is a homomorphism of algebras. Both of them exist for any finite-dimensional 
complex Lie algebra. 

The symmetrisation map is defined in the infinite dimensional case as well. However, no 
analogue of the Duflo isomorphism for  Lie algebras $\gt q$ with $\dim\gt q=\infty$ is known. 
Furthermore, one may need to complete $\U(\gt q)$ in order to replace ${\mathcal Z}(\gt q)$ 
with an interesting related object, see e.g. \cite{Kac-L}. 
In this paper,  we are dealing with the most notable class of infinite dimensional Lie algebras, namely  
affine Kac--Moody algebras $\wg$, and  the related centres at the critical level.

The Feigin--Frenkel centre $\gt z(\widehat{\gt g})$  is a remarkable commutative 
subalgebra of the enveloping algebra $\U(t^{-1}\gt g[t^{-1}])$.  
The central elements of the completed enveloping 
algebra $\widetilde{\U}_{\kappa}(\wg)$ at the critical level $\kappa=-{\tt h}\!^\vee$
can be obtained from the elements of $\gt z(\wg)$ by employing the vertex algebra structure \cite[Sect.~4.3.2]{f:lc}. 
The structure of  $\gt z(\widehat{\gt g})$ is described by a theorem of Feigin and Frenkel \cite{ff:ak}, hence the name. This algebra provides a quantisation of the local Hitchin system \cite[Sect.~2]{bd}. 
Elements $S\in\gt z(\wg)$ give rise to higher Hamiltonians of the Gaudin model, which  describes 
a completely integrable quantum spin chain \cite{FFRe}. 

The classical counterpart of $\gt z(\wg)$ is the Poisson-commutative subalgebra of 
$\gt g[t]$-in\-va\-riants in $\cS(\gt g[t,t^{-1}])/(\gt g[t])\cong \cS(t^{-1}\gt g[t^{-1}])$, which is 
a polynomial ring with infinitely  many generators according to a direct generalisation of 
a Ra\"is--Tauvel theorem \cite{rt}. Explicit formulas for the elements of $\gt z(\wg)$ appeared first in
type {\sf A}~\cite{ct:qs,cm:ho} following Talalaev's discovery \cite{Tal} of  explicit higher 
Gaudin Hamiltonians. Then they were extended to all classical types in \cite{m:ff}.
The construction of \cite{m:ff} relies on the Schur--Weyl duality involving the Brauer algebra.  Type 
{\sf G}$_2$ is covered by \cite{mrr}.  
The subject is beautifully summarised in \cite{book2}. 

Unlike the finite-dimensional case,  no natural isomorphism between the algebras $\cS(t^{-1}\gt g[t^{-1}])^{\gt g[t]}$ and $\gt z(\wg)$ is known. Also, 
generally speaking, an element of $\gt z(\widehat{\gt g})$ cannot be obtained by the symmetrisation $\varpi$ from 
a homogeneous 
$\gt g[t]$-invariant in $\cS(t^{-1}\gt g[t^{-1}])$. At the same time, some of the elements do come in this way, 
see Example~ \ref{paf}, which is dealing with the Pfaffians of $\gt{so}_{2n}$. 
In this paper, we show that for all classical Lie algebras, $\varpi$ can produce generators of $\gt z(\wg)$. 
The symmetrisation map is not a homomorphism of algebras. 
However, it is a homomorphism of $\gt g[t^{-1}]$-modules and it behaves well with respect to taking
various limits.  

According to a striking result of L.\,Rybnikov \cite{r:un}, $\gt z(\wg)$ is the centraliser in 
$\U(t^{-1}\gt g[t^{-1}])$ of a single quadratic element ${\mathcal H}[-1]$, see Section~\ref{s-zen}.  
This fact is crucial for our considerations. 

\vskip0.5ex

Any ${\mathcal Y}\in \U(t^{-1}\gt g[t^{-1}])$ can be expressed as 
a sum 
\begin{equation} \label{sim-alg}
\varpi(Y_k)+\varpi(Y_{k{-}1})+\ldots + Y_1 + Y_0 \ \ \text{with} \ \
 Y_j\in\cS^j(t^{-1}\gt g[t^{-1}]).
\end{equation}
Here $Y_k=\gr\!({\mathcal Y})$ if $Y_k\ne 0$.  Note that 
$\sum\limits_{0\le j\le k}\varpi(Y_j)$ is a $\gt g$-invariant 
if and only if each $Y_j$ is a $\gt g$-invariant. In the following, we consider only elements with 
$Y_0=0$. 

A {\it polarisation} of a $\gt g$-invariant $F\in\cS(\gt g)$ is a $\gt g$-invariant in 
$\cS(t^{-1}\gt g[t^{-1}])$, see Section~\ref{s-fs} for the definition of  a polarisation. However,  
$\cS(t^{-1}\gt g[t^{-1}])^{\gt g}$ is not generated by elements of this sort, see \eqref{-3} for an example.

There are finite sets of elements 
$\{S_1,\ldots,S_{\ell}\}\subset\gt z(\wg)$ with $\ell=\rk\gt g$, called {\it complete sets of Segal--Sugawara vectors}, see 
Section~\ref{ssv} for the definition,  that are of vital importance for the understanding of $\gt z(\wg)$. 
We prove that if $\gt g$ is either a classical Lie algebra or an exceptional Lie algebra of type {\sf G}$_2$, then 
there is a complete set $\{S_k\}$ of Segal--Sugawara vectors such that 
all the terms $Y_j$ occurring  in presentations~\eqref{sim-alg} for $S_k$ 
are polarisations of 
symmetric  invariants of $\gt g$. 
The map {\sf m}, defined in Section~\ref{sec-m},  plays a crucial r\^ole in the 
selection  of suitable $\gt g$-invariants. 
In particular, if $F[-1]\in\cS^k(\gt g t^{-1})$ is obtained from $F\in\cS^k(\gt g)^{\gt g}$
using the canonical isomorphism $\gt g t^{-1}\cong\gt g$, then 
$\varpi(F[-1])\in\gt z(\wg)$ if and only if ${\sf m}(F)=0$, see Theorem~\ref{m-sym} and the remark after it. More generally,
if $H\in\cS^k(\gt g)^{\gt g}$ is such that 
\begin{equation} \label{property}
{\sf m}^d(H)\in \cS(\gt g) \ \ \text{ for all } \ \ 1\le d < k/2, 
\end{equation} 
then there is a way to produce an element of $\gt z(\wg)$ corresponding to $H$,  
see Theorem~\ref{thm-form} and  \eqref{form-m}.

First for $F=\xi_1\ldots \xi_m\in\cS^m(\gt g)$ and $\bar a=(a_1,\ldots,a_m)\in\Z_{<0}^m$, set 
\begin{equation} \label{a}
\varpi(F)[\bar a]=\frac{1}{m!} \sum_{\sigma\in{\tt S}_m} \xi_{\sigma(1)} t^{a_1}\ldots \xi_{\sigma(m)} t^{a_m} \in\U(t^{-1}\gt g[t^{-1}]),
\end{equation}
then extend this notation to all elements  $F\in\cS^m(\gt g)$ by linearity. According to 
Lemma~\ref{lm-sym-a}, $\varpi(F)[\bar a]=\varpi(F[\bar a])$ for the $\bar a$-polarisation 
$F[\bar a]\in\cS^m(t^{-1}\gt g[t^{-1}])$ of $F$. The expression  $\varpi(\tau^r F[-1]){\cdot}1$ encodes a sum of 
$\frac{1}{(m+r)!} c(r,\bar a) \varpi(F)[\bar a]$, where the vectors $\bar a\in\Z_{<0}^m$ are  such that 
$\sum_{j=1}^m a_j=-m-r$ and $c(r,\bar a)\in\mathbb N$ are certain combinatorially defined coefficients, which 
we do not compute explicitly. The combinatorial aspects of our approach remain hidden for the moment, 
cf. Remark~\ref{rm-comb}. 

For each classical Lie algebra $\gt g$, there is a set of generators 
$\{H_1,\ldots,H_\ell\}\subset\cS(\gt g)^{\gt g}$ such that ${\sf m}(H_k)\in \mK H_j$ for some $j$ depending on $k$, see 
Sections~\ref{sec-A}, \ref{sec-C}, \ref{sec-ort} and in particular Propositions~\ref{works}, \ref{sp-m}, \ref{prop-D}. 
In types {\sf A} and {\sf C}, we are using the coefficients of the characteristic polynomial, 
in the orthogonal case, one has to work with $\det(I_n-q(F_{ij}))^{-1}$ instead. 
In type {\sf A}$_{n-1}$, ${\sf m}(\tilde\Delta_k)=\frac{(n-k+2)(n-k+1)}{k(k-1)} \tilde\Delta_{k-2}$; in type {\sf C}$_n$, 
${\sf m}(\Delta_{2k})=\frac{(2n-2k+3)(2n-2k+2)}{2k(2k-1)} \Delta_{2k-2}$; 
and finally for $\gt g=\gt{so}_n$, we have ${\sf m}(\Phi_{2k})=\frac{1}{k(2k-1)} \big(\binom{n}{2} + 2n(k-1) + (k-1)(2k-3) \big) \Phi_{2k-2}$. This leads to the following complete sets of Segal--Sugawara vectors:\\[.5ex]
$\{\tilde S_{k-1}=\varpi(\tilde\Delta_k[-1]) + \sum\limits_{1\le r<(k-1)/2} \binom{n-k+2r}{2r} \varpi(\tau^{2r} \tilde\Delta_{k-2r}[-1]){\cdot}1 \mid 2\le k\le n\}$ in type ${\sf A}_{n-1}$; \\[.3ex]
$\{S_k=\varpi(\Delta_{2k}[-1]) + 
\sum\limits_{1\le r<k} \binom{2n{-}2k{+}2r{+}1}{2r}\varpi(\tau^{2r}\Delta_{2k{-}2r}[-1]){\cdot}1\mid 1\le k\le n\}$ in type ${\sf C}_n$; \\[.3ex]
$\{S_{k}=\varpi(\Phi_{2k}[-1]) +\sum\limits_{1\le r<k} R(k,r) \varpi(\tau^{2r} \Phi_{2k-2r}[-1]){\cdot 1} \mid 1\le k <\ell \}$ for $\gt{so}_n$ with $n=2\ell-1$  \\[.3ex]
with  the addition of $S_{\ell}=\varpi({\rm Pf}[-1])$ for $\gt{so}_{n}$ with $n=2\ell$, where   
$$
R(k,r)=\frac{2^r}{(2r)!} \prod\limits_{u=1}^r \left( \binom{n}{2} + 2n(k-u) +
(k-u)(2k-2u-1) \right).
$$
The result in type {\sf A} is not new. It follows via a careful rewriting from the formulas of~\cite{ct:qs,cm:ho}. 
We are not giving a new proof, quite to the contrary, we use the statement in type {\sf A} in order to 
extend the formula to other types. 

It would be important to find out, whether our formulas for $\gt{so}_n$ and $\gt{sp}_{2n}$ 
describe the same elements as \cite{m:ff}. This is indeed the case for the 
Pfaffian-type Segal--Sugawara vector, see Section~\ref{sec-Spec}.
The advantage of our method is that 
it reduces questions about elements of $\gt z(\wg)$ to questions on the structure of $\cS(\gt g)^{\gt g}$ in a 
type-free way. For example, it is possible to deal with type ${\sf G}_2$ by hand unlike \cite{mrr}, see \eqref{g2-7}.  
It is quite probable, that other exceptional types can be handled on a computer. 
Conjecturally, each exceptional Lie algebra possesses a set $\{H_k\}$ of generating symmetric invariants 
such that each $H_k$ satisfies \eqref{property}.

\vskip0.5ex

One of the significant applications of the Feigin--Frenkel centre is related to {\it Vinberg's quantisation problem}. 
The symmetric algebra $\cS(\gt g)$ carries a Poisson structure extended from the Lie bracket on $\gt g$ by the 
Leibniz rule.  To each $\mu\in\gt g^*\cong\gt g$, one associates the  
{\it Mishchenko--Fomen\-ko subalgebra} $\ca_\mu\subset\cS(\gt g)$, which is an extremely  
interesting 
Poisson-commutative subalgebra \cite{mf:ee}. 
In \cite{v:sc}, Vinberg proposed to find a commutative subalgebra ${\mathcal C}_\mu\subset\U(\gt g)$ such that the graded image $\gr\!({\mathcal C}_\mu)$ coincides with $\ca_\mu$. 
Partial solutions to this problem are obtained in \cite{no:bs,t:cs}.
The breakthrough came in \cite{r:si}, where a certain commutative subalgebra 
$\tilde\ca_\mu\subset\U(\gt g)$ is constructed as an image of $\gt z(\wg)$, see \eqref{ff-map}.

In \cite[Sect.~3.3]{my}, sets of generators $\{H_k\mid 1\le k\le \ell\}$ of $\cS(\gt g)^{\gt g}$ such that 
$\tilde\ca_\mu$ is generated by $\varpi(\partial_\mu^m H_k)$, cf.~\eqref{MF-sym}, 
are exhibited in types {\sf B}, {\sf C}, and {\sf D}.
For the symplectic Lie algebra, $H_k=\Delta_{2k}$, in the orthogonal case $H_k=\Phi_{2k}$ 
with the exception of $H_\ell={\rm Pf}$ in type ${\sf D}_\ell$.  Results of this paper 
provide a different proof for \cite[Thm~3.2]{my}.   
We have pushed the symmetrisation map to the level of $\U(t^{-1}\gt g[t^{-1}])$. 

In Section~\ref{G-A}, we briefly consider Gauding algebras $\cG$. If $\gt g$ is a classical Lie algebra, then  
the 
{\it two points} Gauding subalgebra $\cG\subset\U(\gt g{\oplus}\gt g)$ is generated by the symmetrisations of 
certain bi-homogeneous $\gt g$-invariants in $\cS(\gt g{\oplus}\gt g)$, see Theorem~\ref{2-Gau}.

\section{Preliminaries and notation}

Let $\gt g=\Lie G$ be a non-Abelian complex reductive Lie algebra. 
The Feigin--Frenkel centre  $\gt z(\wg)$ 
is the centre of the universal affine vertex algebra
associated with the affine Kac--Moody algebra $\wg$
at the critical level \cite{ff:ak,f:lc}.  There is an injective homomorphism 
$\gt z(\wg) \hookrightarrow \U(t^{-1}\ggot[t^{-1}])$
and $\gt z(\wg)$ can be viewed as a commutative subalgebra of $\U(t^{-1}\ggot[t^{-1}])$ \cite[Sect.~3.3]{f:lc}. 
Each element of $\z(\widehat{\ggot})$ is annihilated by the
adjoint action of $\gt g$, cf. \cite[Sect.~6.2.]{book2}.

\subsection{The Feigin--Frenkel centre as a centraliser}  \label{s-zen}
We set $\gt g[b]:=\gt g t^{b}$ and $x[b]:=xt^b$ for $x\in\gt g$. Furthermore, 
$\widehat{\gt g}^{-}:= t^{-1}\ggot[t^{-1}]$. 
According to \cite{r:un}, $\z(\widehat{\ggot})$ 
is the centraliser in $\U(\widehat{\gt g}^{-})$ of the following 
quadratic element 
$$
{\mathcal H}[-1]=\sum_{a=1}^{\dim\gt g} x_a[-1]x_a[-1], 
$$
where  $\{x_1,\ldots,x_{\dim\gt g}\}$ is any basis of $\gt g$ that is 
orthonormal w.r.t. a fixed $\gt g$-invariant  non-degenerate 
scalar product $(\,\,,\,)$.  

\subsection{The symmetrisation map} For any complex Lie algebra $\gt q$, let 
$\varpi\!:\cS^k(\gt q)\to \gt q^{\otimes k}$ 
be the canonical symmetrisation map. Following the usual convention, we let $\varpi$ stand also for the symmetrisation map from $\cS(\gt q)$ to $\U(\gt q)$. 
Let $\gr\!(X)\in\cS(\gt q)$ be the symbol of $X\in\U(\gt q)$. Then  
$\gr\!(\varpi(Y))=Y$ for $Y\in\cS^k(\gt q)$ by the construction.  

\subsection{The antipode} Let us define the  anti-involution $\omega$ on ${\mathcal U}(\widehat{\gt g}^{-})$ 
to be the $\mathbb C$-linear map such that $\omega(\xi[k])=-\xi[k]$ for each $\xi\in\gt g$ and 
$$
\omega(\xi_1[k_1]\xi_2[k_2]\ldots \xi_m[k_m])=(-\xi_m[k_m])\ldots (-\xi_2[k_2])(-\xi_1[k_1]).
$$  
Let also $\omega$ be the analogues  anti-involution on $\U(\gt q)$ for any complex Lie algebra $\gt q$.  

Clearly, $\omega({\mathcal H}[-1])={\mathcal H}[-1]$. Therefore $\omega$ acts on  
$\z(\widehat{\ggot})$.  
For $Y_j\in\cS^j(\wg^-)$, we have $\omega(\varpi(Y_j))=(-1)^j\varpi(Y_j)$. 
A non-zero element ${\mathcal Y}\in\U(\wg^-)$ presented in the form~\eqref{sim-alg}
is an eigenvector of $\omega$ if and only if 
either all $Y_j$ with even $j$ or all $Y_j$ with odd $j$ are zero.

\subsection{The map {\sf m}} \label{sec-m}
For $\gt{gl}_N=\gt{gl}_N(\mK)={\rm End}(\mK^N)$ and $1\le r\le k$, 
consider the linear map 
$$
{\sf m}_r\!: \gt{gl}_N^{\otimes k}\to \gt{gl}_N^{\otimes (k{-}r{+}1)} \ \ 
\text{ that  sends } \ \ y_1{\otimes}\ldots{\otimes}\,y_k \ \ \text{ to } \ \ 
y_1y_2\ldots y_r\otimes y_{r+1}{\otimes}\ldots{\otimes}\,y_k. 
$$
Note that clearly ${\sf m}_r{\circ}{\sf m}_s={\sf m}_{r+s-1}$. 
Via the adjoint representation of $\gt g$, the map ${\sf m}_r$ leads to a map 
$\gt g^{\otimes k}\to \gl(\gt g) \otimes \gt g^{\otimes (k{-}r)}$, which we denote by the same symbol. 
Observe that 
$$
\ad(y_1)\ad(y_2)\ldots\ad(y_{2r+1})+\ad(y_{2r+1})\ldots\ad(y_2)\ad(y_1) \in\gt{so}(\gt g)\cong \Lambda^2\gt g.
$$
We embed $\cS^k(\gt g)$ in $\gt g^{\otimes k}$ via $\varpi$. 
Set ${\sf m}={\sf m}_3$. Then
${\sf m}\!: \cS^{k}(\gt g)\to \Lambda^2\gt g \otimes \cS^{k-3}(\gt g)$. 
For example, if $Y=y_1y_2y_3\in\cS^3(\gt g)$, then 
\begin{align*}
& {\sf m}(Y)=\frac{1}{6}\big(\ad(y_1)\ad(y_2)\ad(y_3)+\ad(y_3)\ad(y_2)\ad(y_1)+ \ad(y_1)\ad(y_3)\ad(y_2) + \\
& \qquad  \qquad + \ad(y_2)\ad(y_3)\ad(y_1) 
+ \ad(y_2)\ad(y_1)\ad(y_3) + \ad(y_3)\ad(y_1)\ad(y_2)\big)\in\gt{so}(\gt g).
\end{align*}
Similarly one defines ${\sf m}_{2r+1}\!: \cS^{k}(\gt g)\to \Lambda^2\gt g \otimes \gt g^{\otimes (k{-}2r{-}1)}$
for each odd $2r+1\le k$.   
Note  that each ${\sf m}_{2r+1}$ is $G$-equivariant. 
It is convenient to put ${\sf m}(\cS^k(\gt g))=0$ for $k\le 2$. 

Suppose that $\gt g$ is simple. There is a $G$-stable decomposition  $\Lambda^2\gt g=\gt g\oplus V$. This $V$ will be called the {\it Cartan component} of $\Lambda^2\gt g$. If $\gt g$ is not of type {\sf A}, then $V$ is irreducible. For certain elements $H\in\cS^k(\gt g)$, we have 
${\sf m}(H)\in \gt g\otimes \cS^{k-3}(\gt g)$. 
If 
${\sf m}(H)\in\cS^{k-2}(\gt g)$, 
then 
${\sf m}_{2r+1}(H)={\sf m_{2r-1}}{\circ}{\sf m}(H)$. 
Note that ${\sf m}(\cS^3(\gt g)^{\gt g})=0$, since $(\Lambda^2\gt g)^{\gt g}=0$. 

\subsection{Polarisations and fully symmetrised elements} \label{s-fs}
For elements $y_1,\ldots , y_m \in \gt g$ and a vector $\bar a=(a_1,\ldots,a_m)\in \mathbb Z_{<0}^m$, set 
$\Upsilon[\bar a]=\prod\limits_{i=1}^m y_i[a_i]\in \cS(\widehat{\gt g}^{-})$. 
If we consider the product $Y=\prod_{i} y_i\in\cS^m(\gt g)$, then there is no uniquely defined sequence of factors $y_i$.  
However, the {\it $\bar a$-polarisation}  $Y[\bar a]:=\frac{1}{m!} \sum\limits_{\sigma\in{\tt S}_m} \Upsilon[\sigma(\bar a)]$ of $Y$ is  well-defined. 
We extend this notion to all elements of $\cS^m(\gt g)$ by linearity. 
Linear combinations of the elements 
$$
\varpi\left( Y[\bar a]  \right) \in \U(\widehat{\gt g}^{-})
$$
are said to be {\it fully symmetrised}. Note that  $\varpi(H)$ is fully symmetrised if $H\in\cS^m(\gt g t^{-1})$. 
If $a_i=a$ for all $i$, then $\Upsilon[\bar a]=Y[\bar a]$ and we denote it simply by $Y[a]$. 

The evaluation ${\sf Ev}_1$  at $t=1$ defines an isomorphism ${\sf Ev}_1\!: \cS(\gt g[a])\to \cS(\gt g)$ of $\gt g$-modules. For $F\in\cS(\gt g)$, let $F[a]$ stand for ${\sf Ev}_1^{-1}(F)\in  \cS(\gt g[a])$.    
Then $\varpi(F)[a]:=\varpi(F[a])$ is fully symmetrised.  

\subsection{Segal--Sugawara vectors}\label{ssv} Set $\tau=-\partial_t$.  According to \cite{ff:ak}, $\gt z(\widehat{\ggot})$ 
is a polynomial algebra in infinitely many variables with a distinguished set of ``generators"  
$\{S_1,\ldots,S_{\ell}\}$ such that $\ell=\rk\gt g$ and 
$$
\gt z(\widehat{\ggot})= \mK[\tau^m (S_k) \mid 1\le k\le\ell, m\ge 0].
$$
We have $\gr\!(S_k)=H_k[-1]$ with $H_k\in\cS(\gt g)^{\gt g}$ and 
$\mK[H_1,\ldots,H_\ell]=\cS(\gt g)^{\gt g}$. 
The set $\{S_k\}$ is said to be a {\it complete set of Segal--Sugawara vectors}.  
The symbols of $\tau^m(S_k)$ generate $\cS(\wg^-)^{\gt g[t]}$ in accordance with 
\cite{rt}.  

Suppose that we have $\tilde S_k\in\gt z(\wg)$ with $1\le k\le\ell$ and 
$\gr\!(S_k)=\tilde H_k[-1]$, where $\tilde H_k\in\cS(\gt g)^{\gt g}$, for each $k$. 
The structural properties of $\gt z(\wg)$ imply that  $\{\tilde S_k\}$ is a complete set of Segal--Sugawara vectors if 
and only if the set $\{\tilde H_k\}$ generates $\cS(\gt g)^{\gt g}$.

\subsection{Symmetric invariants} 
For a finite-dimensional Lie algebra $\gt q$, we have $\cS(\gt q)\cong\mK[\gt q^*]$. 
For any reductive Lie algebra, there is an isomorphism of $\gt g$-modules 
$\gt g\cong\gt g^*$.  For $\xi\in(\gt{gl}_n)^*$, write  
\begin{equation} \label{d-gl}
\det(qI_n-\xi)=q^n-\Delta_1(\xi)q^{n-1} + \ldots + (-1)^{k}\Delta_k(\xi) q^{n-k} + \ldots + (-1)^n \Delta_n(\xi)  .
\end{equation}
Then $\cS(\gt{gl}_n)^{\gt{gl}_n}=\mK[\Delta_1,\ldots,\Delta_n]$. 

Let $\gt f\subset\gt g$ be a reductive subalgebra. Then there is an $\gt f$-stable subspace 
$\gt m\subset\gt g$ such that $\gt g=\gt f\oplus\gt m$, whereby  also 
$\gt g^* \cong\gt f^*\oplus\gt m^*$. Identifying $\gt f$ with $\gt f^*$, one  
defines the {\it restriction} $H|_{\gt f}$ of $H\in\cS(\gt g)$ to $\gt f$. This is the image of $H$ in
$\cS(\gt g)/\gt m\cS(\gt g)\cong\cS(\gt f)$. 

In cases $n=2\ell$, $\gt f=\gt{sp}_{2\ell}$ and $n=2\ell+1$, $\gt f=\gt{so}_n$, the 
restrictions ${\Delta_{2k}}|_{\gt f}$ with $1\le k\le \ell$ form a generating set 
in $\cS(\gt f)^{\gt f}$. In case $\gt f=\gt{so}_{n}$ with $n=2\ell$, 
the restriction of the determinant $\Delta_{2\ell}$ is the square of the Pfaffian and 
has to be replaced by the Pfaffian  in the generating set.  

Explicit formulas for basic symmetric invariants of the exceptional Lie algebras 
are less transparent. 

The inclusions $\gt g\subset\cS(\gt g)$ are ruled  by the symmetric invariants. 
The key point here is that $\cS(\gt g)$ is a free module over $\cS(\gt g)^{\gt g}$ \cite{K}. 
If 
$\{H_1,\ldots,H_\ell\}\subset\cS(\gt g)^{\gt g}$ is a generating set consisting of homogeneous elements 
and $\deg H_i=d_i+1$,
then to each $i$ 
corresponds a {\it primitive} copy of $\gt g$ in $\cS^{d_i}(\gt g)$. The non-primitive
copies are obtained as linear combinations of  the primitive ones with coefficients  from $\cS(\gt g)^{\gt g}$.

\subsection{Miscellaneousness}
Let $\gt h\subset\gt g$ be a Cartan subalgebra, we let $\ell$ stand for $\dim\gt h=\rk\gt g$ and
$W=W(\gt g,\gt h)$ stand for the Weyl group of $\gt g$.   
The fundamental weights of $\gt g$ are $\pi_k$ with $1\le k\le\ell$ and 
$V(\lambda)$ stands for an irreducible finite-dimensional $\gt g$-module with the highest weight 
$\lambda=\sum_{k=1}^{\ell} c_k \pi_k$. Please keep in mind that  
the Vinberg--Onishchik numbering \cite[Tables]{VO} of simple roots (and fundamental weights) is used. 
If $\alpha\in\gt h^*$ is a positive root, then $\{e_\alpha,h_\alpha,f_\alpha\}\subset\gt g$ is an $\gt{sl}_2$-triple
associated  with $\alpha$.  

An automorphism $\sigma\in{\rm Aut}(\gt g)$ extends to $\gt g[t^{-1}]$ by setting 
$\sigma(t^{-1})=t^{-1}$. In this context, $\sigma$ stands also for the corresponding 
automorphism of $\cS(\gt g[t^{-1}])$. If we take a $\sigma$-invariant product 
$(\,\,,\,)$, then  $\sigma({\mathcal H}[-1])={\mathcal H}[-1]$. Therefore  
$\sigma$ acts on $\gt z(\wg)$. 

If $\sigma\in{\rm Aut}(\gt g)$ is of finite order 
$m$, then it  leads to a $\Z/m\Z$-grading $\gt g=\gt g_0\oplus\gt g_1\oplus\ldots\oplus\gt g_{m-1}$.  
In case $m=2$, we have $\gt g_1=\{\xi\in\gt g\mid \sigma(\xi)=-\xi\}$.

Throughout the paper, \\[.1ex]
($\diamond$) $\{x_i\}$  is an orthonormal basis of $\gt g$; \\[.1ex]
($\diamond$) in the sums $\sum_{i} x_i$ or 
 $\sum_{i,j} x_i x_j$, the ranges  are from $1$ to $\dim\gt g$ for $i$ and  for $j$; \\[.1ex]
($\diamond$) $\bar b=(b_1,b_2)\in\mathbb Z_{<0}^2$ and ${\mathcal H}[\bar b]$ stands for 
$\sum_{i} x_i[b_1]x_i[b_2]\in \U(\wg^-)$ and also for the symbol of this sum; \\[.1ex]  
($\diamond$) $G_\xi$ stands for the stabiliser of $\xi$ and it is always clear from the context, which $G$-action is considered,  $\gt g_\xi=\Lie G_\xi$\,; \\[.1ex] 
($\diamond$)  $\gt q$ stands for an arbitrary unspecified complex Lie algebra; \\[.1ex]
($\diamond$) if $\ca\subset\U(\gt q)$ is a subalgebra, then $\gr\!(\ca):=\left<\gr\!(x) \mid x\in \ca\right>_{\mK}\subset\cS(\gt q)$. 

\section{Explicit formulas in type {\sf A}} \label{sec-A}

In type {\sf A}, there are several explicit formulas for the Segal--Sugawara vectors 
\cite{ct:qs,cm:ho}, see also \cite[Sect.~7.1]{book2}. 
One of them actually uses symmetrisation. 
One can form the  matrix $E[-1]+\tau = (E_{ij}[-1])+\tau I_n$ with $E_{ij}\in\gt{gl}_n$ and calculate its 
column- and symmetrised determinants. Due to the fact that this matrix is {\it Manin}, see 
\cite[Def.~3.1.1,\, p.~48,\, Lemma~7.2]{book2}, the results are the same. The symmetrised version is more suitable for our purpose. The elements $S_j$ are 
coefficients  of $\tau^{n{-}j}$ in 
$$
\det_{\rm sym}(E[-1]+\tau)=\varpi(\Delta_n[-1])+\varpi(\tau \Delta_{n{-}1}[-1])+\ldots 
+\varpi(\tau^{n-2}\Delta_2[-1])+\varpi(\tau^{n-1}\Delta_1[-1])+\tau^n. 
$$
Assume the conventions that 
$$\tau x[a] - x[a]\tau =[\tau,x[a]]=\tau(x[a])=-ax[a{-}1]$$ and $\tau{\cdot}1=0$. This leads 
for example to  
$\tau x[-1]{\cdot}1= x[-2]$. Note that $\varpi$ acts on the summands of 
$\tau^{n-k} \Delta_{k}[-1]$ as on products of $n$ factors.  It permutes $\tau$ with elements of $\gt{gl}_n[-1]$.  

Let $\theta$ be  the Weyl involution  of $\gt g$. 
As is well-known, $\theta(\Delta_k)=(-1)^k\Delta_k$ in the case  of $\gt{gl}_n$. 
In particular, 
$\theta({\mathcal H}[-1])={\mathcal H}[-1]$ and 
$\theta$ acts on $\gt z(\wg)$.  
Hence one can always modify the Segal--Sugawar vectors in such a way that they become eigenvectors of 
$\theta$.  
The resulting simplified forms are 
\begin{align} 
& S_n=  \varpi(\Delta_n[-1])+\varpi(\tau^2\Delta_{n{-}2}[-1]){\cdot}1+{\ldots} +  \varpi(\tau^{2r}\Delta_{n-2r}[-1]){\cdot 1}+ {\ldots}  \label{S-det} \\ 
&  \qquad \qquad \qquad \qquad \qquad \qquad \qquad \qquad \qquad \qquad \qquad \qquad \qquad
+ \varpi(\tau^{2m-2}\Delta_2[-1]){\cdot}1,   \nonumber \\ 
& S_k=\varpi(\Delta_k[-1])+ \sum_{1\le r<k/2} \binom{n{-}k{+}2r}{2r}\varpi(\tau^{2r}\Delta_{k{-}2r}[-1]){\cdot}1. \label{S-sym-k}
\end{align}

We  will see  that there is a direct connection with the symmetrisaton and that one 
could have  used  $\omega$ instead of $\theta$ in order to simplify the formulas. 
The following two lemmas are valid for all 
Lie algebras.  

\begin{lm} \label{lm-sym-a}
Take $Y=y_1 \ldots  y_m \in \cS^m(\gt g)$ and $\bar a=(a_1,\ldots,a_m)\in \mathbb Z_{<0}^m$.
Then in $\U(\widehat{\gt g}^{-})$, we have  
$$
\mathbb Y[\bar a]:= \sum_{\sigma\in {\tt S}_m} y_{\sigma(1)}[a_1]\ldots y_{\sigma(m)}[a_m] 
=\varpi\left(\sum_{\sigma\in {\tt S}_m} y_{1}[a_{\sigma(1)}]\ldots y_{m}[a_{\sigma(m)}] \right) = m! \varpi(Y[\bar a]) 
$$
in the notation of Section~\ref{s-fs}. 
\end{lm}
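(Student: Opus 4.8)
The plan is to split the claimed chain of equalities into an easy half and a substantial half. The second equality, $\varpi\big(\sum_{\sigma\in{\tt S}_m}y_1[a_{\sigma(1)}]\ldots y_m[a_{\sigma(m)}]\big)=m!\,\varpi(Y[\bar a])$, is purely formal: inside the \emph{commutative} algebra $\cS(\widehat{\gt g}^{-})$ one has $y_1[a_{\sigma(1)}]\ldots y_m[a_{\sigma(m)}]=\Upsilon[\sigma(\bar a)]$, so the argument of $\varpi$ equals $\sum_{\sigma}\Upsilon[\sigma(\bar a)]=m!\,Y[\bar a]$ by the definition of the $\bar a$-polarisation in Section~\ref{s-fs}, and it remains only to pull the scalar $m!$ through the linear map $\varpi$. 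Everything then reduces to the identity $\mathbb Y[\bar a]=m!\,\varpi(Y[\bar a])$; together with the previous observation this shows all three expressions in the lemma coincide. I would establish it in two steps.

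\textbf{Step 1: $\mathbb Y[\bar a]$ is unchanged if one permutes the entries of $\bar a$.} It is enough to treat a transposition $(i,i{+}1)$ of the slots. In each summand of $\mathbb Y\big[(i,i{+}1)\bar a\big]$ the two middle factors read $y_{\sigma(i)}[a_{i+1}]\,y_{\sigma(i+1)}[a_i]$; using $[\,\xi[p],\eta[q]\,]=[\xi,\eta][p{+}q]$ in $\U(\widehat{\gt g}^{-})$ rewrite this as $y_{\sigma(i+1)}[a_i]\,y_{\sigma(i)}[a_{i+1}]+[y_{\sigma(i)},y_{\sigma(i+1)}]\,[a_i{+}a_{i+1}]$. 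Summing the first term over $\sigma$ and relabelling $\sigma\mapsto\sigma\,(i,i{+}1)$ reproduces $\mathbb Y[\bar a]$ exactly. In the sum of the second terms, the summands indexed by $\sigma$ and by $\sigma\,(i,i{+}1)$ are negatives of one another — the outer factors and the shift $[a_i{+}a_{i+1}]$ are unaffected, while the bracket $[y_{\sigma(i)},y_{\sigma(i+1)}]$ is antisymmetric — and since $\sigma\mapsto\sigma\,(i,i{+}1)$ is a fixed-point-free involution of ${\tt S}_m$ this sum vanishes. As adjacent transpositions generate ${\tt S}_m$, we conclude $\mathbb Y[\sigma(\bar a)]=\mathbb Y[\bar a]$ for all $\sigma\in{\tt S}_m$.

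\textbf{Step 2: $m!\,\varpi(Y[\bar a])=\tfrac1{m!}\sum_{\sigma\in{\tt S}_m}\mathbb Y[\sigma(\bar a)]$.} Starting from $m!\,Y[\bar a]=\sum_{\rho}\prod_{i}y_i[a_{\rho(i)}]$ and applying $\varpi(z_1\ldots z_m)=\tfrac1{m!}\sum_{\pi}z_{\pi(1)}\ldots z_{\pi(m)}$ with $z_i=y_i[a_{\rho(i)}]$ gives $m!\,\varpi(Y[\bar a])=\tfrac1{m!}\sum_{\rho,\pi}y_{\pi(1)}[a_{\rho(\pi(1))}]\ldots y_{\pi(m)}[a_{\rho(\pi(m))}]$. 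For fixed $\pi$ the substitution $\sigma=\rho\pi$ runs over ${\tt S}_m$ and satisfies $a_{\rho(\pi(j))}=a_{\sigma(j)}$, so the double sum becomes $\tfrac1{m!}\sum_{\sigma}\big(\sum_{\pi}y_{\pi(1)}[a_{\sigma(1)}]\ldots y_{\pi(m)}[a_{\sigma(m)}]\big)=\tfrac1{m!}\sum_{\sigma}\mathbb Y[\sigma(\bar a)]$. Combining Steps 1 and 2 yields $m!\,\varpi(Y[\bar a])=\mathbb Y[\bar a]$, as wanted.

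The only delicate point is Step~1 — in particular keeping the two relabellings apart, one reproducing $\mathbb Y[\bar a]$ and the other forcing the commutator terms into cancelling pairs; everything else is bookkeeping. It is worth noting that no central term intervenes, since $a_i{+}a_{i+1}<0$ keeps each bracket $[\xi,\eta][a_i{+}a_{i+1}]$ inside $\widehat{\gt g}^{-}$, and that the argument uses nothing beyond the Lie bracket, so the lemma indeed holds for an arbitrary complex Lie algebra.
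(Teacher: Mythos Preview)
Your proof is correct and follows essentially the same route as the paper's. Both arguments hinge on the same commutator cancellation: pairing the $\sigma$-term with the $\sigma\,(i,i{+}1)$-term and using $[y_{\sigma(i)},y_{\sigma(i+1)}][a_i{+}a_{i+1}]=-[y_{\sigma(i+1)},y_{\sigma(i)}][a_i{+}a_{i+1}]$. The only organisational difference is that the paper phrases Step~1 as invariance of $\mathbb Y[\bar a]$ under the slot transposition ${\tt t}_i$ (swapping the $i$th and $(i{+}1)$th factors), which after a relabelling of the summation index $\sigma$ is the same statement as your invariance under permuting the entries of $\bar a$; the paper then leaves your Step~2 implicit, reading off the conclusion directly from the characterisation of $\mathrm{im}\,\varpi$ as the ${\tt S}_m$-invariants.
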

\begin{proof}
It suffices to show that $\mathbb Y[\bar a]$ is invariant under all ${\tt t}_i=(i\,i{+}1)\in {\tt S}_m$ with  $1\le i<m$. 
For each $\sigma\in{\tt S}_m$, both monomials 
$$
y_{\sigma(1)}[a_1]{\ldots} y_{\sigma(i)}[a_{i}]y_{\sigma(i{+}1)}[a_{i{+}1}]{\ldots} y_{\sigma(m)}[a_m]  \ \ 
\text{ and } \ \ y_{\sigma(1)}[a_1]{\ldots} y_{\sigma(i{+}1)}[a_{i}]y_{\sigma(i)}[a_{i{+}1}]{\ldots} y_{\sigma(m)}[a_m]
$$
appear in $\mathbb Y[\bar a]$ with the same coefficient $1$. Let $s(\sigma,i)$ stand for their sum. Then 
$$
s(\sigma,i)  -  {\tt t}_i(s(\sigma,i))  = \ldots[y_{\sigma(i)}[a_i],y_{\sigma(i+1)}[a_{i+1}]]\ldots + 
 \ldots[y_{\sigma(i+1)}[a_{i}],y_{\sigma(i)}[a_{i+1}]]\ldots =0, 
$$ 
because $[y_{\sigma(i)}[a_i],y_{\sigma(i+1)}[a_{i+1}]]=[y_{\sigma(i)},y_{\sigma(i+1)}][a_i{+}
a_{i+1}]= - [y_{\sigma(i+1)}[a_{i}],y_{\sigma(i)}[a_{i+1}]]$. 
Since $\mathbb Y[\bar a]=\frac{1}{2}\sum_{\sigma} s(\sigma,i)$ for each $i$, we are done. 
\end{proof}

\begin{lm}\label{side-1}
Take $F\in \cS^m(\gt g)$ and $r\ge 1$. Then $\varpi(\tau^r F[-1]){\cdot}1$ is fully symmetrised and  therefore is
an eigenvector of $\omega$ corresponding to the eigenvalue $(-1)^m$.
\end{lm}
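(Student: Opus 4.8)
The plan is to prove the sharper assertion that $\varpi(\tau^r F[-1]){\cdot}1$ is a $\mathbb Q$-linear combination of polarisations $\varpi(F[\bar b])$ with $\bar b\in\Z_{<0}^m$ and $\sum_j b_j=-m-r$. Granting this, the lemma follows at once: such a combination is fully symmetrised by the definition in Section~\ref{s-fs}, and since every $F[\bar b]$ lies in $\cS^m(\wg^-)$, the antipode $\omega$ multiplies it by $(-1)^m$, as recalled above.

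By linearity I may assume $F=y_1\cdots y_m$ with $y_i\in\gt g$, viewed as formal symbols. The first step is to unfold $\varpi(\tau^r F[-1])$ as the average over the $(m+r)!$ orderings of the letters $\tau$ (with multiplicity $r$) and $y_1[-1],\dots,y_m[-1]$, and to regroup these orderings according to the set $S$ of the $r$ positions occupied by a $\tau$ and the bijection $\pi$ of the remaining $m$ positions onto $\{1,\dots,m\}$. Since the $r$ copies of $\tau$ are equal, this yields
$$
\varpi(\tau^r F[-1])=\frac{r!}{(m+r)!}\sum_{S}\sum_{\pi\in{\tt S}_m}w_{S,\pi},
$$
where $w_{S,\pi}$ is the monomial with a $\tau$ in each position of $S$ and with $y_{\pi(j)}[-1]$ in the $j$-th position outside $S$.

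The second step is to evaluate $w_{S,\pi}{\cdot}1$. Reading $w_{S,\pi}$ from the left, each letter $y_i[a]$ acts by left multiplication in $\U(\wg^-)$ and each $\tau$ acts by the derivation $D$ with $D(x[a])=-a\ts x[a{-}1]$ --- this is exactly what the conventions $\tau x[a]-x[a]\tau=-a\ts x[a{-}1]$ and $\tau{\cdot}1=0$ encode, and in particular a monomial ending in $\tau$ contributes $0$. Because $D$ is a derivation that acts identically on every label while neither $D$ nor left multiplication permutes factors, one checks that
$$
w_{S,\pi}{\cdot}1=\sum_{\bar b}c_S(\bar b)\ts y_{\pi(1)}[b_1]\cdots y_{\pi(m)}[b_m],
$$
the sum being over $\bar b\in\Z_{<0}^m$ with $\sum_j b_j=-m-r$, and the coefficients $c_S(\bar b)\in\mathbb N$ depending on $S$ and $\bar b$ but \emph{not} on $\pi$; indeed they are the coefficients of the universal polynomial obtained by running the same computation with generic symbols in place of $y_{\pi(1)},\dots,y_{\pi(m)}$. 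Summing over $\pi$ and invoking Lemma~\ref{lm-sym-a} with $Y=F$ turns the inner sum into $\mathbb Y[\bar b]=m!\,\varpi(F[\bar b])$, so that
$$
\varpi(\tau^r F[-1]){\cdot}1=\frac{r!\ts m!}{(m+r)!}\sum_{\bar b}\Bigl(\sum_{S}c_S(\bar b)\Bigr)\varpi(F[\bar b]),
$$
a combination of polarisations; this also makes the coefficients appearing in the Introduction explicit modulo the $c_S(\bar b)$, which I do not intend to compute.

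The delicate point is the regrouping in the second step: one must arrange the symmetrisation so that summing the terms $w_{S,\pi}{\cdot}1$ over the relabellings $\pi$ reproduces precisely the sum $\mathbb Y[\bar b]$ governed by Lemma~\ref{lm-sym-a}. This works only because $\tau$ acts through the label-independent derivation $D$, which is what makes $c_S(\bar b)$ independent of $\pi$; the remaining manipulations are formal bookkeeping, and the explicit shape of $c_S(\bar b)$ is left aside on purpose.
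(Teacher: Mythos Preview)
Your proof is correct and follows essentially the same route as the paper's own argument: reduce to $F=y_1\cdots y_m$ by linearity, expand $\varpi(\tau^r F[-1]){\cdot}1$ as a sum over $\bar a\in\Z_{<0}^m$ with $\sum a_j=-(m+r)$ of expressions $\sum_{\sigma}y_{\sigma(1)}[a_1]\cdots y_{\sigma(m)}[a_m]$ with combinatorial coefficients, and invoke Lemma~\ref{lm-sym-a}. Your decomposition of the coefficient $c(r,\bar a)$ by the position set $S$ of the $\tau$'s and your explanation of why $c_S(\bar b)$ is independent of $\pi$ make explicit what the paper leaves as ``by the construction'' and ``an elementary combinatorial way''.
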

\begin{proof}
Notice that  $\varpi(\tau^r (F{+}F')[-1]){\cdot}1=\varpi(\tau^r F[-1]){\cdot}1+ \varpi(\tau^r F'[-1]){\cdot}1$
for any $F'\in\cS^m(\gt g)$. Hence we may assume that $F=y_1\ldots y_m$ with $y_j\in\gt g$. 
By the construction,  $\varpi(\tau^r F[-1]){\cdot}1$ is the sum of  terms 
$$
\frac{1}{(m+r)!} c(r,\bar a) \sum_{\sigma\in {\tt S}_m} y_{\sigma(1)}[a_1]\ldots y_{\sigma(m)}[a_m] 
\ \ \text{ with } \ \ c(r,\bar a)\in\mathbb N,
$$
taken over all vectors $\bar a=(a_1,\ldots,a_m)\in\Z_{<0}^m$  such that $\sum a_j=-(m+r)$. 
The scalars 
$c(r,\bar a)$ depend on  $(m,r,\bar a)$ in an elementary combinatorial way. 
Each summand here is a fully symmetrised element by Lemma~\ref{lm-sym-a}.
Hence the desired conclusion follows.  
\end{proof}

Let $z=\frac{1}{n}I_n$ be a central element of $\gt g=\gt{gl}_n$ and 
let 
$\tilde\Delta_k$ denote the restriction of $\Delta_k$ to $\gt{sl}_n$. Then 
\begin{equation} \label{z}
\Delta_k=\tilde\Delta_k+(n-k+1)z\tilde\Delta_{k-1}+\binom{n-k+2}{2}z^2\tilde\Delta_{k-2}+\ldots+\binom{n-2}{k-2}z^{k-2}\tilde\Delta_2+\binom{n}{k}z^k.
\end{equation} 
Fix $\gt h=\left<E_{ii} \mid 1\le i\le n\right>_{\mK}$. 
Let $\esi_i\in\gt h^*$ be a linear function such that 
$\esi_i(E_{jj})=\delta_{i,j}$. For $E_{ii}\in\gt g$, set $\tilde E_{ii}=E_{ii}-z$.  

\begin{prop}\label{works}
In type {\sf A}, we have 
$$
{\sf m}_{2r+1}(\tilde\Delta_k) = \frac{(2r)! (k-2r)!}{k!} \binom{n-k+2r}{2r}  \tilde\Delta_{k-2r}
$$
if $k-2r > 1$ and ${\sf m}(\tilde\Delta_3)={\sf m}(\Delta_3)=0$. 
\end{prop}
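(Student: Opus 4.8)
The plan is to compute ${\sf m}_{2r+1}(\tilde\Delta_k)$ directly from the definition of the symmetrisation embedding $\cS^k(\gt g)\hookrightarrow\gt g^{\otimes k}$ followed by $\ad$ and the multiplication map ${\sf m}_{2r+1}$. The natural first step is to reduce to $\gt{gl}_n$ rather than $\gt{sl}_n$: since $\Delta_k=\sum_{j}\binom{n-k+j}{j}z^{j}\tilde\Delta_{k-j}$ by \eqref{z} and $\ad(z)=0$, one has ${\sf m}_{2r+1}(\Delta_k)={\sf m}_{2r+1}(\tilde\Delta_k)$ for the appropriate component, so it is harmless to work with the coefficients $\Delta_k$ of the characteristic polynomial \eqref{d-gl} and extract $\tilde\Delta_{k-2r}$ at the end. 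Concretely, $\Delta_k$ is (up to a scalar) the full symmetrisation of $\sum E_{i_1 j_1}\cdots$ contracted against the antisymmetriser; the cleanest bookkeeping device is to write $\Delta_k=\frac1{k!}\sum_{\sigma\in{\tt S}_k}\,\mathrm{sgn}(\sigma)\,E_{i_{\sigma(1)}i_1}\cdots E_{i_{\sigma(k)}i_k}$ as an element of $\cS^k(\gt{gl}_n)$ (summation over repeated indices), which makes its $\gt{gl}_n$-invariance manifest.

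The heart of the computation is the following mechanism: in $\gt{gl}_n^{\otimes k}$, the symmetrised monomial $\varpi(y_1\cdots y_k)$ is the average over ${\tt S}_k$ of $y_{\pi(1)}\otimes\cdots\otimes y_{\pi(k)}$; applying $\ad$ in each slot and then ${\sf m}_{2r+1}$ means forming $\ad(y_{\pi(1)})\cdots\ad(y_{\pi(2r+1)})\otimes(\text{rest})$, averaged. The key algebraic input, already flagged in Section~\ref{sec-m}, is that $\ad(a_1)\cdots\ad(a_{2r+1})+\ad(a_{2r+1})\cdots\ad(a_1)$ lands in $\gt{so}(\gt g)\cong\Lambda^2\gt g$, so only the $\Lambda^2$-part survives and one may symmetrise the remaining $k-2r-1$ slots freely. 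On the combinatorial side, I expect that after writing $\ad$ of a product of elementary matrices $E_{ij}$ as $E_{ij}\otimes 1-1\otimes E_{ji}$ acting on $\gt{gl}_n$ and telescoping the product $\ad(E_{i_{\sigma(1)}i_1})\cdots\ad(E_{i_{\sigma(2r+1)}i_{2r+1}})$ against the determinant's antisymmetriser, almost all terms cancel by the sign structure, leaving a contribution proportional to a $(k-2r)$-minor, i.e. to $\tilde\Delta_{k-2r}$. The scalar $\frac{(2r)!(k-2r)!}{k!}\binom{n-k+2r}{2r}$ should emerge from three sources: the $\frac1{k!}$ normalisations in $\varpi$ and in $\Delta_k$, a factor counting the $\binom{k}{2r+1}\cdot(2r)!$ ordered ways the $2r+1$ ``collapsed'' indices can be chosen and ordered inside the product, and a factor $n-k+2r$ coming from the trace of the identity on the complementary index range (this is exactly the source of the binomial coefficient, as the contraction runs over $n$ values but $k-2r$ are forced distinct). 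A sanity check: for $r=0$ the formula gives the identity $\tilde\Delta_k\mapsto\tilde\Delta_k$ (since ${\sf m}_1=\mathrm{id}$), and for $k=3$, $r=1$ the coefficient is $\frac{2!\,1!}{3!}\binom{n-1}{2}\tilde\Delta_1=\frac13\binom{n-1}{2}\tilde\Delta_1$ — but on $\gt{sl}_n$ one has $\tilde\Delta_1=0$, which is consistent with ${\sf m}(\tilde\Delta_3)=0$ as asserted; on $\gt{gl}_n$ one should instead get ${\sf m}(\Delta_3)\in\mK z$, and matching this against $(\Lambda^2\gt{gl}_n)^{\gt{gl}_n}=0$ forces the coefficient of $z$ to vanish, recovering ${\sf m}(\Delta_3)={\sf m}(\tilde\Delta_3)=0$.

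An alternative and perhaps more robust route, which I would pursue if the index telescoping becomes unwieldy, is the generating-function / Manin-matrix approach: form the matrix $E+u I_n$ over $\cS(\gt{gl}_n)$, note $\det(E+uI_n)=\sum_k \Delta_k u^{n-k}$ up to signs, and track how ${\sf m}$ (extended linearly over the $u$-grading) acts on the whole characteristic polynomial at once. The map ${\sf m}$ essentially records the ``correction to commutativity'' when one passes from $\cS$ to $\U$, so its effect on $\det(E+uI_n)$ should be expressible through the known difference between $\det_{\mathrm{col}}$ and $\det_{\mathrm{sym}}$ for a Manin matrix (cf. \cite[Lemma~7.2]{book2}); extracting coefficients of $u$ then yields all the ${\sf m}_{2r+1}(\tilde\Delta_k)$ simultaneously with their scalars, and one reads off the claimed formula. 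The main obstacle in either approach is purely combinatorial: controlling the exact multiplicities with which $\tilde\Delta_{k-2r}$ is produced when the $2r+1$ adjoint factors are collapsed inside the fully symmetrised, antisymmetrised expression — i.e. pinning down the coefficient $\frac{(2r)!(k-2r)!}{k!}\binom{n-k+2r}{2r}$ rather than merely proving proportionality. Proportionality itself, ${\sf m}_{2r+1}(\tilde\Delta_k)\in\mK\,\tilde\Delta_{k-2r}$, is comparatively easy: it follows from $G$-equivariance of ${\sf m}_{2r+1}$ (stated in Section~\ref{sec-m}) together with the fact that $\tilde\Delta_{k-2r}$ spans the unique (up to nonzero invariants, but here the degree is fixed) copy of the trivial $\gt{gl}_n$-module of degree $k-2r-1$ inside $\cS^{k-2r-1}(\gt g)\otimes\gt g$ landing in $\cS^{k-2r}(\gt g)$, so the scalar is the only real work.
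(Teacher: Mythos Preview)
There is a genuine gap in your argument, and it sits exactly where you think the easy part is. You write that proportionality ${\sf m}_{2r+1}(\tilde\Delta_k)\in\mK\,\tilde\Delta_{k-2r}$ ``is comparatively easy: it follows from $G$-equivariance of ${\sf m}_{2r+1}$ together with the fact that $\tilde\Delta_{k-2r}$ spans the unique \ldots copy of the trivial $\gt{gl}_n$-module \ldots landing in $\cS^{k-2r}(\gt g)$.'' But a priori ${\sf m}$ lands in $\Lambda^2\gt g\otimes\cS^{k-3}(\gt g)$, not in $\cS^{k-2}(\gt g)$, and the $G$-invariants in $\Lambda^2\gt g\otimes\cS^{k-3}(\gt g)$ are \emph{not} one-dimensional: in type~{\sf A} the decomposition $\Lambda^2\gt{sl}_n=\gt{sl}_n\oplus V(2\pi_1{+}\pi_{n-1})\oplus V(\pi_1{+}2\pi_{n-1})$ contributes several invariant lines once $k-3\ge 2$, and even inside $(\gt{sl}_n\otimes\cS^{k-3}(\gt g))^{\gt g}$ there are as many independent invariants as there are copies of $\gt{sl}_n$ in $\cS^{k-3}(\gt g)$, which is more than one for $k\ge 6$. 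So $G$-equivariance alone does not force the image to be a scalar multiple of $\tilde\Delta_{k-2}$; that is the content of the proposition, not a formality. Your telescoping/index computation would have to produce this directly, and nothing in the sketch indicates how the Cartan component of $\Lambda^2\gt{sl}_n$ is killed.

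The paper's proof supplies precisely this missing structural step. It first shows by a short highest-weight argument that no degree-$3$ factor of a monomial in $\Delta_k$ can carry weight $2\esi_1-\esi_n-\esi_{n-1}$ or $\esi_1+\esi_2-2\esi_n$ (the constraints $i\notin\{l,u\}$, $j\notin\{s,p\}$, etc.\ coming from the minor structure make this impossible), so ${\sf m}(\Delta_k)\in(\gt{sl}_n\otimes\cS^{k-3}(\gt g))^{\gt g}$. It then uses density of $G(\gt{sl}_n\oplus\gt h)$ in $\gt{sl}_n\oplus\gt g$ to reduce to computing the restriction to $\gt{sl}_n\oplus\gt h$: one picks a single monomial $\lH_1=E_{44}\cdots E_{kk}$, reads off its $\gt{sl}_n$-coefficient via an explicit ${\sf m}$-calculation on a $\Delta_3$-type piece of a smaller $\gt{gl}$, and invokes $W$-symmetry to identify the whole answer with $\tilde\Delta_{k-2}$ together with its scalar. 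Only after this is established for ${\sf m}={\sf m}_3$ can one iterate via ${\sf m}_{2r+1}={\sf m}_{2r-1}\circ{\sf m}$ --- an identity that (as Section~\ref{sec-m} notes) is valid only once one already knows ${\sf m}(\tilde\Delta_k)\in\cS^{k-2}(\gt g)$. Your direct approach or the Manin-matrix route might in principle bypass this, but as written neither addresses why the non-$\gt g$ part of $\Lambda^2\gt g$ does not appear.
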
 
\begin{proof}
Notice that the 
centre of $\gt g$  plays a very specific  r\^ole in {\sf m}, since 
$\ad(z)=0$. In particular, 
 ${\sf m}(\cS^3(\gt{gl}_n))={\sf m}(\cS^3(\gt{sl}_n))\subset \Lambda^2\gt{sl}_n$. 
Furthermore, 
$$
{\sf m}(\Delta_k)\in {\sf m}(\tilde\Delta_k)+ \Lambda^2\gt{sl}_n \otimes z\cS^{k-4}(\gt g), 
$$
where one can use the multiplication in either ${\rm End}(\gt{gl}_{n})$ or ${\rm End}(\gt{sl}_{n})$ for the definition of ${\sf m}$.
Therefore we can work either with 
$\gt{sl}_n$ or with $\gt{gl}_n$, whichever is more convenient. 

Suppose that $Y=E_{ij}E_{ls}E_{up}$ is a factor of a monomial  of $\Delta_k$. 
Then $$
i\not\in\{ l,u\},  \ j\not\in\{ s,p\}, \ l\ne u, \ \text{and } \ s\ne p.
$$
The $\gt h$-weight of $Y$ cannot be equal to 
either $2\esi_1-\esi_n-\esi_{n{-}1}$ or $\esi_1+\esi_2-2\esi_n$.   
These are the highest weights of the Cartan component of 
$\Lambda^2\gt{sl}_n$.  
Hence ${\sf m}(\Delta_k)\in (\gt{sl}_n{\otimes}\cS^{k{-}3}(\gt g))^{\gt g}$. 
The image in question 
a polynomial function on $(\gt{sl}_n{\oplus}\gt g)^*\cong \gt{sl}_n\oplus\gt g$ of degree $1$ in $\gt{sl}_n$ 
and degree $k{-}3$ in $\gt g$. 
Note that ${\sf m}(\Delta_3)$ is a $\gt{gl}_n$-invariant in  $\gt{sl}_n$ and is thereby zero. 
Suppose that $n\ge k>3$. 

Fortunately, $G(\gt{sl}_n{\oplus}\gt h)$ is a dense subset of $\gt{sl}_n{\oplus}\gt g$. 
We calculate the restriction $$\lf={\sf m}(\Delta_k)|_{\gt{sl}_n\oplus\gt h}$$  of ${\sf m}(\Delta_k)$ to $\gt{sl}_n\oplus\gt h$.
Write $\lf=\sum\limits_{\nu=1}^L \xi_\nu \otimes \lH_\nu$ with 
$\xi_\nu\in\gt{sl}_n$ and pairwise different monomials $\lH_\nu\in \cS^{k-3}(\gt h)$ in $\{E_{ii}\}$.
Since ${\sf m}(\Delta_k)$ is an element of $\gt h$-weight zero, $\xi_\nu\in\gt h$ for each $\nu$.
Thus one can say  that $\lf$ is an invariant of the Weyl group  $W(\gt g,\gt h)\cong{\tt S}_{n}$.
Without loss of generality assume that 
$\lH_1= y_4\ldots y_k$
with $y_s=E_{ss}$ for all $s\ge 4$. In order to understand $\lf$, it suffices to calculate 
$\xi_1$.  Let $F$ be the polynomial obtained from  $\Delta_3$ 
by 
setting $E_{ij}=0$ for all $(i,j)$ such that  $i$ or $j$ belongs to $\{4,\ldots,k\}$. 
Then $\xi_1=\frac{3!(k-3)!}{k!} {\sf m}(F)$. 

Take now $Y$ as above with $\{i,j,l,s,u,p\}=\{1,2,3\}$. Then 
\begin{itemize}
\item[$\diamond$] ${\sf m}(Y)(E_{14})=0$ if $i=j$ or $l=s$ or $u=p$, 
\item[$\diamond$] ${\sf m}(Y)(E_{14})=\frac{1}{6} E_{14}$ if $Y= E_{13} E_{32} E_{21}$, 
\item[$\diamond$] ${\sf m}(Y)(E_{14})=\frac{1}{6} E_{14}$ if $Y= E_{12} E_{23} E_{31}$. 
\end{itemize}
Besides, ${\sf m}(Y)(E_{vw})=0$ if $v,w\ge 4$. 
In the self-explanatory  notation, 
$\eta={\sf m}\big(\Delta_3^{(1,2,3)}\big)$ is an invariant of 
$(\gt{gl}_3\oplus\gt{gl}_{n-3})$ and $\eta$  acts on 
$\gt{gl}_3=\left< E_{vw} \mid 1\le v,w\le 3\right>_{\mK}$   as zero. 
Since $\Delta_3^{(1,2,3)}$ is a  linear combination of $Y=E_{ij}E_{ls}E_{up}$ with 
$\{i,j,l,s,u,p\}=\{1,2,3\}$,  the element 
 $\eta$ acts on $\gt g$ as $\frac{1}{3}(E_{11}+E_{22}+E_{33})$. 
This implies  that $\eta=\frac{1}{3}(\tilde E_{11}+\tilde E_{22}+\tilde E_{33})$. 
By the construction of $F$, we have now 
${\sf m}(F)=\binom{n-k+2}{2} \sum\limits_{l\not\in\{4,\ldots,k\}} \frac{1}{3} \tilde E_{ll}$ and hence 
$$
\xi_1\otimes\lH_1=\frac{3!(k-3)!}{k!}\frac{1}{3}\binom{n-k+2}{2}\big(\sum_{l\not\in\{4,\ldots,k\}} \tilde E_{ll}\big) \otimes E_{44} \ldots E_{kk}\,.
$$ 
From this one deduces that up to the scalar $\frac{k-2}{3}\frac{3!(k-3)!}{k!}\binom{n-k+2}{2}$, 
the restriction of ${\sf m}(\tilde\Delta_k)$ to $\gt{sl}_n\oplus\gt h$ 
coincides with the restriction $\tilde\Delta_{k-2}|_{\gt{sl}_n \oplus\gt h}$, where we regard 
$\tilde\Delta_{k-2}$ as an element of $\gt{sl}_n \otimes \cS^{k-3}(\gt g)$. 
In particular, ${\sf m}(\tilde\Delta_k)$ is a symmetric invariant. More explicitly, 
$$
{\sf m}(\tilde\Delta_k) = \frac{(k-2)}{3}\frac{3!(k-3)!}{k!} \binom{n-k+2}{2} \tilde\Delta_{k-2}  = \frac{2!(k-2)!}{k!} \binom{n-k+2}{2} \tilde\Delta_{k-2}. 
$$
Iterating  the map {\sf m}, we obtain the result. 
\end{proof}

\begin{rem}
Strictly speaking, ${\sf m}(\Delta_k)$ is not a symmetric invariant.  
\end{rem}

Now we can exhibit  formulas for Segal--Sugawar vectors of $t$-degree $k$ that are independent of $n$, i.e., these formulas are  valid for all 
$n\ge k$. First of all notice that in view of \eqref{z}, Formula~\eqref{S-sym-k} produces an element 
of $\gt z(\widehat{\gt{sl}_n})$ if we replace each $\Delta_{k-2r}$
with $\tilde\Delta_{k-2r}$. (This statement can be deduced from \eqref{S-det} as well.) 
Making use of Proposition~\ref{works}, one obtains that for $H=\tilde \Delta_k$, 
\begin{equation} \label{form-m}
\tilde S_{k-1}=\varpi(H[-1])+\sum\limits_{1\le r<(k-1)/2} \binom{k}{2r} \varpi(\tau^{2r} {\sf m}_{2r+1}(H)[-1]){\cdot}1   
\end{equation}
is a Segal--Sugawara vector. 

\section{Commutators and Poisson brackets} 
In this section, 
we prove that Formula~\eqref{form-m} is universal, i.e., that it is valid in all types,  providing 
${\sf m}_{2r+1}(H)$ is a symmetric invariant for each $r\ge 1$. 

For 
$F\in\cS^m(\gt g)$,
set $\mathbb X_{F[-1]}:=[{\mathcal H}[-1], \varpi(F)[-1]]$ .  Note that 
$$\omega({\mathcal H}[-1] \varpi(F)[-1])=(-1)^{m+2}\varpi(F)[-1]{\mathcal H}[-1].
$$
Hence 
$\omega$ multiplies $\mathbb X_{F[-1]}$ 
with $(-1)^{m{+}1}$.  This implies that the symbol of $\mathbb X_{F[-1]}$ has degree $m+1-2d$ with $d\ge 0$. 
Let ${\mathcal H}[-1]$ stand also for $\sum_{i} x_i[-1]x_i[-1]\in \cS^2(\gt g[-1])$. 
%
The fact that $\{{\mathcal H}[-1], F[-1]\}=0$ for $F\in\cS^m(\gt g)^{\gt g}$ is, of course, known. There is no harm however, in 
recalling a proof. 

\begin{lm} \label{0-Poisson}
Take two arbitrary $\gt g$-invariants $F,F'$ in $\cS(\gt g)$. 
Then $\{F[-1],F'[-1]\}=0$. 
\end{lm}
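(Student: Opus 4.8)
The plan is to work in the Poisson algebra $\cS(\wg^-)$ and reduce the statement to the fact that $F$ and $F'$ are $\gt g$-invariants. First I would recall that $\cS(\wg^-)\cong \cS(t^{-1}\gt g[t^{-1}])$ carries the Poisson bracket induced by the Lie bracket on $\wg^-$, so that for $x,y\in\gt g$ and $a,b<0$ one has $\{x[a],y[b]\}=[x,y][a+b]$, which vanishes whenever $a+b<-1$ — in particular it vanishes when $a=b=-1$, since then $a+b=-2$ is \emph{not} of the form $c<0$ that occurs... wait, $-2$ does occur. Let me restate: the point is that $\{x[-1],y[-1]\}=[x,y][-2]$, which is a \emph{linear} element of $\gt g[-2]$, and the whole difficulty is that this need not be zero. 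So the real mechanism must be the invariance of $F$ and $F'$, not a degree count alone.

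The key step is the following observation. Recall that $\cH[-1]=\sum_i x_i[-1]x_i[-1]$, and that ${\mathcal H}[-1]$, hence also the operator $\{\,\cdot\,,{\mathcal H}[-1]\}$... but actually the cleanest route is: for $F=\xi_1\cdots\xi_m\in\cS^m(\gt g)^{\gt g}$ with $\xi_j\in\gt g$, the polarisation $F[-1]$ equals $\xi_1[-1]\cdots\xi_m[-1]$, and by the Leibniz rule
\begin{equation*}
\{F[-1],F'[-1]\}=\sum_{j,k}\Big(\prod_{i\ne j}\xi_i[-1]\Big)\Big(\prod_{l\ne k}\xi'_l[-1]\Big)\,[\xi_j,\xi'_k][-2].
\end{equation*}
Now I would use the identification ${\sf Ev}_1\colon\cS(\gt g[-1])\to\cS(\gt g)$ and ${\sf Ev}_1'$ onto $\cS(\gt g[-2])$ to view the right-hand side inside $\cS(\gt g)\otimes\gt g[-2]$; under this identification the coefficient of a fixed basis vector $x_a[-2]$ is, up to sign, the value at $(\mu,\mu)$ of the bidifferential expression $\sum_{j,k}(\partial_{\xi_j}F)(\partial_{\xi'_k}F')\,( [\xi_j,\xi'_k], x_a)$, i.e. essentially $\big(\,[d_\mu F,\, d_\mu F'\,],\,x_a\big)$ where $d_\mu F\in\gt g\cong\gt g^*$ is the differential of $F$ at $\mu\in\gt g$. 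The $\gt g$-invariance of $F$ says exactly that $[\mu,d_\mu F]=0$ for all $\mu$, i.e. $d_\mu F\in\gt g_\mu$; likewise $d_\mu F'\in\gt g_\mu$. Since $\gt g_\mu$ is a subalgebra, $[d_\mu F,d_\mu F']\in\gt g_\mu$ as well, and then $([d_\mu F,d_\mu F'],x_a)$ paired against anything... hmm, this shows the bracket lies in $\gt g_\mu$ but not that it is zero.

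Let me fix the argument: the correct identity is that for $F$ invariant, $[d_\mu F,\mu]=0$, and one wants $\sum_{j,k}(\partial_{\xi_j}F)(\partial_{\xi'_k}F')[\xi_j,\xi'_k]=0$ as an element of $\gt g\cong\gt g^*$-valued function, i.e. $[d_\mu F, d_\mu F']=0$. This holds because $d_\mu F\in\gt z(\gt g_\mu)$: indeed for a reductive $\gt g$, the differential of an invariant at $\mu$ lies in the \emph{centre} of $\gt g_\mu$ (this is standard — it follows from $d_\mu F$ being $\gt g_\mu$-invariant under the coadjoint action, i.e. $[\gt g_\mu,d_\mu F]=0$). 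Therefore $[d_\mu F,d_\mu F']=0$ on the dense set of regular $\mu$ where $\gt g_\mu$ is a fixed Cartan, and hence identically. So the plan in order is: (i) write $\{F[-1],F'[-1]\}$ via Leibniz as above and identify its coefficients with $([d_\mu F,d_\mu F'],x_a)$ as polynomial functions on $\gt g$; (ii) recall that for a reductive Lie algebra the differential of a $\gt g$-invariant at any point lies in the centre of the stabiliser of that point; (iii) conclude $[d_\mu F,d_\mu F']=0$, first on the regular locus and then everywhere by density, giving $\{F[-1],F'[-1]\}=0$. The main obstacle is step (i) — carefully matching the Poisson bracket computation in $\cS(\wg^-)$ with the differential-of-invariants picture on $\gt g$, keeping track of the shift $[-1]+[-1]=[-2]$ and the linear identifications ${\sf Ev}_1$; once the bookkeeping is set up, (ii)–(iii) are the classical Kostant-type facts about invariants of reductive Lie algebras and require no new work.
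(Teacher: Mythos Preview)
Your proposal is correct and follows essentially the same idea as the paper: both arguments reduce to showing $[d_\mu F,d_\mu F']=0$, using that the differential of a $\gt g$-invariant at $\mu$ lies in the centre of $\gt g_\mu$. The paper is more streamlined because it invokes the pointwise formula $\{f_1,f_2\}(\gamma)=[d_\gamma f_1,d_\gamma f_2](\gamma)$ directly rather than expanding via Leibniz and reassembling; also, your density argument on the regular locus is unnecessary, since once you know $d_\mu F\in\gt z(\gt g_\mu)$ and $d_\mu F'\in\gt g_\mu$ for \emph{every} $\mu$, the commutator vanishes identically without passing to a dense set.
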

\begin{proof}
The Poisson bracket of two polynomial  functions can be calculated by
\begin{equation} \label{eq-poisson}
\{f_1,f_2\}(\gamma)=[d_\gamma f_1,d_\gamma f_2](\gamma) \ \ \text{for} \ \ \gamma\in(\wg^-)^*. 
\end{equation}
In case of $F[-1]$ and $F'[-1]$, the differentials $d_\gamma F[-1], d_\gamma F'[-1]$ at $\gamma$ depend only on 
the $(-1)$-part of $\gamma$. More explicitly, if $\gamma(x[-1])=\tilde\gamma(x)$ with 
$\tilde\gamma\in\gt g^*$, then $d_\gamma F[-1]=(d_{\tilde\gamma} F)[-1]$ and the same 
identity hods for 
$F'$. We have $d_{\tilde\gamma} F,d_{\tilde\gamma}F'\in (\gt g_\gamma)^{\gt g_\gamma}$, 
since $F$ and $F'$ are $\gt g$-invariants. Hence
$[d_{\tilde\gamma} F,d_{\tilde\gamma}F']=0$ 
and 
also  $[d_{\gamma}F[-1],d_{\gamma}F'[-1]]=0$ for any $\gamma\in(\wg^-)^*$. 
\end{proof}

If $[\gt g,\gt g]$ is not simple, then the following assumption on the choice of 
the scalar product on $\gt g$ is made in order to simplify the calculations. 
\begin{itemize}
\item[($\blackdiamond$)] There is a constant $C\in\mK$ such that 
$\sum\limits_{i=1}^{\dim\gt g} \ad(x_i)^2(\xi)=C\xi$ for each $\xi\in[\gt g,\gt g]$. 
\end{itemize}
The constant $C$
depends on the scalar product in question. 

From now on, assume that $\gt g$ is semisimple. 
As the next step we examine the difference 
$$
X_{F[-1]}:=  \mathbb X_{F[-1]}-\varpi(\{{\mathcal H}[-1], F[-1]\}) 
$$
and more general expressions, where the commutator is taken with 
${\mathcal H}[\bar b]$. 
Our goal is to present $\mathbb X_{F[-1]}$ in the form~\eqref{sim-alg}.
For any ${\mathcal F}_1,{\mathcal F}_2\in\U(\wg^-)$, the symbol 
$\gr\!([{\mathcal F}_1,{\mathcal F}_2])$ is equal to the Poisson bracket  
$\{\gr\!({\mathcal F}_1),\gr\!({\mathcal F}_2)\}$ if this Poisson bracket is non-zero. 


\subsection{Commutators, the first approximation} 
Fix $m\ge 1$. Then set $\widecheck{j}=m-j$ for $1\le j<m$. 

\begin{lm} \label{lm-com-h}
For $Y=\hat y_1\ldots \hat y_m\in\cS(\wg^-)$ with $\hat y_j=y_j[a_j]$, set 
$$X_Y=X_{Y,\bar b}=[{\mathcal H}[b_1,b_2], \varpi(Y)]- \varpi(\{{\mathcal H}[b_1,b_2], Y\}).$$ 
Then 
\begin{align*}
& X_Y= \sum\limits_{\sigma\in {\tt S}_{m-1}, i,u,l;j<p  } 
({\sf m}(y_{\sigma(p)} \otimes  y_{l} \otimes y_{\sigma(j)}) (x_i), x_{u}) \times \\ 
&  (c_{2,3}(j,p)  
\hat y_{\sigma(1)}\ldots \hat y_{\sigma(j-1)} x_i[b_1{+}a_{\sigma(j)}] \hat y_{\sigma(j+1)}\ldots \hat y_{\sigma(p-1)} x_{u}[b_2{+}a_{l}+a_{\sigma(p)}] \hat y_{\sigma(p+1)} \ldots \hat y_{\sigma(m{-}1)} +  \\
&  c_{2,3}(j,p)  
\hat y_{\sigma(1)}\ldots \hat y_{\sigma(j-1)} x_i[b_2{+}a_{\sigma(j)}] \hat y_{\sigma(j+1)}\ldots \hat y_{\sigma(p-1)} x_{u}[b_1{+}a_{l}+a_{\sigma(p)}] \hat y_{\sigma(p+1)} \ldots \hat y_{\sigma(m{-}1)} +  \\
& 
 -  c_{3,2}(j,p) \hat y_{\sigma(1)}\ldots \hat y_{\sigma(j-1)} x_i[b_1{+}a_{\sigma(j)}{+}a_{l}] \hat y_{\sigma(j+1)}\ldots \hat y_{\sigma(p-1)} x_{u}[b_2{+}a_{\sigma(p)}] \hat y_{\sigma(p+1)} \ldots \hat y_{\sigma(m{-}1)}  +  \\
  & 
 - c_{3,2}(j,p) \hat y_{\sigma(1)}\ldots \hat y_{\sigma(j-1)} x_i[b_2{+}a_{\sigma(j)}{+}a_{l}] \hat y_{\sigma(j+1)}\ldots \hat y_{\sigma(p-1)} x_{u}[b_1{+}a_{\sigma(p)}] \hat y_{\sigma(p+1)} \ldots \hat y_{\sigma(m{-}1)}  ), 
\end{align*}
where $1\le l\le m$, $1\le j<p\le m-1$, and 
$\sigma$ is a map from $\{1,\ldots,m-1\}$ 
to $\{1,\ldots,m\}\setminus\{l\}$. 
%
%
The constants $c_{2,3}(j,p)$, $c_{3,2}(j,p)\in\mathbb Q$ do not depend on $Y$,
they depend only on $m$. Besides, $$c_{2,3}(j,p)=c_{3,2}(\widecheck{p},\widecheck{j}),$$ 
$c_{2,3}(j,p)\le 0$ for all $j<p$, and $c_{2,3}(j,p)< 0$ if in addition  \,$\widecheck{j}\ge p$. 
\end{lm}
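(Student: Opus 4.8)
The plan is to compute $[{\mathcal H}[b_1,b_2], \varpi(Y)]$ directly by the Leibniz rule for commutators in $\U(\wg^-)$, and to isolate exactly those terms that do not already appear in $\varpi(\{{\mathcal H}[b_1,b_2], Y\})$. Recall ${\mathcal H}[b_1,b_2]=\sum_i x_i[b_1]x_i[b_2]$, so for a monomial $\varpi(Y)=\frac{1}{m!}\sum_{\pi\in{\tt S}_m}\hat y_{\pi(1)}\ldots\hat y_{\pi(m)}$ one expands
$$
[x_i[b_1]x_i[b_2],\ \hat y_{\pi(1)}\ldots\hat y_{\pi(m)}]
=\sum_{s}\hat y_{\pi(1)}\ldots[x_i[b_1]x_i[b_2],\hat y_{\pi(s)}]\ldots\hat y_{\pi(m)}.
$$
Now $[x_i[b_1]x_i[b_2],y[a]]=x_i[b_1]\,[x_i,y][b_2{+}a]+[x_i,y][b_1{+}a]\,x_i[b_2]$. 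The ``naive'' part, where these two new bracketed factors would be symmetrised together with the rest, reproduces (after summing over $i$, which turns $\sum_i x_i[b_1](\ad x_i\cdot)[\,\cdot\,]$ into the Poisson-bracket expression via the invariance of $(\,,\,)$) precisely $\varpi(\{{\mathcal H}[b_1,b_2],Y\})$; this is the content of Lemma~\ref{0-Poisson}-style bookkeeping together with the definition of $\varpi$ on sums of $m$ factors. The discrepancy $X_Y$ therefore collects the \emph{ordering corrections}: whenever a factor $x_i[b_1]$ or $x_i[b_2]$ has to be moved past an intermediate factor $\hat y_{\sigma(\cdot)}$ to bring the two new Lie-algebra elements into the correct relative position for $\varpi$, one more commutator $[\,\cdot,\,\hat y_{\sigma(\cdot)}]=[\,\cdot\,,y_{\sigma(\cdot)}][\text{shifted degree}]$ is produced. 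Composing two consecutive $\ad$'s this way yields, after symmetrising over the permutations $\sigma$ of the remaining indices and pairing with $x_u$, exactly a term of the shape $({\sf m}_2$-type composition of three $\ad$'s$)(x_i),x_u)$, i.e. $({\sf m}(y_{\sigma(p)}\otimes y_l\otimes y_{\sigma(j)})(x_i),x_u)$ — here $l$ is the index that got hit by the ${\mathcal H}$-commutator and $j<p$ record the two positions across which transport happened.

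First I would fix a single monomial $Y=\hat y_1\ldots\hat y_m$ and carefully write $[{\mathcal H}[b_1,b_2],\varpi(Y)]$ as a triple sum over (i) the index $l$ of the ``struck'' factor $\hat y_l$, (ii) the two surviving slots $j<p$ into which the split pieces $x_i[\,\cdot\,]$, $x_u[\,\cdot\,]$ land, and (iii) the permutation $\sigma$ of $\{1,\dots,m\}\setminus\{l\}$ filling the other slots. The four lines in the asserted formula correspond to the four sign/ordering cases: $b_1$ attached to slot $j$ versus $b_2$ attached to slot $j$, crossed with whether the \emph{second} $\ad y_l$ acts \emph{before} or \emph{after} the transport across $\hat y_{\sigma(j)}$ — equivalently, whether $a_l$ is absorbed into the argument of $x_u$ (the $c_{2,3}$ terms, where the shift is $b_2{+}a_l{+}a_{\sigma(p)}$) or into the argument of $x_i$ (the $c_{3,2}$ terms, with shift $b_1{+}a_{\sigma(j)}{+}a_l$). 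Extending to general $F\in\cS^m(\gt g[-1])$, hence to general $Y$, is by linearity as in the proof of Lemma~\ref{side-1}. The coefficients $c_{2,3}(j,p),c_{3,2}(j,p)$ arise purely from counting how many of the $m!$ permutations $\pi$ in $\varpi(Y)$ place the configuration so that slot $j$ lies strictly left of slot $p$ with the struck factor in between in the relevant order; they are manifestly rational, independent of the $y$'s and the $a$'s, and depend only on $m$.

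The symmetry $c_{2,3}(j,p)=c_{3,2}(\widecheck p,\widecheck j)$ I would get by applying the antipode $\omega$ from Section~1.3 (or, equivalently, reading the monomial $\hat y_{\pi(1)}\ldots\hat y_{\pi(m)}$ backwards): $\omega$ sends position $s$ to position $m-s=\widecheck s$, swaps the roles of the two ${\mathcal H}$-factors, and turns a $c_{2,3}$ contribution into a $c_{3,2}$ one with reversed slot labels, while $X_Y$ itself transforms with a known sign, forcing the claimed identity between the two coefficient families. The sign claims — $c_{2,3}(j,p)\le 0$ always, and $c_{2,3}(j,p)<0$ once $\widecheck j\ge p$ — are the genuinely delicate part: they require an explicit, positive combinatorial formula for the \emph{number} of contributing permutations, and a nonvanishing argument showing this count is strictly positive in the stated range. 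I expect this positivity/strict-positivity count to be the main obstacle; everything upstream is a disciplined but routine expansion of commutators, whereas pinning down the sign of $c_{2,3}$ forces one to actually evaluate (or at least bound below) the combinatorial multiplicity, and it is precisely this sign information that will be used later to control the degree drop $m+1-2d$ and to run the Rybnikov-centraliser argument.
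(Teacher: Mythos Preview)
Your overall architecture is right --- expand $[{\mathcal H}[b_1,b_2],\varpi(Y)]$ by the Leibniz rule, subtract $\varpi(P_Y)$, and interpret the remainder as ordering corrections --- but you are missing one full layer of the computation, and without it the argument does not close.

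The first round of corrections, where the free factor $x_i[b_\upsilon]$ is commuted past intermediate $\hat y$'s, does \emph{not} directly produce the three-fold $\ad$ expression ${\sf m}(y_{\sigma(p)}\otimes y_l\otimes y_{\sigma(j)})$. It produces terms of filtration degree $m$ of two kinds: (a) monomials carrying two \emph{separate} single commutators $[\hat x_i^{(\upsilon)},\hat y_j]\ldots[\hat x_i^{(\nu)},\hat y_l]$, with an explicit coefficient $\frac{j+l-m-1}{m+1}$, and (b) monomials carrying a double commutator $[\hat x_i^{(\upsilon)},[\hat x_i^{(\nu)},\hat y_j]]$. You have not accounted for either. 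The type-(b) terms must be killed, and this uses the Casimir-type property $(\blackdiamond)$: $\sum_i\ad(x_i)^2$ is a scalar, so $\sum_i[[\hat x_i^{(\upsilon)},[\hat x_i^{(\nu)},\hat y_j]],\hat y_l]=C[y_j,y_l][a_j{+}a_l{+}b]$ cancels against its $(j\,l)$-transposed partner. The type-(a) terms do not vanish individually; instead the coefficient $\frac{j+l-m-1}{m+1}$ is antisymmetric under $(j,l)\mapsto(l',j')$ with $l'=m{-}l{+}1$, so one performs a \emph{second} round of commutation, moving $[\hat x_i^{(\nu)},\hat y_l]$ toward the ``mirror'' position and spawning further commutators $[[\hat x_i^{(\nu)},\hat y_l],\hat y_p]$. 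Only now does one have the structure $[x_i,y_{\sigma(j)}]\ldots[y_{\sigma(p)},[y_l,x_i]]$ which, via the manipulation in the paper's \eqref{com}, becomes $({\sf m}(y_{\sigma(p)}\otimes y_l\otimes y_{\sigma(j)})(x_i),x_u)$. Your sentence ``composing two consecutive $\ad$'s this way yields\ldots a composition of three $\ad$'s after pairing with $x_u$'' conflates these two stages; two single commutators plus the pairing trick give at most two composed $\ad$'s, not three.

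This second stage is also where the sign information lives: $c_{2,3}(j,p)$ is the accumulated coefficient $\sum_l\frac{j+l-m-1}{(m+1)!}$ over those $l$ from which the moving commutator can reach position $p$, and since each summand with $j+l<m+1$ is negative one gets $c_{2,3}(j,p)\le 0$, with strict inequality exactly when the range $j<l\le\widecheck j$ is nonempty, i.e.\ when $p\le\widecheck j$. So the ``delicate'' positivity is in fact an immediate consequence of the explicit first-stage coefficient, once the two-stage structure is in place. Your plan to extract the symmetry $c_{2,3}(j,p)=c_{3,2}(\widecheck p,\widecheck j)$ from $\omega$ is correct and matches the paper.
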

\begin{proof}
Set  $\hat Y=\hat y_1\ldots\hat y_m\in\U(t^{-1}\gt g[t^{-1}])$. 
Let $\hat x_i^{(1)}$ stand for $x_i[b_1]$ and  $\hat x_i^{(2)}$ for $x_i[b_2]$. 
Then 
$$
[{\mathcal H}[b_1,b_2],\hat Y]\!\!=\!\! \sum\limits_{j=1,i=1}^{j=m,i=\dim\gt g} (\hat y_1\ldots \hat y_{j{-}1} \hat x_i^{(1)} [\hat x_i^{(2)},\hat y_j] \hat y_{j{+}1}\ldots \hat y_m+ \hat y_1\ldots \hat y_{j{-}1}  [\hat x_i^{(1)},\hat y_j] \hat x_i^{(2)} \hat y_{j{+}1}\ldots\hat y_m).
$$
Furthermore, $[{\mathcal H}[b_1,b_2],\varpi(Y)]= \frac{1}{m!} \sum\limits_{\sigma\in{\tt S}_m} 
 [{\mathcal H}[b_1,b_2],\sigma(\hat Y)]$. 
 One can say that here $\sigma$ defines a sequence of factors $\hat y_j$. 
The symmetrisation 
of $P_Y=\{{\mathcal H}[b_1,b_2], Y\}$ resembles this sum, but  with a rather significant difference: 
the factor 
$\hat x_i^{(\upsilon)}$, which is not involved in 
$[\hat x_i^{(\nu)},\hat y_{\sigma(j)}]$, 
does not have to stay next to 
$[\hat x_i^{(\nu)},\hat y_{\sigma(j)}]$. Therefore the idea is to modify each term of $\varpi(P_Y)$ in such a way that 
$\hat x_i^{(\upsilon)}$ gets back to its place as in $[{\mathcal H}[b_1,b_2],\varpi(Y)]$. 
In this process, other commutators $\pm[\hat x_i^{(\upsilon)},\hat y_l]$ will appear. 
It is convenient to arrange the summands of $\varpi(P_{Y})$ into groups according to the sequence of factors 
$\hat y_j$, including also the one appearing in $[\hat x_i^{(\nu)},\,\,]$. 
Then 
$X_Y=\frac{1}{m!} \sum_{\sigma\in{\tt S}_m} X_{\sigma(\hat Y)}$, where, for  instance,  
$[{\mathcal H}[b_1,b_2],\hat Y]$ is a summand of $X_{\hat Y}$.

At the final step in half of the cases, one has to commute $\hat x_i^{(\upsilon)}$ with $[\hat x_i^{(\nu)},\hat y_j]$. %
For the expression $(m+1)X_{\hat Y}$ and  a place $j$, this leads to the factor  
$$
j[\hat x_i^{(2)},[\hat x_i^{(1)},\hat y_j]] + (m-j+1) [[\hat x_i^{(2)},\hat y_j], \hat x_i^{(1)}]\, .
$$
Observe that 
$$
[\hat x_i^{(2)},[\hat x_i^{(1)},\hat y_j]]+[[\hat x_i^{(2)},\hat y_j],\hat x_i^{(1)}]=[[\hat x_i^{(2)},\hat x_i^{(1)}],\hat y_j]=0. 
$$
Assume that $X_Y$ is presented in the form \eqref{sim-alg}.  
Then in the part of ``degree" $m+1$,  the expressions containing such double commutators  annihilate each other if  we add over pairs of places $(j,m-j+1)$. 
In $X_Y$, the expressions having 
$[[\hat x_i^{(\upsilon)},[\hat x_i^{(\nu)},\hat y_j]],\hat y_l]$ 
as a factor remain for the moment. 
In view of ($\blackdiamond$), 
$$
\sum\limits_i \ad(\hat x_i^{(1)})\ad(\hat x_i^{(2)})(y_j[a_j])=\sum\limits_i \ad(\hat x_i^{(2)})\ad(\hat x_i^{(1)})(y_j[a_j])=Cy_j[a_j+b],
$$
where $b=b_1+b_2$. 
In $X_{\hat Y}$, this results in 
\begin{equation} \label{com-C}
\sum\limits_{i} [[\hat x_i^{(\upsilon)},[\hat x_i^{(\nu)},\hat y_j]],\hat y_l] = 
[Cy_j[a_j+b],\hat y_l]=C[y_j,y_l][a_j+a_l+b]\,. 
\end{equation}
In $X_{{\sigma}(\hat Y)}$ with $\sigma=(j\,l)$, one finds a similar factor 
with a different sign, namely,  $C[y_l,y_j][a_l{+}a_j{+}b]$.
This proves that the expressions  containing   $[\hat x_i^{(\upsilon)},[\hat x_i^{(\nu)},\hat y_j]]$ 
as a factor 
have no contribution to $X_Y$.  

In $(m+1)X_{\hat Y}$,   a term of the form 
$$
X(\id,i,\upsilon,\nu;j,l)=\hat y_1\ldots \hat y_{j{-}1} [\hat x_i^{(\upsilon)},\hat y_j] \hat y_{j{+}1} \ldots \hat y_{l{-}1} [\hat x_i^{(\nu)},\hat y_l] \hat y_{l{+}1} \ldots \hat y_m
$$ 
appears $j$ times with the coefficient $1$ if we have taken the commutator $[\hat x_i^{(\nu)},\hat y_l]$  and 
are moving $\hat x_i^{(\upsilon)}$ to the right, it also appears $(m-l+1)$ times with the coefficient $(-1)$ if the commutator was taken 
with $\hat y_j$ and $\hat x_i^{(\nu)}$ is moving to the left. 
Now we are able to state that 
$$
X_{\sigma(\hat Y)}= \frac{1}{m+1} \sum_{i,\upsilon,\nu;j<l} (j+l-m-1) X(\sigma,i,\upsilon,\nu;j,l). 
$$
Set $j'=m-j+1$. Then $j+j'=m+1$. Assume that $l\ne j'$ and $l>j$. 
For any $\sigma\in{\tt S}_m$, 
the symbol of  $X(\sigma,i,\upsilon,\nu;j,l)$ is the same as the symbol of $X(\sigma,i,\upsilon,\nu;l',j')$ 
and these two expressions appear in $X_{\sigma(\hat Y)}$ with opposite coefficients.
In order to annihilate them in a uniform way, 
we move the commutator that is closer to the middle until the expression obtains a sort of a central symmetry,  
see Example~\ref{m6} below. 
In case $\sigma=\id$, the arising expressions look similar to 
$$
\hat y_1\ldots \hat y_{j{-}1} [\hat x_i^{(\upsilon)},\hat y_j] \hat y_{j{+}1}  \ldots   \hat y_{l{-}1} \hat y_{l{+}1} \ldots  \hat y_{p{-}1} [[\hat x_i^{(\nu)},\hat y_l],\hat y_p] \hat y_{p{+}1} \ldots \hat y_m.
$$
First we deal with these expressions  ``qualitative" and after that describe the coefficients. 

Observe that for $y\in\gt g$ and $a\in\Z_{<0}$, we have  
$y[a]=\sum_{i} (x_i,y) x_i[a]$.  
Assume for simplicity that  $\{j,p,l\}=\{1,2,3\}$, 
disregard for the moment the other factors, and ignore the $t$-degrees of the elements. 
Consider the sum 
\begin{align}
& \sum\limits_i [x_i,y_1] [y_3,[y_2,x_i]] = \sum\limits_{i,j,u} ([x_i,y_1],x_j)x_j ([y_3,[y_2,x_i]],x_u)x_u=   \label{com}
 \\
& \quad = \sum\limits_{i,j,u} (x_i,[ y_1,x_j] )x_j (x_i, \ad(y_2)\ad(y_3)(x_u))x_u = \nonumber  \\
& \quad \quad = \sum\limits_{i,j,u}  (( \ad(y_2)\ad(y_3)(x_u), x_i) x_i, [y_1,x_j]) x_j x_u =   \nonumber \\
& \quad \quad  \quad = \sum\limits_{j,u} (\ad(y_2)\ad(y_3)(x_u),[y_1,x_j])x_jx_u=\sum\limits_{j,u}
(\ad(y_3)\ad(y_2)\ad(y_1)(x_j), x_u)x_j x_u.  \nonumber
\end{align}
Note that $\ad(y_3)\ad(y_2)\ad(y_1)(x_j)={\sf m}(y_3\otimes y_2\otimes y_1)(x_j)$. 
If we recall the $t$-degrees, then the product $x_j x_u$ has to be replaced with 
$x_j[b_\upsilon{+}a_1] x_u[b_\nu{+}a_2{+}a_3]$ in \eqref{com}.  The other factors $\hat y_w$ do no interfere with the 
transformations in \eqref{com}. 

In the process of changing the sequence of factors of  
$X(\id,i,\upsilon,\nu;j,l)$ with $j<l<j'$, 
the term 
$...[\hat x_i^{(\upsilon)},\hat y_j]...[[\hat x_i^{(\nu)},\hat y_l],\hat y_p]...$
appears with the 
negative coefficient $\frac{1}{(m+1)!}(j{+}l{-}m{-}1)$ as long as $l<p\le j'$. 
This shows that indeed the constants $c_{2,3}(j,p)$ 
do not depend on $Y$, they depend only on $m$. Moreover, 
$c_{2,3}(j,p)=0$ if $p>\widecheck{j}$ and  $c_{2,3}(j,p)<0$ if $p\le \widecheck{j}$. 

The symmetry 
$c_{2,3}(j,p)=c_{3,2}(\widecheck{p},\widecheck{j})$ is justified by  the fact that $\omega(X_Y)=(-1)^{m-1}X_Y$. 
A more direct way to see this, is to notice that if $[\hat x_i^{(\nu)},\hat y_l]$ jumps from a place  
$v$ to $j'$, then 
$j<v<j'$ and there is a term with the apposite coefficient, where  $[\hat x_i^{(\nu)},\hat y_l]$ jumps from 
$v'$ to $j$. 
The first type of moves produces 
$$
({\rm coeff.}) ({\sf m}(y_{\sigma(p)}\otimes y_l\otimes y_{\sigma(j)})(x_i),x_u) \ldots x_i[b_{\upsilon}{+}a_{\sigma(j)}] \ldots x_u[b_\nu{+}a_l{+}a_{\sigma(p)}]\ldots 
$$ 
and the second
$$
  ({\rm the\ same\ coeff.}) (x_u,{\sf m}(y_{\sigma(p')}\otimes y_l\otimes y_{\sigma(j')})(x_i)) \ldots x_u[b_\nu{+}a_l{+}a_{\sigma(p')}] \ldots x_i[b_{\upsilon}{+}a_{\sigma(j')}]\ldots \ .
$$
We have $(x_i,{\sf m}(y_{\sigma(j)}\otimes y_l\otimes y_{\sigma(p)})(x_u))= - ({\sf m}(y_{\sigma(p)}\otimes y_l\otimes y_{\sigma(j)})(x_i),x_u)$ and the scalar product $(\,\,,\,)$ is symmetric.  
These facts  confirm the symmetry of the constants 
and justifies the  minus signs in front of $c_{3,2}(j,p)$ in the answer. 
\end{proof}

\begin{ex}\label{m6}
Consider the case $m=6$. One obtains that 
\begin{align*}
&  X_Y= \frac{1}{7!}\!\sum\limits_{\sigma\in {\tt S}_6,i}\!\big( 4 X(\sigma,i,1,2;5,6) - 4 X(\sigma,i,1,2;1,2) + 
3 X(\sigma,i,1,2;4,6) - 3 X(\sigma,i,1,2;1,3) +\\
&  2 X(\sigma,i,1,2;3,6) - 2 X(\sigma,i,1,2;1,4) + 2 X(\sigma,i,1,2;4,5) - 2 X(\sigma,i,1,2;2,3)+ \\
&  X(\sigma,i,1,2;2,6) -  X(\sigma,i,1,2;1,5) +  X(\sigma,i,1,2;3,5) -  X(\sigma,i,1,2;2,4)\big) + \\ 
&  (\ \text{the similar expression for $(\upsilon,\nu)=(2,1)$}\ ).
\end{align*}
Take $\sigma=\id$. In the term $X(\sigma,i,1,2;5,6)$, the commutator 
$[\hat x_i^{(1)},\hat y_5]$ jumps to the first place producing  
commutators wit $\hat y_4, \hat y_3, \hat y_2,\hat y_1$. In the same manner,  
$[\hat x_i^{(2)},\hat y_2]$ jumps to the last place in  $X(\sigma,i,1,2;1,2)$. 
 In the term $X(\sigma,i,1,2;4,5)$, the commutator 
$[\hat x_i^{(1)},\hat y_4]$ jumps to the second place producing  
commutators with $\hat y_3$ and $\hat y_2$.
The non-zero 
constants $c^-(j,p):=-c_{2,3}(j,p)$ are 
$$
c^-(1,2)=\frac{4}{7!},\, c^-(1,3)=\frac{7}{7!},\,  c^-(1,4)=\frac{9}{7!},\, c^-(1,5)=\frac{10}{7!}, 
\ c^-(2,3)=\frac{2}{7!},\, c^-(2,4)=\frac{3}{7!}. 
$$
\end{ex}

Instead of the usual symmetrisation map, one can consider a {\it weighted} ``symmetrisation" or rather shuffle,  
where each permutation is added with a scalar coefficient assigned by a certain function $\Psi$. We will need only a very particular 
case of this construction. Let  $\Psi\!: {\tt S}_{k{+}2}\to \mathbb Q$ 
be a {\it weight function}, satisfying the following assumptions: 
\begin{itemize}
\item[({\tt A})] $\Psi(\sigma)$ depends only on $j=\sigma(k{+}1)$ and $p=\sigma(k{+}2)$, i.e., 
$\Psi(\sigma)=\Psi(j,p)$,
 \item[({\tt B})]  $\Psi(j,p)=\Psi(j',p')$ if $j'=k+3-j$.  
\end{itemize}
Then set 
$$
\varpi_{\rm wt} (y_1\ldots y_k\otimes y_{k+1} \otimes y_{k+2}) = 
\sum_{\sigma\in{\tt S}_{k+2}} \Psi(\sigma)\, y_{\sigma(1)} \otimes \ldots \otimes y_{\sigma(k+2)}
$$
for $y_j\in\gt q$. Let also $\varpi_{\rm wt}$ stand for the corresponding map 
from $\cS^k(\gt q)\otimes\gt q\otimes\gt q$ to $\U(\gt q)$. 
Condition ({\tt B}) guaranties that $\omega(\varpi_{\rm wt}(F))=(-1)^k \varpi_{\rm wt}(F)$ for each 
$F\in \cS^k(\gt q)\otimes\gt q\otimes\gt q$. 
In case  $\Psi(\sigma)=\frac{1}{(k+2)!}$, the map $\varpi_{\rm wt}$ coincides with $\varpi$. 
Keep in mind that each appearing $\varpi_{\rm wt}$ may have its own weight function.

Suppose that $Y\in\cS^m(\gt g)$, $\bar a\in\mathbb Z_{<0}^m$, and we want 
to merge them in order to obtain an element of $\cS^m(\wg^-)$. The only canonical way to do so is to 
replace $\bar a$ with the orbit ${\tt S}_m\bar a$, add over this orbit, and  divide by $|{\tt S}_m\bar a|$ as we have done in 
Section~\ref{s-fs}. The result is the $\bar a$-polarisation  $Y[\bar a]$ of $Y$. 
Set $$X_{Y[\bar a]}=[{\mathcal H[b_1,b_2], \varpi(Y[\bar a])]- \varpi(\{{\mathcal H}[b_1,b_2]}, Y[\bar a]\}).$$
For different numbers $u,v,l\in\{1,\ldots,m\}$, let $\bar a^{u,v,l}\in\mathbb Z_{<0}^{m-3}$ be the vector 
obtained from $\bar a$ by removing $a_u$, $a_v$, and $a_l$.  
 Let $\left<u,v,l\right>$ be 
a triple such that $l<v$ and $u\ne l,v$. 
Write ${\sf m}(Y)= \sum\limits_{w=1}^L \xi_w \otimes R_w$ with 
$\xi_w \in\Lambda^2\gt g$, $R_w\in\cS^{m-3}(\gt g)$.

\begin{prop} \label{prop-fs-c}
The element $X_{Y[\bar a]}$ is equal to
$$
\sum_{\left<u,v,l\right>,i,j,\upsilon,\nu,w} (\xi_w(x_i), x_j)
\varpi_{\rm wt}(R_w[\bar a^{u,v,l}] \otimes x_i[b_\upsilon+a_u] \otimes x_j[b_\nu+a_l+a_v]),
$$
where   $\Psi(j,p)= c_{2,3}(j,p)$ if $j<p$ and 
$\Psi(j,p)=c_{3,2}(p,j)$ if $j>p$ for the weight function $\Psi$. 
\end{prop}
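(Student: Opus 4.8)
The plan is to reduce Proposition~\ref{prop-fs-c} to Lemma~\ref{lm-com-h} by exploiting bilinearity and the fact that $\varpi(Y[\bar a])$ is, by definition, an average over the orbit ${\tt S}_m\bar a$ of monomials of the shape handled in the lemma. First I would fix a factorisation $Y=y_1\ldots y_m$ with $y_i\in\gt g$ (both sides are linear in $Y$, so this is harmless), write $\hat y_i=y_i[a_i]$, and observe that by Lemma~\ref{lm-sym-a} the element $\varpi(Y[\bar a])$ equals the symmetrisation of a single representative $\hat y_1\ldots\hat y_m$ up to the normalising factor $|{\tt S}_m\bar a|/m!$; more precisely, $X_{Y[\bar a]}$ is obtained by applying the construction $X_{Y,\bar b}$ of Lemma~\ref{lm-com-h} to the monomial $\hat y_1\ldots\hat y_m$ and then averaging over the sequences $\sigma(\bar a)$, $\sigma\in{\tt S}_m$. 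Thus the formula for $X_Y$ from Lemma~\ref{lm-com-h} already provides the raw expansion, and the only work left is to repackage it.

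Next I would perform the repackaging in two steps. \emph{Step 1: identify the $\Lambda^2\gt g$-valued part.} In Lemma~\ref{lm-com-h} the summand attached to a choice $\sigma,i,u,l,j<p$ carries the coefficient $({\sf m}(y_{\sigma(p)}\otimes y_l\otimes y_{\sigma(j)})(x_i),x_u)$. Summing over the sequence-reshufflings encoded by $\sigma$ and using the $G$-equivariant decomposition ${\sf m}(Y)=\sum_w \xi_w\otimes R_w$ with $\xi_w\in\Lambda^2\gt g$, $R_w\in\cS^{m-3}(\gt g)$, these coefficients collapse to $(\xi_w(x_i),x_j)$ times a monomial in the remaining factors $\hat y$; the remaining factors, being permuted freely by the part of $\sigma$ not involved in the triple of ``active'' indices, assemble precisely into the polarisation $R_w[\bar a^{u,v,l}]$, where $\langle u,v,l\rangle$ is the re-indexed version of the old triple of positions (with $l<v$ and $u$ the position of the singleton $\hat x^{(\upsilon)}_i$). \emph{Step 2: recognise the coefficients as a weight function.} The four families of terms in Lemma~\ref{lm-com-h}, weighted by $c_{2,3}(j,p)$ resp.\ $-c_{3,2}(j,p)$ and by the two choices of $(\upsilon,\nu)$, are exactly what is produced by a single $\varpi_{\rm wt}$ applied to $R_w[\bar a^{u,v,l}]\otimes x_i[b_\upsilon+a_u]\otimes x_j[b_\nu+a_l+a_v]$ once one sets $\Psi(j,p)=c_{2,3}(j,p)$ for $j<p$ and $\Psi(j,p)=c_{3,2}(p,j)$ for $j>p$. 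Here I would invoke the symmetry $c_{2,3}(j,p)=c_{3,2}(\widecheck p,\widecheck j)$ already established in Lemma~\ref{lm-com-h}: it is precisely condition~({\tt B}) on $\Psi$ (with $k=m-2$, since $k+3-j=\widecheck j+1$ in that indexing), so $\Psi$ is a legitimate weight function and $\omega(\varpi_{\rm wt}(\cdot))=(-1)^{m-2}\varpi_{\rm wt}(\cdot)$ is consistent with $\omega(X_{Y[\bar a]})=(-1)^{m-1}X_{Y[\bar a]}$; condition~({\tt A}) holds by construction of $c_{2,3}$.

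The main obstacle I anticipate is purely bookkeeping: matching the \emph{positions} in Lemma~\ref{lm-com-h}'s monomials (where $j<p$ index places in a length-$(m-1)$ word and $l$ is a separate label) with the \emph{unordered triple} $\langle u,v,l\rangle$ of Proposition~\ref{prop-fs-c}, and checking that after averaging over $\sigma\in{\tt S}_m$ no overcounting occurs — i.e.\ that each $\langle u,v,l\rangle$ is hit with total multiplicity absorbed correctly into the single $\varpi_{\rm wt}$. Concretely one must verify that the ``jump from place $v$ to place $j$'' versus ``jump from $v'$ to $j'$'' pairing described at the end of the proof of Lemma~\ref{lm-com-h} is exactly the two summands $j<p$ and $j>p$ of $\varpi_{\rm wt}$, so that the $\pm$ signs in front of $c_{3,2}$ are reproduced by $\Psi(j,p)=c_{3,2}(p,j)$ and the antisymmetry $({\sf m}(y_{\sigma(j)}\otimes y_l\otimes y_{\sigma(p)})(x_u),x_i)=-(\xi_w(x_i),x_j)$-type identity. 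I would organise this by first treating $m$ small (the case $m=6$ of Example~\ref{m6} is a good testing ground) to fix the combinatorial dictionary, then stating the general correspondence and checking that every term of $X_Y$ from Lemma~\ref{lm-com-h}, after the substitution $y[a]=\sum_i(x_i,y)x_i[a]$ used in \eqref{com}, appears exactly once on the right-hand side with the asserted coefficient. Everything else — linearity in $Y$, the passage from $X_Y$ to $X_{Y[\bar a]}$, and the $\omega$-eigenvalue sanity check — is routine.
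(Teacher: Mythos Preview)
Your proposal is correct and follows essentially the same route as the paper: reduce to a monomial $Y=y_1\ldots y_m$, apply Lemma~\ref{lm-com-h} to each term of the $\bar a$-average, and then use the freedom in permuting $t$-degrees (what the paper calls ``symmetry in $t$-degrees'') to replace the ordered coefficient ${\sf m}(y_{\sigma(p)}{\otimes}y_l{\otimes}y_{\sigma(j)})$ by the symmetric ${\sf m}(y_{\sigma(p)}y_ly_{\sigma(j)})$, whence the $\xi_w\otimes R_w$ decomposition. The paper's proof is just a terser version of your outline, closing with the one-line scalar count $\frac{2}{m!}\,3!\,c_{2,3}(j,p)=\frac{3!(m-3)!}{m!}\cdot\frac{2}{(m-3)!}\,\Psi(j,p)$ that pins down $\Psi$; your anticipated ``bookkeeping obstacle'' is exactly this identity together with its $c_{3,2}$ analogue.
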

\begin{proof}
Using the linearity, we may assume that $Y=y_1\ldots y_m$.  
The symmetry in $t$-degrees allows one to add the expressions appearing in the 
formulation of Lemma~\ref{lm-com-h} over  the triples 
$(y_{e}[a_{\sigma(p)}], y_{f}[a_l], e_g[a_{\sigma(j)}])$ with $\{e,f,g\}=\{\sigma(p),l,\sigma(j)\}$ 
while keeping $x_i[b_\upsilon{+}a_{\sigma(j)}]$, $x_u[b_\nu{+}a_{l}{+}a_{\sigma(p)}]$ and 
$x_i[b_\upsilon{+}a_{\sigma(j)}{+}a_l]$, $x_u[b_\nu{+}a_{\sigma(p)}]$ at their places. In this way 
the coefficient 
${\sf m}(y_{\sigma(p)}\otimes y_l \otimes y_{\sigma(j)})$ is replaced with 
${\sf m}(y_{\sigma(p)} y_l y_{\sigma(j)})$ and thereby $\xi_w$  with $1\le w\le L$ come into play. 
It remains to count the scalars 
and describe the weight function.

Suppose that $j<p$. Then 
$$
\frac{2}{m!} 3! c_{2,3}(j,p) =\frac{3! (m-3)!} {m!}  \frac{2}{(m-3)!} \Psi(j,p) 
$$
and thereby $\Psi(j,p)=c_{2,3}(j,p)$. Analogously, $\Psi(p,j)=c_{3,2}(j,p)$. 
\end{proof}

\begin{thm} \label{m-sym}
For $F\in\cS^m(\gt g)^{\gt g}$ with $m\ge 4$, the symmetrisation   
$\varpi(F)[-1]$ is an element of the Feigin--Frenkel centre if and only if 
${\sf m}(F)=0$. 
\end{thm}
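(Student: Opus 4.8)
The plan is to identify the commutator $[{\mathcal H}[-1],\varpi(F)[-1]]$ with the image of ${\sf m}(F)$ under an injective linear map. By Rybnikov's description of the Feigin--Frenkel centre (Section~\ref{s-zen}), $\varpi(F)[-1]\in\gt z(\wg)$ if and only if $\mathbb X_{F[-1]}=[{\mathcal H}[-1],\varpi(F)[-1]]=0$. The symbol of ${\mathcal H}[-1]$ is the polarisation $C_2[-1]$ of the quadratic Casimir $C_2=\sum_i x_i^2\in\cS^2(\gt g)^{\gt g}$, so $F$ being $\gt g$-invariant together with Lemma~\ref{0-Poisson} gives $\{{\mathcal H}[-1],F[-1]\}=0$ and hence $\mathbb X_{F[-1]}=X_{F[-1]}$. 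Thus the theorem is equivalent to the claim that $X_{F[-1]}=0$ precisely when ${\sf m}(F)=0$.

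First I would feed $\bar b=(-1,-1)$ and $\bar a=(-1,\ldots,-1)$ into Proposition~\ref{prop-fs-c}. Writing ${\sf m}(F)=\sum_w\xi_w\otimes R_w$ with $\xi_w\in\Lambda^2\gt g$ and $R_w\in\cS^{m-3}(\gt g)$, the fact that all $t$-degrees equal $-1$ makes every admissible triple $\langle u,v,l\rangle$ and every pair $(\upsilon,\nu)$ contribute one and the same summand, so the proposition collapses to
\[
X_{F[-1]}=C'\sum_{i,j,w}\big(\xi_w(x_i),x_j\big)\,\varpi_{\rm wt}\!\big(R_w[-1]\otimes x_i[-2]\otimes x_j[-3]\big),
\]
with $C'$ a nonzero constant and $\varpi_{\rm wt}$ attached to the fixed weight function $\Psi$ given by $\Psi(j,p)=c_{2,3}(j,p)$ for $j<p$ and $\Psi(j,p)=c_{3,2}(p,j)$ for $j>p$. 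The right-hand side depends bilinearly on the pairs $(\xi_w,R_w)$, so it is a fixed linear function of ${\sf m}(F)\in\Lambda^2\gt g\otimes\cS^{m-3}(\gt g)$; in particular ${\sf m}(F)=0$ forces $X_{F[-1]}=0$, which is the ``if'' part.

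For the converse I would read off the component $Y_{m-1}$ of $X_{F[-1]}$ in a presentation of the form~\eqref{sim-alg}. Since $\varpi$ is a linear isomorphism $\cS(\wg^-)\to\U(\wg^-)$, rewriting $\varpi_{\rm wt}(R_w[-1]\otimes x_i[-2]\otimes x_j[-3])$ in standard order shows that its $\cS^{m-1}$-component equals $\big(\sum_{\sigma\in{\tt S}_{m-1}}\Psi(\sigma)\big)\,R_w[-1]\,x_i[-2]\,x_j[-3]$, all permutation corrections dropping into strictly lower degree. Here the sign estimate of Lemma~\ref{lm-com-h} is the decisive input: every $c_{2,3}(j,p)$, hence every value $\Psi(\sigma)$, is $\le 0$, while $c_{2,3}(1,2)<0$ because $\widecheck{1}=m-1\ge 2$; therefore $\sum_{\sigma}\Psi(\sigma)\ne 0$. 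Consequently $Y_{m-1}$ is a nonzero multiple of $\sum_{i,j,w}(\xi_w(x_i),x_j)\,R_w[-1]\,x_i[-2]\,x_j[-3]$. It then remains to observe that the linear map $\Lambda^2\gt g\otimes\cS^{m-3}(\gt g)\to\cS^{m-1}(\wg^-)$ sending $\xi\otimes R$ to $\sum_{i,j}(\xi(x_i),x_j)\,R[-1]\,x_i[-2]\,x_j[-3]$ is injective: it factors through the standard embedding $\Lambda^2\gt g\hookrightarrow\gt g\otimes\gt g$ (the skew tensor $\xi$ is recovered from the numbers $(\xi(x_i),x_j)$), the isomorphism $\cS^{m-3}(\gt g)\xrightarrow{\sim}\cS^{m-3}(\gt g[-1])$, and the multiplication map $\cS^{m-3}(\gt g[-1])\otimes\gt g[-2]\otimes\gt g[-3]\to\cS^{m-1}(\wg^-)$, the last of which is injective because $\gt g[-1]$, $\gt g[-2]$ and $\gt g[-3]$ span independent graded pieces of $\wg^-$. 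Hence $X_{F[-1]}=0$ entails $\sum_w\xi_w\otimes R_w=0$, that is, ${\sf m}(F)=0$.

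The only delicate point in this scheme has in fact already been dealt with in Proposition~\ref{prop-fs-c} and Lemma~\ref{lm-com-h}: a priori the weighted symmetrisation produced by the commutator could have a vanishing symbol because of cancellations between permutations, and it is exactly the uniform negativity $c_{2,3}(j,p)\le 0$, strict for $\widecheck{j}\ge p$, that excludes this. Granting those inputs, what is left above is only bookkeeping of non-zero constants together with an elementary multigrading observation about $\cS(\wg^-)$.
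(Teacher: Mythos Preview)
Your argument is correct and follows essentially the same route as the paper: reduce to $X_{F[-1]}$ via Rybnikov's criterion and Lemma~\ref{0-Poisson}, invoke Proposition~\ref{prop-fs-c} for the ``if'' direction, and for the converse compute the symbol $\gr(X_{F[-1]})$ as a nonzero multiple of $\sum_{w,i,j}(\xi_w(x_i),x_j)\,R_w[-1]\,x_i[-2]\,x_j[-3]$, using the sign information from Lemma~\ref{lm-com-h} to see the multiple is nonzero. One small imprecision: when you write ``every $c_{2,3}(j,p)$, hence every value $\Psi(\sigma)$, is $\le 0$'', note that for $j>p$ the value $\Psi(j,p)=c_{3,2}(p,j)$ is not literally a $c_{2,3}$; however the symmetry $c_{3,2}(p,j)=c_{2,3}(\widecheck{j},\widecheck{p})$ from Lemma~\ref{lm-com-h} shows these are $\le 0$ as well, so your conclusion stands.
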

\begin{proof}
According to \cite{r:un}, $\varpi(F)[-1]\in\gt z(\wg)$ if and only if it commutes with  ${\mathcal H}[-1]$. In view of
Lemma~\ref{0-Poisson}, this is the case if and only if  $X_{F[-1]}=0$. Lemma~\ref{lm-com-h}   
describes this element. It states that 
$c_{2,3}(j,p), c_{3,2}(j,p)\le 0$ and 
$c_{2,3}(j,p)<0$ if $p\le\widecheck{j}$ as well as  $c_{3,2}(j,p)<0$ if $\widecheck{p}\le j$. 
Since $\varpi(F)[-1]$ is fully symmetrised, we can use Proposition~\ref{prop-fs-c}.
It immediately implies that if ${\sf m}(F)=0$, then $X_{F[-1]}=0$. 

Suppose that ${\sf m}(F)\ne 0$. Write   ${\sf m}(F)= \sum\limits_{w=1}^L \xi_w \otimes R_w$ with 
$\xi_w \in\Lambda^2\gt g$ and linearly independent $R_w\in\cS^{m-3}(\gt g)$. 
If $\xi\in\Lambda^2\gt g$ is non-zero, then there are $i,j$ such that $(\xi(x_i),x_j)\ne 0$. 

Set ${\boldsymbol c}=\sum\limits_{j<p} (c_{2,3}(j,p)+c_{3,2}(j,p))$.  
According to Lemma~\ref{lm-com-h}, ${\boldsymbol c}<0$.  Hence
$$
m! \, {\boldsymbol c} \sum_{w=1;i,j}^{w=L} (\xi_w(x_i),x_j) x_i[-2]x_j[-3] R_w[-1]  
$$
is a non-zero element of $\cS(\wg^-)$. 
In view of the same lemma,  this expression is equal to $\gr\!(X_{F[-1]})$. 
Thus  $X_{F[-1]}\ne 0$. This completes the proof. 
\end{proof}

\begin{rem} 
If $\gt g$ is simple, then $\gt g^{\gt g}$ is equal to zero and $\cS^2(\gt g)^{\gt g}$ is spanned by ${\mathcal H}=\sum_{i} x_i^2$. By our convention, ${\sf m}(\cS^m(\gt g))=0$ if $m\le 2$. 
Furthermore, ${\sf m}(\cS^3(\gt g)^{\gt g})\subset (\Lambda^2\gt g)^{\gt g}=0$. Therefore Theorem~\ref{m-sym} holds for $m\le 3$ as well. 
\end{rem}

We will be  using weighted shuffles $\varpi_{\rm wt}$  of Poisson {\it  half-brackets}.  
If $Y=\hat y_1\ldots \hat y_m\in\cS(\wg^-)$, 
then
\begin{equation} \label{h-P}
 \varpi_{\rm wt}(Y, b_1,b_2):=  
 \sum\limits_{j=1,i=1}^{j=m,i=\dim\gt g} 
\varpi_{\rm wt}(Y/\hat y_j \otimes x_i[b_1] \otimes [x_i[b_2],\hat y_j] ). 
\end{equation}
Strictly speaking, here $\varpi_{\rm wt}$ is a linear  map from $\cS^m(\wg^{-})$ in 
$\U(\wg^{-})$ depending on 
$(b_1,b_2)$ 
and the choice of  a  weight 
function $\Psi$. 
The absence of ${\rm wt}$ in the lower index indicates that we are taking the usual symmetrisation. 

\subsection{Iterated shuffling} 

 Another general fact about Lie algebras $\gt q$ will be needed. Suppose that  $Y=y_1\ldots y_m\in\cS^m(\gt q)$ 
 and $x\in\gt q$. Write $Y=\frac{1}{m} \sum\limits_{1\le j\le m} y_j \otimes Y^{(j)}$ with 
$Y^{(j)}=Y/y_j$. Then 
\begin{equation}\label{P-sh}
\sum\limits_{1\le j\le m} [x,y_j] Y^{(j)} = \{x,Y\}. 
\end{equation}

\begin{prop} \label{h-fs}
Let ${\mathcal F}[\widecheck{\overline a}]=\varpi(F[\bar a])\in\U(\widehat{\gt g}^{-})$ be a fully symmetrised 
element corresponding to  $F\in\cS^m(\gt g)$ and a vector $\bar a=(a_1,\ldots,a_m)\in\mathbb Z_{<0}^m$. Suppose that ${\sf m}_{2r+1}(F)\in\cS^{m-2r}(\gt g)$ 
for all $m/2>r\ge 1$. Then \\
{\sf (i)} $X_{F[\bar a]}=[{\mathcal H}[\bar b],{\mathcal F}[\widecheck{\overline a}]] - \varpi(\{ {\mathcal H}[\bar b], F[\bar a]\})$ 
is a sum 
of weighted symmetrisations 
$$
\varpi_{\rm wt}({\sf m}(F)[\bar a^{l,j}], b_1{+}a_l,b_2{+}a_j), \ \ \varpi_{\rm wt}({\sf m}(F)[\bar a^{l,j}], 
b_2{+}a_l,b_1{+}a_j),
$$
where $l\ne j$ and  $\bar a^{l,j}$ is obtained from $\bar a$ by removing $a_l$ and $a_j$; 
\\[.3ex] 
{\sf (ii)} for every weight function $\Psi$, there is a constant $c\in\mathbb Q$, which is independent of $F$, 
such that ${\mathcal P}_{F[\bar a]}=\varpi_{\rm wt}(F[\bar a], b_1,b_2) - c\varpi(F[\bar a],b_1,b_2)$
is a sum of 
$\varpi_{\rm wt}({\sf m}(F)[\bar a^{(1)}], b_\upsilon{+}\gamma_\upsilon, b_\nu{+}\gamma_\nu)$   
with  different weight functions, whereby   
$\bar a^{(1)}$ is a subvector of $\bar a$ with $m{-}2$ entries and $\bar\gamma\in\mathbb Z_{<0}^2$ is constructed from  the complement $\bar a\setminus\bar a^{(1)}$ of $\bar a^{(1)}$; \\[.3ex]
{\sf (iii)} $\mathbb X_{F[a]}=[{\mathcal H}[b_1,b_2],{\mathcal F}[\widecheck{\overline a}]]$ is a sum of  
$$
C(\bar a^{(r)},\bar\gamma)\varpi({\sf m}_{2r+1}(F)[\bar a^{(r)}],  b_\upsilon{+}\gamma_\upsilon, b_\nu{+}\gamma_{\nu}),
$$
where  $0\le r<m/2$,  $\bar a^{(r)}$ is a subvector of $\bar a$ with $m{-}2r$ entries,  $\bar\gamma\in\mathbb Z_{<0}^2$ is constructed from  $\bar a\setminus\bar a^{(r)}$, and the coeffients $C(\bar a^{(r)},\bar\gamma)\in\mathbb Q$
are independent of $F$. 
\end{prop}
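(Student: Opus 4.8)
The strategy is to bootstrap from Lemma~\ref{lm-com-h} and Proposition~\ref{prop-fs-c}, which already handle the case $\bar b=(b_1,b_2)$ with a single Poisson half-bracket inserted at one spot, and then iterate. The three parts are logically nested: (i) is the special case of Proposition~\ref{prop-fs-c} where one first symmetrises $Y$ with weights $\frac{1}{m!}$ coming from the polarisation $F[\bar a]$, so that the output weight functions $\Psi$ are exactly $c_{2,3}$, $c_{3,2}$ as recorded there; (ii) upgrades this to an arbitrary input weight function $\varpi_{\rm wt}$ in place of the plain symmetrisation; and (iii) is the genuinely iterated statement, obtained by feeding (ii) back into itself $r$ times and using the hypothesis ${\sf m}_{2s+1}(F)\in\cS^{m-2s}(\gt g)$ together with the composition identity ${\sf m}_{2r+1}={\sf m}_{2r-1}\circ{\sf m}$ recorded in Section~\ref{sec-m}.

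\textbf{Step 1 (part (i)).} I would start from $\mathbb X_{F[\bar a]}=[{\mathcal H}[\bar b],{\mathcal F}[\widecheck{\overline a}]]$ and subtract $\varpi(\{{\mathcal H}[\bar b],F[\bar a]\})$. The computation of $[{\mathcal H}[\bar b],\hat Y]$ is literally the one at the start of the proof of Lemma~\ref{lm-com-h}: a sum over a place $j$ and an index $i$ of terms $\hat y_1\cdots\hat y_{j-1}\hat x_i^{(\upsilon)}[\hat x_i^{(\nu)},\hat y_j]\hat y_{j+1}\cdots$. The difference $X_{F[\bar a]}$ is then governed by exactly the same double-commutator cancellations (the $[[\hat x_i^{(2)},\hat x_i^{(1)}],\hat y_j]=0$ identity and the $(\blackdiamond)$ identity) as in that lemma, so the surviving terms carry a single ${\sf m}(y_{\sigma(p)}\otimes y_l\otimes y_{\sigma(j)})$ and, after summing over the three $t$-degree assignments in the triple $\{l,\sigma(j),\sigma(p)\}$, a single $\xi_w={\sf m}(F)$-piece. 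The point where the hypothesis ${\sf m}_{2r+1}(F)\in\cS^{m-2r}(\gt g)$ is \emph{not yet} needed for $r=1$: for part (i) only ${\sf m}(F)={\sf m}_3(F)$ appears, and no constraint on it is required — the two new factors $x_i[b_\upsilon+a_l]$, $x_j[b_\nu+a_l+a_v]$ just get absorbed into $\varpi_{\rm wt}$ as in Proposition~\ref{prop-fs-c}.

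\textbf{Step 2 (part (ii)).} Here the input is $\varpi_{\rm wt}(F[\bar a],b_1,b_2)$ as defined in \eqref{h-P}, i.e.\ a half-bracket inserted at a place $j$ and then $(k{+}2)$-shuffled with weights $\Psi$. I would split $\varpi_{\rm wt}=c\,\varpi+(\varpi_{\rm wt}-c\,\varpi)$ where $c$ is the average of $\Psi$ over those permutations that keep the two special slots adjacent to the half-bracket, so that $c\,\varpi(F[\bar a],b_1,b_2)$ is the ``unweighted remainder'' subtracted off in the statement, and the difference is a weighted combination of \emph{shuffles that move the free factor $\hat x_i^{(\upsilon)}$ away}, which — by exactly the argument producing $c_{2,3},c_{3,2}$ in Lemma~\ref{lm-com-h} — generates a second commutator $[[\hat x_i^{(\nu)},\hat y_l],\hat y_p]$ and hence a factor ${\sf m}(y_{\sigma(p)}\otimes y_l\otimes y_{\sigma(j)})$. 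Using ${\sf m}_3(F)\in\cS^{m-2}(\gt g)$ (part of the hypothesis) this collapses to ${\sf m}(F)[\bar a^{(1)}]$ times two new $t$-shifted factors, with new weight functions obtained by restricting the old $\Psi$ to the appropriate subgroup; condition $(\tt B)$ on $\Psi$ is preserved under this restriction, which is why $\omega$-eigenvector structure survives. The constant $c$ is manifestly independent of $F$ since it is built only from $\Psi$ and $m$.

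\textbf{Step 3 (part (iii)) and the main obstacle.} Now iterate: $\mathbb X_{F[a]}=[{\mathcal H}[b_1,b_2],{\mathcal F}[\widecheck{\overline a}]]$. Apply part (i) to peel off the first ${\sf m}(F)$; the leftover $\varpi_{\rm wt}$-terms are weighted shuffles of ${\sf m}(F)[\bar a^{l,j}]$ with two extra factors, to which I apply part (ii); this produces ${\sf m}({\sf m}(F))[\bar a^{(2)}]={\sf m}_3\circ{\sf m}(F)={\sf m}_5(F)$ (here the hypothesis ${\sf m}_5(F)\in\cS^{m-4}(\gt g)$ and the identity ${\sf m}_5={\sf m}_3\circ{\sf m}$ from Section~\ref{sec-m} are exactly what makes this legal), and so on. After $r$ steps one reaches $\varpi(\,{\sf m}_{2r+1}(F)[\bar a^{(r)}],b_\upsilon+\gamma_\upsilon,b_\nu+\gamma_\nu\,)$ with a rational coefficient $C(\bar a^{(r)},\bar\gamma)$ accumulated from the constants $c$ at each stage plus the combinatorial $c_{2,3},c_{3,2}$; since ${\sf m}_{2r+1}(F)$ eventually lands in $\cS^{m-2r}(\gt g)$ with $m-2r-1\le 2$, i.e.\ once $2r\ge m-3$, the map ${\sf m}$ kills it and the iteration terminates, giving the range $0\le r<m/2$. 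The main obstacle is purely bookkeeping: keeping the weight functions $\Psi$ satisfying $(\tt A)$ and $(\tt B)$ stable under the repeated restrictions, and checking that at each stage the ``$c\,\varpi$'' remainder one subtracts is genuinely of the form a \emph{plain} symmetrisation so that the induction closes — this is where one must be careful that the new $t$-degree shifts $\gamma_\upsilon,\gamma_\nu$ are built only from the removed entries of $\bar a$ and that the half-bracket structure \eqref{h-P} is reproduced verbatim at each level, so that part (ii) is applicable again with a fresh $\Psi$. The independence of all coefficients from $F$ is automatic throughout because every constant introduced depends only on $m$, on the removed $t$-degrees, and on the weight functions, never on the Lie-algebra elements $y_i$.
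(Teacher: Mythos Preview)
Your overall strategy---bootstrap from Proposition~\ref{prop-fs-c}, then iterate---matches the paper's. But there is a genuine gap in Step~1, and a related omission in Step~2.

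\textbf{The error in part (i).} You claim that for $r=1$ ``no constraint on ${\sf m}(F)$ is required'' and that the two new factors ``just get absorbed into $\varpi_{\rm wt}$ as in Proposition~\ref{prop-fs-c}.'' This is false. Proposition~\ref{prop-fs-c} outputs terms of the shape
\[
\sum_{w,i,j}(\xi_w(x_i),x_j)\,\varpi_{\rm wt}\bigl(R_w[\bar a^{u,v,l}]\otimes x_i[\,\cdot\,]\otimes x_j[\,\cdot\,]\bigr),
\]
with ${\sf m}(F)=\sum_w \xi_w\otimes R_w$ and $\xi_w\in\Lambda^2\gt g$. The assertion of (i), however, is that $X_{F[\bar a]}$ is a sum of \emph{half-brackets} $\varpi_{\rm wt}({\sf m}(F)[\bar a^{l,j}],b_\upsilon+a_l,b_\nu+a_j)$ in the sense of~\eqref{h-P}. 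For this even to be well-typed, ${\sf m}(F)[\bar a^{l,j}]$ must be a polarisation in $\cS^{m-2}(\wg^-)$, which forces ${\sf m}(F)\in\cS^{m-2}(\gt g)$. The conversion step you are missing is the one the paper carries out explicitly: when each $\xi_w\in\gt g$ (and only then),
\[
\sum_{i,j}(\xi_w(x_i),x_j)\,x_i[\beta_1]\,x_j[\beta_2]
=\sum_i x_i[\beta_1]\,[\xi_w,x_i][\beta_2],
\]
which together with \eqref{P-sh} turns the $\Lambda^2\gt g$-factor into the commutator hidden inside the half-bracket~\eqref{h-P}. Without the hypothesis ${\sf m}(F)\in\cS^{m-2}(\gt g)$ this identity fails and the output of Proposition~\ref{prop-fs-c} does not have the form claimed in (i). In short: the hypothesis enters already at $r=1$, not first at $r\ge 2$.

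\textbf{The omission in part (ii).} Your sketch produces only terms containing $[\hat x_i^{(\nu)},\hat y_j]$ and $[[\hat x_i^{(\upsilon)},\hat y_l],\hat y_p]$. But here the input already has $\hat x_i^{(\nu)}$ inside a commutator (that is what the half-bracket~\eqref{h-P} means), so the symmetrisation process generates genuinely new species of terms that do not appear in Lemma~\ref{lm-com-h}: triple commutators $\ad(y_p)\ad(y_l)\ad(y_j)(\hat x_i^{(\upsilon)})$ paired with a free $\hat x_i^{(\nu)}$ (which survive and give the $\bar\gamma=(a_j{+}a_l,0)$ contributions), as well as terms of types $[[\hat y_p,\hat x_i^{(\upsilon)}],[\hat y_l,\hat x_i^{(\nu)}]]$, $[\hat y_p,[\hat x_i^{(\upsilon)},[\hat x_i^{(\nu)},\hat y_j]]]$, and $[\hat x_i^{(\upsilon)},[\hat y_p,[\hat y_j,\hat x_i^{(\nu)}]]]$. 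The last three must be shown to cancel (using $(\blackdiamond)$, antisymmetry under $l\leftrightarrow p$, and the Jacobi identity respectively); you cannot simply invoke ``the argument producing $c_{2,3},c_{3,2}$'' because that argument starts from a commutator $[{\mathcal H}[\bar b],\varpi(Y)]$, not from a half-bracket with an arbitrary weight function. Your description of the constant $c$ is also not quite right: $c$ is determined by requiring that the top-degree image of $\varpi_{\rm wt}(F[\bar a],b_1,b_2)$ in $\cS^{m+1}(\wg^-)$ equals $c\sum_i\{x_i[b_2],F[\bar a]\}x_i[b_1]$, not by any ``adjacency'' average.

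Once (i) and (ii) are repaired, your Step~3 is essentially the paper's argument: recognise $[{\mathcal H}[\bar b],{\mathcal F}[\widecheck{\bar a}]]$ itself as a $\varpi_{\rm wt}$ with the indicator weight $\Psi(j,j{\pm}1)=1$, peel off the $r=0$ Poisson-bracket term and then iterate (ii), using ${\sf m}_{2r+1}={\sf m}_{2r-1}\circ{\sf m}$ at each stage.
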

\begin{proof}
Since we are working with a fully symmetrised element, 
Proposition~\ref{prop-fs-c} applies. 
In the same notation, write ${\sf m}(F)=\sum\limits_{w=1}^L \xi_w \otimes R_w$. 
By our assumptions, ${\sf m}(F)\in\cS^{m-2}(\gt g)$. 
In particular, $\xi_w\in\gt g$ for each $w$. Observe that 
$$\sum_{i,j} (\xi_w(x_i),x_j) x_i[b_\upsilon] x_j[b_\nu] =  \sum_{i,j}  x_i[b_\upsilon] ([\xi_w,x_i],x_j) x_j[b_\nu] =  \sum_{i}  x_i[b_\nu] [\xi_w,x_i[b_\upsilon]].
$$ 
Thereby part {\sf (i)} follows from Proposition~\ref{prop-fs-c} in view of \eqref{P-sh}.  

\vskip0.5ex

{\sf (ii)} \ Note that $\omega({\mathcal P}_{F[\bar a]})=(-1)^{m+1} {\mathcal P}_{F[\bar a]}$, because of the assumption {\tt (B)} imposed on all 
weight functions.  By the construction, the image of  $\varpi_{\rm wt}(F[\bar a], b_1,b_2)$ in 
$\cS^{m+1}(\wg^-)$ is equal to $c \sum_{i}  \{x_i[b_2],F[\bar a]\}x_i[b_1]$ 
for some $c\in\mathbb Q$. 
This constant $c$ depends only on the weight function $\Psi$.

Let us symmetrise ${\mathcal P}_{F[\bar a]}$ by changing the sequence of factors 
in its summands. Note that there is no need to commute factors 
$\hat y_j=y_j[a_{\sigma(j)}]$ and $\hat y_l=y_l[a_{\sigma(l)}]$ of 
summands of 
${\mathcal F}[\widecheck{\overline a}]$, since ${\mathcal P}_{F[\bar a]}$ is 
symmetric  in the $\hat y_{p}$'s. 
There is no sense in commuting $\hat x_i^{(1)}$ and $\hat x_i^{(2)}$ either. 
Because of the antipode symmetry, the expressions of ``degree" $m$  annihilate each other in ${\mathcal P}_{F[\bar a]}$. Now we have a sum of terms of  degree $m-1$ and each non-zero summand must contain 
certain  factors 
according to one of the types listed below: 
\begin{itemize}
\item[{\sf (1)}] $[\hat x_i^{(\nu)},y_j]$ and $[[\hat x_i^{(\upsilon)},\hat y_l],\hat y_p]$,  
\item[{\sf (2)}] $[\hat y_p,[\hat y_l,[\hat y_j,\hat x_i^{(\upsilon)}]]]=\ad(y_p)\ad(y_l)\ad(y_j)(x_i[b_\upsilon{+}a_{\sigma(p)}{+}a_{\sigma(l)}{+}a_{\sigma(j)}])$ 
and $\hat x_i^{(\nu)}$, 
\item[{\sf (3)}] $[[\hat y_p,\hat x_i^{(\upsilon)}], [\hat y_l,\hat x_i^{(\nu)}]]=[[y_p,x_i], [y_l,x_i]][a_{\sigma(p)}{+} a_{\sigma(l)}{+}b_1{+}b_2]$, 
\item[{\sf (4)}] $[\hat y_p,[\hat x_i^{(\upsilon)}, [\hat x_i^{(\nu)},\hat y_j]]]$, 
\item[{\sf (5)}] $[\hat x_i^{(\upsilon)},[ \hat y_p, [\hat y_j,\hat x_i^{(\nu)}]]]=[[\hat x_i^{(\upsilon)},\hat y_p], [\hat y_j,\hat x_i^{(\nu)}]] - 
[\hat y_p,  [\hat x_i^{(\upsilon)}, [\hat x_i^{(\nu)},\hat y_j]]]$. 
\end{itemize}
The terms of type  {\sf (4)} disappear if we add over all $i$ and permute $p$ and $j$, because of the properties of 
${\mathcal H}[\bar b]$, cf. \eqref{com-C}. The terms of type  {\sf (3)} disappear if we permute  $l$ and $p$. 
Therefore the terms of type  {\sf (5)} disappear as well.  

One can deal with the terms of types  {\sf (1)} and  {\sf (2)} in the same way as in   
Lemma~\ref{lm-com-h} and Proposition~\ref{prop-fs-c}. They lead to 
$\gamma=(a_j,a_l)$ and $\gamma=(a_j{+}a_l,0)$ as well as $\gamma= (0,a_j{+}a_l)$. 
Note that the commutators of type {\sf (2)} are easier to understand, since there is no need
to permute the $t$-degrees, and at the same time they give rise to half-brackets. 

\vskip0.5ex

{\sf (iii)} \  We are presenting $\mathbb X_{F[\bar a]}$ in the form~\eqref{sim-alg} and can state at once that it
has terms of degrees $m+1-2d$ only. Note that $[{\mathcal H}[\bar b],{\mathcal F}[\widecheck{\overline a}]]$ 
can be viewed  as a weighted symmetrisation  $\varpi_{\rm wt}(F[\bar a], b_1, b_2)$ 
if we choose $\Psi(j,j+1)=\Psi(j+1,j)=1$ and $\Psi(j,l)=0$ in case $|l-j|>1$. 
The term of degree $m+1$ is the Poisson bracket $\{{\mathcal H}[\bar b],F[\bar a]\}$. Here 
$r=0$ and $\bar\gamma=0$. 
In degree $m-1$, we obtain images in $\cS^{m-1}(\wg^-)$ 
of the weighted symmetrisations described in part~{\sf (i)}. 
Further terms, which are   of degrees $m-3,m-5,m-7$, and so on, 
are described by the iterated application of part~{\sf (ii)}. 
At all steps, we obtain combinatorially defined rational coefficients, which are independent of $F$. 
\end{proof}

\begin{ex}\label{k4}
Suppose that $F\in\cS^4(\gt g)^{\gt g}$ and that $\gt g$ is simple. 
Here ${\sf m}(F)\in (\Lambda^2\gt g{\otimes}\gt g)^{\gt g}$ and  $\dim(\Lambda^2\gt g{\otimes}\gt g)^{\gt g}=1$. 
This subspace is spanned  by ${\mathcal H}=\sum x_i^2$. Hence ${\sf m}(F)={\mathcal H}$ up to a scalar. 
As we will proof in Section~\ref{sec-hP}, $S=\varpi(F[-1])+6\varpi(\tau^2{\sf m}(F)[-1]){\cdot}1$ is a
Segal--Sugawara vector. Making use of the fact that $\tau^2({\mathcal H}[-1])\in\gt z(\wg)$, 
one can write $S$ as a sum $\varpi(F[-1])+B{\mathcal H}[-2]$
for some scalar $B\in\mathbb\mK$. 

The only possible vector $\bar\gamma$ that can appear in Proposition~\ref{h-fs}{\sf (iii)}
is $(-1,-1)$. 
Therefore 
the commutator $[{\mathcal H}[-1],\varpi(F[-1])]$ 
is equal to $B\varpi(\{{\mathcal H}[-2],{\mathcal H}[-1]\})=B[{\mathcal H}[-2],{\mathcal H}[-1]]$. 
In the orthonormal basis $\{x_i\}$, we have 
\begin{equation} \label{-3}
\{{\mathcal H}[-2],{\mathcal H}[-1]\}=4 \sum\limits_{i,j,s} ([x_i,x_j],x_s) x_s[-3]x_i[-2]x_j[-1]. 
\end{equation} 
\end{ex}

\subsection{Poisson (half-)brackets} \label{sec-hP}

Suppose that $\hat Y=\hat y_1\ldots \hat y_m\in\cS^m(\wg^-)$ and  $\hat y_j=y_j[a_j]$.
Then 
$P_{\hat Y}:= \{ {\mathcal H}[\bar b], \hat Y\} =P_{\hat Y}(b_1,b_2)+P_{\hat Y}(b_2,b_1)$, where 
$$
P_{\hat Y}(b_\upsilon,b_\nu)=  \sum_{j=1,i=1}^{j=m,i=\dim\gt g}  [x_i[b_\nu],\hat y_j] x_i[b_\upsilon]  \hat Y/\hat y_j = 
  \sum_{j,i,u} ([x_u,x_i],y_j)  x_u[b_\nu{+}a_j] x_i[b_\upsilon] \hat Y/\hat y_j. 
$$
Note that in case $b_\nu{+}a_j=b_\upsilon$, each summand $([x_u,x_i],y_j)  x_u[b_\nu{+}a_j] x_i[b_\upsilon] \hat Y/\hat y_j$ is annihilated by $([x_i,x_u],y_j)  x_i[b_\nu{+}a_j] x_u[b_\upsilon] \hat Y/\hat y_j$. Hence
\begin{equation} \label{P2}
P_{\hat Y}(b_1,b_2)=\sum_{i,u;\,j: a_j\ne b_1-b_2} ([x_i,x_u],y_j)  x_i[b_2{+}a_j] x_u[b_1] \hat Y/\hat y_j.
\end{equation}
The  product $(\,\,,\,)$ extends to a non-degenerate $\gt g$-invariant  scalar product on 
$\cS(\wg^-)$. We will assume that $(\gt g[a],\gt g[d])=0$ for $a\ne d$, that 
$(x[a],y[a])=(x,y)$ for $x,y\in\gt g$,  and 
that 
$$
(\xi_1\ldots\xi_{\boldsymbol k},\eta_1\ldots\eta_{\boldsymbol m})=  \delta_{{\boldsymbol k},{\boldsymbol m}}
\sum_{\sigma\in{\tt S_{\boldsymbol k}}} (\xi_1,\eta_{\sigma(1)})\ldots (\xi_{\boldsymbol k},\eta_{\sigma({\boldsymbol k})})
$$
if $\xi_j,\eta_j\in\cS(\wg^-)$, ${\boldsymbol m}\ge {\boldsymbol k}$. 
Let ${\mathcal B}$ be a monomial basis of $\cS(\wg^-)$ consisting of the elements 
$\hat v_1\ldots \hat v_{\boldsymbol k}$, where  $\hat v_j=v_j[d_j]$ and $v_j\in\{x_i\}$. 
Then ${\mathcal B}$ 
is an orthogonal, but not an orthonormal basis. 
For instance, if $\Xi=x_1^{\gamma_1}\!\ldots x_{\boldsymbol k}^{\gamma_{\boldsymbol k}}$, then $(\Xi,\Xi)=\gamma_1!\ldots\gamma_{\boldsymbol k}!$.

Set $M:=m+1$, ${\mathcal B}(M):={\mathcal B}\cap \cS^M(\wg^-)$,  and 
write
$$
P_{\hat Y}(b_1,b_2) = \sum_{\mathbb V\in{\mathcal B}(M)} A(\mathbb V) \mathbb V 
\ \ \text{ with } \ \ A(\mathbb V)\in\mK,
$$
expressing each $\hat Y/\hat y_j$ in the  basis ${\mathcal B}$. 
 
\begin{lm} \label{lm-V}
If 
 $A(\mathbb V)\ne 0$ and $\mathbb V=\hat v_1\ldots\hat v_M$, then   
${\boldsymbol p}:=\{ p \mid d_p=b_1\}\ne\varnothing$. 
Furthermore, 
\begin{equation} \label{P-scalar}
A(\mathbb V) = 
\sum_{p\in{\boldsymbol p}, l\not\in{\boldsymbol p}} \big(\mathbb V,\mathbb V\big)^{-1} \big(\hat Y, \frac{\mathbb V}{\hat v_l\hat v_p}  [v_l,v_p][d_l{-}b_2]\big).
\end{equation}
\end{lm}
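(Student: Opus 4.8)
\textbf{Proof plan for Lemma~\ref{lm-V}.}
The plan is to compute the coefficient $A(\mathbb V)$ by pairing $P_{\hat Y}(b_1,b_2)$ with the basis vector $\mathbb V$ using the explicit scalar product on $\cS(\wg^-)$, and then to read off the stated formula. Since $\{{\mathcal B}$-monomials$\}$ form an orthogonal basis, for any $\mathbb V\in{\mathcal B}(M)$ one has $A(\mathbb V)=(\mathbb V,\mathbb V)^{-1}\,(P_{\hat Y}(b_1,b_2),\mathbb V)$. First I would substitute the expression~\eqref{P2} for $P_{\hat Y}(b_1,b_2)$, namely $\sum_{i,u;\,j:a_j\ne b_1-b_2}([x_i,x_u],y_j)\,x_i[b_2{+}a_j]\,x_u[b_1]\,\hat Y/\hat y_j$, and pair it termwise with $\mathbb V=\hat v_1\ldots\hat v_M$.

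Next, for a fixed summand, the factor $x_u[b_1]$ must be matched against one of the $\hat v_p$ with $d_p=b_1$ under the symmetric-product scalar product; this is exactly where the condition ${\boldsymbol p}=\{p\mid d_p=b_1\}\ne\varnothing$ enters — if no $\hat v_p$ has $t$-degree $b_1$, every pairing vanishes and $A(\mathbb V)=0$. Having chosen $p\in{\boldsymbol p}$ to absorb $x_u[b_1]$, the factor $x_i[b_2{+}a_j]$ and the remaining factors of $\hat Y/\hat y_j$ must pair with the leftover factors of $\mathbb V/\hat v_p$. The key algebraic step is to recognise that summing over $i,u$ and over which leftover factor $\hat v_l$ of $\mathbb V$ absorbs $x_i[b_2{+}a_j]$ contracts the structure constant $([x_i,x_u],y_j)$ with the two contributions $(x_i,v_l)(x_u,v_p)$ into $([v_l,v_p],y_j)$ (with the appropriate sign bookkeeping from antisymmetry of the bracket and symmetry of $(\,,\,)$), and that the index $j$ is then forced by matching $t$-degrees: $b_2+a_j=d_l$ forces $a_j=d_l-b_2$, so $y_j$ is paired against $\frac{\mathbb V}{\hat v_l\hat v_p}$ via $(\hat Y,\,\ldots)$ after reinserting $[v_l,v_p][d_l-b_2]$ in place of $\hat y_j$. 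One must check $l\notin{\boldsymbol p}$: if $d_l=b_1$ then, since all admissible $a_j$ satisfy $a_j\ne b_1-b_2$ we cannot have $d_l-b_2=a_j$, so such $l$ contribute nothing and may be excluded from the sum.

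Carrying this out carefully, the sum over $p\in{\boldsymbol p}$ and $l\notin{\boldsymbol p}$ of $(\mathbb V,\mathbb V)^{-1}\big(\hat Y,\frac{\mathbb V}{\hat v_l\hat v_p}[v_l,v_p][d_l-b_2]\big)$ emerges, which is precisely~\eqref{P-scalar}; and the nonvanishing forces ${\boldsymbol p}\ne\varnothing$. I would present this as: (i) orthogonality reduces $A(\mathbb V)$ to a single scalar product; (ii) expand via~\eqref{P2} and pair factor-by-factor; (iii) perform the contraction of structure constants, tracking signs and the forced value of $j$; (iv) conclude by reindexing the double sum. The main obstacle I anticipate is the sign and multiplicity bookkeeping in step (iii): the symmetric-product scalar product introduces sums over matchings (permutations in ${\tt S}_{\boldsymbol k}$), and one must verify that after contracting $i$ and $u$ the antisymmetry $[x_i,x_u]=-[x_u,x_i]$ combines correctly with the two ways of assigning $\{x_i[b_2+a_j],\ x_u[b_1]\}$ to $\{\hat v_l,\hat v_p\}$, so that no spurious factor of $2$ or sign error survives, and that the ``$a_j\ne b_1-b_2$'' restriction in~\eqref{P2} matches exactly the exclusion $l\notin{\boldsymbol p}$.
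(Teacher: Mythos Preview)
Your proposal is correct and follows essentially the same route as the paper: both match the explicit factor $x_u[b_1]$ in~\eqref{P2} against some $\hat v_p$ with $d_p=b_1$ (forcing ${\boldsymbol p}\ne\varnothing$), match $x_i[b_2{+}a_j]$ against some $\hat v_l$, contract $\sum_{i,u}([x_i,x_u],y_j)(x_i,v_l)(x_u,v_p)=([v_l,v_p],y_j)$, and then recombine the sum over $j$ into $(\hat Y,\,\mathbb V^{p,l}[v_l,v_p][d_l{-}b_2])$. The only presentational difference is that you start from the orthogonal-projection identity $A(\mathbb V)=(\mathbb V,\mathbb V)^{-1}(P_{\hat Y}(b_1,b_2),\mathbb V)$, which absorbs the multiplicity bookkeeping automatically; the paper instead first computes the $(p,l)$-contribution with the normalisation $(\mathbb V^{p,l},\mathbb V^{p,l})^{-1}$ and then corrects for overcounting via $(\mathbb V,\mathbb V)=\gamma_p\gamma_l(\mathbb V^{p,l},\mathbb V^{p,l})$ --- exactly the obstacle you flagged in step~(iii).
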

\begin{proof}
The first statement is clear, cf.~\eqref{P2}.  It remains to calculate the coefficient of $\mathbb V$ 
in $P_{\hat Y}(b_1,b_2)$. 
Pic a pair $(p,l)$ with $p\in{\boldsymbol p}$ and $l\not\in{\boldsymbol p}$. 
If we take into account  only those summands  of  $P_{\hat Y}(b_1,b_2)$, 
where $\mathbb V^{p,l}=\mathbb V/(\hat v_p\hat v_l)$ is a summand of 
$\hat Y/\hat y_j$ for some $j$, the factor  $\hat v_l$  is  $x_u[a_j+b_2]$, and
$\hat v_p$ appears as $x_i[b_1]$,  
then the coefficient is 
$$
\sum_{j=1}^{m} (\hat y_j,[v_l,v_p][d_l{-}b_2]) (\hat Y/\hat y_j, \mathbb V^{p,l}) (\mathbb V^{p,l},\mathbb V^{p,l})^{-1} = 
(\mathbb V^{p,l},\mathbb V^{p,l})^{-1} (\hat Y, \mathbb V^{p,l} [v_l,v_p][d_l{-}b_2]).
$$
If one  
adds these expressions over the pairs $(p,l)$, then 
certain instances may be counted 
more than once.  If $\hat v_p=\hat v_{p'}$ for
some $p'\ne p$ or $\hat v_l=\hat v_{l'}$ for some $l'\ne l$, then $(p,l')$ or $(p',l)$ has to be omitted from the summation.  
In other words, it is necessary to  divide the contribution of $(p,l)$  by 
the 
multiplicities $\gamma_p$ and $\gamma_l$ of $v_p[d_p]$, $v_l[d_l]$ in $\mathbb V$.  
Since $(\mathbb V,\mathbb V)=\gamma_p\gamma_l (\mathbb V^{p,l},\mathbb V^{p,l})$, the result follows. 
\end{proof}

The Poisson bracket $P_{\hat Y}$ is not multi-homogeneous w.r.t. 
$\wg^-=\bigoplus_{d\le -1} \gt g[d]$. 
If $b_1\ne b_2$, then in general  the  ``halves" of  $P_{\hat Y}$ have different multi-degrees and neither of them 
has to be multi-homogeneous. We need to split $P_{\hat Y}(b_1,b_2)$ into smaller pieces. 
For $\bar a\in\Z_{<0}^m$, set $\cS^{\bar a}(\wg^-)=\prod_{j=1}^m \gt g[a_j]\subset \cS(\wg^-)$. 

Let  $\bar\alpha=\{\alpha_1^{r_1},\ldots,\alpha_s^{r_s}\}$ be a multi-set such that 
$\alpha_i\ne\alpha_j$ for $i\ne j$, $\alpha_j\in \Z_{<0}$ for all $1\le j\le s$, 
$\sum_{j=1}^s r_j=M$, and  $r_j>0$ for all $j$. 
Set $\cS^{\bar\alpha}(\wg^-):=\prod_{j=1}^{s} \cS^{r_j}(\gt g[\alpha_j])$, 
${\mathcal B}(\bar\alpha):={\mathcal B}\cap \cS^{\bar\alpha}(\wg^-)$. 
Fix different $i,j\in\{1,\ldots,s\}$. Assume that a monomial 
$\mathbb V=\hat v_1\ldots \hat v_M\in{\mathcal B}(\bar\alpha)$ with 
$\hat v_l=v_l[d_l]$ is written in such a way that $d_l=\alpha_i$ 
for   $1\le l\le r_i$ and $d_l=\alpha_j$ for  $r_i<l\le r_i+r_j$.  
Finally suppose that $F\in\cS^m(\gt g)$. 
In this notation, set
\begin{align} 
& \mathbb W[F, \bar\alpha, (i,j)] :=\sum_{{\mathbb V}\in{\mathcal B}(\bar\alpha)} A(\mathbb V) \mathbb V     
\ \ \text{ with }  \nonumber  \\
 &  
\qquad \ A(\hat v_1\ldots \hat v_M)= \big(\mathbb V,\mathbb V\big)^{-1}\!\!\!\!\!\!\!\!\!\!\!\!\!\!\!\!\sum_{{\footnotesize \begin{array}{c}1\le l\le r_i, \\ r_i< p \le r_i+r_j \end{array}}}\!\!\!\!\!\!\!\!\!\!\!\!(F,[v_l,v_p] \prod_{u\ne p,l} v_u). 
 \label{W} 
\end{align} 
Clearly,  
$\mathbb W[F, \bar\alpha, (j,i)] = - \mathbb W[F, \bar\alpha, (i,j)]$.

\begin{prop} \label{universal} 
Let $F\in\cS^m(\gt g)^{\gt g}$ be fixed. Then the elements 
$\mathbb W[\bar\alpha, (i,j)]=\mathbb W[F,\bar\alpha, (i,j)]$ 
satisfy the following ``universal" relations:
$$
\sum_{j: j\ne i} \mathbb W[\bar\alpha, (i,j)] =0 \ \text{ for each } \ i\le s. 
$$
These relations  are independent of $F$. 
\end{prop}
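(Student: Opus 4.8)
The plan is to read off the relation directly from the definition~\eqref{W} of $\mathbb W[F,\bar\alpha,(i,j)]$ by summing the coefficients $A(\mathbb V)$ over all $j\ne i$ and recognising the result as the evaluation of a $\gt g$-invariance identity for $F$. Fix $i\le s$ and fix a monomial $\mathbb V=\hat v_1\ldots\hat v_M\in{\mathcal B}(\bar\alpha)$. In $\sum_{j\ne i}\mathbb W[\bar\alpha,(i,j)]$ the coefficient of $\mathbb V$ is, up to the common normalising factor $(\mathbb V,\mathbb V)^{-1}$, the sum over all indices $l$ with $d_l=\alpha_i$ and all indices $p$ with $d_p\ne\alpha_i$ of $(F,[v_l,v_p]\prod_{u\ne p,l}v_u)$. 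So the whole claim reduces to the purely finite-dimensional statement that for each index $l$ with $d_l=\alpha_i$,
\begin{equation*}
\sum_{p: d_p\ne\alpha_i} \big(F,[v_l,v_p]\prod_{u\ne p,l}v_u\big)=0.
\end{equation*}
The first step, then, is to reduce to this identity; the second step is to prove it.

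For the second step I would use $\gt g$-invariance of $F$ together with invariance of the scalar product. Writing $x:=v_l$, the inner sum is $\sum_{p\ne l}\big(F,[x,v_p]\prod_{u\ne p,l}v_u\big)$ restricted to those $p$ with $d_p\ne\alpha_i$; I would first argue that adjoining the ``missing'' terms $p$ with $d_p=\alpha_i$ (other than $p=l$) contributes zero, so that one may as well sum over all $p\ne l$. That extra vanishing should follow from antisymmetry: swapping the roles of two indices $l,l'$ both carrying the weight $\alpha_i$ produces a sign, because $[v_l,v_{l'}]=-[v_{l'},v_l]$ while the remaining product $\prod_{u\ne l,l'}v_u$ is unchanged — this is exactly the antisymmetry $\mathbb W[\bar\alpha,(i,j)]=-\mathbb W[\bar\alpha,(j,i)]$ already recorded after~\eqref{W}, applied with $j=i$ formally, and makes the diagonal block drop out. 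Once the sum runs over all $p\ne l$, the expression $\prod_{u\ne l}v_u$ (a fixed element $\Xi\in\cS^{m-1}(\gt g)$, with the slot $p$ then paired against $x$) lets us write $\sum_{p\ne l}[x,v_p]\prod_{u\ne p,l}v_u=\{x,\Xi\}=\mathrm{ad}^*(x)\Xi$ by~\eqref{P-sh}, so the sum equals $(F,\,x\cdot\Xi)$ where $x\cdot$ denotes the derivation action of $x\in\gt g$ on $\cS^{m-1}(\gt g)$. Hence it equals $-\big((x\cdot F),\Xi\big)=0$ because $F$ is a $\gt g$-invariant — invariance of the product $(\,,\,)$ moves the $\gt g$-action from one factor to the other with a sign, and $x\cdot F=0$.

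Finally I would assemble: since the coefficient of every basis monomial $\mathbb V$ in $\sum_{j\ne i}\mathbb W[\bar\alpha,(i,j)]$ vanishes, the element itself vanishes, and the relation is manifestly independent of which invariant $F$ was chosen since the only property used was $x\cdot F=0$. The main obstacle I anticipate is bookkeeping rather than conceptual: one must be careful that the normalisation $(\mathbb V,\mathbb V)^{-1}$ and the multiplicity corrections (the $\gamma_p,\gamma_l$ appearing in Lemma~\ref{lm-V}) are already absorbed consistently into~\eqref{W}, so that the coefficient of $\mathbb V$ really is the naive sum $(\mathbb V,\mathbb V)^{-1}\sum_{l,p}(F,[v_l,v_p]\prod v_u)$ with each ordered pair counted once; and that restricting $p$ to $d_p\ne\alpha_i$ versus summing over all $p\ne l$ genuinely differ only by the antisymmetric diagonal block, which requires checking the case $r_i=1$ (where that block is empty and the restriction is vacuous) separately but trivially.
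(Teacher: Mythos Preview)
Your approach is essentially the same as the paper's: fix $\mathbb V$, write the coefficient in $\sum_{j\ne i}\mathbb W[\bar\alpha,(i,j)]$ as the double sum over $l$ with $d_l=\alpha_i$ and $p$ with $d_p\ne\alpha_i$, extend to all $p\ne l$ using antisymmetry on the diagonal block, and then kill the full sum via $\gt g$-invariance of $F$. The paper phrases the last step as $\big(F,\sum_{w\ne l}[v_l,v_w]\prod_{u\ne l,w}v_u\big)=\big(\{F,v_l\},\prod_{w\ne l}v_w\big)=0$, which is equivalent to your invariance-of-the-form argument.

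One small slip: you state the reduced claim as holding ``for each index $l$ with $d_l=\alpha_i$'', i.e.\ that the restricted inner sum $\sum_{p:d_p\ne\alpha_i}(\ldots)$ vanishes for fixed $l$. That stronger statement is generally false---the antisymmetry cancellation on the diagonal block only takes effect after you also sum over $l$ (your own explanation of the cancellation, swapping $l$ and $l'$, already makes this clear). What you actually need and actually prove is that the \emph{double} sum vanishes: the diagonal block drops out after summing over $l$, and then for each $l$ the extended sum over all $p\ne l$ is zero by invariance. So rephrase the reduction as a statement about the double sum and the argument goes through unchanged.
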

\begin{proof}
Follow the notation of \eqref{W}. 
Note that for each $1\le l \le r_i$, 
$$
\big(F, \sum_{w: w\ne l} [v_l,v_w] \prod_{u: u\ne l,w} v_u\big) =\big(\{F,v_l\}, \prod_{w: w\ne l} v_w\big)=0.
$$
Of course, here we are  adding also over the pairs $(l,w)$ with $\hat v_w\in\gt g[\alpha_i]$
if $r_i>1$. 
However, $[v_l,v_w]=-[v_w,v_l]$ and hence the coefficient of $\mathbb V$ in 
$\sum_{j\ne i} \mathbb W[\bar\alpha, (i,j)]$ 
is equal to 
$$
\big(\mathbb V,\mathbb V\big)^{-1} \sum_{1\le l \le r_i} \big(F, \sum_{w: w\ne l} [v_l,v_w] \prod_{u: u\ne l,w} v_u\big)  = \sum_{1\le l \le r_i}  0 =0. 
$$
This completes the proof. 
\end{proof}

\begin{prop} \label{P-fs}
For $Y[\bar a]$ with  $Y=y_1{\ldots} y_m\in\cS^m(\gt g)$ and $\bar a\in\mathbb Z_{<0}^m$, see Section~\ref{s-fs} for the nota\-tion,  
the rescaled Poisson half-bracket  
$$
{\boldsymbol P}[\bar a]:=|{\tt S}_m \bar a| P_{Y[\bar a]}(b_1,b_2) = |{\tt S}_m \bar a| \sum_{u} \{x_u[b_2],Y[\bar a]\} x_u[b_1]
$$ 
is equal to 
the sum of \,$\mathbb W[Y, \bar\alpha, (i,j)]$ with $i<j$ over all multi-sets $\bar\alpha$ as above such that the 
multi-set $\{a_1,\ldots,a_m\}$ of entries of $\bar a$
 can be obtained from $\bar\alpha$ by removing one $\alpha_j=b_1$ and replacing one
$\alpha_i$ with 
$\alpha_i-b_2$.  
\end{prop}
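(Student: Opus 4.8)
The plan is to strip off the polarisation, reduce to ordinary monomials, apply \eqref{P2} and Lemma~\ref{lm-V}, and then recognise the resulting coefficients as the scalars $A(\mathbb V)$ that define the relevant $\mathbb W[Y,\bar\alpha,(i,j)]$. First, by the definition of the $\bar a$-polarisation in Section~\ref{s-fs} the stabiliser of $\bar a$ in ${\tt S}_m$ has order $m!/|{\tt S}_m\bar a|$, whence $Y[\bar a]=|{\tt S}_m\bar a|^{-1}\sum_{\bar a'\in{\tt S}_m\bar a}\Upsilon[\bar a']$, the sum being over the distinct rearrangements $\Upsilon[\bar a']=\prod_{j}y_j[a'_j]$ of the factors. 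Since $P_{(-)}(b_1,b_2)$ is linear, ${\boldsymbol P}[\bar a]=\sum_{\bar a'\in{\tt S}_m\bar a}P_{\Upsilon[\bar a']}(b_1,b_2)$, so it suffices to expand each term by \eqref{P2} and to collect the degree-$(m{+}1)$ monomials so obtained according to their multi-set of $t$-degrees.

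Next, fix a monomial $\mathbb V=\hat v_1\ldots\hat v_M\in{\mathcal B}(M)$ with $\hat v_u=v_u[d_u]$, and let $\bar\alpha$ be the multi-set $\{d_1,\ldots,d_M\}$. By the first assertion of Lemma~\ref{lm-V}, $\mathbb V$ occurs in $P_{\Upsilon[\bar a']}(b_1,b_2)$ only if $b_1$ is one of the $d_u$; by \eqref{P-scalar} the contribution of a pair $(p,l)$ with $d_p=b_1$ and $d_l\ne b_1$ to the coefficient of $\mathbb V$ is $(\mathbb V,\mathbb V)^{-1}(\Upsilon[\bar a'],\,\frac{\mathbb V}{\hat v_l\hat v_p}[v_l,v_p][d_l{-}b_2])$. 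Because the $\gt g[a]$-pieces of $\cS(\wg^-)$ are pairwise orthogonal, this vanishes unless the multi-set of $t$-degrees of $\bar a'$, namely $\{a_1,\ldots,a_m\}$, is obtained from $\bar\alpha$ by deleting the entry $b_1$ and replacing the entry $d_l$ by $d_l{-}b_2$. This is precisely the condition describing the admissible $\bar\alpha$ in the statement, with $b_1$ the removed $\alpha_j$ and $d_l$ the replaced $\alpha_i$; in particular no other multi-sets $\bar\alpha$ contribute.

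Now fix such a $\bar\alpha$ together with the two distinguished entries $\alpha_j=b_1$ and $\alpha_i=d_l$ (there may be several admissible $i$ for a given $\bar\alpha$), and sum the above contributions over $\bar a'\in{\tt S}_m\bar a$ and over all admissible pairs $(p,l)$. Using $\sum_{\bar a'}\Upsilon[\bar a']=|{\tt S}_m\bar a|\,Y[\bar a]$ and the fact that, restricted to a fixed stratum of $t$-degrees, the scalar product on $\cS(\wg^-)$ is the symmetric product transported from $\cS(\gt g)$ by ${\sf Ev}_1$, the sum over $\bar a'$ turns $(\Upsilon[\bar a'],\frac{\mathbb V}{\hat v_l\hat v_p}[v_l,v_p][d_l{-}b_2])$ into a fixed multiple of the $\gt g$-pairing $(Y,[v_l,v_p]\prod_{u\ne l,p}v_u)$. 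After grouping the admissible pairs — equivalently, after relabelling the factors of $\mathbb V$ so that the $d_l$-factors occupy positions $1\le l\le r_i$ and the $b_1$-factors positions $r_i<p\le r_i+r_j$ — this reproduces exactly the coefficient $A(\mathbb V)$ of $\mathbb V$ in $\mathbb W[Y,\bar\alpha,(i,j)]$ given by \eqref{W}. Finally $[v_l,v_p]=-[v_p,v_l]$ together with $\mathbb W[Y,\bar\alpha,(j,i)]=-\mathbb W[Y,\bar\alpha,(i,j)]$ lets one fix the orientation and restrict to $i<j$, and Proposition~\ref{universal} ($\sum_{j\ne i}\mathbb W[\bar\alpha,(i,j)]=0$) serves as a welcome consistency check.

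The conceptual content here is light — it is just \eqref{P2}, Lemma~\ref{lm-V} and linearity — and the real work, which I expect to be the only genuine obstacle, is the bookkeeping of the three competing normalisations: the orbit size $|{\tt S}_m\bar a|$, the self-pairings $(\mathbb V,\mathbb V)=\prod_u\gamma_u!$ (which enter both \eqref{P-scalar} and \eqref{W}), and the $1/m!$ built into the polarisation. One also has to observe that the terms discarded in \eqref{P2} because $a'_j=b_1-b_2$ are exactly those where $d_l-b_2=b_1$, i.e. where the two distinguished entries of $\bar\alpha$ would coincide — which cannot happen, since by definition the $\alpha_t$ are pairwise distinct. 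Once these numerical factors are matched, the claimed identity holds monomial by monomial.
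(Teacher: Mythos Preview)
Your approach is essentially the paper's own: both proofs use \eqref{P2} and Lemma~\ref{lm-V} to expand the half-bracket in the monomial basis, then match the coefficients against \eqref{W} via the key identity $(Y[\bar a],\mathbb V)=|{\tt S}_m\bar a|^{-1}(Y,v_1\ldots v_m)$ for $\mathbb V\in{\mathcal B}\cap\cS^{\bar a}(\wg^-)$. The paper applies \eqref{P-scalar} directly to $Y[\bar a]$ and invokes this identity once, whereas you first decompose $Y[\bar a]$ into the orbit sum $\sum_{\bar a'}\Upsilon[\bar a']$ and then recombine; the two routes are equivalent.

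One small slip: in your final paragraph you write that the terms discarded in \eqref{P2} are those ``where $d_l-b_2=b_1$''. Since $a'_j=d_l-b_2$, the discarded condition $a'_j=b_1-b_2$ reads $d_l=b_1$, not $d_l-b_2=b_1$. Your conclusion --- that the two distinguished entries $\alpha_i=d_l$ and $\alpha_j=b_1$ of $\bar\alpha$ would then coincide, which is forbidden --- is nevertheless correct, so the argument survives; just fix the displayed equation.
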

\begin{proof}
For each multi-homogeneous component of ${\boldsymbol P}[\bar a]$, the multi-set of $t$-degrees 
is obtained from the entries of $\bar a$ by appending $b_1$ and replacing one $a_l$ with $a_l{+}b_2$. 
Moreover, here $b_1\ne a_l{+}b_2$, cf.~\eqref{P2}. 
This explains the restrictions on $\bar\alpha$. 

For $\bar\alpha$ and $(i,j)$ satisfying the assumptions of the proposition,  we have to compare
the coefficients $A(\mathbb V)$ of $\mathbb V\in{\mathcal B}(\bar\alpha)$ given by 
\eqref{W} and \eqref{P-scalar}. 
The key point here is the observation that 
$(Y[\bar a], \mathbb V)=|{\tt S}_m \bar a|^{-1} (Y,v_1\ldots v_m)$ for 
any $\mathbb V=\hat v_1\ldots\hat v_m \in {\mathcal B}\cap \cS^{\bar a}(\wg^-)$. 

In a more relevant setup, 
suppose that a summand 
$(y_{\sigma(1)}, [v_l,v_p]) \prod\limits_{w\ne 1,u\ne l,p} (y_{\sigma(w)},v_u)$ 
of the scalar product on the right hand side of 
\eqref{W} is non-zero for some $\sigma\in{\tt S}_m$ and some $l,p$.  
Then  there is exactly one choice, prescribed by $(d_1,\ldots,d_M)$, of 
the $t$-degrees for a monomial of $Y[\bar a]$ such that the corresponding summand 
$$
(y_{\sigma(1)}[\alpha_i-b_2],[v_l,v_p][d_l-b_2]) \prod_{w\ne 1,u\ne l,p} (y_{\sigma(w)}[d_u],\hat v_u)
$$
of the scalar product on 
the right hand side of \eqref{P-scalar} is non-zero as well. 
By our assumptions on the scalar product, these summands are equal. 
\end{proof}

\begin{thm} \label{thm-form}
Suppose that ${\sf m}_{2r+1}(H)$ with $H\in \cS^k(\gt g)^{\gt g}$ is a symmetric invariant for each $r\ge 1$. 
Then 
Formula~\eqref{form-m} provides a Segal--Sugawara vector $S$ associated with $H$. 
\end{thm}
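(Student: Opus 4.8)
The plan is to combine Rybnikov's criterion (an element of $\U(\wg^-)$ annihilating ${\mathcal H}[-1]$ lies in $\gt z(\wg)$) with the structural analysis of the commutator $[{\mathcal H}[-1],\,\cdot\,]$ developed in Proposition~\ref{h-fs}. Write $S=\varpi(H[-1])+\sum_{1\le r<(k-1)/2}\binom{k}{2r}\varpi(\tau^{2r}{\sf m}_{2r+1}(H)[-1]){\cdot}1$ as in~\eqref{form-m}. Since $\gr\!(S)=H[-1]$ with $H$ a generator (or part of a generating set) of $\cS(\gt g)^{\gt g}$, the structural properties of $\gt z(\wg)$ recalled in Section~\ref{ssv} guarantee that $S$ is a Segal--Sugawara vector \emph{provided} $S\in\gt z(\wg)$, i.e. provided $[{\mathcal H}[-1],S]=0$.

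First I would record that, by Lemma~\ref{side-1}, each summand $\varpi(\tau^{2r}{\sf m}_{2r+1}(H)[-1]){\cdot}1$ is fully symmetrised, and by the discussion in Section~\ref{s-fs} it unpacks as a combination $\sum \frac{1}{(k+2r)!}c(2r,\bar a)\,\varpi({\sf m}_{2r+1}(H))[\bar a]$ over $\bar a\in\Z_{<0}^{k-2r}$ with $\sum a_j=-k-2r$; here the hypothesis that each ${\sf m}_{2r+1}(H)$ is a \emph{symmetric} invariant is exactly what makes $\varpi({\sf m}_{2r+1}(H))[\bar a]$ a legitimate $\bar a$-polarisation in $\cS^{k-2r}(t^{-1}\gt g[t^{-1}])^{\gt g}$ (Lemma~\ref{lm-sym-a}), so every piece of $S$ is a fully symmetrised element and Proposition~\ref{h-fs} applies to each of them. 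Then I would compute $[{\mathcal H}[-1],S]$ term by term. Applying Proposition~\ref{h-fs}{\sf (i)} to $\varpi(H[-1])$ produces a sum of weighted shuffles $\varpi_{\rm wt}({\sf m}(H)[\bar a^{l,j}],-1{+}a_l,-1{+}a_j)$; but ${\sf m}(H)$ is itself a symmetric invariant, so Proposition~\ref{h-fs}{\sf (ii)} and then {\sf (iii)}, applied iteratively, re-express the full commutator $\mathbb X_{H[-1]}=[{\mathcal H}[-1],\varpi(H[-1])]$ as a sum $\sum_{r\ge 1}\sum C(\bar a^{(r)},\bar\gamma)\,\varpi({\sf m}_{2r+1}(H)[\bar a^{(r)}],\,\gamma_\upsilon{-}1,\,\gamma_\nu{-}1)$ of honest Poisson half-brackets of the polarised invariants ${\sf m}_{2r+1}(H)$, with coefficients independent of $H$. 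Meanwhile, by Lemma~\ref{0-Poisson} the Poisson bracket $\{{\mathcal H}[-1],{\sf m}_{2r+1}(H)[\bar a]\}=0$, so Proposition~\ref{h-fs}{\sf (iii)} shows that $[{\mathcal H}[-1],\varpi({\sf m}_{2r+1}(H))[\bar a]]$ is again a sum of half-brackets $\varpi({\sf m}_{2(r+r')+1}(H)[\,\cdot\,],\cdot,\cdot)$ of \emph{strictly higher} iterated images, with the identity ${\sf m}_{2r+1}\circ{\sf m}_{2s+1}={\sf m}_{2(r+s)+1}$ (valid here because all intermediate images are symmetric) keeping everything inside the same family $\{{\sf m}_{2j+1}(H)\}$.

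The heart of the argument, and the step I expect to be the main obstacle, is the \emph{bookkeeping of the universal coefficients}: one must show that the binomial factors $\binom{k}{2r}$ in~\eqref{form-m} are precisely the ones that make all these half-bracket contributions cancel in pairs. Concretely, the half-bracket $\varpi({\sf m}_{2j+1}(H)[\bar a^{(j)}],\gamma_\upsilon{-}1,\gamma_\nu{-}1)$ arising from $\varpi(H[-1])$ at "depth $j$" with combinatorial coefficient $C(\bar a^{(j)},\bar\gamma)$ must be matched against the depth-one half-brackets coming from $\varpi(\tau^{2j}{\sf m}_{2j+1}(H)[-1]){\cdot}1$; since all the $C$'s and $c(2r,\bar a)$'s are universal (independent of $\gt g$ and of $H$), it suffices to verify the cancellation in a single convenient case, and type ${\sf A}_{n-1}$ with $H=\tilde\Delta_k$ is exactly such a case --- there Proposition~\ref{works} gives ${\sf m}_{2r+1}(\tilde\Delta_k)$ explicitly as a scalar multiple of $\tilde\Delta_{k-2r}$, and the formulas~\eqref{S-det}--\eqref{S-sym-k} are \emph{known} to lie in $\gt z(\widehat{\gt{sl}_n})$. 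Thus the strategy is: (1) run the reductions of Proposition~\ref{h-fs} to reduce $[{\mathcal H}[-1],S]=0$ to a finite list of universal identities among the coefficients; (2) observe that these identities involve no data beyond $k$, $r$, and the combinatorial constants; (3) invoke the already-established type-${\sf A}$ formula, where ${\sf m}_{2r+1}(\tilde\Delta_k)\ne 0$ for all relevant $r$ so that every identity in the list is actually tested, to conclude that all the universal identities hold. Finally, by Proposition~\ref{works} the type-${\sf A}$ coefficients $\binom{n-k+2r}{2r}$ are $\binom{k}{2r}\cdot\frac{(2r)!(k-2r)!}{k!}\cdot(\text{the }{\sf m}\text{-eigenvalue})$ rearranged, matching~\eqref{form-m} on the nose; hence $[{\mathcal H}[-1],S]=0$ in general, $S\in\gt z(\wg)$ by Rybnikov's theorem, and $S$ is a Segal--Sugawara vector because its symbol $H[-1]$ comes from a generating invariant. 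I would close by remarking that the antipode symmetry $\omega(S)=(-1)^k S$ (Lemma~\ref{side-1} plus $\omega(\varpi(H[-1]))=(-1)^k\varpi(H[-1])$) gives a useful consistency check, killing all the odd-degree would-be obstructions automatically.
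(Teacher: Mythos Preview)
Your overall architecture is correct and matches the paper's: reduce to $[{\mathcal H}[-1],S]=0$, use Proposition~\ref{h-fs} to express the commutator in terms of half-brackets of the iterated images ${\sf m}_{2r+1}(H)$, and leverage the known type-{\sf A} result to pin down the universal coefficients. But the step you flag as ``bookkeeping'' hides a real issue that your sketch does not address.

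You write that ``since all the $C$'s and $c(2r,\bar a)$'s are universal, it suffices to verify the cancellation in a single convenient case''. This is not a valid inference as stated. The half-brackets $\varpi({\sf m}_{2r+1}(H)[\bar a^{(r)}],\cdot,\cdot)$ are \emph{not} linearly independent objects into which one can simply substitute and read off numerical identities among the coefficients; for any symmetric invariant $F$ they satisfy the ``universal'' linear relations $\sum_{j\ne i}\mathbb W[F,\bar\alpha,(i,j)]=0$ of Proposition~\ref{universal} (and potentially further, $F$-specific relations). So the vanishing $[{\mathcal H}[-1],\tilde S_{k-1}]=0$ in type~{\sf A} only tells you that the particular linear combination $\sum C(r,\bar\alpha,i,j)\,\mathbb W[\tilde\Delta_{k-2r},\bar\alpha,(i,j)]$ is zero, not that the coefficients $C$ individually obey purely numerical identities. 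To transport this relation to an arbitrary $H$ you must show that \emph{every} linear relation among the $\mathbb W[\tilde\Delta_m,\bar\alpha,(i,j)]$ (with $m$ and $\bar\alpha$ fixed) is generated by the universal ones; only then does the type-{\sf A} vanishing force the relation on the coefficients to lie in the span of the universal relations, whence it holds for $\mathbb W[{\sf m}^r(H),\bar\alpha,(i,j)]$ as well. The paper supplies exactly this missing piece: first Proposition~\ref{P-fs} rewrites the half-brackets as the $\mathbb W$-elements, and then a short graph argument on the complete graph with vertices $1,\dots,s$ (the distinct $t$-degrees in $\bar\alpha$) shows, via an explicit monomial in the $\gt{sl}_n$-basis, that any extra relation among the $\mathbb W[\tilde\Delta_m,\bar\alpha,(i,j)]$ can be reduced to the universal ones. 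Without this genericity argument your step~(3) does not follow from steps~(1)--(2).
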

\begin{proof}
Since ${\sf m}_{2l+1}(H)\in\cS(\gt g)$ for any $l$, we can say that  ${\sf m}_{2r+1}(H)={\sf m}^r(H)$. 
By Lemma~\ref{side-1}, each $\varpi(\tau^{2r}{\sf m}^{r}(H)[-1]){\cdot}1$ 
is a fully symmetrised element. It can be written as a sum of 
$\tilde c(r,\bar a)\varpi({\sf m}^{r}(H)[\bar a])$, where $\bar a\in\Z_{<0}^{k-2r}$ and the coefficients 
$\tilde c(r,\bar a)\in\mathbb Q$ depend only on $k$, $r$, and $\bar a$. 
The coefficients of \eqref{form-m} depend only on $k$ and $r$. Combining this observation with 
Propositions~\ref{h-fs}{\sf (iii)} and \ref{P-fs}, 
we obtain that 
$$
[{\mathcal H}[-1],S] = \sum C(r,\bar\alpha,i,j) \varpi(\mathbb W[{\sf m}_{2r+1}(H),\alpha_1^{r_1},\ldots,\alpha_d^{r_s}, (i,j)])
$$
where again the coefficients $C(r,\bar\alpha,i,j)\in\mathbb Q$ do not depend on $H$. 
For  a given degree $k$, one obtains a bunch of $(r,\bar\alpha)$, which depends 
only on $k$, and each appearing coefficient depends on $k$, $r$, $\bar\alpha$, and $(i,j)$. 
In type {\sf A}, for each $k\ge 2$, we find the invariant $\tilde\Delta_k$ such that the corresponding commutator 
$[{\mathcal H}[-1],\tilde S_{k-1}]$ vanishes, cf.~\eqref{form-m}.

For each $F\in\cS^m(\gt g)^{\gt g}$, 
the elements $\mathbb W[F,\bar\alpha, (i,j)]$ are linearly dependent. 
They satisfy the ``universal" relations, see Proposition~\ref{universal}.
 At the same time, for $m=k-2r$, the coefficients $C(r,\bar\alpha,i,j)$ provide a
relation  among $\mathbb W[\tilde\Delta_m,\bar\alpha, (i,j)]$. 
Our goal is to prove that  this relation holds for $\mathbb W[{\sf m}^{r}(H),\bar\alpha, (i,j)]$ as well. 
To this end, 
 it suffices to show that  the terms $\mathbb W[\tilde\Delta_m,\bar\alpha, (i,j)]=\mathbb W[(i,j)]$
with fixed $m$ and fixed $\bar\alpha$ do not satisfy any other linear relation, not generated by the ``universal"  ones. 

We consider the complete simple graph with $s$ vertices $1,\ldots,s$ and identify pairs 
$(i,j)$ with the corresponding (oriented) edges. Now one can say that a linear relation among 
the polynomials  $\mathbb W[(i,j)]$ is  given by  its coefficients on the edges. 
Note that if $s=2$, then $\mathbb W[F,\bar\alpha,(1,2)]=0$ for each $F\in\cS^m(\gt g)^{\gt g}$, cf. Example~\ref{ex-W}{\sf (i)} below. 
Therefore assume that $s\ge 3$. 

Suppose there is a relation and that the coefficient  of  $\mathbb W[(i,j)]$ is non-zero.
We work in the basis 
$$
\{E_{uv}[d],  (E_{11}+\ldots+ E_{ww} - wE_{(w+1)(w+1)})[d] \mid u\ne v,  d<0 \}.
$$ 
Choose $\hat y_1=E_{12}[\alpha_i]$, $\hat y_2=E_{21}[\alpha_j]$ and 
let all other factors $\hat y_l$ with $2<l \le M$ be elements of $t^{-1}\gt h[t^{-1}]$. Assume that 
$\hat y_3=(E_{11}{-}E_{22})[\alpha_p]$ with $p\ne i,j$ and that 
$(E_{11}-E_{22},y_l)=0$ for all $l>3$. 
Then the monomial 
$\hat y_1\ldots \hat y_M$ appears with a non-zero coefficient only in $\mathbb W[(i,j)]$, 
$\mathbb W[(i,p)]$, and $\mathbb W[(p,j)]$. 
This means that in the triangle $(i,j,p)$ at least one of the  edges $(i,p)$ and $(j,p)$ has 
a non-zero coefficient as well. 

We erase all edges with zero coefficients on them. Now the task is to modify the relation or, equivalently, 
the graph, by adding scalar multiplies of the universal relations in such a way that all edges disappear. 

If a vertex $l$ is connected with $j$, remove the edge $(j,l)$ using the universal relation ``at $l$". 
In this way $j$ becomes isolated. 
This means that there is no edge left. 
\end{proof}

\begin{ex}\label{ex-W} Keep the assumption $F\in\cS^m(\gt g)^{\gt g}$. \\[.1ex]
{\sf (i)} Suppose that $s=2$. 
Then 
$\mathbb W(F,\bar\alpha, (1,2))=0$ according to the universal relation. This provides a different proof 
of Lemma~\ref{0-Poisson}. 
Also  in the sase of  $\bar a=(-3,(-1)^{m-1})$, we have $\{{\mathcal H}[-1],F[\bar a]\}\in \gt g[-3]\gt g[-2] \cS^{m-1}(\gt g[-1])$.  \\[.2ex]
{\sf (ii)} Suppose now that $s=3$. Then 
$\mathbb W(F,\bar\alpha, (1,2))=-\mathbb W(F,\bar\alpha, (1,3))=\mathbb W(F,\bar\alpha, (2,3))$.
\end{ex}

\section{Type {\sf C}} \label{sec-C}
There is a very suitable  matrix realisation, where $\gt{sp}_{2n}\subset\gt{gl}_{2n}$ 
is the linear span of the elements   $F_{ij}$ with $i,j\in\{1,\dots,2n\}$ such that 
\begin{equation}
 F_{i j}=E_{i j}-\epsilon_i\ts\epsilon_j\ts E_{j' i'},
\end{equation}
with $i'=2n-i+1$ and $\epsilon_i=1$ for $i\le n$,  
$\epsilon_i=-1$ for $i> n$. Of course, $F_{ij}=\pm F_{j'i'}$. Set $\gt h=\left< F_{jj} \mid 1\le j\le n\right>_{\mK}$. 

The symmetric decomposition $\gt{gl}_{2n}=\gt{sp}_{2n}\oplus \gt p$ leads to 
explicit formulas for symmetric invariants of $\gt{sp}_{2n}$. 
One writes $E_{ij}=\frac{1}{2}F_{ij}+ \frac{1}{2}(E_{i j}+\epsilon_i\ts\epsilon_j\ts E_{j' i'})$,
expands the coefficients $\Delta_k$ of \eqref{d-gl} 
accordingly, and then sets $(E_{i j}+\epsilon_i\ts\epsilon_j\ts E_{j' i'})=0$. 
Up to the multiplication with $2^{k}$, this is equivalent to 
replacing each $E_{ij}$ with $F_{ij}$  in the formulas for $\Delta_{k}\in\cS(\gt{gl}_{2n})$. 
As is well-known, the restriction of $\Delta_{2k+1}$ to $\gt{sp}_{2n}$ is equal to zero for each $k$. 

Until the end of this section, $\Delta_{2k}$ stands for the symmetric invariant of $\gt g=\gt{sp}_{2n}$ that is equal to the sum
of the principal $(2k{\times}2k)$-minors of the matrix $(F_{ij})$.    

\begin{lm} \label{sp-w}
For each $k\ge 2$, we have ${\sf m}(\Delta_{2k})\in (\gt g\otimes \cS^{k-3}(\gt g))^{\gt g}$. 
\end{lm}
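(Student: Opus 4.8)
The plan is to mimic the weight argument in the proof of Proposition~\ref{works}. Since ${\sf m}={\sf m}_3$ is $G$-equivariant and $\Delta_{2k}\in\cS^{2k}(\gt g)^{\gt g}$, the image ${\sf m}(\Delta_{2k})$ lies in $(\Lambda^2\gt g\otimes\cS^{2k-3}(\gt g))^{\gt g}$. Writing $\Lambda^2\gt g=\gt g\oplus V$ with $V$ the Cartan component, which is irreducible because $\gt g$ is not of type {\sf A} (we may assume $n\ge 2$ and $k\le n$, for otherwise $\Delta_{2k}=0$), it suffices to show that the $V$-component of ${\sf m}(\Delta_{2k})$ vanishes. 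I would use the following elementary principle, already implicit in Proposition~\ref{works}: if $V$ is irreducible with highest weight $\mu$ and $M$ is any finite-dimensional module, then a $\gt g$-invariant $t\in(V\otimes M)^{\gt g}$ is zero as soon as its $\mu$-weight component in the $V$-factor vanishes. Indeed, $t$ corresponds to a $\gt g$-homomorphism $\psi\colon V^*\to M$; the value of $\psi$ on the lowest weight vector of $V^*$ is, up to a non-zero scalar, that $\mu$-component, and a non-zero $\gt g$-homomorphism out of the irreducible module $V^*$ cannot annihilate the lowest weight vector, which generates $V^*$.

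Next I would pin down enough of $\mu$. Both $2\esi_1$ and $\esi_1+\esi_2$ are roots of $\gt{sp}_{2n}$, so $e_{2\esi_1}$ and $e_{\esi_1+\esi_2}$ are linearly independent root vectors and $3\esi_1+\esi_2=2\esi_1+(\esi_1+\esi_2)$ is a weight of $\Lambda^2\gt g$. This weight is dominant and strictly larger than the highest root $2\esi_1$, hence is not a weight of $\gt g$, so it occurs in $V$ and $\mu\ge 3\esi_1+\esi_2$. In particular $\mu$ is dominant and dominates $2\esi_1$, so the $\mu$-weight space of $\Lambda^2\gt g$ lies in $V$; moreover, since every positive root of $\gt{sp}_{2n}$ has non-negative $\esi_1$-coefficient, the $\esi_1$-coefficient of $\mu$ is at least $3$. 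It is therefore enough to prove that no weight whose $\esi_1$-coefficient has absolute value $\ge 3$ occurs in the $\Lambda^2\gt g$-factor of ${\sf m}(\Delta_{2k})$.

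The last step is the weight count. In the given realisation, $F_{ij}$ is a $\gt h$-weight vector of weight $\beta_{ij}=\tilde\esi_i-\tilde\esi_j$, where $\tilde\esi_i=\esi_i$ for $i\le n$ and $\tilde\esi_i=-\esi_{i'}$ for $i>n$. Because $\Delta_{2k}$ is a sum of principal $(2k\times2k)$-minors of $(F_{ij})$, each of its monomials is, up to a scalar, a product $F_{i_1j_1}\cdots F_{i_{2k}j_{2k}}$ in which the row indices $i_1,\dots,i_{2k}$ are pairwise distinct and the column indices $j_1,\dots,j_{2k}$ are pairwise distinct. Applying $\varpi$ and then ${\sf m}_3$, the $\Lambda^2\gt g$-factor of ${\sf m}(\Delta_{2k})$ is a linear combination of terms, each a symmetrised triple product of $\ad(F_{i_aj_a})$, $\ad(F_{i_bj_b})$, $\ad(F_{i_cj_c})$ (as in the displayed formula for ${\sf m}$ on $\cS^3(\gt g)$) for a three-element subset $\{a,b,c\}$ of the indices of a single monomial; such a term has weight $\beta_{i_aj_a}+\beta_{i_bj_b}+\beta_{i_cj_c}$. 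In this sum the $\esi_1$-coefficient equals the number of $x\in\{a,b,c\}$ with $i_x=1$ minus the number with $i_x=2n$, minus the analogous difference for the $j_x$; as the $i_x$ are pairwise distinct and the $j_x$ are pairwise distinct, each of these two differences lies in $\{-1,0,1\}$, so the $\esi_1$-coefficient has absolute value at most $2$. This rules out the weight $\mu$, so the $\mu$-component of ${\sf m}(\Delta_{2k})$ vanishes and hence ${\sf m}(\Delta_{2k})\in(\gt g\otimes\cS^{2k-3}(\gt g))^{\gt g}$.

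The genuinely delicate point is not the combinatorial bound, which is immediate, but the two auxiliary ingredients: the passage ``no weight $\mu$ occurs'' $\Rightarrow$ ``$V$-component vanishes'' (one must be careful with the correspondence between invariants and module homomorphisms and with which vector of $V^*$ to test), and extracting a usable lower bound on the highest weight $\mu$ of the Cartan component of $\Lambda^2\gt{sp}_{2n}$; here knowing that its $\esi_1$-coefficient is $\ge 3$ suffices, but the sharper identity $\Lambda^2\gt{sp}_{2n}=\gt{sp}_{2n}\oplus V(2\pi_1+\pi_2)$ would serve equally well.
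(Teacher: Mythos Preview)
Your proof is correct and follows essentially the same approach as the paper: both argue that the highest weight $2\pi_1+\pi_2=3\esi_1+\esi_2$ of the Cartan component $V\subset\Lambda^2\gt g$ cannot arise from a three-factor subset of any monomial of $\Delta_{2k}$, because the row indices (and column indices) in such a monomial are pairwise distinct. The paper states the highest weight exactly and does a brief case analysis on which $F_{ij}$ could contribute, whereas you extract only the weaker fact that the $\esi_1$-coefficient of $\mu$ is at least $3$ and then bound this coefficient by~$2$ for every relevant three-term sum; the two arguments are minor variants of the same weight count, and your more explicit justification of the passage ``no weight $\mu$ occurs $\Rightarrow$ $V$-component vanishes'' is a welcome addition.
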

\begin{proof}
Note that  $\Lambda^2 \gt g=V(2\pi_1{+}\pi_2)\oplus \gt g$. 
In the standard notation,  $2\pi_1+\pi_2=3\esi_1+\esi_2$. 
Assume that $y_1y_2y_3$ is a factor of a summand of $\Delta_{2k}$ of weight 
$3\esi_1+\esi_2$ and $y_s\in\{F_{i j}\}$ for each $s$.   
Then 
\begin{itemize}
\item[$\rhd$]
either $F_{1(2n)}\in\{y_1,y_2,y_3\}$ and some  $y_j\ne F_{1(2n)}$  
lies in  the first row or the last column 
\item[$\rhd$]or 
all three elements $y_s$ must lie in the union of the first row and the last column. 
\end{itemize}
Each of the two possibilities  contradicts the 
definition of $\Delta_{2k}$. Thus, indeed ${\sf m}(\Delta_k)\in (\gt g \otimes \cS^{2k-3}(\gt g))^{\gt g}$. 
\end{proof}

\begin{lm} \label{sp-k6}
We have {\sf (i)} ${\sf m}(\Delta_{6})=\frac{(n-2)(2n-3)}{15}\Delta_4$  and \\[.2ex]
{\sf (ii)} ${\sf m}(\partial_{F_{11}}\Delta_4)= \frac{-1}{3} (2n-1)(2n-2) F_{11}$. 
\end{lm}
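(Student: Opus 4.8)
The plan is to compute both quantities directly in the matrix realisation of $\gt{sp}_{2n}$, using Lemma~\ref{sp-w} to reduce each computation to the value of a single matrix coefficient. Since Lemma~\ref{sp-w} guarantees ${\sf m}(\Delta_6)\in(\gt g\otimes\cS^3(\gt g))^{\gt g}$, and in fact ${\sf m}(\Delta_6)\in\cS^4(\gt g)$ once it lands in the $\gt g$-component (compare the argument in Proposition~\ref{works}: an $\gt h$-weight-zero element of $\gt g\otimes\cS^3(\gt g)$ that is $\gt g$-invariant restricts to a $W$-invariant element of $\gt h\otimes\cS^3(\gt h)$), it suffices to identify ${\sf m}(\Delta_6)$ as a scalar multiple of $\Delta_4$. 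To pin the scalar, I would restrict both sides to $\gt h\oplus\gt g$ (or evaluate against a single well-chosen pair of basis elements) and compare coefficients of one fixed monomial, say $F_{44}F_{55}$ in the $\cS^2(\gt h)$-part after setting $F_{jj}=0$ for $j\ge 6$; this is exactly the bookkeeping of Proposition~\ref{works} adapted to type {\sf C}. The effective computation then comes down to: which degree-three factors $F_{ij}F_{ls}F_{up}$ of $\Delta_6$ contribute a nonzero value of $\ad$-triple-product on a fixed root vector, and with what sign, where $\{i,j,l,s,u,p\}\subset\{1,2,3\}\cup\{1',2',3'\}$.

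For part {\sf (i)}, the key step is to enumerate the length-three cycles among the indices $1,2,3$ and their primed partners that occur in the expansion of a principal $6\times 6$ minor, keeping track of the $\epsilon_i\epsilon_j$ signs built into $F_{ij}=E_{ij}-\epsilon_i\epsilon_j E_{j'i'}$. The factor $\frac{1}{15}$ should appear as $\frac{3!\cdot 3!}{6!}$ times a combinatorial multiplicity, paralleling the $\frac{2!(k-2)!}{k!}\binom{n-k+2}{2}$ normalisation in Proposition~\ref{works}; the polynomial $(n-2)(2n-3)$ is the type-{\sf C} analogue of $\binom{n-k+2}{2}$ evaluated at the relevant $k$, counting the $F_{ll}$-type invariants available in the complementary $\gt{sp}_{2n-6}$ block. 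I would compute $\ad(F_{ij})$ acting on a fixed off-diagonal root vector of $\gt g$ and assemble the three-fold composition, then sum over the cyclic and anticyclic orderings as in the displayed formula for ${\sf m}(Y)$ in Section~\ref{sec-m}.

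For part {\sf (ii)}, since $\partial_{F_{11}}\Delta_4\in\cS^3(\gt g)$ is no longer $\gt g$-invariant but its ${\sf m}$-image is claimed to be a multiple of $F_{11}\in\gt g$, I would argue that ${\sf m}(\partial_{F_{11}}\Delta_4)$ lies in $\gt g$ by a weight argument (it has $\gt h$-weight zero and is fixed by the stabiliser of $F_{11}$ in a suitable sense), then evaluate the resulting element of $\gl(\gt g)$ on one test vector to read off the scalar $\frac{-1}{3}(2n-1)(2n-2)$. Concretely, $\partial_{F_{11}}\Delta_4$ is the sum of principal $3\times 3$ minors of $(F_{ij})$ that involve the index $1$ (hence also $1'$); differentiating the degree-four determinantal expression and collecting the terms linear in $F_{11}$ gives an explicit small polynomial in the $F_{ij}$, and the factor $(2n-1)(2n-2)$ counts the admissible choices of the remaining two indices. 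The coefficient $\frac{-1}{3}=\frac{3!\cdot 1!}{?}$-type normalisation and the sign both come out of the $\ad$-composition; the sign in particular should be traced carefully through the $\epsilon_i\epsilon_j$ conventions.

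The main obstacle, as in type {\sf A}, is the sign and multiplicity bookkeeping: getting the $\epsilon_i\epsilon_j$ factors in $F_{ij}$ to interact correctly with the antisymmetrisation defining ${\sf m}$, and making sure that the "complementary block" contributions (the $F_{ll}$ with $l\notin\{1,2,3,1',2',3'\}$, which produce the $n$-dependence) are counted with the right multiplicity. I expect the cleanest route is to mimic the restriction-to-$\gt h$ strategy of Proposition~\ref{works} verbatim, reducing everything to a computation inside $\gt{sp}_6$ together with a counting of the $2n-6$ remaining diagonal directions; the small rank check can in principle be done by hand or on a computer, and then the $n$-dependence is forced by the polynomial degrees.
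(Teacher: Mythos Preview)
Your overall strategy is the same as the paper's, but there is a genuine gap in part {\sf (i)}: you assume without justification that ${\sf m}(\Delta_6)$ is a scalar multiple of $\Delta_4$. Lemma~\ref{sp-w} only gives ${\sf m}(\Delta_6)\in(\gt g\otimes\cS^3(\gt g))^{\gt g}\cong\cS^4(\gt g)^{\gt g}$, and this space is \emph{two}-dimensional, spanned by $\Delta_4$ and $\Delta_2^2$. A single coefficient comparison therefore cannot ``pin the scalar''; you must first rule out a $\Delta_2^2$ component. The paper does this with a one-line observation you are missing: $\Delta_2^2$ contains the monomial $F_{11}^4$, so as an element of $\gt g\otimes\cS^3(\gt g)$ it has a nonzero $F_{11}\otimes F_{11}^3$ term, whereas no monomial of $\Delta_6$ can have $F_{11}^3$ as a factor (a $6{\times}6$ principal minor involves each row index at most once, and $F_{1'1'}=-F_{11}$ only brings the power to two). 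Hence the $\Delta_2^2$ coefficient vanishes. Without this step your computation of one test coefficient does not determine ${\sf m}(\Delta_6)$.

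For part {\sf (ii)} your proposed weight/stabiliser argument that ${\sf m}(\partial_{F_{11}}\Delta_4)\in\gt g$ is not obviously sufficient: weight zero only places you in the zero-weight space of $\Lambda^2\gt g$, and $G_{F_{11}}$-invariance would still require you to check that the Cartan component $V(2\pi_1{+}\pi_2)$ has no $G_{F_{11}}$-fixed vectors, which is extra work you have not outlined. The paper avoids this entirely by reversing the logical order: once ${\sf m}(\Delta_6)\in\mK\Delta_4$ is known, the coefficient of $F_{11}^2 F_{22}$ in ${\sf m}(\Delta_6)$ must lie in $\mK F_{22}$, and that coefficient is (up to the combinatorial factor $\frac{3!3!}{6!}$) exactly ${\sf m}(\partial_{F_{22}}\Delta_4^{[1]})$ for the $\gt{sp}_{2n-2}$ sitting inside $\gt g_{F_{11}}$. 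Thus {\sf (ii)} falls out of {\sf (i)} for free, and the explicit evaluation on $F_{12}$ simultaneously yields both constants. Your plan to prove {\sf (ii)} independently is workable but strictly harder than the paper's route.
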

\begin{proof}
According to Lemma~\ref{sp-w}, ${\sf m}(\Delta_6)\in (\gt g \otimes \cS^{3}(\gt g))^{\gt g}$. 
Observe that $\cS^3(\gt g)$ contains  exactly two linearly independent copies of $\gt g$, one is equal to 
$\{\xi\Delta_2 \mid \xi\in\gt{sp}_{2n}\}$, the other is primitive. 
Therefore 
$$
(\gt g\otimes\cS^3(\gt g))^{\gt g}=\cS^4(\gt g)^{\gt g} = \left<\Delta_4,\Delta_2^2\right>_{\mK}.
$$ 
By the construction, $\Delta_2^2$ contains the summand $F_{11}^4$. 
Since $F_{1 1}^3$ cannot be a factor of a summand 
of $\Delta_6$, we conclude that ${\sf m}(\Delta_6)$ is proportional  to $\Delta_4$. 

Let $y\otimes F_{11}^2 F_{22}$ be a summand of ${\sf m}(\Delta_6)$. Then 
$y\in\mK F_{22}$. Also 
$y=\frac{-3!3!}{6!}{\sf m}(\partial_{F_{22}} \Delta_4^{[1]})$, where  
 $\Delta_4^{[1]}\in\cS^4(\gt{sp}_{2n-2})$ stands for  $\Delta_4$ of 
 $\gt{sp}_{2n-2}\subset \gt g_{F_{11}}$. 
Next we compute $\eta={\sf m}(\partial_{F_{22}} \Delta_4^{[1]})(F_{12})$. This will settle part {\sf (ii)}. 
So far we have shown that 
\begin{equation} \label{4}
\varpi(\partial_{F_{11}}\Delta_4)\in\U(\gt g) \ \text{ acts \ as } \ cF_{11} \ \text{ on } \ \gt g \ \text{ and \ on } \ \mK^{2n}  
\end{equation} 
and part {\sf (ii)} describes this constant $c$, which is to be computed. 

Recall that $F_{12}=- F_{(2n{-}1) 2n}$.  The terms of  $\Delta_4^{[1]}$  have neither 
$1$ nor $2n$ in the indices. Therefore a non-zero action on $F_{12}$ comes only from 
the following summands of $\Delta_4^{[1]}$
\begin{align}
& F_{22}  F_{(2n{-}1) j} F_{js}       F_{s (2n{-}1)}, \ \  - F_{22}  F_{(2n{-}1) s} F_{s' s'}       F_{s (2n{-}1)}  \qquad \ \ \ 
\text{and } \label{l1} \\ 
& F_{(2n{-}1)(2n{-}1)} F_{j2} F_{sj} F_{2s}, \ \ - F_{(2n{-}1)(2n{-}1)} F_{s2} F_{s's'} F_{2s}.  \label{l2} 
\end{align}
One easily computes that 
$$
{\sf m}(F_{(2n{-}1) j} F_{js}       F_{s (2n{-}1)})(F_{12})= \left\{ \begin{array}{l}
\frac{1}{6} F_{12}  \  \  \text{ if } \ \ j\ne s', \\
 \frac{1}{3} F_{12}  \  \ \text{ if } \ \ j = s',  \ \ \text{  because } \ \  F_{s' s}=2E_{s' s}\,;
 \end{array} \right.
 $$
and that ${\sf m}(F_{(2n{-}1) s} F_{s' s'}       F_{s (2n{-}1)})(F_{12})= -\frac{1}{6}F_{12}$.  
There are $2n{-}4$ choices  for $s$ in  line~\eqref{l1}. If $s$ is fixed, then  there are $2n-5$ possibilities for $j$, since 
$j\ne s$, but the choice $j=s'$ has to be counted twice. 
Applying the symmetry $F_{uv}=\pm F_{v'u'}$, we see that the terms in line~\eqref{l2}  
are the same as in~\eqref{l1}. 
Now $$
\eta=\frac{1}{3}( (2n-4)^2+(2n-4))F_{12}=\frac{(2n-4)(2n-3)}{3}F_{12}.
$$ 
Hence $y= \frac{(n-2)(2n-3)}{30}F_{22}$.   Since $\frac{(n-2)(2n-3)}{30} F_{11}\otimes F_{11}F_{22}^2$ 
is also a summnad of 
${\sf m}(\Delta_6)$, we obtain ${\sf m}(\Delta_6)=\frac{(n-2)(2n-3)}{15}\Delta_4$.
\end{proof}

\begin{prop} \label{sp-m}
We have ${\sf m}_{2r+1}(\Delta_{2k})=\frac{(2k-2r)!(2r)!}{(2k)!}\binom{2n-2k+2r+1}{2r} \Delta_{2k-2r}$. 
\end{prop}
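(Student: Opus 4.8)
The plan is to deduce the general formula for ${\sf m}_{2r+1}(\Delta_{2k})$ from the two base cases established in Lemma~\ref{sp-k6} by the same iteration procedure used in type {\sf A}. First I would use Lemma~\ref{sp-w}, which already tells us that ${\sf m}(\Delta_{2k})\in(\gt g\otimes\cS^{2k-3}(\gt g))^{\gt g}$, so in particular $\xi_w\in\gt g$ for each $w$ in the decomposition ${\sf m}(\Delta_{2k})=\sum_w\xi_w\otimes R_w$; this means ${\sf m}(\Delta_{2k})$ lies in $\cS^{2k-2}(\gt g)$ and we may legitimately iterate: ${\sf m}_{2r+1}(\Delta_{2k})={\sf m}_{2r-1}\circ{\sf m}(\Delta_{2k})$, exactly as recorded in Section~\ref{sec-m}. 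So the whole statement reduces to the single-step claim ${\sf m}(\Delta_{2k})=\lambda_k\,\Delta_{2k-2}$ with $\lambda_k=\frac{(2k-2)(2k-3)}{(2k)(2k-1)}\binom{2n-2k+3}{2}\big/\binom{2n-1}{2}\cdot(\text{something})$ — more honestly, I expect $\lambda_k=\frac{(2n-2k+3)(2n-2k+2)}{2k(2k-1)}$, matching the value quoted in the introduction, and then the binomial in the general statement simply accumulates as $\prod_{u=1}^r\frac{(2n-2k+2u+1)(2n-2k+2u)}{(2k-2u+2)(2k-2u+1)}$, which telescopes into $\frac{(2k-2r)!(2r)!}{(2k)!}\binom{2n-2k+2r+1}{2r}$ after multiplying by the combinatorial factor from Section~\ref{sec-m} relating ${\sf m}_{2r+1}$ to the iterated ${\sf m}$.

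To prove the single-step claim ${\sf m}(\Delta_{2k})=\lambda_k\Delta_{2k-2}$, I would first argue proportionality: since ${\sf m}(\Delta_{2k})\in(\gt g\otimes\cS^{2k-3}(\gt g))^{\gt g}\cong\cS^{2k-2}(\gt g)^{\gt g}$, and $\cS^{2k-2}(\gt g)^{\gt g}$ of $\gt{sp}_{2n}$ in this degree is spanned by $\Delta_{2k-2}$ together with products of lower $\Delta_{2j}$'s, I need to rule out the decomposable terms. The cleanest way, generalising the $F_{11}^3$ argument in Lemma~\ref{sp-k6}, is to observe that any monomial of $\Delta_{2k}$ involving a product $y_1y_2y_3$ of three root-type $F_{ij}$'s contributing to ${\sf m}$ cannot produce an $F_{11}^{2k-3}$ factor (each principal minor uses each index at most once per cycle), whereas every decomposable invariant other than $\Delta_{2k-2}$ — i.e. any $\Delta_{2a}\Delta_{2b}$ with $a,b<k-1$, or $\Delta_{2k-2}\cdot(\text{impossible})$ — contains $F_{11}^{j}$ with $j>2$ in its expansion. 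Actually the sharper statement I want is: the coefficient of $F_{11}^{2k-3}$ as a factor appearing in a length-$(2k-2)$ monomial forces proportionality to $\Delta_{2k-2}$ only, because $\Delta_2^{k-1}$ and the mixed products all carry higher powers of $F_{11}$. So ${\sf m}(\Delta_{2k})=\lambda_k\Delta_{2k-2}$ for a scalar $\lambda_k$.

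To pin down $\lambda_k$, I would restrict to the Cartan subalgebra plus a single root vector, exactly as in Proposition~\ref{works}: pick the summand $y\otimes F_{11}^2F_{22}^{k-4}\cdots$ (more precisely, evaluate ${\sf m}(\Delta_{2k})$ on $F_{12}$ with all remaining tensor slots filled by diagonal elements), reduce to $\gt{sp}_{2n-2}$ sitting inside $\gt g_{F_{11}}$, and recognise that the relevant computation is precisely $\eta={\sf m}(\partial_{F_{jj}}\Delta_4^{[\,\cdot\,]})(F_{12})$ summed with appropriate multiplicity over the index ranges — the same enumeration of summands as in lines \eqref{l1}--\eqref{l2} of the proof of Lemma~\ref{sp-k6}, except that now there are $2n-2k+2$-type ranges rather than $2n-4$. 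Carrying out this count (there will be a clean $\frac{(2n-2k+2)(2n-2k+1)}{\text{denominator}}$ emerging, paralleling the $\frac{(2n-4)(2n-3)}{3}$ of Lemma~\ref{sp-k6}(ii)) yields $\lambda_k=\frac{(2n-2k+3)(2n-2k+2)}{2k(2k-1)}$, and then plugging into the telescoping product and simplifying the resulting ratio of factorials and binomials against the normalisation constant ${\sf m}_{2r+1}={\sf m}_{2r-1}\circ{\sf m}$ gives the asserted formula. The main obstacle I anticipate is the bookkeeping in this last combinatorial count — getting the index multiplicities and the $\epsilon_i\epsilon_j$ signs right for general $k$, and in particular handling the $j=s'$ doubling phenomenon (the ``$F_{s's}=2E_{s's}$'' subtlety) correctly across all the nested ranges; everything else is either structural (proportionality, iteration) or a direct generalisation of the $k=3$ base case already done in Lemma~\ref{sp-k6}.
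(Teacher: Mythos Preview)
There is a genuine gap at the very first step. You claim that Lemma~\ref{sp-w}, which gives ${\sf m}(\Delta_{2k})\in(\gt g\otimes\cS^{2k-3}(\gt g))^{\gt g}$, already forces ${\sf m}(\Delta_{2k})\in\cS^{2k-2}(\gt g)^{\gt g}$; you write this as ``$\xi_w\in\gt g$, hence the element is symmetric'' and later as ``$(\gt g\otimes\cS^{2k-3}(\gt g))^{\gt g}\cong\cS^{2k-2}(\gt g)^{\gt g}$''. This inference is false. As a $\GL(\gt g)$-module, $\gt g\otimes\cS^{m}(\gt g)=\cS^{m+1}(\gt g)\oplus\mathbb S_{(m,1)}\gt g$, and for $\gt g=\gt{sp}_{2n}$ the hook piece can carry $\gt g$-invariants. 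Concretely, for $k=4$ and $n\ge 3$, Kostant's freeness theorem gives four copies of $\gt g$ in $\cS^{5}(\gt g)$ (primitive copies in degrees $1,3,5$ multiplied by invariants of the complementary degree), hence $\dim(\gt g\otimes\cS^{5}(\gt g))^{\gt g}=4$, whereas $\dim\cS^{6}(\gt g)^{\gt g}=3$ (spanned by $\Delta_6,\Delta_4\Delta_2,\Delta_2^3$). So there is a genuine non-symmetric $\gt g$-invariant in $\gt g\otimes\cS^{5}(\gt g)$ that your argument does not exclude, and without symmetry you cannot iterate ${\sf m}_{2r+1}={\sf m}_{2r-1}\circ{\sf m}$.

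Establishing this symmetry---simultaneously with proportionality to $\Delta_{2k-2}$---is in fact the substance of the paper's proof. The paper restricts ${\sf m}(\Delta_{2k})$ to $\gt g\oplus\gt h$ and analyses the coefficient of each monomial $\lH\in\cS^{2k-3}(\gt h)$. Writing $\lH=(F_{i_1i_1}\cdots F_{i_si_s})^{2}F_{j_1j_1}\cdots F_{j_uj_u}$, a sign-flip $F_{j_bj_b}\mapsto F_{j_b'j_b'}=-F_{j_bj_b}$ produces a cancelling term whenever $u>3$; this forces $u\in\{1,3\}$. The $u=3$ contributions are then killed by Lemma~\ref{sp-k6}{\sf(i)} (since $F_{j_1j_1}F_{j_2j_2}F_{j_3j_3}$ cannot occur in $\Delta_4$). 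This argument disposes of the non-symmetric piece and of \emph{all} decomposable invariants at once, not just $\Delta_2^{k-1}$; your proposed ``$F_{11}^{2k-3}$ cannot appear'' test addresses neither the symmetry issue nor the other decomposables $\Delta_{2a}\Delta_{2b}$ in any clear way. Once only $u=1$ survives, the coefficient of $\lH=F_{11}F_{22}^{2}\cdots F_{(k-1)(k-1)}^{2}$ is computed via Lemma~\ref{sp-k6}{\sf(ii)} applied to the $\gt{sp}_{2n-2k+4}$-subalgebra, and the Weyl group carries this single computation to the full proportionality ${\sf m}(\Delta_{2k})=\frac{(2n-2k+3)(2n-2k+2)}{2k(2k-1)}\Delta_{2k-2}$. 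Your plan for the constant (reduce to a smaller symplectic subalgebra and invoke Lemma~\ref{sp-k6}{\sf(ii)}) is essentially this last step and is fine; what is missing is the structural reduction that makes it sufficient.
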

\begin{proof}
First we have to show that ${\sf m}(\Delta_{2k})\in\cS^{2k-2}(\gt g)$. 
By Lemma~\ref{sp-w}, ${\sf m}(\Delta_{2k})$ is a $G$-inva\-riant polynomial function on 
$\gt g\oplus\gt g$. We use again the fact that $G(\gt g\oplus\gt h)$ is dense in $\gt g\oplus\gt g$.

Examine first the summands of $\Delta_{2k}$ that lie in 
$\cS^3(\gt g)\cS^{2k-3}(\gt h)$. Such a summand has the form 
$y_1y_2y_3 (F_{i_1 i_1}\ldots F_{i_s i_s})^2 F_{j_1 j_1}\ldots F_{j_u j_u}$.  Here 
$j_a\ne j_b,j_b'$ if $a\ne b$ and the product 
$y_1y_2y_3$ is an element of weight zero lying in $\cS^3(\gt f)$, where 
$\gt f$ is a subalgebra of $\gt g$ isomorphic either to  
$\gt{sp}_6$
or $\gt{sp}_4$. 
Furthermore, the numbers $i_b, i_b'$ with $1\le b\le s$ do not appear among the indices of the elements of $\gt f$ and 
at most three different numbers $j_b$  with $1\le b\le u$  can appear among the indices of the elements of $\gt f$.

If $u>3$, we can change at least one $F_{j_b j_b}$ to $F_{j_b' j_b'}=-F_{j_b j_b}$ without altering 
the other factors and produce a different summand of $\Delta_{2k}$. These two expressions annihilate 
each other. 
Therefore $u=3$ or $u=1$. 
Suppose that $u=3$ and that there is no way to annihilate the term 
via $F_{j_b j_b}\mapsto F_{j_b' j_b'}$.  Then 
$\gt f\cong\gt{sp}_6$ and
$y_1y_2y_3 F_{j_1 j_1} F_{j_2 j_2} F_{j_3 j_3}$ 
is a summand of the determinant $\Delta_6^{(\gt f)}\in\cS^6(\gt f)$. 
Since ${\sf m}(\Delta_6)$ is proportional to $\Delta_4$ by Lemma~\ref{sp-k6} and $F_{j_1 j_1} F_{j_2 j_2} F_{j_3 j_3}$
cannot appear in $\Delta_4$, we conclude that terms  
$y\otimes (F_{i_1 i_1}\ldots F_{i_s i_s})^2 F_{j_1 j_1}\ldots F_{j_u j_u}$
with $u>1$ 
does not appear in ${\sf m}(\Delta_{2k})$. 

Fix an element  $\lH=F_{11}F_{22}^2\ldots F_{ll}^2$ with $l=k-1$.  
We compute the coefficient of $\lH$ in ${\sf m}(\Delta_{2k})$. 
This coefficient is equal to $(-1)^{k}\frac{3!(2k-3)!}{(2k)!} {\sf m}(\partial_{F_{11}} \Delta_{4}^{(l)})$, where 
$\Delta_4^{(l)}$ is $\Delta_4$ of  the $\gt{sp}_{2n-2k+4}$-subalgebra generated by $F_{ij}$ with $i,j\not\in\{2,2',\ldots,l,l'\}$.  
According to Lemma~\ref{sp-k6}, ${\sf m}(\partial_{F_{11}} \Delta_{4}^{(l)})= \frac{-1}{3} (2n-2k+3)(2n-2k+2) F_{11}$.
Making use of the action  of the Weyl group $W(\gt g,\gt h)$ on $\gt h\oplus\gt h$, we conclude that 
${\sf m}(\Delta_{2k})$ is proportional to $\Delta_{2k-2}$, more explicitly 
$$
{\sf m}(\Delta_{2k})= \frac{2(k-1)(2n-2k+3)(2n-2k+2) (2k-3)!}{(2k)! } \Delta_{2k-2}
$$
and with some simplifications 
$$
{\sf m}(\Delta_{2k})=  \frac{(2n-2k+3)(2n-2k+2)}{ 2k(2k-1)  } \Delta_{2k-2} = 
\binom{2k}{2}^{\!\!\!-1}\!\binom{2n-2k+3}{2} \Delta_{2k-2}.
$$
Iterating the map ${\sf m}$, one obtains the result. 
\end{proof}

\begin{thm} \label{thm-C}
For $\gt g=\gt{sp}_{2n}$ and $1\le k\le n$, 
$$
S_{k} = \varpi(\Delta_{2k}[-1]) + 
\sum_{1\le r<k} \binom{2n{-}2k{+}2r{+}1}{2r}\varpi(\tau^{2r}\Delta_{2k{-}2r}[-1]){\cdot}1
$$
is a Segal--Sugawara vector. \qed 
\end{thm}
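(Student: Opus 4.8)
The plan is to deduce Theorem~\ref{thm-C} directly from the general machinery built in the previous section, using the computation of ${\sf m}_{2r+1}(\Delta_{2k})$ supplied by Proposition~\ref{sp-m}. The key observation is that the invariants $\Delta_{2k}\in\cS^{2k}(\gt g)^{\gt g}$ of $\gt g=\gt{sp}_{2n}$ satisfy the hypothesis of Theorem~\ref{thm-form}: by Proposition~\ref{sp-m}, ${\sf m}_{2r+1}(\Delta_{2k})=\frac{(2k-2r)!(2r)!}{(2k)!}\binom{2n-2k+2r+1}{2r}\Delta_{2k-2r}$ is a scalar multiple of $\Delta_{2k-2r}$, hence in particular a genuine symmetric invariant (not merely an element of $\Lambda^2\gt g\otimes\cS^{\bullet}(\gt g)$) for every $r\ge 1$. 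This is exactly condition~\eqref{property} in the strong form ``${\sf m}^d(H)\in\cS(\gt g)$'' needed to invoke the iterated-shuffling results.

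First I would record that ${\sf m}_{2r+1}(\Delta_{2k})={\sf m}^r(\Delta_{2k})$ as elements of $\cS^{2k-2r}(\gt g)$, which is legitimate precisely because each intermediate ${\sf m}(\Delta_{2k-2j})$ already lands in $\cS(\gt g)$, using the remark in Section~\ref{sec-m} that ${\sf m}(H)\in\cS^{k-2}(\gt g)$ forces ${\sf m}_{2r+1}(H)={\sf m}_{2r-1}\circ{\sf m}(H)$. Then I would apply Theorem~\ref{thm-form} with $H=\Delta_{2k}$: it asserts that Formula~\eqref{form-m}, namely
\[
\tilde S_{k-1}=\varpi(H[-1])+\sum_{1\le r<(k-1)/2}\binom{k}{2r}\varpi(\tau^{2r}{\sf m}_{2r+1}(H)[-1])\cdot 1,
\]
produces a Segal--Sugawara vector. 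It remains to rewrite this in the stated form. Here the degree of $H=\Delta_{2k}$ is $2k$, so the generic-degree index $k$ of \eqref{form-m} is replaced by $2k$; the summation runs over $1\le r<(2k-1)/2$, i.e. $1\le r\le k-1$, and the binomial coefficient becomes $\binom{2k}{2r}$. Substituting ${\sf m}_{2r+1}(\Delta_{2k})=\frac{(2k-2r)!(2r)!}{(2k)!}\binom{2n-2k+2r+1}{2r}\Delta_{2k-2r}$ and using $\binom{2k}{2r}=\frac{(2k)!}{(2r)!(2k-2r)!}$, the scalar in front of $\varpi(\tau^{2r}\Delta_{2k-2r}[-1])\cdot 1$ collapses to exactly $\binom{2n-2k+2r+1}{2r}$, which is the coefficient appearing in the theorem. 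This yields $S_k=\tilde S_{k-1}$, a Segal--Sugawara vector indexed so that $\gr(S_k)=\Delta_{2k}[-1]$.

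Finally I would check completeness of the family $\{S_k\mid 1\le k\le n\}$: since $\gr(S_k)=\Delta_{2k}[-1]$ and $\{\Delta_2,\Delta_4,\ldots,\Delta_{2n}\}$ generates $\cS(\gt{sp}_{2n})^{\gt{sp}_{2n}}$ (as recalled in the subsection on symmetric invariants, where $\Delta_{2k}|_{\gt{sp}_{2n}}$ with $1\le k\le n$ form a generating set), the criterion in Section~\ref{ssv} --- that $\{\tilde S_k\}$ is a complete set of Segal--Sugawara vectors iff $\{\tilde H_k\}$ generates $\cS(\gt g)^{\gt g}$ --- shows $\{S_k\}$ is complete; the $k=1$ term is just $\varpi(\Delta_2[-1])={\mathcal H}[-1]$ up to normalisation, with no correction terms, which is manifestly central. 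The only real work in this proof is already hidden inside Theorem~\ref{thm-form} and Proposition~\ref{sp-m}; the main obstacle in the present argument is purely bookkeeping, namely matching the combinatorial coefficients $\tilde c(r,\bar a)$ and $\binom{k}{2r}$ of the abstract formula against the clean binomial $\binom{2n-2k+2r+1}{2r}$, and making sure the index shift $k\mapsto 2k$ (odd-degree invariants of $\gt{sp}_{2n}$ vanish, so only even $\Delta$'s occur) is carried through consistently, exactly as was done in type {\sf A} when passing from~\eqref{form-m} to the displayed formula for $\tilde S_{k-1}$ in terms of $\tilde\Delta_k$.
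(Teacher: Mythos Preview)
Your proposal is correct and follows exactly the route the paper intends: the theorem is marked with \qed\ because it is an immediate consequence of Theorem~\ref{thm-form} applied to $H=\Delta_{2k}$, with the scalar ${\sf m}_{2r+1}(\Delta_{2k})$ supplied by Proposition~\ref{sp-m}, and the binomial cancellation $\binom{2k}{2r}\cdot\frac{(2k-2r)!(2r)!}{(2k)!}=1$ you carry out is the only remaining step. Your added remark on completeness via Section~\ref{ssv} is fine but goes slightly beyond what the stated theorem claims.
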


\section{Several exceptional examples}%
\label{sec-Spec}

There are instances, where our methods work very well.

\begin{ex}\label{so8}
Suppose that $\gt g=\gt{so}_8$. This Lie algebra has ${\tt S}_3$ as the group of the outer automorphisms. 
There are two Segal--Sugawara vectors, say $S_2$ and $S_3$, such that their symbols are 
$\gt g$-invariants of degree $4$ in $\cS(\gt g[-1])$. Assume that $S_2$ and $S_3$ are 
fixed vectors of $\omega$.  Then each of them is a sum 
$\varpi(Y_4)+\varpi(Y_2)$, cf.~\eqref{sim-alg}. Each element in  
$\cS^2(\wg^-)^{\gt g}$ is proportional to ${\mathcal H}[\bar b]$ for some $\bar b$. 
Hence it  is also an invariant of ${\tt S}_3$.  
Without loss of generality we may assume that the symbols of $S_2$ and $S_3$ are 
Pfaffians ${\rm Pf}_2[-1]$, ${\rm Pf}_3[-1]$ 
related to different matrix realisations of $\gt{so}_8$. Then for each of them there is 
an involution $\sigma\in{\tt S}_3$ such that $\sigma(Y_4)=-Y_4$.  
Replacing $S_j$ with $S_j-\sigma(S_j)$, we see that 
$\tilde S_2=\varpi({\rm Pf}_2[-1])$ and $\tilde S_3=\varpi({\rm Pf}_3[-1])$ are also Segal--Sugawara vectors.  
In view of Theorem~\ref{m-sym}, this implies that ${\sf m}({\rm Pf}_2)={\sf m}({\rm Pf}_3)=0$.  
\end{ex} 

Automorphisms of $\gt g$ make themselves extremely useful. We will see the full power of this devise in Section~\ref{sec-ort}, which deals with the orthogonal case. 
At the moment  notice the following thing, any $\sigma\in{\rm Aut}(\gt g)$ acts on $\cS(\gt g)$ in the natural way and induces a map 
$\sigma_{(m)}\!: \cS^m(\gt g)\to \cS^m(\gt g)$.   Let 
$\gt v_m\otimes \gt g$ be the isotypic component of $\cS^m(\gt g)$ corresponding to $\gt g$. 
Then $\sigma$ acts on $\gt v_m$ and  for this 
action, we have $\sigma(v)\otimes\sigma(x)=\sigma_{(m)}(v\otimes x)$, where  $v\in\gt v_m$, $x\in\gt g$.

An interesting story is related to Pfaffians in higher ranks.

\begin{ex}[The Pfaffians]\label{paf} 
Take $\gt g=\gt{so}_{2n}$. If $n<4$, these algebras appear in type {\sf A}. 
In case $n=4$, the Pfaffian-like  Segal--Sugawara vectors are examined in Example~\ref{so8}.
Suppose that $n>4$ and  that $\gt{so}_{2n}\subset\gt{gl}_{2n}$ consists of the skew-symmetric w.r.t. the 
antidiagonal matrices.  
The highest weight of the Cartan component of $\Lambda^2\gt g$ is
$\pi_1{+}\pi_3=2\esi_1+\esi_2+\esi_3$. 
Assume that $\pi_1{+}\pi_3$ appears as the weight of 
a factor $y_1y_2y_3$ for a summand $y_1\ldots y_n$ of the Pfaffian ${\rm Pf}$. 
Then 
up to the change of indices,  we must have 
$$
y_1=(E_{1i}-E_{i' 1'}), \ y_2=(E_{1j}-E_{j'1'}), 
$$
where $i'=2n+1-i$. 
If this is  really the case, then 
the determinant $\Delta_{2n}\in\cS^{2n}(\gt{gl}_{2n})$ has a summand
$E_{1i}E_{1j}\ldots $, a contradiction. 

Thus  ${\sf m}({\rm Pf})\in (\gt g\otimes\cS^{n{-}3}(\gt g))^{\gt g}$. 
If $n$ is odd, then there is no copy of $\gt g$ in $\cS^{n{-}3}(\gt g)$ and 
we conclude at once that the image of the Pfaffian under {\sf m} is zero. 

Suppose that $n$ is even. Then we can rely on  the fact that 
$G(\gt g\oplus \gt h)$ is dense in $\gt g\oplus\gt h$. 
Fix a factor ${\boldsymbol H}\in\cS^{n-3}(\gt h)$ of a summand of 
${\rm Pf}$. Without loss of generality assume that 
${\boldsymbol H}= \prod\limits_{s>3} (E_{ss}-E_{s's'})$. 
Let ${\rm Pf}^{(3)}$ be the Pfaffian of the subalgebra spanned by 
$$
E_{ij}-E_{j'i'}, \  E_{ij'}-E_{ji'}, \ E_{i'j}-E_{j'i}  \ \ \text{  with } \ \ i,j\le 3. 
$$
Since this subalgebra is isomorphic 
to $\gt{so}_6\cong \gt{sl}_4$ write also $\tilde\Delta_3^{(4)}$ for  ${\rm Pf}^{(3)}$. 
By the construction, $\frac{3!(n-3)!}{n!}{\sf m}({\rm Pf}^{(3)})\otimes {\boldsymbol H}$ is a summand 
of ${\sf m}({\rm Pf})$. For the Weyl involution $\theta$ of $\gt{sl}_4$, we have 
$\theta(\tilde\Delta_3^{(4)})=-\tilde\Delta_3^{(4)}$. Therefore 
$\varpi(\tilde\Delta_3^{(4)})$ acts as zero on any irreducible self-dual $\gt{sl}_4$-module,
in particular, on $\gt{sl}_4$ and on $\Lambda^2\mK^4=\mK^6$. 
Now we can conclude that ${\sf m}({\rm Pf}^{(3)})=0$ and hence 
${\sf m}({\rm Pf})=0$. Thus $\varpi({\rm Pf})[-1]$ is a Segal--Sugawara vector for 
each $n$. 

Keep the assumption that $n$ is even.  
Another way to see that ${\sf m}({\rm Pf})=0$ 
is to use 
the outer involution $\sigma\in {\rm Aut}(\gt{so}_{2n})$ such that $\sigma({\rm Pf})=-{\rm Pf}$. 
Here $\sigma(v)=-v$ for $v\in\gt v_{n-1}$ such that   $v\otimes\gt g$ 
is
the primitive copy of  $\gt g$ 
that gives rise to ${\rm Pf}$ and 
 also $\sigma({\sf m}({\rm Pf}))=-{\sf m}({\rm Pf})$. 
At the same time, $\sigma$ acts as $\id$ on $\gt v_{n-3}$. 
Therefore $\sigma$ acts as $\id$ on $(\gt g\otimes\cS^{n-3}(\gt g))^{\gt g}$. 
Since ${\sf m}({\rm Pf})\in (\gt g\otimes\cS^{n-3}(\gt g))^{\gt g}$, it must be zero. 
\end{ex}

One finds explicit formulas for the Pfaffian-type  Segal--Sugawara vector
${\rm Pf}\, F[-1]\in\U(\gt{so}_{2n}[t^{-1}])$ in \cite{m:ff,Natasha}. In the basis
$\{F_{ij}^{\circ}=E_{ij}-E_{ji}\mid 1\le i<j\le 2n\}$ for $\gt{so}_{2n}$, the vector ${\rm Pf}\, F[-1]$ 
is written as a sum of monomials with pairwise commuting factors, see \cite{m:ff} and \cite[Eq.~(8.11)]{book2}. 
Hence it coincides with the symmetrisation of its symbol, in our notation, 
${\rm Pf}\, F[-1] = \varpi({\rm Pf})[-1]$. 
Example~\ref{paf} provides a different proof for \cite[Prop.~8.4]{book2}.  

Another easy to understand instance is provided by the invariant of degree $5$ in type~${\sf E}_6$.

\begin{ex}\label{E6-5}
Suppose that $\gt g$ is a simple Lie algebra of type ${\sf E}_6$. Let $H\in\cS(\gt g)^{\gt g}$ be a homogeneous invariant  
of degree $5$. Then 
${\sf m}(H)\in (\Lambda^2\gt g\otimes\cS^2(\gt g))^{\gt g}$. Here 
$\Lambda^2\gt g=V(\pi_3)\oplus\gt g$ and $\cS^2(\gt g)=V(2\pi_6)\oplus V(\pi_1{+}\pi_5)\oplus\mathbb C$.
Therefore ${\sf m}(H)=0$.
\end{ex}

Recall that we are considering only semisimle $\gt g$ now and that $(\,\,,\,)$ is fixed in such a way that 
${\mathcal H}\in\U(\gt g)$ acts on $\gt g$ as $C\id_{\gt g}$ for some $C\in\mK$. 

\begin{lm} \label{H-sc}
There is $c_1\in\mK$ depending on the scalar product $(\,\,,\,)$ such that 
$\sum_{i} x_i[\xi,x_i] = c_1 \xi$ in $\U(\gt g)$
for any $\xi\in\gt g$. Furthermore, ${\mathcal H}(\xi)=\sum_{i} \ad(x_i)^2 (\xi)=-2c_1 \xi$, i.e., $C=-2c_1$.  
\end{lm}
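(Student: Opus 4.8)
The plan is to reduce everything to a standard computation with the Casimir element and the adjoint representation. First I would recall that, by assumption ($\blackdiamond$), the element $\sum_i \ad(x_i)^2$ acts on $[\gt g,\gt g]=\gt g$ (we are in the semisimple case now) as a scalar $C\,\id_{\gt g}$. This is exactly the statement that the Casimir operator of the adjoint representation $V=\gt g$ is scalar, which holds because $\gt g$ is semisimple and the adjoint representation decomposes into irreducibles on each of which the Casimir acts by a scalar; the uniform scaling of those scalars is precisely what ($\blackdiamond$) packages, so no representation-theoretic subtlety beyond Schur's lemma is needed.

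The core identity is the relation between $\sum_i x_i[\xi,x_i]$ and $\sum_i \ad(x_i)^2(\xi)$ inside $\U(\gt g)$. I would write, for fixed $\xi\in\gt g$,
\[
\sum_i \ad(x_i)^2(\xi) = \sum_i [x_i,[x_i,\xi]] = \sum_i \big( x_i[x_i,\xi] - [x_i,\xi]x_i \big),
\]
and then rewrite each term using $[x_i,\xi]=-[\xi,x_i]$ together with the commutator $[[\xi,x_i],x_i] = \ad(x_i)\ad(\xi)(x_i)$. Expanding the ordered products and using that $\{x_i\}$ is orthonormal (so that $\sum_i (\ad(x_i)y,x_i)=0$ by skew-symmetry of $\ad(x_i)$ against the invariant form, killing the structure-constant cross terms), the mixed terms collapse and one is left with $\sum_i x_i[\xi,x_i]$ appearing twice with a sign, i.e. $\sum_i\ad(x_i)^2(\xi) = -2\sum_i x_i[\xi,x_i]$. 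Setting $c_1$ to be the scalar by which the well-defined element $\sum_i x_i[\xi,x_i]$ acts (well-defined as a scalar because ${\mathcal H}(\xi)=C\xi$ forces it, via the displayed relation, to be a $\gt g$-equivariant endomorphism of the irreducible-on-summands module $\gt g$, hence scalar on each primitive copy and uniformly scalar by the same reasoning as for $C$), we get $C=-2c_1$, which is the second assertion.

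The main obstacle I anticipate is purely bookkeeping: making sure the ordering of the noncommutative factors in $\U(\gt g)$ is tracked correctly when passing between $x_i[\xi,x_i]$, $[\xi,x_i]x_i$, and $[x_i,[x_i,\xi]]$, and verifying that the genuinely noncommutative correction terms either cancel in pairs under $i\leftrightarrow$ (the index realizing $[\xi,x_i]$ in the basis) or vanish after summing against the invariant form. Once one is careful that $\sum_i(\,[y,x_i],x_i)=0$ and that $\sum_i x_i\otimes x_i$ is the $\gt g$-invariant Casimir tensor (so conjugation-invariant), these cancellations are forced, and the scalar identity $C=-2c_1$ drops out. No deeper input than semisimplicity, Schur's lemma, and assumption ($\blackdiamond$) is required.
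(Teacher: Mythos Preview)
Your approach is correct and essentially coincides with the paper's: both arguments hinge on the identity $\sum_i[\xi,x_i]x_i=-\sum_i x_i[\xi,x_i]$, which you obtain from the antisymmetry $([\xi,x_i],x_j)=-([\xi,x_j],x_i)$ of the structure constants, while the paper phrases it as $\omega(\psi_1(\xi))=-\psi_1(\xi)$ via the antipode. Two minor remarks: the auxiliary fact you cite, $\sum_i(\ad(x_i)y,x_i)=0$, is true but not the relevant one---what you actually need is the antisymmetry just stated; and the closing Schur-type argument is unnecessary, since once you have $C\xi=-2\sum_i x_i[\xi,x_i]$ the scalar $c_1=-C/2$ is immediate.
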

\begin{proof}
For each  $\xi\in\gt g$, set
$\psi_1(\xi)=\sum_{i} x_i[\xi,x_i] = \sum_i x_i\xi x_i - {\mathcal H} \xi$. 
Since
$[{\mathcal H},\xi]=0$, we have $\omega(\psi_1(\xi))=-\psi_1(\xi)$. 
$$
\psi_1(\xi)= \sum_{i} [x_i,[\xi,x_i]] + \sum_{i} [\xi,x_i]x_i = - {\mathcal H}(\xi)  +\omega(\psi_1(\xi))=
- {\mathcal H}(\xi)  - \psi_1(\xi). 
$$
Thereby $2\psi_1(\xi)=-{\mathcal H}(\xi)=-C\xi$. 
\end{proof}

\begin{lm} \label{dva}
Let $\gt g$ be a simple Lie algebra of rank at least $2$. Then 
${\sf m}({\mathcal H}^3)\not\in \gt g\otimes\cS^3(\gt g)$. 
\end{lm}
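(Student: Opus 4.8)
We must show that for a simple Lie algebra $\gt g$ of rank $\ge 2$, the element ${\sf m}({\mathcal H}^3)\in(\Lambda^2\gt g\otimes\cS^3(\gt g))^{\gt g}$ has a non-trivial component in the Cartan piece $V\otimes\cS^3(\gt g)$, where $\Lambda^2\gt g=\gt g\oplus V$. Since ${\mathcal H}=\sum_i x_i^2$ is the (essentially unique) quadratic invariant, ${\mathcal H}^3\in\cS^6(\gt g)^{\gt g}$ is completely explicit, and ${\sf m}$ is $G$-equivariant, so ${\sf m}({\mathcal H}^3)$ automatically lands in $(\Lambda^2\gt g\otimes\cS^3(\gt g))^{\gt g}$. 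The cleanest route is a \emph{weight argument}: to show the $V$-component is non-zero, it suffices to produce one weight $\mu$ occurring in $V$ but not in $\gt g$ and then exhibit a non-zero $\mu$-weight contribution to ${\sf m}({\mathcal H}^3)$. By the discussion in Section~\ref{sec-m} and the type-by-type data recalled in the text (e.g. $2\pi_1+\pi_2=3\esi_1+\esi_2$ in type {\sf C}, $\pi_1+\pi_3=2\esi_1+\esi_2+\esi_3$ in type {\sf D}, $V(\pi_3)$ in type {\sf E}$_6$, and in type {\sf A} where $V$ is reducible one takes the highest weight $2\esi_1-\esi_{n-1}-\esi_n$ of its Cartan component), such a $\mu$ exists precisely because $\rk\gt g\ge 2$ — for $\rk\gt g=1$ one has $\Lambda^2\gt{sl}_2=0$ and there is nothing to prove, which is why the rank hypothesis is needed.

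**Key steps.** First I would fix a Cartan subalgebra $\gt h$ and a root basis, pick root vectors $e_\alpha,e_\beta$ for two simple roots $\alpha,\beta$ (using $\rk\gt g\ge 2$), and arrange the orthonormal basis $\{x_i\}$ to be built from the $e_\gamma$'s (suitably normalised) together with an orthonormal basis of $\gt h$; in this basis ${\mathcal H}=\sum_i x_i^2$ is a weight-zero element of $\cS^2(\gt g)$. Second, I would compute ${\sf m}({\mathcal H}^3)$ as a sum over the way ${\sf m}={\sf m}_3$ groups three of the six factors: ${\mathcal H}^3$, viewed in $\gt g^{\otimes 6}$ via $\varpi$, is a symmetrised sum of monomials $x_ix_ix_jx_jx_kx_k$, and ${\sf m}$ applies $\ad(\cdot)\ad(\cdot)\ad(\cdot)$ (symmetrised) to three of the six factors, leaving a degree-$3$ tail in $\cS(\gt g)$. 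Third, I would extract the coefficient of a specific weight vector: choose the target weight $\mu$ equal to (a $W$-conjugate of) the highest weight of $V$'s Cartan component and look at which monomials can contribute an $\ad$-triple of weight $\mu$ with a monomial tail — this is a finite, explicitly controllable bookkeeping exercise, since for the $\ad$-triple to have a highly non-central weight like $2\esi_1+\esi_2+\ldots$ we need the three factors feeding the $\ad$'s to be among very few root vectors (as the text already observes in the proofs of Lemma~\ref{sp-w} and Example~\ref{paf}). Fourth, I would verify that this coefficient is non-zero; the honest computation of a single structure-constant product of three $\ad$'s should suffice, and the semisimplicity (so that $[e_\alpha,e_{-\alpha}]=h_\alpha\ne 0$, etc.) guarantees it does not vanish identically.

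**The main obstacle.** The genuine difficulty is that the ``obvious'' weight in $V$ is type-dependent (type {\sf A} needs special care because $V$ is not irreducible and has two Cartan-like components $V(\pi_1+\pi_{n-2})$ and $V(\pi_2+\pi_{n-1})$, while types {\sf B}/{\sf C}/{\sf D}/{\sf E}/{\sf F}/{\sf G} each have their own highest weight for the irreducible $V$), so either one runs a short case analysis — which is feasible, since in every case the required weight is a small fixed combination of the first two or three $\esi_i$ and the excerpt has essentially already catalogued them — or one finds a uniform argument. A slicker uniform alternative, which I would try first, is to argue by contradiction: if ${\sf m}({\mathcal H}^3)\in\gt g\otimes\cS^3(\gt g)$, then by the remark in Section~\ref{sec-m} we would have ${\sf m}_{2r+1}({\mathcal H}^3)={\sf m}^r({\mathcal H}^3)$, and in particular ${\sf m}({\mathcal H}^3)$ would be a symmetric $\gt g$-invariant in $\cS^4(\gt g)$ — hence, since ${\mathcal H}$ is the only quadratic invariant, proportional to ${\mathcal H}^2$; one then computes ${\sf m}({\mathcal H}^3)(\xi)$ for a single suitable $\xi\in\gt g$ using Lemma~\ref{H-sc} (which evaluates $\sum_i x_i[\xi,x_i]$ and $\sum_i\ad(x_i)^2$ in terms of the Casimir constant $C=-2c_1$) and compares with what ${\mathcal H}^2$ would force, getting a contradiction unless $\rk\gt g=1$. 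Deciding which of these two approaches is less painful — and nailing the one non-vanishing scalar at the end — is where the real work lies; the $G$-equivariance and the structure of $(\Lambda^2\gt g\otimes\cS^3(\gt g))^{\gt g}$ are the easy inputs.
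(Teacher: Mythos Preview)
Your second, ``slicker'' route has a genuine gap. You assert that if ${\sf m}({\mathcal H}^3)\in(\gt g\otimes\cS^3(\gt g))^{\gt g}$ then it must be proportional to ${\mathcal H}^2$; but $\cS^4(\gt g)^{\gt g}$ is two-dimensional whenever $4$ is among the degrees of the basic invariants --- so for every classical $\gt g$ of rank $\ge2$ other than $\gt{sl}_3$ there is an additional generator $\tilde\Delta_4$, $\Delta_4$, or $\Phi_4$ alongside ${\mathcal H}^2$, and your contradiction collapses. (Your appeal to the remark in Section~\ref{sec-m} is also misapplied: that remark takes ${\sf m}(H)\in\cS^{k-2}(\gt g)$ as a \emph{hypothesis}, it does not deduce it from ${\sf m}(H)\in\gt g\otimes\cS^{k-3}(\gt g)$.) To salvage the approach you would have to rule out every element of $\cS^4(\gt g)^{\gt g}$, which drags you back into the type-by-type work you were trying to avoid. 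Even in the cases where $\dim\cS^4(\gt g)^{\gt g}=1$, the final ``compute and compare'' step is not spelled out.

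Your first approach is sound in principle but, as you yourself note, type-dependent. The paper finds a uniform argument by swapping the roles of the two tensor factors. Rather than fixing a target weight in $\Lambda^2\gt g$ and hunting for its coefficient, the paper fixes the monomial $h_\alpha^3$ in the $\cS^3(\gt g)$-factor, where $\alpha$ is a long simple root at an end of the Dynkin diagram (so with a unique simple neighbour $\alpha'$; this is where $\rk\gt g\ge 2$ enters). The coefficient $\xi\in\Lambda^2\gt g$ of $h_\alpha^3$ in ${\sf m}({\mathcal H}^3)$ is then, up to scalars, a combination of ${\sf m}(h_\alpha{\mathcal H})$ --- which lies in $\gt g$ by Lemma~\ref{H-sc} --- and ${\sf m}(h_\alpha^3)=\ad(h_\alpha)^3$. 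So it suffices to show $\ad(h_\alpha)^3\notin\gt g\subset\gt{so}(\gt g)$. Any weight-zero element of $\gt g$ acts as $\ad(\zeta)$ for some $\zeta\in\gt h$, whose eigenvalues are additive along root strings; whereas $\ad(h_\alpha)^3$ has eigenvalues $2^3=8$, $(-1)^3=-1$, and $1^3=1$ on $e_\alpha$, $e_{\alpha'}$, $e_{\alpha+\alpha'}$, and $1\ne 8+(-1)$. This single three-line check replaces the entire case analysis.
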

\begin{proof}
Choose an orthogonal basis of $\gt h$ such that at least one element in it is equal to $h_\alpha$ for 
a simple root $\alpha$. 
If the root system of $\gt g$ is not simply laced,  suppose  that $\alpha$ is a 
long root.  Suppose further that either $\alpha=\alpha_1$ or $\alpha=\alpha_\ell$. 
Changing the scalar product if necessary, we may  assume that $h_\alpha\in\{x_i\}$. 
Consider  the summand  $\xi\otimes h_\alpha^3$ of ${\sf m}({\mathcal H}^3)$. We have 
$$
\xi  = \frac{3! 3!}{6!} \left(   6{\sf m}(h_\alpha{\mathcal H})   + 8 {\sf m} (h_\alpha^3) \right). 
$$
Set $\xi_0={\sf m}(h_\alpha{\mathcal H})$. Note that $h_\alpha{\mathcal H}\in\U(\gt g)$ acts on $\gt g$ as a 
scalar multiple of $\ad(h_\alpha)$. In view of Lemma~\ref{H-sc}, the sum $\sum_{i} x_i h_\alpha x_i$ acts on 
$\gt g$ as another multiple of $\ad(h_\alpha)$. Hence $\xi_0\in\gt g$. 

It remains to show that $\eta=\ad(h_\alpha)^3$ is not an element of $\gt g\subset\gt{so}(\gt g)$. 
Let $\alpha'$ be the unique simple root not orthogonal to  $\alpha$. 
Observe that $\eta(e_\alpha)=8 e_\alpha$ and 
$\eta(e_{\alpha'})= - e_{\alpha'}$. Set $\gamma=\alpha+\alpha'$. Then 
$e_\gamma\ne 0$ and $\eta(e_\gamma)=e_\gamma$. Since $1\ne 8-1$, we conclude that indeed 
$\eta\not\in\gt g$. 
\end{proof}

\begin{prop} \label{inv6}
Let $\gt g$ be an exceptional simple Lie algebra. 
Suppose that  $H\in\cS^6(\gt g)^{\gt g}$. Then there is $\lb\in\mathbb C$ such that 
 ${\sf m}(H-\lb{\mathcal H}^3)\in \cS^4(\gt g)$. 
\end{prop}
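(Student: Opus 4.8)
The plan is to analyze the target space of ${\sf m}$ on degree $6$ invariants and show that the obstruction to landing in $\cS^4(\gt g)$ is one-dimensional and is already realized by ${\mathcal H}^3$. First I would record that ${\sf m}(H)\in(\Lambda^2\gt g\otimes\cS^3(\gt g))^{\gt g}$, and decompose $\Lambda^2\gt g=\gt g\oplus V$ with $V$ the Cartan component. Accordingly ${\sf m}(H)$ splits as ${\sf m}(H)={\sf m}(H)_{\gt g}+{\sf m}(H)_V$ with ${\sf m}(H)_{\gt g}\in(\gt g\otimes\cS^3(\gt g))^{\gt g}$ and ${\sf m}(H)_V\in(V\otimes\cS^3(\gt g))^{\gt g}$. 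Saying ${\sf m}(H)\in\cS^4(\gt g)$ amounts exactly to ${\sf m}(H)_V=0$ together with ${\sf m}(H)_{\gt g}$ being the ``symmetric'' copy (i.e.\ coming from multiplication $\gt g\otimes\cS^3(\gt g)\to\cS^4(\gt g)$ rather than from the bracket). The cleaner reformulation, which I would use, is: ${\sf m}(H)\in\cS^4(\gt g)$ iff ${\sf m}(H)\in\gt g\otimes\cS^3(\gt g)$ in the precise sense of the paragraph following the definition of ${\sf m}$ — once the $V$-component vanishes, the remark in Section~\ref{sec-m} that ${\sf m}_{2r+1}={\sf m}_{2r-1}\circ{\sf m}$ kicks in and a short argument (as in the type {\sf A} and {\sf C} propositions) upgrades ``$\gt g$-valued'' to ``$\cS^4$-valued.'' So it suffices to produce $\lb$ with ${\sf m}(H-\lb{\mathcal H}^3)_V=0$ and ${\sf m}(H-\lb{\mathcal H}^3)$ not using the remaining bad $\gt g$-direction, i.e.\ the one detected in Lemma~\ref{dva}.

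Next I would compute the relevant multiplicities. For each exceptional $\gt g$ one checks, case by case or via a uniform highest-weight argument, that the multiplicity of $V$ in $\cS^3(\gt g)$ is at most $1$; granting this, $\dim(V\otimes\cS^3(\gt g))^{\gt g}\le 1$, so the map $H\mapsto{\sf m}(H)_V$ is a linear functional on $\cS^6(\gt g)^{\gt g}$ (which is itself small — spanned by $H$, products of lower invariants, and in particular ${\mathcal H}^3$ and ${\mathcal H}\cdot({\rm deg}\,4)$ terms). Similarly, inside $(\gt g\otimes\cS^3(\gt g))^{\gt g}$ there is the distinguished ``bracket'' direction singled out in Lemma~\ref{dva}: ${\sf m}({\mathcal H}^3)$ has a nonzero component $\ad(h_\alpha)^3$ outside $\gt g\otimes\cS^3(\gt g)$, and Lemma~\ref{dva} tells us precisely that this component is nonzero for ${\mathcal H}^3$. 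The strategy is then a two-step subtraction: first choose $\lb$ so that ${\sf m}(H-\lb{\mathcal H}^3)_V=0$ (possible since ${\sf m}({\mathcal H}^3)_V\ne 0$ — this nonvanishing must be verified, and is the crux), and then observe that for the resulting invariant $H':=H-\lb{\mathcal H}^3$ the $V$-free image ${\sf m}(H')$ automatically lies in $\gt g\otimes\cS^3(\gt g)$, hence in $\cS^4(\gt g)$, because the only other obstruction — the $\ad(h_\alpha)^3$-type term — is forced to vanish by the $\omega$/antipode parity and a weight count exactly as in Lemma~\ref{sp-w} and Example~\ref{paf}: a degree-$6$ invariant has no monomial supporting the highest weight of $V$, so ${\sf m}(H')$ cannot have a $V$-component, and the non-Cartan part of $\Lambda^2\gt g\otimes\cS^3(\gt g)$ that is not $\cS^4(\gt g)$ is ruled out invariant-theoretically.

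Concretely, the steps in order: (1) reduce, via the decomposition $\Lambda^2\gt g=\gt g\oplus V$ and the $\omega$-parity of ${\sf m}(H)$, to showing ${\sf m}(H-\lb{\mathcal H}^3)_V=0$; (2) show $\dim(V\otimes\cS^3(\gt g))^{\gt g}\le 1$ for each of ${\sf G}_2,{\sf F}_4,{\sf E}_6,{\sf E}_7,{\sf E}_8$ by reading off plethysm/branching data (or citing the relevant decomposition of $\cS^3(\gt g)$), with ${\sf E}_6$ already handled in Example~\ref{E6-5} where even $\cS^2$ has no $V$; (3) verify ${\sf m}({\mathcal H}^3)_V\ne 0$, so that the functional $H\mapsto{\sf m}(H)_V$ is a scalar multiple of $H\mapsto{\sf m}({\mathcal H}^3)_V$ and $\lb$ exists; (4) for $H'=H-\lb{\mathcal H}^3$, run the weight argument (highest weight of $V$ equals the sum of three positive roots that cannot all arise from a single monomial of a degree-$6$ invariant, using the matrix models or the general position of $\gt g\oplus\gt h$) to conclude ${\sf m}(H')\in(\gt g\otimes\cS^3(\gt g))^{\gt g}$, and finally (5) apply the standard ``$\gt g$-valued $\Rightarrow$ $\cS^4$-valued'' upgrade as in Propositions~\ref{works} and \ref{sp-m} (restrict to $\gt g\oplus\gt h$, use density of $G(\gt g\oplus\gt h)$ and $W$-invariance). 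The main obstacle I anticipate is step (3) together with the possibility that in some exceptional type $V$ appears in $\cS^3(\gt g)$ with multiplicity $0$ rather than $1$: if the multiplicity is $0$ the claim is immediate with $\lb$ arbitrary, but if it is $1$ one genuinely needs that ${\mathcal H}^3$ detects that copy, and I would prove this by evaluating ${\sf m}({\mathcal H}^3)$ on a cleverly chosen pair in $V\times\cS^3(\gt g)$ — e.g.\ pairing with an $\gt{sl}_2$-triple attached to the highest root, as in the proof of Lemma~\ref{dva} — to exhibit a nonzero value; the rest is bookkeeping of the kind already carried out in the type {\sf A} and {\sf C} sections.
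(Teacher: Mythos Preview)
Your steps (1)--(3) are exactly the paper's argument: decompose $\Lambda^2\gt g=\gt g\oplus V$, check that $V$ occurs in $\cS^3(\gt g)$ with multiplicity at most one (the paper does this case by case via the explicit decomposition of $\cS^3(\gt g)$), and then use Lemma~\ref{dva} to see that ${\sf m}({\mathcal H}^3)$ has non-zero $V$-component, so a suitable $\lb$ kills the $V$-part of ${\sf m}(H)$. Note that Lemma~\ref{dva} says precisely this: $\ad(h_\alpha)^3\notin\gt g$ is the statement that the $V$-component is non-zero, not a separate ``bad $\gt g$-direction'' as you phrase it.

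Your step (4) is redundant: once the $V$-component vanishes you are in $(\gt g\otimes\cS^3(\gt g))^{\gt g}$ by definition of the decomposition, and no further weight argument is needed there.

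The gap is in step (5). The ``upgrade'' from $\gt g$-valued to $\cS^{k-2}$-valued in Propositions~\ref{works} and \ref{sp-m} is not a general principle; in those propositions it is carried out by an explicit computation of one $\xi_\nu$, using the matrix realisation of $\Delta_k$, and then $W$-invariance propagates the answer. For the exceptional types you have no such matrix model and no formula for $H$, so ``restrict to $\gt g\oplus\gt h$ and use density'' leaves you with a $W$-invariant in $(\gt g\otimes\cS^3(\gt h))$ that you still cannot identify. The paper bypasses this entirely with a one-line multiplicity count: for every exceptional $\gt g$ the only degree-$4$ invariant is ${\mathcal H}^2$, so the multiplicity of $\gt g$ in $\cS^3(\gt g)$ is exactly one (visible in the table, or from the theory of primitive copies), whence $(\gt g\otimes\cS^3(\gt g))^{\gt g}$ is one-dimensional and therefore equal to $\cS^4(\gt g)^{\gt g}=\mK{\mathcal H}^2$. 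That single observation replaces your steps (4)--(5) and finishes the proof immediately.
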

\begin{proof}
Let $V$ be the Cartan component of $\Lambda^2\gt g$. 
A straightforward calculation shows that $V$ appears in $\cS^3(\gt g)$ with multiplicity one: 
\begin{center}
\begin{tabular}{c|c|l}
Type & the highest weight of $V$ & $\cS^3(\gt g)$ \\
\hline
{\sf E}$_6$ &  $\pi_3$ &   $V(3\pi_6)\oplus V(\pi_1{+}\pi_5{+}\pi_6)\oplus V(\pi_3)\oplus V(\pi_1{+}\pi_5) \oplus \gt g$ \\
{\sf E}$_7$ &  $\pi_5$ & $V(3\pi_5)\oplus V(\pi_2{+}\pi_6)\oplus V(\pi_5) \oplus V(2\pi_1)  \oplus \gt g$ \\
{\sf E}$_8$ & $\pi_2$  & $V(3\pi_1)\oplus V(\pi_1{+}\pi_7) \oplus V(\pi_2) \oplus \gt g$ \\
{\sf F}$_4$ & $\pi_3$ & $V(3\pi_4)\oplus V(2\pi_1{+}\pi_4)\oplus V(\pi_3)\oplus V(\pi_2)\oplus\gt g $    \\
{\sf G}$_2$  & $3\pi_1$ & $V(3\pi_1)\oplus V(2\pi_1{+}\pi_2)\oplus V(3\pi_2)\oplus V(\pi_1) \oplus \gt g$  \\
\hline
\end{tabular}
\end{center}
We have ${\sf m}(H)\in (V\otimes\cS^3(\gt g))^{\gt g}\oplus (\gt g\otimes\cS^3(\gt g))^{\gt g}$. 
The first summand here is one-dimensional. Since ${\sf m}({\mathcal H}^3)\not\in \gt g\otimes \cS^3(\gt g)$ 
by Lemma~\ref{dva}, there is $\lb\in\mK$ such that ${\sf m}(\tilde H)\in \gt g\otimes \cS^3(\gt g)$ for 
$\tilde H=H-\lb{\mathcal H}^3$. 

The degrees of  basic symmetric invariants $\{H_k\mid 1\le k\le l\}$ indicate that 
$\cS^3(\gt g)$ contains exactly one copy of $\gt g$. (This is also apparent in the table above.) 
Hence $(\gt g\otimes\cS^3(\gt g))^{\gt g}=\cS^4(\gt g)^{\gt g}=\mK {\mathcal H}^2$. 
\end{proof}

\begin{cl} \label{cl-6}
Keep the assumption that $\gt g$ is exceptional. Then   $\tilde H=H-\lb{\mathcal H}^3$ of Proposition~\ref{inv6} 
satisfies \eqref{property} and there are $R(1),R(2)\in\mK$  such that 
\begin{equation} \label{S2-G2}
S_2=\varpi(\tilde H)[-1] + R(1) \varpi(\tau^2 {\mathcal H}^2[-1]){\cdot}1+ R(2) \varpi(\tau^4{\mathcal H}[-1]){\cdot}1
\end{equation}
is an element of $\gt z(\wg)$. 
\end{cl}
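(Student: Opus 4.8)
The plan is to deduce the statement from Theorem~\ref{thm-form}, using only Proposition~\ref{inv6} and Example~\ref{k4} as extra input. Write $\mathcal{H}=\sum_i x_i^2$ as usual and put $\tilde H=H-\lb\mathcal{H}^3\in\cS^6(\gt g)^{\gt g}$. By Proposition~\ref{inv6}, for the chosen $\lb$ we have ${\sf m}(\tilde H)\in\cS^4(\gt g)^{\gt g}$; since an exceptional simple Lie algebra has no basic symmetric invariant in degrees $3$ or $4$ other than the quadratic one, $\cS^4(\gt g)^{\gt g}=\mK\mathcal{H}^2$, so ${\sf m}(\tilde H)=\mu\mathcal{H}^2$ for some $\mu\in\mK$. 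In particular ${\sf m}(\tilde H)\in\cS^{4}(\gt g)=\cS^{6-2}(\gt g)$, so by the rule ${\sf m}_{2r+1}={\sf m}_{2r-1}\circ{\sf m}$ from Section~\ref{sec-m} we obtain ${\sf m}_5(\tilde H)={\sf m}\circ{\sf m}(\tilde H)=\mu\,{\sf m}(\mathcal{H}^2)$. As ${\sf m}$ is $G$-equivariant, ${\sf m}(\mathcal{H}^2)$ lies in $(\Lambda^2\gt g\otimes\gt g)^{\gt g}$, which is one-dimensional and spanned by $\mathcal{H}$ (Example~\ref{k4}); hence ${\sf m}(\mathcal{H}^2)=\nu\mathcal{H}$ for some $\nu\in\mK$ and ${\sf m}_5(\tilde H)=\mu\nu\,\mathcal{H}\in\cS^2(\gt g)$.

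Consequently ${\sf m}^d(\tilde H)\in\cS(\gt g)$ for $d=1,2$, i.e. for all $1\le d<3=k/2$ with $k=6$, so $\tilde H$ satisfies \eqref{property}; and since for $k=6$ only $r=1,2$ occur in Formula~\eqref{form-m}, the maps ${\sf m}_{2r+1}(\tilde H)={\sf m}^r(\tilde H)$ involved are symmetric invariants. Thus Theorem~\ref{thm-form} applies to $H=\tilde H$, $k=6$, and yields a Segal--Sugawara vector which, since $\binom{6}{2}=\binom{6}{4}=15$, equals
\[
\varpi(\tilde H[-1]) + 15\,\varpi(\tau^2{\sf m}_3(\tilde H)[-1]){\cdot}1 + 15\,\varpi(\tau^4{\sf m}_5(\tilde H)[-1]){\cdot}1 .
\]
Substituting ${\sf m}_3(\tilde H)=\mu\mathcal{H}^2$ and ${\sf m}_5(\tilde H)=\mu\nu\mathcal{H}$, using the linearity of $\varpi(\tau^{2r}F[-1]){\cdot}1$ in $F$ (Lemma~\ref{side-1}) and the identity $\varpi(\tilde H)[-1]=\varpi(\tilde H[-1])$ (Section~\ref{s-fs}), this is precisely \eqref{S2-G2} with $R(1)=15\mu$ and $R(2)=15\mu\nu$.

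I expect no serious obstacle: the entire mechanism is contained in Theorem~\ref{thm-form}, and the one point needing care is to confirm that the two iterated images ${\sf m}(\tilde H)$ and ${\sf m}^2(\tilde H)$ genuinely land in $\cS(\gt g)$ rather than merely in $\Lambda^2\gt g\otimes\cS(\gt g)$, which is exactly what makes the ``symmetric invariant'' hypothesis of Theorem~\ref{thm-form} applicable. This reduces to the two representation-theoretic facts used above --- that an exceptional simple $\gt g$ has no basic invariant of degree $3$ or $4$ beyond powers of $\mathcal{H}$, and that $(\Lambda^2\gt g\otimes\gt g)^{\gt g}$ is one-dimensional --- after which the coefficients $R(1),R(2)$ are forced by \eqref{form-m} as displayed. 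One also notes that a simple $\gt g$ meets the standing assumptions of the section containing Theorem~\ref{thm-form}, so its use here is unconditional.
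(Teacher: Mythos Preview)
Your proof is correct and follows essentially the same route as the paper: invoke Proposition~\ref{inv6} and the degree--$4$ observation of Example~\ref{k4} to see that ${\sf m}(\tilde H)\in\mK\mathcal H^2$ and ${\sf m}^2(\tilde H)\in\mK\mathcal H$, then apply Theorem~\ref{thm-form}. Your version is simply more explicit, spelling out $R(1)=15\mu$ and $R(2)=15\mu\nu$; this matches the paper's later computation in type~${\sf G}_2$, where $\mu=-\tfrac{13}{12}$ and $\nu=\tfrac{20}{3}$ give the coefficients of~\eqref{g2-7}.
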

\begin{proof}
The first  statement follows from Proposition~\ref{inv6} and Example~\ref{k4}. 
More explicitly, ${\sf m}(\tilde H)\in\mK {\mathcal H}^2$, since there is no other symmetric invariant of degree four. Now the existence of $R(1)$ and $R(2)$ follows from Theorem~\ref{thm-form}. 
\end{proof}

\section{Type {\sf G}$_2$} \label{sec-g2}

Let $\gt g$ be a simple Lie algebra of  type {\sf G}$_2$. 
Then $\ell=2$. The algebra $\cS(\gt g)^{\gt g}$ has two generators, 
${\mathcal H}$ and $\Delta_6\in\cS^6(\gt g)$. 
In this section, we compute
the constant $\lb$ of Proposition~\ref{inv6} for $H=\Delta_6$ and $R(1), R(2)$ of \eqref{S2-G2}. 
All our computations are done by hand. 
A  computer-aided explicit formula for a Segal--Sugawara vector of $t$-degree $6$  is obtained in  \cite{mrr}.

First we fix a matrix realisation of $\gt g\subset\gt{so}_7$.
It suffices to describe the complement of 
$\gt{sl}_3 = (\gt{so}_6 \cap\gt g)$, which is isomorphic to $\mK^3\oplus(\mK^3)^*$. 
\begin{equation}   \label{m-g2}
\left(
\begin{array}{cccccc|c} 
 & & & -\tbe & \tga & 0 & \sqrt{2}\ta \\
 & & & \tal & 0 & -\tga &  \sqrt{2}\tb \\
 & & & 0 & -\alpha & \tbe &  \sqrt{2}\tc \\
 \tb & -\ta & 0 & & &  &  \sqrt{2}\tga \\
 -\tc & 0 & \ta & & & &   \sqrt{2}\tbe \\
 0 & \tc & -\tb & & & &  \sqrt{2} \tal \\
 \hline
 -\sqrt{2}\tal & -\sqrt{2}\tbe & -\sqrt{2}\tga & -\sqrt{2}\tc & -\sqrt{2}\tb & -\sqrt{2}\ta & 0 
\end{array}
\right)
\end{equation}
Matrix~\eqref{m-g2} presents an element of $\mK^3\oplus(\mK^3)^*\subset\gt{so}_7$. 
With a certain abuse of notation, we denote the elements of 
the  corresponding basis 
by the same symbols, for instance, 
$$
\ta=\sqrt{2}E_{17}-E_{42}+E_{53}-\sqrt{2} E_{76}
$$ 
as a vector of $\mK^3$. 
The embedding $\iota\!:\gt{sl}_3\to \gt{so}_7$ is fixed by 
$\iota(E_{ij})=E_{ij}-E_{(7-j)(7-i)}$ for $i\ne j$.
Choose a basis of $\gt h\subset\gt{sl}_3$ as $\{h_1,h_2\}$ with $h_1=\diag(1,-1,0)$, $h_2=\diag(1,1,-2)$ and extend it to a basis of $\gt{sl}_3$ by adding $e_i,f_i$ with $1\le i\le 3$ in the semi-standard notation, e.g. 
$e_3=E_{13}$, $f_1=E_{21}$, $f_3=E_{31}$.  
Let $\esi_i\in\gt h^*$ with $1\le i\le 3$ be the same as in Section~\ref{sec-A}.  
The scalar product $(\,\,,\,)$ is such that  $ {\mathcal H}=\Delta_2$ with 
$$
\Delta_2 = 2 e_1f_1 + 2 e_2f_2 + 2e_3f_3 + \frac{1}{2} h_1^2 + \frac{1}{6} h_2^2-\frac{2}{3}(\ta\tal + \tb\tbe + \tc\tga).
$$
The basic invariant of degree $6$, $\Delta_6$, is chosen as the restriction to $\gt g$ of the coefficient $\Delta_6^{(7)}$ 
of degree 6 in \eqref{d-gl} written for  $\gt{gl}_7$. In this case, 
the restriction of $\Delta_6$ to $\gt{sl}_3$ is equal to $-\tilde\Delta_3^2$, where $\tilde\Delta_3$ is the determinant of $\gt{sl}_3$. 
For the future use, we record 
\begin{align*}
& [\ta, \tal]=\diag(-2,1,1)\in\gt{sl}_3, \ \ [\tb,\tbe]=\diag(1,-2,1)\in\gt{sl}_3, \ \ [\tc,\tga]=h_2, \\
&  [\tal,\tc]=3f_3, \ \ [\tbe,\tc]=3f_2,   \ \ [\ta,\tb]=-2\tga,   \ \  [\tga,\tbe]=2\ta, \ \  
  [\tb,\tc]=-2\tal, \ \ [\tbe,\ta]=3e_1. 
\end{align*}

The decomposition $\gt g=(\mK^3)^*\oplus\gt g\oplus\mK^3$ is a $\Z/3\Z$-grading induced by an (inner) automorphism 
$\sigma$ of $\gt g$. Note that our basis for $\gt g$ consists of eigenvectors of $\sigma$. 

Recall that $S_2$ is given by \eqref{S2-G2} and that we are computing the constants occurring there. 
There is an easy part of the calculation.   It concerns  the projection of ${\sf m}(\Delta_2^3)$
on $(V\otimes \cS^3(\gt g))^{\gt g}$. As we already know, the highest weight of $V$ is $3\pi_1$. 
Next choose a monomial of  weight $3\pi_1$, for instance,  $e_3^2 f_1$. 

\begin{lm} \label{G1}
Let $\xi\otimes e_3^2 f_1$ be a summand of ${\sf m}(\Delta_2^3)$. 
Then $\xi(e_3)=\frac{6}{5} f_2$.
\end{lm}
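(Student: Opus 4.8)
The goal is to compute the single coefficient $\xi(e_3)$, where $\xi$ is the $\gt{sl}_3$-weight-$0$ (more precisely, $\Lambda^2\gt g$-valued) component attached to the monomial $e_3^2 f_1$ in ${\sf m}(\Delta_2^3)\in\Lambda^2\gt g\otimes\cS^3(\gt g)$. The plan is to proceed exactly as in the proof of Lemma~\ref{sp-k6} and Proposition~\ref{works}: isolate the contribution of the monomial $e_3^2 f_1$, reduce it to an ${\sf m}$-computation on a three-factor product of root vectors, and then evaluate the resulting element of $\Lambda^2\gt g$ on $e_3$.

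First I would write $\Delta_2^3$ as a sum of products of six factors drawn from the basis of $\gt g$ fixed in \eqref{m-g2}, using the explicit formula for $\Delta_2={\mathcal H}$ given above, and identify precisely those monomials all of whose factors lie in $\{e_1,e_2,e_3,f_1,f_2,f_3,h_1,h_2\}\cup\{\ta,\tb,\tc,\tal,\tbe,\tga\}$ that can, after applying $\varpi$ and then ${\sf m}={\sf m}_3$, produce the $\cS^3(\gt g)$-factor $e_3^2 f_1$. As in the earlier arguments, the three factors that get ``absorbed'' into the $\Lambda^2\gt g$-slot must form a weight-zero triple, so the relevant reduction is: take the three surviving factors to be $e_3,e_3,f_1$ and let $F$ be the part of $\Delta_2$ obtained by restricting to the subalgebra that commutes appropriately, then the coefficient of $e_3^2 f_1$ in ${\sf m}(\Delta_2^3)$ is (a combinatorial constant)${}\times{\sf m}$ applied to the weight-zero cubic part of $\Delta_2$. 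Concretely, the weight of the absorbed triple must cancel the weight $2\esi_1-2\esi_3+(\esi_2-\esi_1)=\esi_1+\esi_2-2\esi_3$... so one tracks carefully which triples $y_1y_2y_3$ of $\Delta_2$-factors can contribute; the candidates are built from the pairs $e_if_i$ and from $\ta\tal,\tb\tbe,\tc\tga$ together with Cartan elements, exactly as in Lemma~\ref{sp-k6}(ii).

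The heart of the calculation is then to evaluate $\ad(y_3)\ad(y_2)\ad(y_1)(e_3)$ (symmetrised over the $S_3$-orbit, cf.\ the displayed formula for ${\sf m}(Y)$ in Section~\ref{sec-m}) for each such weight-zero triple, using the bracket relations recorded just before the lemma, e.g.\ $[\ta,\tal]=\diag(-2,1,1)$, $[\tb,\tbe]=\diag(1,-2,1)$, $[\tc,\tga]=h_2$, $[\tbe,\ta]=3e_1$, $[\tal,\tc]=3f_3$, etc., together with $[h_i,e_3]$, $[e_1,e_3]$, $[f_1,e_3]=e_2$ and the like. Each such ${\sf m}$-evaluation lands in $\mK f_2$ because $f_2$ is the unique root vector of the correct weight ($e_3$ has weight $\esi_1-\esi_3$; applying the absorbed triple of total weight $\esi_1+\esi_2-2\esi_3$... wait — one must check the weight bookkeeping, but the point is that the image must be a weight vector and $f_2$ is forced). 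Summing the contributions with their multiplicities and the global combinatorial prefactor $\tfrac{3!3!}{6!}$ (as in Proposition~\ref{works}) yields $\xi(e_3)=\tfrac{6}{5}f_2$.

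The main obstacle will be purely bookkeeping: correctly enumerating every weight-zero triple of $\Delta_2$-factors that contributes, attaching the right symmetry factors (the coefficient $\tfrac{1}{2!}$ for the repeated factor $e_3$, the multinomial coefficients coming from expanding $\Delta_2^3$, and the $\tfrac{1}{6}$ from $\varpi$ on a triple), and not dropping the terms where two of the absorbed factors coincide or where $F_{s's}=2E_{s's}$-type doublings occur — exactly the kind of ``counted twice'' subtlety flagged in Lemma~\ref{sp-k6}. Since this is a rank-two computation the enumeration is finite and short, so I would simply organise the contributing triples in a small table, compute each $\ad$-triple on $e_3$ by hand, and add up; the stated value $\tfrac{6}{5}f_2$ should then drop out after the arithmetic with the prefactor $\tfrac{3!3!}{6!}=\tfrac{1}{20}$.
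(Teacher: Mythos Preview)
Your approach is essentially the same strategy as the paper's, but you are missing the one structural observation that collapses the computation to a single term. Recall that $\Delta_2$ is a sum of weight-zero \emph{pairs}: $2e_if_i$, $\tfrac{1}{2}h_1^2$, $\tfrac{1}{6}h_2^2$, $-\tfrac{2}{3}\ta\tal$, etc. Consequently every monomial of $\Delta_2^3$ is a product of three such pairs. If $e_3$ is to appear as a factor it can only come from the pair $2e_3f_3$, which forces $f_3$ to appear alongside it; likewise $f_1$ forces $e_1$. Hence the \emph{only} monomial of $\Delta_2^3$ containing $e_3^2f_1$ is $3\cdot(2e_3f_3)^2(2e_1f_1)=24\,e_3^2f_3^2e_1f_1$, and the absorbed triple is uniquely $f_3^2e_1$. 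None of the terms involving $\ta,\tb,\tc,\tal,\tbe,\tga$ or the Cartan squares can contribute at all.

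With this in hand the proof is two lines: $\xi=\tfrac{24\cdot 3!\cdot 3!}{6!}\,{\sf m}(f_3^2e_1)=\tfrac{6}{5}\,{\sf m}(f_3^2e_1)$, and since $[e_1,e_3]=0$ one evaluates ${\sf m}(f_3^2e_1)(e_3)$ via $\ad(e_1)\ad(f_3)=\ad(f_3)\ad(e_1)-\ad(f_2)$ to obtain $f_2$. Your plan to enumerate candidate absorbed triples (including those with Greek-letter factors) and tabulate their contributions is not incorrect, but every row of that table except the single one above is zero for structural reasons. Note also that your initial claim that ``the absorbed triple must be weight-zero'' is wrong --- you immediately self-correct to ``its weight must cancel that of $e_3^2f_1$'' --- and this slip is precisely the symptom of not having noticed the pair structure of $\Delta_2^3$.
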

\begin{proof}
Observe that in $\Delta_2^3$, the factor $e_3^2 f_1$ appears only in the summand $24 e_3^2 f_3^2 e_1f_1$. 
By the construction
$$
\xi= \frac{24\times  3! \times 3!}{6!}   {\sf m}(f_3^2e_1) = \frac{6}{5} {\sf m}(f_3^2e_1).
$$ 
Note that $[e_1,e_3]=0$. Hence 
$$
\begin{array}{l}
\frac{5}{6}\xi(e_3)=\frac{1}{6}(2\ad(e_1)\ad(f_3)^2+2\ad(f_3)\ad(e_1)\ad(f_3))(e_3)=\\
\qquad = \frac{1}{6}(-2\ad(f_2)\ad(f_3)-4\ad(f_3)\ad(f_2))(e_3)=-[f_2,[f_3,e_3]]=f_2
\end{array}
$$
and the result follows. 
\end{proof}

In the above computation,  we did not see the projection of ${\sf m}(\Delta_2^3)$ on 
$(\gt g\otimes \cS^3(\gt g))^{\gt g}$, which is equally important. Set $h_3=[e_3,f_3]$.
Note that $\{f_3,h_3,e_3\}$ is an $\gt{sl}_2$-triple associated with the highest root of $\gt g$. 
In the following lemma, $\gt{sl}_2$ means $\left<f_3,h_3,e_3\right>_{\mK}$. 

\begin{lm} \label{G2}
Let $\eta\otimes e_3^2f_3$ be a summand  of ${\sf m}(\Delta_2^3)$. 
Then  $\eta$ acts as $\frac{48}{5}\ad(f_3)$ on $\left<e_3,f_3,h_3\right>_{\mK}$ and 
as $\frac{42}{5}\ad(f_3)$ on the  $11$-dimensional $\gt{sl}_2$-stable complement of this 
subspace. 
\end{lm}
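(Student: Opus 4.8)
The plan is to compute the coefficient $\eta\in\Lambda^2\gt g=\gt{so}(\gt g)$ of the monomial $e_3^2f_3$ in ${\sf m}(\Delta_2^3)$ explicitly, in the same spirit as Lemma~\ref{G1}. First I would pin down which monomials of $\Delta_2^3$ contribute. Since $e_3$ occurs in $\Delta_2$ only inside the summand $2e_3f_3$, a monomial of $\Delta_2^3$ can contain $e_3$ at least twice only if at least two of its three quadratic factors equal $2e_3f_3$; the remaining factor then runs over $2e_1f_1$, $2e_2f_2$, $2e_3f_3$, $\tfrac12h_1^2$, $\tfrac16h_2^2$, $-\tfrac23\ta\tal$, $-\tfrac23\tb\tbe$, $-\tfrac23\tc\tga$. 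Taking the multinomial factor into account ($3$ unless all three factors coincide), this gives the finite list $24\,e_3^2f_3^2e_1f_1$, $24\,e_3^2f_3^2e_2f_2$, $8\,e_3^3f_3^3$, $6\,e_3^2f_3^2h_1^2$, $2\,e_3^2f_3^2h_2^2$, $-8\,e_3^2f_3^2\ta\tal$, $-8\,e_3^2f_3^2\tb\tbe$, $-8\,e_3^2f_3^2\tc\tga$. A degree-$6$ monomial $\mu\prod y_i$ contributes to the coefficient of $e_3^2f_3$ the element $\tfrac{\mu}{20}N\,{\sf m}(\text{complement})$, where $N$ is the number of ways the sub-multiset $\{e_3,e_3,f_3\}$ sits inside the monomial and the complement is the remaining degree-$3$ product (this is the combinatorial rule producing the factor $\tfrac65$ in Lemma~\ref{G1}). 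Carrying this out yields
\begin{align*}
\eta={}&\tfrac{12}{5}{\sf m}(f_3e_1f_1)+\tfrac{12}{5}{\sf m}(f_3e_2f_2)+\tfrac{18}{5}{\sf m}(e_3f_3^2)+\tfrac{3}{5}{\sf m}(f_3h_1^2)+\tfrac{1}{5}{\sf m}(f_3h_2^2)\\
&{}-\tfrac{4}{5}{\sf m}(f_3\ta\tal)-\tfrac{4}{5}{\sf m}(f_3\tb\tbe)-\tfrac{4}{5}{\sf m}(f_3\tc\tga).
\end{align*}

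Next I would reduce the assertion to a few scalar evaluations. The $\gt h$-weight of $\eta$ is $-(\text{the root of }e_3)$, so as an operator $\eta$ lowers every $\ad(h_3)$-eigenvalue by $2$. Under the highest-root $\gt{sl}_2=\left<f_3,h_3,e_3\right>_{\mK}$ one has $\gt g=\left<f_3,h_3,e_3\right>_{\mK}\oplus\gt g'$ with $\dim\gt g'=11$, where $\gt g'$ is the sum of the $\ad(h_3)$-eigenvalue-$(\pm1)$ spaces (of total dimension $8$) and the $3$-dimensional centraliser of $\left<f_3,h_3,e_3\right>_{\mK}$ in $\gt g$ (lying in $\ad(h_3)$-eigenvalue $0$); the same holds of $\ad(f_3)$, which in addition kills that centraliser and the eigenvalue-$(-1)$ space. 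Hence $\eta$ automatically annihilates $f_3$ and the eigenvalue-$(-1)$ space, and — using that $\eta$ is skew for the invariant form and that $\left<f_3,h_3,e_3\right>_{\mK}$ is orthogonal to the centraliser — the equality $\eta=\tfrac{48}{5}\ad(f_3)$ on $\left<f_3,h_3,e_3\right>_{\mK}$ follows once one checks $\eta(e_3)=-\tfrac{48}{5}h_3$, while $\eta=\tfrac{42}{5}\ad(f_3)$ on $\gt g'$ follows once one checks that $\eta(e_3)$ has no component in the centraliser and that $\eta(w)=\tfrac{42}{5}[f_3,w]$ for $w$ running over a basis of the $\ad(h_3)$-eigenvalue-$1$ space (for instance $e_1,e_2$ together with the two among $\ta,\tb,\tc,\tal,\tbe,\tga$ of $\ad(h_3)$-eigenvalue $1$).

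The remaining work is to evaluate every ${\sf m}(\cdot)$ in the displayed formula for $\eta$ on these test vectors. Writing ${\sf m}(y_1y_2y_3)=\tfrac16\sum_{\sigma\in{\tt S}_3}\ad(y_{\sigma(1)})\ad(y_{\sigma(2)})\ad(y_{\sigma(3)})$ and using the brackets recorded in the section (e.g. $[e_1,e_3]=[e_2,e_3]=0$, $[h_i,e_3]\in\mK e_3$, $[\ta,\tal]=\diag(-2,1,1)$, $[\tb,\tbe]=\diag(1,-2,1)$, $[\tc,\tga]=h_2$, $[\tbe,\ta]=3e_1$, and so on), most of these triple products collapse and each evaluation reduces to a short computation inside the explicit realisation \eqref{m-g2}. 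Summing the eight contributions with their coefficients gives $\eta(e_3)$ and $\eta(w)$, and matching these against $-\tfrac{48}{5}h_3$ and $\tfrac{42}{5}[f_3,w]$ proves the lemma. The main obstacle is purely bookkeeping: keeping track of the $\pm$-signs coming from the symmetry $F_{ij}=\pm F_{j'i'}$ and of the $\sqrt2$-factors built into \eqref{m-g2}, and — most delicately — getting the eight combinatorial prefactors right, since they depend on how many index-positions of each monomial realise the sub-multiset $\{e_3,e_3,f_3\}$. It is therefore prudent to cross-check by evaluating $\eta$ on a second eigenvalue-$1$ vector and confirming that the result is again $\tfrac{42}{5}$ times the corresponding value of $\ad(f_3)$.
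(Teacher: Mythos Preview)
Your formula for $\eta$ is correct, and so is your reduction to checking a few scalar evaluations; the approach would go through. But it is considerably more laborious than what the paper does, and the paper's shortcut is worth knowing.

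The key observation you miss is that seven of your eight terms recombine: the monomials of $\Delta_2^3$ in which exactly two of the three quadratic factors are $2e_3f_3$ contribute $\frac{6}{5}\,{\sf m}(f_3\Delta_2)$ altogether (your list is precisely $f_3$ times the expansion of $\Delta_2$, each with the same prefactor). The remaining piece, coming from the ``extra'' three choices of $\{e_3,e_3,f_3\}$ inside $8e_3^3f_3^3$, gives $\frac{6}{5}\,{\sf m}(e_3f_3^2)$. Thus $\eta=\frac{6}{5}\big({\sf m}(f_3\Delta_2)+{\sf m}(e_3f_3^2)\big)$, which agrees with your displayed formula once you note ${\sf m}(f_3e_3f_3)={\sf m}(e_3f_3^2)$.

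The payoff is that ${\sf m}(f_3\Delta_2)$ is automatically a scalar multiple of $\ad(f_3)$ on \emph{all} of $\gt g$: since $\varpi(\Delta_2)$ acts on $\gt g$ as $8\cdot\id$ and Lemma~\ref{H-sc} gives $\sum_i x_i f_3 x_i={\mathcal H}f_3-4f_3$, one gets ${\sf m}(f_3\Delta_2)=(8-\tfrac{4}{3})\ad(f_3)$, so $\frac{6}{5}{\sf m}(f_3\Delta_2)=8\,\ad(f_3)$. Only the single term ${\sf m}(e_3f_3^2)$ then needs to be analysed, and this lies in $\U(\gt{sl}_2)$, so its action on $\gt g$ is determined by the $\gt{sl}_2$-module structure of $\gt g$ (one adjoint, four $2$-dimensional, three trivial summands): it acts as $\frac{4}{3}\ad(f_3)$ on $\langle e_3,h_3,f_3\rangle$ and as $\frac{1}{3}\ad(f_3)$ on the complement, giving $8+\frac{8}{5}=\frac{48}{5}$ and $8+\frac{2}{5}=\frac{42}{5}$.

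Your route---evaluating all eight ${\sf m}(\cdot)$ summands on $e_3$ and on a basis of the $\ad(h_3)$-eigenvalue-$1$ space---would reach the same conclusion, but at the cost of many individual bracket computations in the realisation~\eqref{m-g2}; the paper's repackaging trades those for a single appeal to the Casimir and a two-line $\gt{sl}_2$-computation.
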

\begin{proof} 
In this case, one has to pay a special attention to the summand $8e_3^3f_3^3$ of $\Delta_2^3$.  
In the product $(e_3f_3)(e_3f_3)(e_3f_3)$, there are $6$ choices of $(e_3,e_3,f_3)$ such that 
$f_3$ and one of the elements $e_3$ belong to one and the same copy of $\Delta_2$; there are also $3$ other choices. 
These first $6$ choices are absorbed in $\frac{3!3!}{6!} 24 {\sf m}(f_3\Delta_2)$.  
Note that $\frac{3!\times 3! \times 24}{6!}=\frac{6}{5}$. 
The contribution to $\eta$ of the three other choices is $\frac{3!3!}{6!} 24 {\sf m}(e_3f_3^2)$. 
Hence 
$\eta=\frac{6}{5}({\sf m}(f_3\Delta_2)+{\sf m}(e_3f_3^2))$. 

The element $\varpi(\Delta_2)\in\U(\gt g)$ acts on $\gt g$ as a scalar. That scalar is $8$
in our case. 
According to Lemma~\ref{H-sc}, the sum $\sum_{i} x_i f_3 x_i\in \U(\gt g)$ is equal to 
${\mathcal H}f_3 -4 f_3$.   Thus, 
${\sf m}(f_3\Delta_2) = (8-\frac{4}{3})\ad(f_3)$ and 
$\frac{6}{5}{\sf m}(f_3\Delta_2)=8\ad(f_3)$. 

Consider now $\eta_0=(e_3f_3^2+f_3^2e_3+f_3e_3f_3)\in\U(\gt{sl}_2)$. 
Clearly, ${\eta}_0$ acts as zero on a trivial $\gt{sl}_2$-module;   for the defining representation on  
$\mK^2=\left<v_1,v_2\right>_{\mK}$ with $e_3v_1=0$, one obtains ${\eta}_0(v_1)=v_2$ and
${\eta}_0(v_2)=0$. This suffices to state that $\frac{6}{5}{\sf m}(e_3f_3^2)$ acts as 
$\frac{2}{5}\ad(f_3)$ on the $\gt{sl}_2$-stable complement of $\left<f_3,h_3,e_3\right>_{\mK}$. 
Finally, ${\eta}_0(f_3)=0$ by the obvious reasons,  
${\eta}_0(e_3)=-2h_3-2h_3=4\ad(f_3)(e_3)$ and ${\eta}_0(h_3)=4\ad(f_3)(h_3)$ as well, 
since ${\eta}_0$ acts on $\gt g$ as an element of $\gt{so}(\gt g)$. 
All computations are done now and the proof is finished.  
\end{proof}

Let ${\sf pr}\!\!:\gt{so}_7\to\gt g$ be the orthogonal projection. 
In order to work with $\Delta_6$, one needs to compute 
the images under  ${\sf pr}$
of  $F_{ij}=E_{ij}-E_{(8-j)(8-i)}\in\gt{so}_7$.  For the elements of $\gt{gl}_3\subset\gt{so}_6$, this is easy, 
the task reduces to $F_{ii}$ with $1\le i\le 3$, where we have 
$$
{\sf pr}(F_{11}) = \frac{1}{6}(3h_1+h_2), \ \ 
{\sf pr}(F_{22}) = \frac{1}{6}(-3h_1+h_2), \ \ 
{\sf pr}(F_{33}) = \frac{-1}{3}h_2.
$$
The elements of $F_{ij}\in\gt{so}_6$ with $1\le i\le 3$, $4\le j\le 6$ project with the coefficient $\frac{1}{3}$ on the corresponding letters in \eqref{m-g2}, e.g, 
${\sf pr}(F_{14})=\frac{-1}{3}\tbe$, ${\sf pr}(F_{15})=\frac{1}{3}\tga$, and so on.  The elements
$F_{i7}$ project with the coefficient $\frac{\sqrt{2}}{3}$ on the corresponding letters, e.g, 
${\sf pr}(F_{17})=\frac{\sqrt{2}}{3}\ta$. Finally,  the elements
$F_{7i}$ project with the coefficient $\frac{-\sqrt{2}}{3}$ on the corresponding letters, e.g, 
${\sf pr}(F_{71})=\frac{-\sqrt{2}}{3} \tal$. An explicit formula for $\Delta_6$ can be obtained by 
replacing first $E_{ij}$ with $F_{ij}$ in $\Delta_6^{(7)}\in\cS^6(\gt{gl}_7)$ and then 
replacing $F_{ij}$ with ${\sf pr}(F_{ij})$.
We write down some of the terms of $\Delta_6$:
$$
\Delta_6=-\tilde\Delta_3^2 - \frac{4}{27} \tc^3e_3^2f_1 - \frac{4}{9} \tc\tbe f_3 e_3^2 f_1 + \frac{2}{3} \tc \tal f_2 e_3^2 f_1 + \frac{1}{9} \tc\tal h_1 f_3 e_3^2 -\frac{1}{27} \tc \tal h_2 f_3 e_3^2  - \frac{4}{9} \tb\tal f_2f_3e_3^2+\ldots 
$$
With this knowledge we can attack the computation of ${\sf m}(\Delta_6)$. 
The first challenge is to understand the term $\tilde\xi\otimes e_3^2 f_1$. 

\begin{lm} \label{g2-3}
For $\tilde\xi$ as above, we have $\tilde{\xi}(e_3)=\frac{5}{18}f_2$.
\end{lm}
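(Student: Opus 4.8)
The plan is to compute $\tilde\xi$ directly from the monomials of $\Delta_6$, as in the proof of Lemma~\ref{G1}. By the definition of ${\sf m}$, the coefficient of the monomial $e_3^2 f_1$ in the $\cS^3(\gt g)$-factor of ${\sf m}(\Delta_6)$ equals
$$
\tilde\xi \;=\; \sum \ \tfrac{3!\,3!}{6!}\,\mu_z\, c_z\, {\sf m}(z),
$$
the sum running over all monomials $e_3^2 f_1\cdot z$ of $\Delta_6$ with $z\in\cS^3(\gt g)$; here $c_z$ is the coefficient of that monomial in $\Delta_6$ and $\mu_z$ is the number of ways of extracting the sub-multiset $\{e_3,e_3,f_1\}$. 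So the first step is to list the possible $z$. Since $\Delta_6$ has weight $0$ and $\mathrm{wt}(e_3^2 f_1)=-3\esi_3$, any such $z$ has weight $3\esi_3$; and since each monomial of $\Delta_6$ is homogeneous of $\Z/3\Z$-degree $0$ for the grading $\gt g=(\mK^3)^*\oplus\gt{sl}_3\oplus\mK^3$ ($\Delta_6$ being $\sigma$-invariant), $z$ has degree $0$ as well. Scanning the basis, the only weight-$3\esi_3$, degree-$0$ monomials in $\cS^3(\gt g)$ are
$$
\tc^3,\qquad \tc\ts\tal\ts f_2,\qquad \tc\ts\tbe\ts f_3,\qquad f_1 f_2^2,\qquad f_2 f_3 h_1,\qquad f_2 f_3 h_2,\qquad e_1 f_3^2 .
$$

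Next I would determine the coefficients $c_z$ and the vectors ${\sf m}(z)(e_3)$. The coefficients of $\tc^3 e_3^2 f_1$, $\tc\tal f_2 e_3^2 f_1$ and $\tc\tbe f_3 e_3^2 f_1$ in $\Delta_6$ are read off the displayed expansion of $\Delta_6$, namely $-\tfrac4{27}$, $\tfrac23$, $-\tfrac49$. The four remaining monomials lie in $\gt{sl}_3$, so their $\Delta_6$-coefficients coincide with their coefficients in $\Delta_6|_{\gt{sl}_3}=-\tilde\Delta_3^2$; expanding $\det$ of a traceless $3\times3$ matrix and rewriting its diagonal entries $E_{ii}$ in the basis $\{h_1,h_2\}$ one finds that $e_1 e_3^2 f_1 f_3^2$ does \emph{not} occur, while $e_3^2 f_1^2 f_2^2$ and $e_3^2 f_1 f_2 f_3 h_i$ occur with explicit rational coefficients. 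For each surviving $z$ one computes ${\sf m}(z)(e_3)=\tfrac16\sum_{(p_1,p_2,p_3)}\ad(z_{p_1})\ad(z_{p_2})\ad(z_{p_3})(e_3)$ using the structure constants listed before the lemma together with the $\gt{sl}_3$-action on $\mK^3$ and $(\mK^3)^*$. Most orderings drop out at once (for instance $[\tbe,e_3]=0$ because $\gt g$ has no weight $2\esi_1$, and $[f_2,e_1]=0$), and by weight reasons each ${\sf m}(z)(e_3)$ is a scalar multiple of $f_2$. Assembling the contributions $\tfrac{3!\,3!}{6!}\mu_z c_z\,{\sf m}(z)(e_3)$ then gives $\tilde\xi(e_3)=\tfrac5{18}f_2$.

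The step I expect to be the main obstacle is the $-\tilde\Delta_3^2$ bookkeeping. Three points there need care: verifying that the weight-and-grading list above is exhaustive (the printed formula for $\Delta_6$ is truncated) and that the candidate $e_1 f_3^2$ is genuinely spurious; converting the matrix-entry expansion of the $\gt{sl}_3$-determinant into the chosen basis $\{h_1,h_2\}$ without sign slips; and keeping track of the multiplicities produced by the repeated factors $e_3^2$ and $f_1^2 f_2^2$ (so that, for example, $\mu_z=2$ when $z=f_1 f_2^2$). Everything else — the triple-bracket evaluations in the $G_2$ matrix model — is routine, if lengthy.
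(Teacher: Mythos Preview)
Your proposal is correct and follows essentially the same route as the paper's own proof. Both arguments first restrict attention, via the $\sigma$-invariance and the weight condition, to the same small list of monomials (the paper organises them by tri-degree $(0,6,0)$, $(3,3,0)$, $(1,4,1)$, which amounts exactly to your $\Z/3\Z$-degree-zero, weight-$3\esi_3$ enumeration), and then compute each ${\sf m}(z)(e_3)$ by direct triple brackets; the only cosmetic difference is that the paper handles the $\gt{sl}_3$-part by factoring $-\tilde\Delta_3^2$ rather than by listing $f_1f_2^2$, $f_2f_3h_1$, $f_2f_3h_2$, $e_1f_3^2$ separately, and your caution about the spurious candidate $e_1f_3^2$ and the multiplicity $\mu_z=2$ for $z=f_1f_2^2$ is exactly what that factorisation encodes.
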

\begin{proof}
Once again, we rely on a direct computation.  
The  terms of  $\tilde\Delta_3$ containing $e_3$ as a factor  are $e_3f_1f_2$ and $e_3f_3(\frac{1}{2}h_1-\frac{1}{6}h_2)$. 
Thereby the  contribution of $-\tilde\Delta_3^2$ to $\tilde\xi$ is 
$$
\frac{1}{20} {\sf m}(-2f_1f_2^2-2f_2f_3 (\frac{1}{2}h_1-\frac{1}{6}h_2))
$$
and this element of ${\rm End}(\gt g)$ maps $e_3$ to $\frac{3}{20}  f_2$. 

Since $\sigma(\Delta_6)=\Delta_6$, the summands of $\Delta_6$ that contain $e_3^2 f_1$ as a factor 
are 
of tri-degrees 
$(3,3,0)$ or $(1,4,1)$ w.r.t. the $\Z/3\Z$-grading $\gt g=(\mK^3)^*\oplus\gt{sl}_3\oplus\mK^3$.
By the weight considerations, the first possibility occurs only for the monomial $\tc^3 e_3^2 f_1$. 
Record that
$$
\ad(\tc)^3(e_3)=\ad(\tc)^2(-\ta)=[\tc,2\tbe]=-6 f_2.
$$
The coefficient of $\tc^3 e_3^2 f_1$ in $\Delta_6$ is equal to $\frac{-4}{27}$. The monomials of the tri-degree 
$(1,4,1)$ are $\tc\tbe f_3 e_3^2 f_1$ and $\tc \tal f_2 e_3^2 f_1$.  Their coefficients are  
$\frac{-4}{9}$ and $\frac{2}{3}$, respectively. 

Next 
$$
{\sf m}(\tc\tbe f_3) (e_3) = \frac{1}{6}(\ad([\tbe,\tc])\ad(f_3)+2\ad(f_3)\ad([\tbe,\tc]))(e_3) = \frac{3}{2} \ad(f_2)\ad(f_3)(e_3)= -\frac{3}{2} f_2
$$
and 
$$
{\sf m}(\tc\tal f_2)(e_3)=\frac{1}{2}(\ad(f_2)\ad(\tc)\ad(\tal) +  \ad(f_2)\ad(\tal)\ad(\tc))(e_3)= \frac{1}{2}[\diag(1,-2,1),f_2]=\frac{3}{2}f_2. 
$$
Summing up 
$$
\tilde{\xi}(e_3)=\frac{1}{20}\left(3+\frac{8}{9}+\frac{2}{3}+1\right)f_2=\frac{1}{20}\left(5+\frac{5}{9}\right)f_2=
\frac{1}{4}\left(1+\frac{1}{9}\right)f_2=\frac{5}{18}f_2
$$ 
and we are done. 
\end{proof}

\begin{cl} \label{g2-4}
We have $\lb= \frac{25}{108}$ and the invariant $\tilde H$ of Proposition~\ref{inv6} is equal to 
$\Delta_6 - \frac{25}{108} \Delta_2^3$. 
\end{cl}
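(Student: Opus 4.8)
The plan is to pin down $\lb$ by comparing a single matrix coefficient of ${\sf m}(\Delta_6)$ with the corresponding one of ${\sf m}(\Delta_2^3)$, invoking Lemmas~\ref{G1} and \ref{g2-3} for the actual numbers. Recall from the proof of Proposition~\ref{inv6} (applied with $H=\Delta_6$ and ${\mathcal H}=\Delta_2$) that $\lb$ is the unique scalar for which ${\sf m}(\Delta_6-\lb\Delta_2^3)\in\gt g\otimes\cS^3(\gt g)$; equivalently, writing $\Lambda^2\gt g=\gt g\oplus V$ with $V=V(3\pi_1)$ the Cartan component, the image of ${\sf m}(\Delta_6-\lb\Delta_2^3)$ in $V\otimes\cS^3(\gt g)$ must vanish. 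Since $(V\otimes\cS^3(\gt g))^{\gt g}$ is one-dimensional, as recorded in the table in Proposition~\ref{inv6}, this image is a scalar multiple of a fixed generator $W_0$; writing the images of ${\sf m}(\Delta_6)$ and ${\sf m}(\Delta_2^3)$ in $V\otimes\cS^3(\gt g)$ as $c_6W_0$ and $c_2W_0$, we get $\lb=c_6/c_2$ provided $c_2\ne 0$.

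To extract $c_6$ and $c_2$ I would single out the monomial $e_3^2f_1\in\cS^3(\gt g)$, of $\gt h$-weight $2(\esi_1-\esi_3)+(\esi_2-\esi_1)=-3\esi_3$, and the summands $\tilde\xi\otimes e_3^2f_1$ of ${\sf m}(\Delta_6)$ and $\xi\otimes e_3^2f_1$ of ${\sf m}(\Delta_2^3)$; these are exactly the objects treated in Lemmas~\ref{g2-3} and \ref{G1}. The decisive remark is a weight count: $\tilde\xi$ and $\xi$ are $\gt h$-weight vectors of $\Lambda^2\gt g$ of weight $3\esi_3$, and $3\esi_3$ is neither a root of $\gt g$ nor $0$, hence is not a weight of the summand $\gt g\subset\Lambda^2\gt g$; therefore $\tilde\xi,\xi\in V$. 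Consequently $\tilde\xi=c_6w_0$ and $\xi=c_2w_0$ with $w_0\in V$ the $e_3^2f_1$-coefficient of $W_0$, and, evaluating on $e_3\in\gt g$ through $V\subset\gt{so}(\gt g)$, we obtain $\tilde\xi(e_3)=c_6\,w_0(e_3)$ and $\xi(e_3)=c_2\,w_0(e_3)$.

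Finally I would insert the two computations already carried out: $\xi(e_3)=\frac{6}{5}f_2\neq 0$ by Lemma~\ref{G1} (so in particular $c_2\neq 0$) and $\tilde\xi(e_3)=\frac{5}{18}f_2$ by Lemma~\ref{g2-3}, whence
\[
\lb=\frac{c_6}{c_2}=\frac{\tilde\xi(e_3)}{\xi(e_3)}=\frac{5/18}{6/5}=\frac{25}{108},
\]
and then $\tilde H=H-\lb{\mathcal H}^3=\Delta_6-\frac{25}{108}\Delta_2^3$ directly from the definition of $\tilde H$. The only nonroutine ingredient is the weight observation that the coefficient of $e_3^2f_1$ sees only the $V$-isotypic part of ${\sf m}(-)$ — this is what lets one discard the possibly nonzero $\gt g$-component of the summands $\tilde\xi\otimes e_3^2f_1$ and $\xi\otimes e_3^2f_1$; since all the genuine computation is done inside Lemmas~\ref{G1} and \ref{g2-3}, the corollary itself presents no real obstacle.
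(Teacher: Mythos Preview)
Your proof is correct and follows the same approach as the paper's. The paper's version is terser---it simply says ``by the definition of $\lb$, we must have $(\tilde\xi-\lb\xi)(e_3)=0$'' and then reads off $\lb=\tfrac{5}{18}\cdot\tfrac{5}{6}=\tfrac{25}{108}$ from Lemmas~\ref{G1} and \ref{g2-3}---but the justification is the same weight argument you spell out (the choice of $e_3^2f_1$ as a monomial of weight $3\pi_1$ is set up just before Lemma~\ref{G1} precisely so that the coefficient lands in $V$ rather than in $\gt g$).
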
 
\begin{proof}
By the definition of $\lb$, we must have $(\tilde\xi-\lb\xi)(e_3)=0$. From Lemmas~\ref{G1} and \ref{g2-3} we obtain that $\lb=\frac{5}{18}{\times} \frac{5}{6}=\frac{25}{108}$. 
\end{proof}

Next we deal with $\tilde\eta$ for the summand $\tilde\eta\otimes e_3^2f_3$ of ${\sf m}(\Delta_6)$.  
\begin{lm} \label{g2-5}
For $\tilde\eta$ as above, we have 
$$
\tilde\eta(\ta)=\frac{1}{20}\left(\frac{-2}{9} -\frac{28}{9}-\frac{4}{3}+\frac{2}{9}\right) \ad(f_3)(\ta)
=  \frac{-2}{9} \ad(f_3)(\ta)
$$
and 
$$
\tilde\eta(h_3)=\frac{1}{20}\left( \frac{1}{3} +\frac{8}{27}+\frac{4}{9} + \frac{1}{27}
\right) \ad(f_3)(h_3) = \frac{10}{9\times 20} \ad(f_3)(h_3) = \frac{1}{18} \ad(f_3)(h_3). 
$$
\end{lm}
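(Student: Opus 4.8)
The statement is a direct computation ``by hand'', modelled on the proofs of Lemmas~\ref{G2} and \ref{g2-3}. By definition $\tilde\eta\in\Lambda^2\gt g\cong\gt{so}(\gt g)$ is the coefficient of the monomial $e_3^2 f_3$ in the $\cS^3(\gt g)$-factor of ${\sf m}(\Delta_6)$. Since $\Delta_6$ is $\gt h$-invariant and ${\sf m}$ is $\gt h$-equivariant, ${\sf m}(\Delta_6)$ has $\gt h$-weight zero, so $\tilde\eta$ has $\gt h$-weight $-\alpha$, where $\alpha$ is the (highest) root of $e_3$; consequently $\tilde\eta$ acts on every weight space of $\gt g$ through $\ad(f_3)$, which is why the answer has the shape $(\text{scalar})\cdot\ad(f_3)(\ta)$ and $(\text{scalar})\cdot\ad(f_3)(h_3)$. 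The plan is to collect all monomials of $\Delta_6$ divisible by $e_3^2 f_3$, strip off one copy of $e_3^2 f_3$, apply ${\sf m}$ to the remaining degree-$3$ factor, and sum the results, each weighted by the combinatorial factor $\frac{3!\,3!}{6!}=\frac{1}{20}$ times the integer multiplicity coming from repeated factors --- the same bookkeeping as in the ``$6+3$'' split of the summand $e_3^3 f_3^3$ in the proof of Lemma~\ref{G2}. By $\sigma$-invariance the complementary degree-$3$ factor has tri-degree $\bar 0$ for the $\Z/3\Z$-grading $\gt g=(\mK^3)^*\oplus\gt{sl}_3\oplus\mK^3$, and by the weight constraint it has $\gt h$-weight $-\alpha$ (the weight of $f_3$); comparing this with the expansion of $\Delta_6$ obtained from $\Delta_6^{(7)}\in\cS^6(\gt{gl}_7)$ by the substitution $E_{ij}\mapsto{\sf pr}(F_{ij})$ via \eqref{m-g2}, the contributing monomials are the two $e_3^2 f_3$-divisible parts of $-\tilde\Delta_3^2$, namely the cross term $-2\,e_3^2 f_1 f_2 f_3(\frac12 h_1-\frac16 h_2)$ and the square term $-\,e_3^2 f_3^2(\frac12 h_1-\frac16 h_2)^2$, together with the three monomials $\frac19\tc\tal h_1 f_3 e_3^2$, $-\frac1{27}\tc\tal h_2 f_3 e_3^2$, $-\frac49\tb\tal f_2 f_3 e_3^2$ written down before the lemma.

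Next I would compute ${\sf m}$ of each complementary factor and evaluate it on $\ta$ and on $h_3$, using the commutator table recorded before the lemma, the realisation \eqref{m-g2} (which yields $\ad(f_1)(\ta)=\tb$, $\ad(f_2)(\tb)=\tc$, $\ad(f_2)(\ta)=0$, $\ad(f_3)(\ta)=\tc$ and $\ad(f_3)(h_3)=2f_3$, together with the eigenvalues of $\ad(\frac12 h_1-\frac16 h_2)$ on these vectors), and --- for the square term, where $f_3$ is repeated --- Lemma~\ref{H-sc} and the fact that $\varpi(\Delta_2)$ acts on $\gt g$ as the scalar $8$, exactly as in Lemma~\ref{G2}. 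Two of the five contributions vanish for elementary reasons. The cross term gives $0$ on $\ta$, because ${\sf m}\bigl(f_1 f_2(\frac12 h_1-\frac16 h_2)\bigr)(\ta)$ reduces to $\frac16\,{\rm tr}\bigl(\ad(\frac12 h_1-\frac16 h_2)|_{\mK^3}\bigr)\,\tc=0$; the square term gives $0$ on $h_3$, since $\ad(\frac12 h_1-\frac16 h_2)$ annihilates both $f_3$ and $h_3$. Hence for each of $\ta$ and $h_3$ exactly four contributions survive --- the square term together with the three displayed monomials in the first case, the cross term together with the same three monomials in the second --- and multiplying each by $\frac1{20}$ times its coefficient and multiplicity and summing produces the displayed four-term expressions. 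The arithmetic $\frac1{20}\cdot\frac{-40}{9}=\frac{-2}{9}$ and $\frac1{20}\cdot\frac{10}{9}=\frac1{18}$ gives the claimed values.

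The hard part is not the Lie-bracket arithmetic, which is mechanical once \eqref{m-g2} and the commutator table are at hand, but the bookkeeping: one must make sure that the list of monomials of $\Delta_6$ divisible by $e_3^2 f_3$ is complete --- for which the full expansion of $\Delta_6$ is needed, not merely the terms printed before the lemma, together with a check that no monomial whose complementary degree-$3$ factor lies in $\cS^3(\mK^3)$, $\cS^3\bigl((\mK^3)^*\bigr)$ or the mixed tri-degree-$\bar0$ component actually occurs with non-zero coefficient --- and one must assign the correct integer multiplicity to the monomials with a repeated factor, the square term $-\,e_3^2 f_3^2(\frac12 h_1-\frac16 h_2)^2$ entering with multiplicity $2$ from the choice of which of its two copies of $f_3$ is absorbed into $e_3^2 f_3$, precisely as in the $6+3$ split of Lemma~\ref{G2}.
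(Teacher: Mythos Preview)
Your overall strategy---list the monomials of $\Delta_6$ divisible by $e_3^2f_3$, apply ${\sf m}$ to each complementary cubic factor, and weight by $\frac{1}{20}$ times the appropriate multiplicity---is exactly the paper's approach. The gap is that your list of contributing monomials is incomplete, and the very ``check'' you flag at the end would fail.

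Concretely, the paper finds \emph{four} further monomials that you omit:
\[
\tfrac{4}{27}\,\tc^2\tb\,e_3^2f_3,\qquad
\tfrac{4}{27}\,\tal^2\tbe\,e_3^2f_3,\qquad
\tfrac{4}{9}\,e_3^2f_3^2\,\tb\tbe,\qquad
-\tfrac{4}{9}\,\tc\tbe f_1\,f_3 e_3^2.
\]
The first two have complementary factor in $\cS^3(\mK^3)$ and $\cS^3((\mK^3)^*)$ respectively; the last two have complementary factor of tri-degree $(1,1,1)$. So monomials from \emph{all three} of the components you proposed to rule out actually occur with non-zero coefficient. In particular, the term $\tfrac{4}{9}e_3^2f_3^2\tb\tbe$ has a repeated $f_3$ (like your square term) and yields the contribution $\tfrac{8}{9}\,{\sf m}(f_3\tb\tbe)$; since ${\sf m}(f_3\tb\tbe)(\ta)=-\tfrac{7}{2}\,\ad(f_3)(\ta)$, this is precisely the dominant summand $-\tfrac{28}{9}$ in the first displayed formula. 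The pair $\tc^2\tb$, $\tal^2\tbe$ vanish on $\ta$ but each contribute $\tfrac{4}{27}$ on $h_3$, giving the $\tfrac{8}{27}$ in the second formula. Finally, $-\tfrac{4}{9}\tc\tbe f_1$ is the twin of your $-\tfrac{4}{9}\tb\tal f_2$; each contributes $-\tfrac{2}{3}$ on $\ta$ and $\tfrac{2}{9}$ on $h_3$, and only together do they produce the $-\tfrac{4}{3}$ and $\tfrac{4}{9}$.

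With only your five monomials the sums come out to $-\tfrac{2}{3}$ on $\ta$ and $\tfrac{16}{27}$ on $h_3$, not $-\tfrac{40}{9}$ and $\tfrac{10}{9}$. The fix is exactly the bookkeeping you identified as ``the hard part'': one really must run through the full expansion of $\Delta_6$ (via ${\sf pr}$ applied to $\Delta_6^{(7)}$) and keep every $e_3^2f_3$-divisible monomial, not only those printed before the lemma.
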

\begin{proof}
We go through the relevant summands of $\Delta_6$. In $-\tilde\Delta_3^2$, these are 
$\frac{-1}{36}e_3^2f_3^2(3h_1-h_2)^2$ and $\frac{-1}{3} e_3^2 f_1 f_2 f_3 (3h_1-h_2)$. The corresponding contributions to $\tilde\eta$ are:
$$
\frac{-1}{18}{\sf m}(f_3(3h_1-h_2)^2) \ \ \text{ and } \ \ 
\frac{-1}{3}{\sf m}(f_1f_2(3h_1-h_2))
$$ 
multiplied by $\frac{1}{20}$.  
We are going to keep the factor $\frac{1}{20}$ in the background.  
Note that  ${\sf m}(f_3(3h_1-h_2)^2)$ acts on $\ta$ as  $4\ad(f_3)$, hence we add $\frac{-2}{9}$. Since 
$2-4+2=0$,  the second  of the above elements acts on $\ta$ as zero. If we consider the action on $h_3$ instead,  then the contribution of the first term is zero and   ${\sf m}(f_1f_2(3h_1-h_2))$ acts as $\ad([f_1,f_2])=-\ad(f_3)$. 

On account of $\sigma$, the other relevant  terms have tri-degrees $(3,3,0)$, $(0,3,3)$, $(1,4,1)$, where the former two 
possibilities occur for $\frac{4}{27} \tc^2\tb e_3^2f_3$ and $\frac{4}{27} \tal^2\tbe e_3^2f_3$.  Here 
\begin{align*}
& {\sf m}(\tc^2 \tb)(h_3)=\frac{1}{3}(\ad(\tb)\ad(\tc)\ad(\tc)+\ad(\tc)\ad(\tb)\ad(\tc))(h_3)= \\
& = \frac{1}{3}(-2\ad(\tal)\ad(\tc)-4\ad(\tc)\ad(\tal))(h_3)=(-2\ad(f_3)-2\ad(\tc)\ad(\tal))(h_3).
\end{align*}
Since $[\tal,h_3]=\tal$ and $[\tc,\tal]=-3f_3$, the contribution in question is $\frac{4}{27}\ad(f_3)$. Similarly,  
\begin{align*}
& {\sf m}(\tal^2 \tbe)(h_3)=\frac{1}{3}(\ad(\tbe)\ad(\tal)\ad(\tal)+\ad(\tal)\ad(\tbe)\ad(\tal))(h_3)= \\
& = \frac{1}{3}(2\ad(\tc)\ad(\tal)+4\ad(\tal)\ad(\tc))(h_3)=(2\ad(\tal)\ad(\tc)-2\ad(f_3))(h_3).
\end{align*}
Since $[\tc,h_3]=\tc$,  $[\tal,\tc]=3f_3$, we obtain again $\frac{4}{27}\ad(f_3)$. 
A slightly different story happens at $a$. Namely, 
\begin{align*}
& {\sf m}(\tc^2 \tb)(\ta)=(-2\ad(f_3)-2\ad(\tc)\ad(\tal) +\ad(\tc)\ad(\tc)\ad(\tb))(\ta) = \\
& = (-2\ad(f_3) -2\ad(f_3)+ 4\ad(f_3))(\ta)=\left(-4+4\right)\ad(f_3)(\ta)=0; \\
& {\sf m}(\tal^2 \tbe)(\ta)=(2\ad(\tal)\ad(\tc) -2\ad(f_3)+\ad(\tal)\ad(\tal)\ad(\tbe))(\ta)= \\
& =(8-2-6)\ad(f_3) (\ta)=0.
\end{align*}

Consider now the terms of the tri-degree $(1,4,1)$. Let $\esi_i-\esi_j$ be the weight of 
the fourth element from $\gt{sl}_3$. 
Assume first that  $i\ne j$. 
Then $\esi_3-\esi_1=(\esi_i-\esi_j)+\esi_s-\esi_l$  for some $s$ and $l$.   
One of the possibilities is $i=3, j=1$, and $s=l$. The other two come from the decomposition $\esi_1-\esi_3=(\esi_1-\esi_2)+(\esi_2-\esi_3)$. 

In case $s=l$, the relevant term is  $\frac{4}{9} e_3^2f_3^2 \tb\tbe$ and its 
contribution to $\tilde\eta$ is $\frac{8}{9}{\sf m}(f_3 \tb\tbe)$. Since both $\tb$ and $\tbe$ commute with $f_3$ and $h_3$, we see that ${\sf m}(f_3 \tb\tbe)(h_3)=0$. Furthermore, 
$${\sf m}(f_3 \tb\tbe)(\ta)=\ad(\tb)\ad(\tbe)(\tc)-\frac{1}{2}\tc= -3\tc-\frac{1}{2}\tc = \frac{-7}{2}\ad(f_3)(\ta).
$$
In this way the summand $\frac{-28}{9}$ appears in the first formula of the lemma.

In case $s\ne l$, 
the relevant terms are $\frac{-4}{9} \tb\tal f_2 f_3 e_3^2$ and $\frac{-4}{9} \tc\tbe f_1 f_3 e_3^2$. 
On $h_3$, each of the elements ${\sf m}(\tb\tal f_2)$,  ${\sf m}(\tc\tbe f_1)$ acts as $\frac{-1}{2}\ad(f_3)$.  
Thus, $\frac{4}{9}$ appears in the second formula. Further, 
\begin{align*}
& {\sf m}(\tb\tal f_2)(\ta)=\frac{1}{6}(\ad(f_2)\ad(\tb)\ad(\tal)+2\ad(\tal)\ad(f_2)\ad(\tb))(\ta)= \\
 &  \frac{1}{6}(\ad(\tc)\ad(\tal)+2\ad(\tal)\ad(\tc))(\ta)= \frac{1}{2}\ad(f_3)(\ta)+\ad(f_3)(\ta)=\frac{3}{2}\ad(f_3)(\ta).
\end{align*}
In the same fashion ${\sf m}(\tc\tbe f_1)(\ta)=\frac{3}{2}\ad(f_3)(\ta)$. This justifies  $\frac{-4}{3}$ in the first formula.

The final term, which is $\frac{1}{27} \tc \tal (3h_1{-}h_2) f_3 e_3^2$, fulfills  the  case $i=j$, $s=3$, $l=1$.
Here we have 
 ${\sf m}((3h_1{-}h_2)\tc\tal)(\ta)=6\ad(f_3)(\ta)$, hence the last summand in the first formula is 
$\frac{2}{9}$. 
Similarly, 
$${\sf m}((3h_1-h_2)\tc\tal)(h_3)=\frac{1}{6}(2\ad((\tal)\ad(\tc)-2\ad(\tc)\ad(\tal))(h_3)=\ad(f_3)(h_3).$$
This justifies  $\frac{1}{27}$ in the second formula.   
\end{proof}

Lemma~\ref{g2-5} provides a different way to compute $\lb$. Namely, 
$\tilde\eta-\lb\eta$ has to act on $\gt g$ as a scalar  multiple of $\ad(f_3)$. Check,  
\begin{align} 
& \left(\tilde\eta - \frac{25}{108}\eta\right)(\ta)=\left(-\frac{2}{9}-\frac{25}{108}\times\frac{42}{5}\right) \ad(f_3)(\ta) = \frac{-13}{6}\ad(f_3)(\ta); \label{g2-6} \\
&  \left(\tilde\eta - \frac{25}{108}\eta\right)(h_3)=\left(\frac{1}{18}-\frac{5\times 48}{108}\right) \ad(f_3)(h_3) = \frac{-39}{18}\ad(f_3)(h_3) =\frac{-13}{6}\ad(f_3)(h_3). \nonumber
\end{align}
In order to compute $R(1)$ and $R(2)$, state first that according to \eqref{g2-6}, 
$\frac{-13}{6} \ad(f_3) \otimes e_3^2 f_3$ is a summand of 
${\sf m}(\tilde H)$.  
This indicates that if ${\sf m}(\tilde H)$  is written as an element of $\cS^4(\gt g)$, then it 
has a term $\frac{-13}{3} e_3^2 f_3^2$, which is a summand of $\frac{-13}{12} \Delta_2^2$.  
Thus ${\sf m}(\tilde H)=\frac{-13}{12} \Delta_2^2$. 
In terms of Lemma~\ref{H-sc}, we have 
$$
{\sf m}({\mathcal H}^2) = \frac{3!}{4!} \times 4 \left(-2c_1+\frac{1}{3}c_1\right) {\mathcal H} =  \frac{20}{3} {\mathcal H},
$$
since $c_1=-4$ in our case. Making use of Theorem~\ref{thm-form}, we obtain the main result of this section: 
\begin{equation} \label{g2-7}
S_2=\varpi(\Delta_6-\frac{25}{108}\Delta_2^3)[-1]-\frac{65}{4}\varpi(\tau^2 \Delta_2^2[-1]){\cdot}1
- \frac{325}{3} \varpi(\tau^4 \Delta_2[-1]){\cdot}1 
\end{equation} 
is an element of $\gt z(\wg)$. Furthermore,  $S_1={\mathcal H}[-1]$ and $S_2$ form a complete set 
of Segal--Sugawara vectors for $\gt g$.


\section{The orthogonal case} \label{sec-ort}

Suppose now that $\gt g=\gt{so}_n\subset\gt{gl}_n$ with $n\ge 7$. 
A  suitable  matrix realisation of $\gt g$ uses  the elements   $F_{i j}=E_{i j}-E_{j' i'}$ with $i,j\in\{1,\dots,n\}$,   $i'=n-i+1$. A rather  unfortunate thing is that ${\sf m}(\Delta_{2k}|_{\gt g})$ is not a symmetric invariant 
for $k>2$. Therefore we will be working with the coefficients $\Phi_{2k}\in\cS^{2k}(\gt g)^{\gt g}$ of 
$$
\det(I_n- q(F_{ij}))^{-1} = 1 + \Phi_2 q^2+ \Phi_4 q^4 +\ldots + \Phi_{2k} q^{2k} +\ldots\, . 
$$
The generating invariants of this type appeared in \cite[Sect.~3]{my}
in connection with the symmetrisation map and 
they can be used in \eqref{form-m} as well. In \cite{book2,my}, 
the elements $\Phi_{2k}$ 
are called {\it permanents}, but  they are not the permanents of matrices in the usual sense. 
Set $\gt h=\left<F_{jj} \mid 1\le j\le \ell\right>_{\mK}$.

In general, $\det(I_n- qA)^{-1}=\det(I_n+qA+q^2 A^2+\ldots)$ for $A\in\gt{gl}_n$. In particular, 
${\Phi_{2k}}|_{\gt h}$ is  equal to the homogeneous part of degree $2k$ of 
$$
\prod_{j=1}^{\ell} (1+ F_{jj}^2+ F_{jj}^4+  F_{jj}^6 + \ldots) \,.
$$
By the construction, ${\sf m}(\Phi_{2k})$ is a polynomial function on 
$(\Lambda^2\gt g\oplus\gt g)^*\cong\Lambda^2\gt g\oplus\gt g$.  
Set 
$$
\lf={\sf m}(\Phi_{2k})|_{\Lambda^2\gt g\oplus\gt h} \ \ \text{ and write } \ \
\lf=\sum_{\nu}^L \xi_\nu\otimes \lH_{\nu},
$$
where $\lH_{\nu}\in\cS^{2k-3}(\gt h)$ are linearly independent 
monomials in $\{F_{jj}\}$ and $\xi_\nu\in\Lambda^2\gt g$. 
Note that each $\Phi_{2k}$ is an invariant of ${\rm Aut}(\gt g)$.  Since $\Phi_{2k}$ is an
element of $\gt h$-weight zero, each $\xi_\nu$ is also of weight zero. 
Hence one can say  that $\lf$ is an invariant of $W(\gt g,\gt h)$.

Let $\sigma\in{\rm Aut}(\gt g)$ be an involution such that $\gt g_0=\gt g^{\sigma}\cong\gt{so}_{n-1}$, 
$\sigma(F_{11})=-F_{11}$, i.e., $F_{11}\in\gt g_1$, 
and
$\sigma(F_{ss})=F_{ss}$ for  $\ell\ge s>1$. 
Then $\gt g_{0,F_{11}}:=(\gt g_0)_{F_{11}}\cong\gt{so}_{n-2}$. 
Such an involution $\sigma$ is not unique and we fix it by assuming that 
\begin{equation} \label{g1}
\gt g_1=\left<F_{1i}+F_{i'1} \mid 1<i<n\right>_{\mK} \oplus \mK F_{11}.
\end{equation}
The centraliser $\gt g_{1,F_{11}}$ of $F_{11}$ in $\gt g_1$ is equal to $\mK F_{11}$. This property defines 
an involution of {\it rank one}. Set $\gt h_0=\left<F_{ss} \mid \ell\ge s>1\right>_{\mK}$.

By the construction, the map ${\sf m}$ is ${\rm Aut}(\gt g)$-equivariant. Here the group 
${\rm Aut}(\gt g)\subset\GL(\gt g)$ acts on $\gt{so}(\gt g)\subset\gt{gl}(\gt g)$ via konjugation. 
In particular,  $\sigma$ acts as $-\id$ on 
$\gt g_0\wedge \gt g_1\subset \gt{so}(\gt g)$ and as $\id$ 
on the subspaces $\Lambda^2\gt g_0$ and
$\Lambda^2\gt g_1$.  
For the future use, record: ${\sf m}(F_{ii}^3)=F_{ii}$ and if $i\ne j,j'$, then 
${\sf m}(F_{ii}F_{jj}^2)$ acts as $\id$ on $F_{ij}=-F_{j'i'}$, $F_{ij'}=-F_{ji'}$, as $-\id$ on 
$F_{ji}=-F_{i'j'}$, $F_{j'i}=-F_{i'j}$, and as zero on 
all other elements $F_{uw}$. 
In particular, ${\sf m}(F_{ii}F_{jj}^2)\not\in\gt g$ if $i\not\in\{j,j'\}$. 

\begin{lm} \label{odd}
Suppose that $\lH_\nu=F_{11}^{\beta_1}\ldots F_{\ell\ell}^{\beta_\ell}$ and $\xi_\nu\ne 0$. Then there is exactly one odd $\beta_j$ with 
$1\le j\le \ell$. Furthermore, if $\beta_1$ is odd, then 
$$\xi_\nu\in\left<(F_{1i}-F_{i'1})\wedge (F_{1i'}+F_{i1}) \mid 1< i < n \right>_{\mK}\oplus 
\left<F_{11}\wedge F_{ss}\mid 1<s\le \ell \right>_{\mK}.$$ 
\end{lm}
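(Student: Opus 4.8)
The plan is to exploit the $W(\gt g,\gt h)$-invariance of $\lf$ together with the $\sigma$-equivariance of ${\sf m}$, and to compute the relevant coefficient explicitly on a small subalgebra. First I would recall from the discussion preceding the lemma that $\lf$ is $W(\gt g,\gt h)$-invariant and that ${\sf m}$ is ${\rm Aut}(\gt g)$-equivariant, so $\sigma$ acts on $\lf$; since $\Phi_{2k}$ is $\sigma$-invariant, $\sigma(\lf)=\lf$. Writing $\lf=\sum_\nu \xi_\nu\otimes\lH_\nu$ with the $\lH_\nu$ linearly independent monomials in $\{F_{jj}\}$, I note that $\sigma$ permutes these monomials by $\lH_\nu\mapsto (-1)^{\beta_1}\lH_\nu$ (as $\sigma(F_{11})=-F_{11}$ and fixes the other $F_{ss}$), hence $\sigma(\xi_\nu)=(-1)^{\beta_1}\xi_\nu$. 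Combined with the decomposition $\gt{so}(\gt g)=\Lambda^2\gt g_0\oplus(\gt g_0\wedge\gt g_1)\oplus\Lambda^2\gt g_1$ on which $\sigma$ acts by $+1,-1,+1$ respectively, this forces: if $\beta_1$ is even then $\xi_\nu\in\Lambda^2\gt g_0\oplus\Lambda^2\gt g_1$, and if $\beta_1$ is odd then $\xi_\nu\in\gt g_0\wedge\gt g_1$.

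The next step is the parity count for the remaining exponents. The key point is that ${\sf m}(\Phi_{2k})$ has $\gt h$-weight zero, so each $\xi_\nu$ has weight zero in $\Lambda^2\gt g$. I would analyze which weight-zero elements of $\Lambda^2\gt g$ are contributed by a monomial $\lH_\nu$ of a given parity vector $(\beta_1,\dots,\beta_\ell)$. The point of departure is the restriction formula: ${\sf m}(\Phi_{2k})|_{\Lambda^2\gt g\oplus\gt h}$ is built from the degree-$3$ part ${\sf m}$ applied to factors $y_1y_2y_3$ of summands of $\Phi_{2k}$, and by the explicit form of ${\Phi_{2k}}|_{\gt h}$ (the degree-$2k$ part of $\prod_j(1-F_{jj}^2)^{-1}$) and of $\Phi_{2k}$ in the $F_{ij}$ basis, a weight-zero contribution with $\gt h$-part $\lH_\nu$ forces all but one of the exponents $\beta_j$ to be even: the ``odd slot'' is the index $j$ such that the off-diagonal factors $y_1,y_2$ of the relevant degree-$3$ piece of $\Phi_{2k}$ involve row/column $j$ (the element $F_{jj}$ is ``used up'' an odd number of times by being split off the path $F_{j a}F_{a b}\cdots F_{b j}$). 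Concretely I would argue by the same technique as in the proof of Proposition~\ref{sp-m}: restrict to $\Lambda^2\gt g\oplus\gt h$, observe that a weight-zero tensor $\xi_\nu\otimes\lH_\nu$ coming from a monomial of $\Phi_{2k}$ has its ``diagonal weight'' $\sum\beta_j(\esi_j-\esi_{j'})$ cancelled precisely by the weight of the off-diagonal part of $\xi_\nu$, and that the off-diagonal part of a single nonzero $\xi_\nu\in\gt{so}(\gt g)$ weight-zero vector can only be a wedge of root vectors for roots $\pm(\esi_i-\esi_j)$-type combinations attached to one ``active'' diagonal index.

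Finally, for the case $\beta_1$ odd I would pin down $\xi_\nu$ explicitly. By the previous paragraph $\xi_\nu\in\gt g_0\wedge\gt g_1$ and is weight-zero; using \eqref{g1}, $\gt g_1=\langle F_{1i}+F_{i'1}\mid 1<i<n\rangle_{\mK}\oplus\mK F_{11}$, and $\gt g_0\cong\gt{so}_{n-1}$ is spanned by the $F_{uw}$ with $u,w\ne 1,n$ together with the combinations $F_{1i}-F_{i'1}$. A weight-zero element of $\gt g_0\wedge\gt g_1$ is then a linear combination of $(F_{1i}-F_{i'1})\wedge(F_{1i'}+F_{i1})$ and of $F_{11}\wedge F_{ss}$ with $1<s\le\ell$, which is exactly the asserted span. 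To be fully rigorous I would check that no element $F_{uw}\wedge(\text{something in }\gt g_1)$ with $u,w\notin\{1,n\}$ can be weight zero and lie in $\gt g_0\wedge\gt g_1$ unless it is already of the listed form (the $\gt g_1$-factor must carry the opposite weight, forcing it to be $F_{1i}+F_{i'1}$ with $\{i,i'\}$ matching $\{u,w\}$, which after antisymmetrizing reduces to the first family), and that the $F_{11}$-component of $\xi_\nu$ pairs only with weight-zero elements $F_{ss}$ of $\gt g_0$. The main obstacle I expect is the bookkeeping in the second step: cleanly proving that exactly one $\beta_j$ is odd requires carefully tracking how a degree-$3$ sub-factor $y_1y_2y_3$ of a summand of $\Phi_{2k}$ interacts with the diagonal part $\lH_\nu$ and ruling out cancellations-in-disguise, much as the annihilation arguments ``$F_{j_bj_b}\mapsto F_{j_b'j_b'}$'' in the type {\sf C} proofs; everything else is an application of $\sigma$-equivariance and elementary weight combinatorics.
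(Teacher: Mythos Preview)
Your use of $\sigma$-equivariance for the case $\beta_1$ odd is correct and matches the paper. The genuine gap is in your treatment of the parity count. The paper's key idea, which you miss, is to use not just $\sigma=\sigma_1$ but a whole family of rank-one involutions $\sigma_j\in{\rm Aut}(\gt g)$, one for each $1\le j\le\ell$, with $\sigma_j(F_{jj})=-F_{jj}$ and $\sigma_j(F_{ss})=F_{ss}$ for $s\ne j$. Each $\sigma_j$ fixes $\Phi_{2k}$, so $\sigma_j(\xi_\nu)=(-1)^{\beta_j}\xi_\nu$ for every $j$. Since $\sum_j\beta_j=2k-3$ is odd, the number $u$ of odd $\beta_j$ is odd; if $u\ge 3$, one first uses $\sigma_1$ plus the weight-zero condition to force $\xi_\nu$ into the span in the statement, then $\sigma_2$ cuts this down to $\langle F_{22}\wedge F_{11},\ \Xi(2)\rangle_{\mK}$, and $\sigma_3$ acts as the identity there, contradicting $\sigma_3(\xi_\nu)=-\xi_\nu$. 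This uses only ${\rm Aut}(\gt g)$-invariance of $\Phi_{2k}$, not its explicit monomial expansion (cf.\ the Remark following the lemma).

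By contrast, your proposed combinatorial argument is not a proof and contains a confused step: the ``diagonal weight $\sum\beta_j(\esi_j-\esi_{j'})$'' is zero to begin with, since $\lH_\nu\in\cS(\gt h)$ has $\gt h$-weight zero; the weight-zero constraint on $\xi_\nu$ says nothing about the parity vector $(\beta_1,\ldots,\beta_\ell)$. Even if one tracks monomials of $\Phi_{2k}$ directly, you give no mechanism to exclude $u=3,5,\ldots$, and the type~{\sf C} style cancellation $F_{j_bj_b}\mapsto -F_{j_bj_b}$ you allude to is exactly the action of $\sigma_{j_b}$ in disguise --- so the clean way to finish is the paper's.

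For the final assertion your weight analysis of $(\gt g_0\wedge\gt g_1)^{\gt h}$ is essentially the paper's argument, but be careful to separate $\gt h$-weight from $\gt h_0$-weight: $F_{1i}-F_{i'1}$ is an $\gt h_0$-weight vector, not an $\gt h$-weight vector, and the additional $F_{11}$-invariance is what picks out the correct combinations inside $[F_{11},\gt g_1]\wedge\gt g_1$.
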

\begin{proof}
Without loss of generality assume that $\beta_j$ is odd for $j\le u$ and is even for $j>u$. 
Let $\sigma_j\in{\rm Aut}(\gt g)$ with $2\le j\le u$ be an involution of rank one such that $\sigma_j(F_{jj})=-1$
and $\sigma_j(F_{ss})=F_{ss}$ for $s\ne j,j'$. Following the case of $\sigma_1=\sigma$, fix $\sigma_j$ by setting 
$$
\sigma_j(F_{ji}+F_{i'j})=-F_{ji}-F_{i'j} \ \ \text{ for } \ \ i\not\in\{j,j'\}.
$$  
As we have already mentioned, $\sigma_j(\Phi_{2k})=\Phi_{2k}$ for each $j$. 
Thereby ${\sf m}(\Phi_{2k})$ is a $\sigma_j$-invariant as well. 
At the same time $\sigma_j({\boldsymbol H}_\nu)=-{\boldsymbol H}_{\nu}$ by the construction. 
Hence $\sigma_j(\xi_\nu)=-\xi_\nu$ for each $1\le j\le u$. 

The above discussion has 
clarified, how involutions $\sigma_j$ act on $\Lambda^2\gt g={\sf m}(\cS^3(\gt g))$. 
In particular, we must have $\xi_\nu\in\gt g_0 \wedge\gt g_1$. We know also that $\xi_\nu$ is an element of 
$\gt h$-weight zero and that $F_{11}\in\gt h$. Recall that $\gt g_{1,F_{11}}=\mK F_{11}$. The decomposition 
$\gt g_0=\gt g_{0,F_{11}}\oplus [F_{11},\gt g_1]$ indicates that
$\xi \in  \gt g_{0,F_{11}}\wedge F_{11} \oplus  [F_{11},\gt g_1] \wedge\gt g_1$. 
Both summands here are $\gt h$-stable. 
Furthermore, $( \gt g_{0,F_{11}}\wedge F_{11})^{\gt h}$ is spanned by $F_{ss}\wedge F_{11}$ 
with $\ell\ge s>1$. 

The subspace $[F_{11},\gt g_1]$ 
is spanned by $F_{1i}-F_{i'1}$, 
where $1<i<n$.
For each $i$,  the element of 
the opposite  $\gt h_0$-weight in $\gt g_1$ is $F_{1i'}+F_{i1}$. 
Note that $(F_{1i}+F_{i'1})\wedge (F_{1i'}-F_{i1})$ is an  eigenvector of
$F_{11}$ if and only if $i=i'$. Thus, $([F_{11},\gt g_1] \wedge\gt g_1)^{\gt h}$ is a linear span  of 
$$
\Xi(i):=(F_{1i}+F_{i'1})\wedge (F_{1i'}-F_{i1}) + (F_{1i'}+F_{i1})\wedge (F_{1i}-F_{i'1}) 
$$
with $1<i\le i'$.

If $u>1$, then $u\ge 3$. The involution $\sigma_2$ acts on $ F_{1i}\pm F_{i'1}$ as $\id$ if 
$2<i<n-1$. Therefore $\xi_\nu$ has to be a linear combination of 
$F_{22} \wedge F_{11}$ and $\Xi(2)$. At the same time, 
$\sigma_3$ acts as $\id$ on both these vectors.  This contradiction proves that $u=1$.  
\end{proof}

\begin{rem}
Lemma~\ref{odd} is valid for any homogeneous $\Phi\in\cS(\gt g)^{{\rm Aut}(\gt g)}$.  
\end{rem}

Fix now  ${\boldsymbol H}=\lH_\nu=F_{11}^{2b_1-1} F_{22}^{2b_2} \ldots F_{\ell \ell}^{2b_\ell}$ 
with $b_j\in\Z_{\ge 0}$ and $b_1\ge 1$. The task is to compute $\xi=\xi_\nu$.  
Set $b_{j'}=b_j$ for $j\le\ell$. In type {\sf B}, set also $b_{\ell{+}1}=0$.  
Below we list the terms $Y_3$ such that $Y_3 \lH$ is a summand
of  $\Phi_{2k}$: 
\begin{equation} \label{parts}
\begin{array}{l} 
F_{11}^3, \ F_{11}F_{jj}^2,  \ 2(b_1+1)(b_j+1)F_{11} F_{1j} F_{j1}, \ (b_i+1)(b_j+1)F_{11} F_{ij} F_{ji}, \\
\enskip 2b_1(b_j+1)F_{1j}F_{j1} F_{jj}, \ 2b_1(b_i+1)(b_j+1)F_{1i} F_{ij} F_{j1}, 
\end{array}
\end{equation} 
where $1<i,j<n$ and $i\not\in\{j,j'\}$, also  
in 
$F_{jj}$, we have $1< j\le\ell$. 
When computing ${\sf m}$, one has to take into account the additional coefficients appearing from 
the powers of $F_{ii}$. For instance, in case of $F_{11}^3$, this coefficient is  $\binom{2b_1+2}{3}$, 
for   $2b_1(b_j+1)F_{1j}F_{j1} F_{jj}$, the additional scalar factor is $2b_j+1$. 

We will show that $\xi$ acts on $F_{ij}$ as  $c(i,j)F_{11}$ for some constant 
$c(i,j)\in\mK$, compute these constants and see that all of  them are equal. 
Note  that $[F_{11},F_{ij}]=0$  if $i,j\not\in \{1,n\}$. 

\begin{lm} \label{lm-F11}
We have $\xi(F_{11})=0$, furthermore $\xi(F_{ij})=0$ if $i,j\not\in \{1,n\}$. 
\end{lm}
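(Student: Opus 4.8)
The plan is to combine Lemma~\ref{odd} with a single automorphism of $\gt g$. Recall that $\xi=\xi_\nu$ is the coefficient of $\lH=F_{11}^{2b_1-1}F_{22}^{2b_2}\cdots F_{\ell\ell}^{2b_\ell}$ in $\lf={\sf m}(\Phi_{2k})|_{\Lambda^2\gt g\oplus\gt h}$, and that $\beta_1=2b_1-1$ is odd, so by Lemma~\ref{odd}
\[
\xi\in\left<(F_{1a}-F_{a'1})\wedge(F_{1a'}+F_{a1})\mid 1<a<n\right>_{\mK}\ \oplus\ \left<F_{11}\wedge F_{ss}\mid 1<s\le\ell\right>_{\mK}.
\]
It therefore suffices to prove (a) that $\xi$ has no component in the second summand, and (b) that every vector $(F_{1a}-F_{a'1})\wedge(F_{1a'}+F_{a1})$, viewed as an element of $\gt{so}(\gt g)$, annihilates $F_{11}$ and every $F_{ij}$ with $i,j\notin\{1,n\}$.

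For (a) I would argue as follows. Fix $s$ with $2\le s\le\ell$ and let $\tau_s$ be the permutation matrix of the transposition $(s\,s')$ of $\{1,\dots,n\}$. Since $(s\,s')$ commutes with $i\mapsto i'=n{-}i{+}1$, conjugation by $\tau_s$ normalises $\gt{so}_n$ and hence defines an element of $\Aut(\gt g)$. It fixes $F_{11}$ and every $F_{tt}$ with $t\ne s$, and sends $F_{ss}$ to $-F_{ss}$; since the exponent $2b_s$ of $F_{ss}$ in $\lH$ is even, $\tau_s(\lH)=\lH$. As $\Phi_{2k}$ is an invariant of $\Aut(\gt g)$ and ${\sf m}$ is $\Aut(\gt g)$-equivariant, $\tau_s$ fixes ${\sf m}(\Phi_{2k})$, and comparing coefficients of $\lH$ gives $\tau_s(\xi)=\xi$. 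On the other hand, $\tau_s$ interchanges the basis vectors of the first summand with $a=s$ and $a=s'$, fixes the remaining ones, fixes each $F_{11}\wedge F_{tt}$ with $t\ne s$, and sends $F_{11}\wedge F_{ss}$ to $-F_{11}\wedge F_{ss}$. Hence the $(F_{11}\wedge F_{ss})$-coefficient of $\xi$ is its own negative, so it vanishes; running $s$ over $\{2,\dots,\ell\}$ removes the whole second summand.

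For (b), identify $\Lambda^2\gt g$ with $\gt{so}(\gt g)$ via the scalar product, so that a bivector $u\wedge v$ acts on $w\in\gt g$ by $w\mapsto(u,w)\,v-(v,w)\,u$. With $u=F_{1a}-F_{a'1}$ and $v=F_{1a'}+F_{a1}$, all four matrix units occurring in $u$ and $v$ carry an index from $\{1,n\}$, whereas $F_{11}$ is a diagonal element supported on those indices and $F_{ij}$ (for $i,j\notin\{1,n\}$) involves no such index. A short computation with the explicit bilinear form on $\gt{so}_n$ then gives $(u,F_{11})=(v,F_{11})=0$ and $(u,F_{ij})=(v,F_{ij})=0$, whence $(u\wedge v)(F_{11})=0$ and $(u\wedge v)(F_{ij})=0$. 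Combined with (a) this yields $\xi(F_{11})=0$ and $\xi(F_{ij})=0$ for all $i,j\notin\{1,n\}$.

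The main obstacle is step (a): a priori $\xi$ is pinned down only up to the Cartan-type bivectors $F_{11}\wedge F_{ss}$, and $(F_{11}\wedge F_{ss})(F_{ss})$ is a nonzero multiple of $F_{11}$, which would contradict $\xi(F_{ss})=0$, so these genuinely must be eliminated. The automorphism $\tau_s$ does exactly this, but one has to be careful that it really preserves the chosen matrix model (this is the requirement that $(s\,s')$ commute with $i\mapsto i'$) and fixes $\lH$ (this is where the evenness of the exponents $2b_s$ is used). If one prefers to bypass Lemma~\ref{odd}, there is a more computational route: expanding $\xi$ over the list~\eqref{parts}, every $Y_3$ other than those of shape $F_{1i}F_{ij}F_{j1}$ gives an operator sending $F_{11}$ to $0$ --- using ${\sf m}(F_{ii}^3)=F_{ii}$, the recorded action of ${\sf m}(F_{ii}F_{jj}^2)$, and $[F_{11},F_{ij}]=0$ for $i,j\notin\{1,n\}$ --- while ${\sf m}(F_{1i}F_{ij}F_{j1})(F_{11})$ is a multiple of $F_{jj}-F_{ii}$, and the $(i,j)$- and $(j,i)$-contributions cancel because the coefficient of $F_{1i}F_{ij}F_{j1}\lH$ in $\Phi_{2k}$ is symmetric in $i$ and $j$; the automorphism argument is the shorter one and is what I would write up.
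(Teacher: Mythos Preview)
Your proof is correct and takes a genuinely different route from the paper's. The paper first computes $\xi(F_{11})=0$ directly by running through the list~\eqref{parts}: the terms $F_{11}^3$, $F_{11}F_{jj}^2$, $F_{11}F_{1j}F_{j1}$, $F_{11}F_{ij}F_{ji}$, and $F_{1j}F_{j1}F_{jj}$ all annihilate $F_{11}$ for elementary reasons, while $6\,{\sf m}(F_{1i}F_{ij}F_{j1})(F_{11})=(F_{jj}-F_{ii})$ cancels against its $(i\leftrightarrow j)$ partner. Having $\xi(F_{11})=0$, the paper then invokes Lemma~\ref{odd} together with the observation that $(F_{ss}\wedge F_{11})(F_{11})$ is a nonzero multiple of $F_{ss}$ (and, implicitly, that the first summand kills $F_{11}$) to force the $F_{11}\wedge F_{ss}$ components to vanish, whence $\xi$ lies in the first summand and the second assertion follows.

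Your argument inverts the logic: you eliminate the $F_{11}\wedge F_{ss}$ components \emph{first}, using the involutions $\tau_s$ given by conjugation with the permutation matrix of $(s\,s')$, which preserves the antidiagonal form, fixes $\lH$ (even exponent at $s$), and negates exactly the basis vector $F_{11}\wedge F_{ss}$ while permuting the remaining basis of the decomposition in Lemma~\ref{odd}. Then both conclusions drop out from the pairing computation in your part (b). This is cleaner and avoids the case-by-case walk through~\eqref{parts}; it is essentially the same mechanism as the involutions $\sigma_j$ in the proof of Lemma~\ref{odd}, now exploited for the even exponents rather than the odd one. The paper's route, by contrast, gives $\xi(F_{11})=0$ as an honest identity independent of Lemma~\ref{odd}, which has some standalone value. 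Amusingly, the alternative ``computational route'' you sketch in your final paragraph is precisely what the paper actually does.
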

\begin{proof}
By a direct computation, we show that indeed $\xi(F_{11})=0$. 
Some expressions in \eqref{parts} act on $F_{11}$ as zero by obvious reasons. 
If one takes into account that $F_{1j}F_{11}F_{j1}+F_{j1}F_{11}F_{1j}$ acts as 
$[F_{j1},F_{j1}]$, this covers the first line of \eqref{parts}. The same argument
takes care of ${\sf m}(F_{jj}F_{1j}F_{j1})$.  It remains to 
calculate $\eta={\sf m}(F_{1i}F_{ij}F_{j1})(F_{11})$. Here we have 
$6\eta=(F_{11}-F_{ii}) + (F_{jj}-F_{11})$. If we switch $i$ and $j$, then the total sum is zero. 

Since $(F_{ss}\wedge F_{11})(F_{11})=F_{ss}$ up to a non-zero scalar, 
Lemma~\ref{odd} implies now that  
$$\xi \in \left<(F_{1i}-F_{i'1})\wedge (F_{1i'}+F_{i1}) \mid 1< i < n \right>_{\mK}.
$$ 
Hence $\xi(F_{ij})=0$ if $i,j\not\in \{1,n\}$. 
\end{proof}

\begin{lm} \label{lm-D}
Suppose that $n=2\ell$. Assume that $1<u<n$.  Then 
$\xi(F_{1u})=\frac{3!(2k-3)!}{(2k)!} C(1) F_{1u}$ and $C(1)$ is equal to 
$$
\binom{2b_1{+}2}{3}  +\frac{2}{3} b_1 
 \sum_{j=2}^{\ell} (2b_j+1)(b_j+1) + \frac{8}{3} b_1(b_1+1) \sum_{j=2}^{\ell} (b_j+1)+ \frac{8}{3} b_1\!\!\!\sum_{1<i<j\le\ell}\!\!\!(b_i+1)(b_j+1)\,. 
$$
\end{lm}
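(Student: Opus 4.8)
The argument is a direct computation, and I would organise it as follows.

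\smallskip
\emph{Reduction to the map ${\sf m}$.} By definition $\xi=\xi_\nu$ is characterised by the requirement that $\xi\otimes{\boldsymbol H}$ be the ${\boldsymbol H}$-isotypic summand of ${\sf m}(\Phi_{2k})$ restricted to $\Lambda^2\gt g\oplus\gt h$. Since $\xi$ has $\gt h$-weight zero and, for $1<u<n$, the element $F_{1u}$ is a nonzero root vector of $\gt g$ (hence spans a one-dimensional weight space), the value $\xi(F_{1u})$ is automatically a scalar multiple of $F_{1u}$; write $\xi(F_{1u})=c(u)F_{1u}$. The task is to compute $c(u)$ and to verify that it does not depend on $u$. (By Lemma~\ref{lm-F11} and Lemma~\ref{odd} one even knows $\xi\in\langle(F_{1i}-F_{i'1})\wedge(F_{1i'}+F_{i1})\mid 1<i<n\rangle_\mK$; this is not strictly needed below, but it reconfirms that $\xi(F_{1u})\in\mK F_{1u}$.)

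\smallskip
\emph{Expansion over the degree-three divisors.} A monomial of $\Phi_{2k}$ contributes to the ${\boldsymbol H}$-isotypic summand of ${\sf m}(\Phi_{2k})$ precisely when it has the shape $Y_3{\boldsymbol H}$ with $\deg Y_3=3$, and the possible $Y_3$ are exactly the six families displayed in \eqref{parts}, with the coefficients recorded there. A count of symmetrisation weights, identical in spirit to the one in the proof of Proposition~\ref{works}, gives
\[
\bigl[\,{\boldsymbol H}\text{-isotypic part of }{\sf m}(\Phi_{2k})\,\bigr]
=\frac{3!\,(2k-3)!}{(2k)!}\;\sum_{Y_3} c_{Y_3}\,\mu_{Y_3}\;{\sf m}(Y_3)\ \otimes\ {\boldsymbol H},
\]
where $c_{Y_3}$ is the coefficient of $Y_3{\boldsymbol H}$ in $\Phi_{2k}$ and $\mu_{Y_3}$ is the combinatorial factor coming from the powers of the diagonal generators (the ``additional coefficient'' mentioned after \eqref{parts}): $\mu_{F_{11}^3}=\binom{2b_1+2}{3}$, $\mu_{F_{11}F_{jj}^2}=2b_1(b_j+1)(2b_j+1)$, $\mu_{F_{1j}F_{j1}F_{jj}}=2b_j+1$, $\mu_{F_{11}F_{1j}F_{j1}}=2b_1$, and so on. Evaluating both sides on $F_{1u}$ reduces the statement to
\[
C(1)=\sum_{Y_3} c_{Y_3}\,\mu_{Y_3}\,s_{Y_3}(u),
\qquad\text{where }\ {\sf m}(Y_3)(F_{1u})=s_{Y_3}(u)\,F_{1u}.
\]

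\smallskip
\emph{Computing the scalars $s_{Y_3}(u)$.} For the diagonal families this is immediate from the identities recorded before Lemma~\ref{odd}: ${\sf m}(F_{ii}^3)=F_{ii}$ acts on $F_{1u}$ as $+1$, while ${\sf m}(F_{11}F_{jj}^2)$ acts as $1$ if $u\in\{j,j'\}$ and as $0$ otherwise. For the genuinely off-diagonal families $F_{11}F_{1j}F_{j1}$, $F_{11}F_{ij}F_{ji}$, $F_{1j}F_{j1}F_{jj}$ and $F_{1i}F_{ij}F_{j1}$ — together with their $j\mapsto j'$, $i\mapsto i'$ variants, which do occur in $\Phi_{2k}$ because the two-loop $1\to m\to 1$ of $\det(I_n-q(F_{ij}))^{-1}$ runs over all $m$ — one evaluates $\tfrac16\sum_{\tau\in{\tt S}_3}\ad\,\ad\,\ad$ directly on $F_{1u}$ using the bracket rule
\[
[F_{ab},F_{cd}]=\delta_{bc}F_{ad}-\delta_{ad}F_{cb}+\delta_{bd'}F_{ca'}+\delta_{ac'}F_{d'b},
\]
while keeping track of the degeneracy $F_{ss'}=0$ and of the boundary index $n$. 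It is harmless to assume $u\le\ell$ — for $u>\ell$ the computation is completely parallel, or one applies the rank-one involution of $\Aut(\gt g)$ attached to the index $u$ — which removes the coincidence $u=j'$ throughout. One obtains, for instance, $s_{F_{1j}F_{j1}F_{jj}}(u)=\tfrac16$ for $u\ne j$ and $-\tfrac23$ for $u=j$, while the $j'$-variant contributes $-\tfrac16$ for $u\ne j$ and $0$ for $u=j$; the remaining families behave analogously, with similarly simple values.

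\smallskip
\emph{Assembling $C(1)$.} Substituting these scalars into $C(1)=\sum_{Y_3}c_{Y_3}\mu_{Y_3}s_{Y_3}(u)$ and collapsing the index ranges in \eqref{parts} — the double sums over $i,j\in\{2,\dots,n-1\}$ reduce, using $F_{ab}=-F_{b'a'}$, $b_{j'}=b_j$ and the computed values of $s_{Y_3}$, to the sums over $1<j\le\ell$ and $1<i<j\le\ell$ in the statement — one checks that every $u$-dependent contribution cancels and that the four surviving groups of terms add up to
\[
\binom{2b_1+2}{3}
+\frac23\,b_1\sum_{j=2}^{\ell}(2b_j+1)(b_j+1)
+\frac83\,b_1(b_1+1)\sum_{j=2}^{\ell}(b_j+1)
+\frac83\,b_1\!\!\!\sum_{1<i<j\le\ell}\!\!\!(b_i+1)(b_j+1).
\]
The main obstacle is exactly this last step for the families $F_{1i}F_{ij}F_{j1}$ and $F_{11}F_{ij}F_{ji}$: one must run through all admissible triples $(i,j,u)$, include the $i\mapsto i'$, $j\mapsto j'$ variants, and verify that the residual $u$-dependent pieces cancel against the single-index contributions of $F_{11}F_{jj}^2$ and $F_{1j}F_{j1}F_{jj}$; everything else in the computation is mechanical.
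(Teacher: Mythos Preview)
Your approach is essentially the same as the paper's: both list the cubic divisors $Y_3$ from \eqref{parts}, evaluate each ${\sf m}(Y_3)$ on $F_{1u}$, and sum with the combinatorial weights. The paper carries out the $u$-independence check you flag as the ``main obstacle'' by grouping the terms into combinations $\xi_1^{(j)}$ (pairing $F_{11}F_{jj}^2$ with $F_{1j}F_{j1}F_{jj}-F_{1j'}F_{j'1}F_{jj}$), $\xi_2^{(j)}+\xi_2^{(j')}$, and $\xi_5^{i,j}$ (the sum of the eight $\xi_3$-variants and four $\xi_4$-variants over $\{i,i',j,j'\}$), each of which acts on $F_{1u}$ by the same scalar for every $1<u<n$; this makes the cancellation transparent rather than leaving it as a residual verification.
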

\begin{proof}
Recall that ${\sf m}(F_{11}^3)=F_{11}$. This leads to the summand  $\binom{2b_1+2}{3}$ of $C(1)$. 
Consider 
$$\xi_{1}^{(j)}=  {\sf m}(F_{11}F_{jj}^2) +   {\sf m}(F_{1j} F_{j1} F_{jj}-F_{1j'} F_{j'1} F_{jj})$$ 
with $1<j\le\ell$.  Here $\xi_1^{(j)}(F_{1u})= \frac{1}{3}F_{1u}$ for $u\not\in\{j,j'\}$ and 
$\xi_1^{(j)}(F_{1j})=(1-\frac{2}{3})F_{1j}$ as well as 
$\xi_1^{(j)}(F_{1j'})=(1-\frac{2}{3})F_{1j'}$. 
In $C(1)$, we have to add $\frac{1}{3}$ with the coefficients  
$$2b_1\binom{2b_j+2}{2} = 2b_1(b_j+1)(2b_j+1).$$ 
The next terms are $\xi_2^{(j)}={\sf m}(F_{11} F_{1j} F_{j1})$ with $1<j<n$. 
Here $\xi_2^{(j)}(F_{1u})=\frac{1}{3}F_{1u}$ for $u\not\in\{j,j'\}$. Furthermore, 
$\xi_2^{(j)}(F_{1j})=\frac{2}{3}F_{1j}$ and $\xi_2^{(j)}(F_{1j'})=0$. 
Adding $\xi_2^{(j)}$ and $\xi_2^{(j')}$ with $j\le\ell$ and recalling the coefficient of $\xi_2^{(j)}$, we obtain the summands  
$\frac{8}{3}(b_1+1)b_1(b_j+1)$.

Fix $1<i,j < n$ with $i\not\in\{j,j'\}$ and consider 
$$
\xi_3^{i,j}={\sf m}(F_{1i} F_{ij} F_{j1}), \ \ \ \xi_4^{i,j}={\sf m}(F_{11} F_{ij} F_{ji}).
$$
An  easy observation is that $\xi_4^{i,j}(F_{1u})=0$ if $u\not\in\{i,i',j,j'\}$. 
Also $\xi_3^{i,j}(F_{1u})=\frac{1}{6} F_{1u}$ in this case. 
Furthermore, $\xi_4^{i,j}(F_{1i})=\frac{1}{2}F_{1i}$ and  $\xi_4^{i,j}(F_{1i'})=\frac{1}{2}F_{1i'}$.  
A more lengthy calculation brings 
$$
\begin{array}{l}
\xi_3^{i,j}(F_{1i})=\frac{1}{6}F_{1i} - \frac{1}{6}((\ad(F_{1i})\ad(F_{j1})+\ad(F_{j1})\ad(F_{1i}))(F_{1j})=\frac{1}{6}F_{1i}-\frac{1}{6}F_{1i}=0; \\
\xi_3^{i,j}(F_{1j})=\frac{1}{6}F_{1j} +\frac{1}{6} \ad(F_{ij})(F_{1i}) =0; \quad 
\xi_3^{i,j}(F_{1j'})= \frac{1}{6} \ad(F_{1i})\ad(F_{j1})(F_{1i'})=-\frac{1}{6}F_{1j'}; \\
\xi_3^{i,j}(F_{1i'})= \frac{1}{6} \ad(F_{ij})\ad(F_{1i})(F_{ji'}) =\frac{1}{6} \ad(F_{ij}) (F_{j n})=-\frac{1}{6} F_{1i'}.
\end{array}
$$
Note that $\xi_4^{i,j}=\xi_4^{j,i}=\xi_4^{i',j'}$. Fix now $1<i<j\le \ell$ and consider
$$
\xi_5^{i,j}=\xi_3^{i,j}+\xi_3^{j,i}+\xi_3^{i',j}+\xi_3^{j,i'}+\xi_3^{i,j'}+\xi_3^{j',i}+\xi_3^{i',j'}+\xi_3^{j',i'}+
\xi_4^{i,j}+\xi_4^{i',j}+\xi_4^{i,j'}+\xi_4^{i',j'}.
$$
Here $2b_1(b_i+1)(b_j+1)\xi_5^{i,j}$ is a summand of $\frac{(2k)!}{3!(2k-3)!} \xi$. 
Moreover, $\xi_5^{i,j}(F_{1s})=\frac{4}{3}F_{1s}$ for each $1<s<n$. 
This justifies the last summand of $C(1)$. 
\end{proof}

Rearranging  the expression for $C(1)$, one obtains 
\begin{equation} \label{C1}
C(1)=\frac{2}{3} b_1 \big(\sum_{j=1}^{\ell} (b_j+1)(2b_j+1) + 4\sum_{1\le i<j\le\ell} (b_i+1)(b_j+1) \big).
\end{equation}

\begin{lm} \label{lm-B}
Suppose that $n=2\ell+1$. Assume that $1<u<n$.  Then 
$\xi(F_{1u})=\frac{3!(2k-3)!}{(2k)!} \tilde C(1) F_{1u}$ and $\tilde C(1)$ is equal to 
$$
C(1)+ \frac{4}{3}b_1(b_1+1) + \frac{4}{3}b_1\sum_{1<i\le\ell}(b_i+1)=C(1)+ \frac{4}{3} b_1 
 \sum_{1\le j\le\ell} (b_j+1). 
$$
\end{lm}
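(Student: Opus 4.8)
The plan is to run the argument of Lemma~\ref{lm-D} essentially word for word, the one new feature being the \emph{central index} $n_0:=\ell+1$, which satisfies $n_0'=n_0$ and $F_{n_0 n_0}=0$. First I would note that Lemmas~\ref{odd} and~\ref{lm-F11} are stated for all $n\ge 7$, so in the case $n=2\ell+1$ they still give $\xi=\xi_\nu\in\langle (F_{1i}-F_{i'1})\wedge(F_{1i'}+F_{i1})\mid 1<i<n\rangle$ and $\xi(F_{11})=0$; hence $\xi$ is determined by the values $\xi(F_{1u})$ with $1<u<n$ (now including $u=n_0$), and it remains to show that each such value equals a scalar multiple of $F_{1u}$, the scalar being $\frac{3!(2k-3)!}{(2k)!}\tilde C(1)$ and independent of $u$. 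As in the proof of Lemma~\ref{lm-D}, I would expand $\xi(F_{1u})$ as the weighted sum, over the products $Y_3$ in~\eqref{parts} (now with $1<i,j<n=2\ell+1$), of the values ${\sf m}(Y_3)(F_{1u})$, the weights being the coefficients displayed in~\eqref{parts} times the binomial factors coming from the powers $F_{ss}^{2b_s}$ in $\lH$, with the convention $b_{n_0}=0$.

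I would then split this sum into a ``type-{\sf D} part'' and a ``correction''. The products $Y_3$ all of whose free indices avoid $n_0$ are exactly the ones occurring in type {\sf D}$_\ell$, and every local identity used in the proof of Lemma~\ref{lm-D} (the values of $\xi_1^{(j)},\xi_2^{(j)},\xi_3^{i,j},\xi_4^{i,j},\xi_5^{i,j}$ on the various $F_{1u}$) remains valid for \emph{every} $1<u<n$, the value $u=n_0$ included, because those identities involve only bracket relations among $F_{11}$ and root vectors with indices different from $n_0$. Hence this part contributes $\frac{3!(2k-3)!}{(2k)!}C(1)F_{1u}$ with $C(1)$ as in~\eqref{C1}. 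The remaining products are those with an index equal to $n_0$: the ones containing $F_{n_0 n_0}$ vanish since $F_{n_0 n_0}=0$, and what is left are $F_{11}F_{1n_0}F_{n_0 1}$ (from the third shape in~\eqref{parts}), $F_{11}F_{in_0}F_{n_0 i}$ (fourth shape), and $F_{1i}F_{in_0}F_{n_0 1}$, $F_{1n_0}F_{n_0 j}F_{j 1}$ (sixth shape), all evaluated with $b_{n_0}=0$. For the first I would compute $\xi_2^{(n_0)}:={\sf m}(F_{11}F_{1n_0}F_{n_0 1})$ directly; since $F_{1n_0}=F_{1n_0'}$, the two separate cases $\xi_2^{(j)}(F_{1j})$ and $\xi_2^{(j)}(F_{1j'})$ of Lemma~\ref{lm-D} now coincide, so one must check by hand that $\xi_2^{(n_0)}$ again acts by one and the same scalar on $F_{1n_0}$ as on the other $F_{1u}$. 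The terms of the remaining shapes I would assemble, for each fixed $1<i\le\ell$, into an analogue $\xi_5^{i,n_0}$ of $\xi_5^{i,j}$ and verify that it too acts uniformly on the $F_{1u}$.

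Tallying the corrections with their weights then yields, just as in Lemma~\ref{lm-D}, that the $\xi_2^{(n_0)}$-term contributes $\frac{4}{3}b_1(b_1+1)$ and the $\xi_5^{i,n_0}$-term contributes $\frac{4}{3}b_1(b_i+1)$ for each $1<i\le\ell$, so that
\[
\tilde C(1)=C(1)+\tfrac{4}{3}b_1(b_1+1)+\tfrac{4}{3}b_1\!\!\sum_{1<i\le\ell}\!\!(b_i+1)=C(1)+\tfrac{4}{3}b_1\!\!\sum_{1\le j\le\ell}\!\!(b_j+1),
\]
which is the asserted identity; the uniformity of $\xi(F_{1u})$ in $u$ then follows by combining the uniformity of the individual summands, exactly as in the type-{\sf D} case.

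The main obstacle is the bookkeeping forced by the self-conjugacy $n_0'=n_0$: it identifies root vectors and products that were distinct in type~{\sf D} --- so that, for instance, the pair $\xi_2^{(j)},\xi_2^{(j')}$ collapses to the single term $\xi_2^{(n_0)}$, and the twelve terms making up $\xi_5^{i,j}$ collapse to the six making up $\xi_5^{i,n_0}$ --- and this reduction in the number of contributing products must be traced against the combinatorial weights $\binom{2b_s+2}{\cdot}$ and against the bracket relations along the central row and column of $\gt{so}_{2\ell+1}$, which differ slightly from their type-{\sf D} counterparts. It is precisely this accounting, carried out term by term over the short list of $n_0$-corrections, that requires care; everything else is a transcription of the proof of Lemma~\ref{lm-D}.
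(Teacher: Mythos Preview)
Your proposal is correct and follows essentially the same route as the paper: split off the type-{\sf D} contribution $C(1)$, then treat separately the products in~\eqref{parts} that involve the self-conjugate index $n_0=\ell+1$, noting that $F_{n_0 n_0}=0$ kills the $\xi_1$-type term, that $\xi_2^{(n_0)}$ gives the summand $\tfrac{4}{3}b_1(b_1{+}1)$, and that the collapsed six-term $\xi_5^{i,n_0}$ gives $\tfrac{4}{3}b_1(b_i{+}1)$. The paper does exactly this, recording along the way the precise altered values $\xi_4^{i,n_0}(F_{1n_0})=F_{1n_0}$, $\xi_3^{i,n_0}(F_{1n_0})=\xi_3^{n_0,i}(F_{1n_0})=-\tfrac{1}{3}F_{1n_0}$ and $\xi_5^{i,n_0}(F_{1u})=\tfrac{2}{3}F_{1u}$, which are the concrete checks you flagged as ``requiring care''.
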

\begin{proof}
We have to take care of the instances, where $j=\ell+1=j'$. Here $F_{jj}=0$, thereby also $\xi_1^{(j)}=0$. 
By a direct calculation, $\xi_2^{(j)}(F_{1u})=\frac{1}{3}F_{1u}$ for each $u$. 
Recall that $\xi_2^{(j)}$ corresponds to $Y_3=2(b_1+1)(b_j+1)F_{11}F_{1j}F_{j1}$ in \eqref{parts} and that the additional scalar factor in this case is $2b_1$. 
Since $b_{\ell+1}=0$, we have to add $\frac{4}{3}b_1(b_1+1)$  to $C(1)$. 

The calculations for $\xi_3^{i,j}, \xi_3^{j,i}$, and $\xi_4^{i,j}$ have to be altered.
The modifications are:
$$
\xi_4^{i,j}(F_{1j})=F_{1j}, \ \ \ \xi_3^{i,j}(F_{1j})=-\frac{1}{3} F_{1j}, 
\ \ \ \xi_3^{j,i}(F_{1j})=-\frac{1}{3} F_{1j},
$$  
and $\xi_5^{i,j}$ with $1<i<j=l+1$ has a simpler form, here 
$$
\xi_5^{i,j}=\xi_3^{i,j}+\xi_3^{j,i}+\xi_3^{i',j}+\xi_3^{j,i'} + \xi_4^{i,j}+\xi_4^{i',j}. 
$$
The coefficient of this $\xi_5^{i,j}$ in $\frac{(2k)!}{3!(2k-3)!} \xi$ is 
$2b_1(b_i+1)$ and $\xi_5^{i,j}(F_{1u})=\frac{2}{3} F_{1u}$ for all $u$. This justifies  the 
second additional summand. 
\end{proof}

\begin{prop}\label{prop-D}
For $\gt g=\gt{so}_{n}$, we have 
${\sf m}(\Phi_{2k})=R(k) \Phi_{2k-2}$, where 
$$
R(k)=\frac{1}{k(2k-1)} \left(\binom{n}{2} + 2n(k-1) + (k-1)(2k-3) \right). 
$$
\end{prop}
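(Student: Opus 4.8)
The plan is to prove Proposition~\ref{prop-D} by assembling the computations of Lemmas~\ref{odd}, \ref{lm-F11}, \ref{lm-D} and \ref{lm-B} and comparing the outcome with the $\ad$-image of $\Phi_{2k-2}$. First I would regard $\Phi_{2k-2}\in\cS^{2k-2}(\gt g)^{\gt g}$ as an element of $\gt{so}(\gt g)\otimes\cS^{2k-3}(\gt g)$ via the polarisation $\cS^{2k-2}(\gt g)\hookrightarrow\gt g\otimes\cS^{2k-3}(\gt g)$ followed by $\gt g\hookrightarrow\gt{so}(\gt g)$, $x\mapsto\ad(x)$; explicitly this sends $\Phi_{2k-2}$ to $\frac{1}{2k-2}\sum_i\ad(x_i)\otimes\partial_{x_i}\Phi_{2k-2}$. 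Then both ${\sf m}(\Phi_{2k})$ and $R(k)\Phi_{2k-2}$ are $G$-invariant elements of $\gt{so}(\gt g)\otimes\cS^{2k-3}(\gt g)$, i.e. $G$-equivariant homogeneous polynomial maps $\gt g\to\gt{so}(\gt g)$; since $G\cdot\gt h$ is dense in $\gt g$ and the value at $h\in\gt h$ lies in the zero weight space $\gt{so}(\gt g)^{\gt h}$, each is determined by the family $\{\xi_\nu\}$ of coefficients of the monomials $\lH_\nu\in\cS^{2k-3}(\gt h)$. By Lemma~\ref{odd} the coefficient $\xi_\nu$ of ${\sf m}(\Phi_{2k})$ vanishes unless $\lH_\nu$ has exactly one odd exponent; the same is true for $R(k)\Phi_{2k-2}$, because $\Phi_{2k-2}|_{\gt h}$ is a sum of monomials with only even exponents and, $\Phi_{2k-2}$ being of zero weight, has no terms with a single off-diagonal factor, so $\partial_{x_i}\Phi_{2k-2}|_{\gt h}\neq 0$ forces $x_i\in\gt h$ and introduces exactly one odd exponent. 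Using the $W(\gt g,\gt h)$-symmetry it thus suffices to compare the two coefficients for $\lH=F_{11}^{2b_1-1}F_{22}^{2b_2}\cdots F_{\ell\ell}^{2b_\ell}$ with $b_j\in\Z_{\ge 0}$, $b_1\ge 1$ and $\sum_j b_j=k-1$.

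For the left-hand side, Lemmas~\ref{lm-F11}, \ref{lm-D} and \ref{lm-B} show that this coefficient $\xi$ kills $F_{11}$ and every $F_{ij}$ with $i,j\notin\{1,n\}$ and multiplies each $F_{1u}$ with $1<u<n$ by $\frac{3!(2k-3)!}{(2k)!}C(1)$ in type {\sf D} and by $\frac{3!(2k-3)!}{(2k)!}\tilde C(1)$ in type {\sf B}. Combined with the structural restriction of Lemma~\ref{odd} (so that $\xi$ lies in the span of the $(F_{1i}-F_{i'1})\wedge(F_{1i'}+F_{i1})$) and with the fact that $\ad(F_{11})$ exhibits exactly the same pattern --- $\ad(F_{11})(F_{11})=0$, $\ad(F_{11})(F_{ij})=0$ for $i,j\notin\{1,n\}$, $\ad(F_{11})(F_{1u})=F_{1u}$ --- this gives $\xi=\frac{3!(2k-3)!}{(2k)!}C(1)\,\ad(F_{11})$ (respectively with $\tilde C(1)$), once one checks that this span injects into the space of linear operators on $\left<F_{1u}\mid 1<u<n\right>_{\mK}$. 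The crucial simplification is that, after substituting $\sum_j b_j=k-1$ into \eqref{C1}, the inner sum telescopes: writing $a_j=b_j+1$ one has $\sum_j a_j(2a_j-1)+2\big((\sum_j a_j)^2-\sum_j a_j^2\big)=(\sum_j a_j)(2\sum_j a_j-1)$, and with $\sum_j a_j=k-1+\ell$ this yields, in \emph{both} types {\sf B} and {\sf D}, $C(1)=\tfrac13 b_1(n+2k-2)(n+2k-3)$ --- in type {\sf B} the extra term $\tfrac43 b_1\sum_j(b_j+1)$ from Lemma~\ref{lm-B} is precisely what replaces $2k+2\ell-3$ by $2k+2\ell-1=n+2k-2$. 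Hence the $\lH$-coefficient of ${\sf m}(\Phi_{2k})$ equals $\dfrac{2b_1(n+2k-2)(n+2k-3)}{2k(2k-1)(2k-2)}\,\ad(F_{11})$.

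On the other side, when the second tensor factor of $\frac{1}{2k-2}\sum_i\ad(x_i)\otimes\partial_{x_i}\Phi_{2k-2}$ is restricted to $\gt h$, only the summand with $x_i\in\mK F_{11}$ contributes to $\lH$, and there $\partial_{F_{11}}\Phi_{2k-2}|_{\gt h}$ contains $\lH$ with coefficient $2b_1$ (from the monomial $F_{11}^{2b_1}F_{22}^{2b_2}\cdots$, which occurs in $\Phi_{2k-2}|_{\gt h}$ with coefficient $1$). So the $\lH$-coefficient of $R(k)\Phi_{2k-2}$ is $\frac{b_1}{k-1}R(k)\,\ad(F_{11})$. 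Equating the two expressions and cancelling $b_1$ gives $R(k)=\dfrac{(n+2k-2)(n+2k-3)}{2k(2k-1)}$, and the elementary identity $(n+2k-2)(n+2k-3)=2\big(\binom n2+2n(k-1)+(k-1)(2k-3)\big)$ rewrites this as the formula in the statement. Since the right-hand side is independent of the $b_j$ --- a built-in consistency check --- the two coefficients agree for every admissible $\lH$, whence ${\sf m}(\Phi_{2k})$ and $R(k)\Phi_{2k-2}$ agree on all of $\gt{so}(\gt g)\oplus\gt h$ and, by $G$-invariance and density, everywhere; in particular ${\sf m}(\Phi_{2k})\in\cS^{2k-2}(\gt g)$, so that \eqref{form-m} applies.

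I expect the only genuinely delicate point to be the normalisation bookkeeping --- matching the factor $\frac{3!(2k-3)!}{(2k)!}$ coming from the definition of {\sf m} against the factor $\frac{1}{2k-2}$ coming from the polarisation of $\Phi_{2k-2}$ --- together with verifying that the span isolated in Lemma~\ref{odd} really does inject into operators on $\left<F_{1u}\right>_{\mK}$, so that $\xi$ is pinned down as a multiple of $\ad(F_{11})$ rather than merely agreeing with one on a subspace. The algebraic collapse of the two types to the common value and the final quadratic identity are routine.
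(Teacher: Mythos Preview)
Your proposal is correct and follows essentially the same route as the paper: both use Lemmas~\ref{odd}--\ref{lm-B} to pin down the $\lH$-coefficient $\xi$ as a scalar multiple of $\ad(F_{11})$, invoke the Weyl group to reduce to a single monomial, and then read off $R(k)$; your substitution $a_j=b_j+1$ collapsing the double sum to $S(2S\mp 1)$ is in fact a little cleaner than the paper's parallel type-by-type simplification. The one place where the paper is more direct is in identifying $\xi$: rather than appealing to injectivity of the span from Lemma~\ref{odd} into operators on $\left<F_{1u}\right>_{\mK}$, it simply notes that $\xi\in\gt{so}(\gt g)$ has weight zero, so $\xi(F_{u1})\in\mK F_{u1}$, and the skew-symmetry $(\xi F_{1u},F_{u1})+(F_{1u},\xi F_{u1})=0$ forces $\xi(F_{u1})=-c(1)F_{u1}$, which together with Lemma~\ref{lm-F11} already determines $\xi$ on a basis of $\gt g$.
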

\begin{proof}
According to Lemmas~\ref{lm-D} and \ref{lm-B}, there is $c(1)\in\mK$ such that $\xi(F_{1u})=c(1)F_{1u}$ for each $1<u<n$. 
Since $\xi\in\gt{so}(\gt g)$,  we have also $\xi(F_{u1})= -c(u) F_{u1}$. Taking into account Lemma~\ref{lm-F11}, 
we conclude that $\xi=c(1)F_{11}$. 

Simplifying \eqref{C1} and using Lemma~\ref{lm-B}, we obtain that 
$$
\begin{array}{l}
c(1)=\frac{2}{3}b_1\frac{3!(2k-3)!}{(2k)!} \left(2\big(\sum\limits_{j=1}^{\ell} b_j)^2 + (4\ell-1)\big(\sum\limits_{j=1}^{\ell} b_j\big)+
\ell+ 2\ell(\ell-1)\right) = \\
\qquad \qquad = \frac{b_1}{k(2k-1)(k-1)} \left(2(k-1)^2+(4\ell-1)(k-1)+\ell(2\ell-1) \right)
\end{array}
$$
in type {\sf D} and that 
$$
c(1)= \frac{b_1}{k(2k-1)(k-1)} \left(2(k-1)^2+(4\ell-1)(k-1)+\ell(2\ell-1) + 2(k-1)+2\ell  \right)
$$
in type {\sf B}. In both cases, the scalars $c(1)/b_1$ depend only on $k$ and $\ell$. 
Making use of the action of $W(\gt g,\gt h)$, we can conclude now that 
${\sf m}(\Phi_{2k})$ is a symmetric invariant and that it is equal to $R(k) \Phi_{2k-2}$ with $R(k)\in\mathbb Q$. 
More explicitly, $R(k)$ is equal to $2(k-1)\frac{c(1)}{2b_1}=\frac{(k-1)c(1)}{b_1}$.

In type {\sf D}, we have $2(k-1)^2+(4\ell-1)(k-1) = 2n(k-1)+(k-1)(2k-3)$ and 
$\ell(2\ell-1)=\binom{n}{2}$.  Quite similarly, in type {\sf B}, we have 
$\ell(2\ell-1)+2\ell=\ell(2\ell+1)=\binom{n}{2}$ and 
$$
2(k-1)^2+(4\ell-1)(k-1) + 2(k-1)=2n(k-1) + (k-1)(2k-3).
$$
Therefore multiplying $c(1)$ with $(k-1)/b_1$ we obtain the desired formula for $R(k)$. 
\end{proof}

It does not look like there is a nice way to iterate ${\sf m}$.

\begin{thm} \label{thm-ort}
For any $k\ge 2$, \ 
$S_k=\varpi(\Phi_{2k})[-1]+\sum\limits_{1\le r<k} R(k,r) \varpi(\tau^{2r} \Phi_{2k-2r}[-1]){\cdot 1}$ 
is a Segal--Sugawara vector if $R(k,r)=\frac{2^r}{(2r)!} \prod\limits_{u=1}^r \left( \binom{n}{2} + 2n(k-u) +
(k-u)(2k-2u-1) \right)$.
\qed 
\end{thm}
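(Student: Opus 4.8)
The plan is to read Theorem~\ref{thm-ort} off Theorem~\ref{thm-form}, applied to the invariant $H=\Phi_{2k}\in\cS^{2k}(\gt g)^{\gt g}$. First I would verify the hypothesis of Theorem~\ref{thm-form}: that ${\sf m}_{2r+1}(\Phi_{2k})$ is a symmetric invariant for every $r\ge 1$. By Proposition~\ref{prop-D}, ${\sf m}(\Phi_{2k})=R(k)\,\Phi_{2k-2}\in\cS^{2k-2}(\gt g)$, so in particular it is a symmetric invariant; since this image lies in $\cS^{2k-2}(\gt g)$, the relation ${\sf m}_{2r+1}={\sf m}_{2r-1}\circ{\sf m}$ recorded in Section~\ref{sec-m} applies, and an immediate induction on $r$ gives
\[
{\sf m}_{2r+1}(\Phi_{2k})={\sf m}^{r}(\Phi_{2k})=\Big(\prod_{u=1}^{r}R(k-u+1)\Big)\Phi_{2k-2r}\qquad(1\le r\le k-1),
\]
which are symmetric invariants, while ${\sf m}_{2r+1}(\Phi_{2k})=0$ for $r\ge k$. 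Hence the hypothesis of Theorem~\ref{thm-form} holds.

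Next I would insert this into Formula~\eqref{form-m}. Since $\Phi_{2k}$ is homogeneous of degree $2k$, reading \eqref{form-m} with $2k$ in place of the degree parameter turns the summation range $1\le r<(2k-1)/2$ into $1\le r<k$, and Theorem~\ref{thm-form} yields that
\[
S_k=\varpi(\Phi_{2k}[-1])+\sum_{1\le r<k}\binom{2k}{2r}\Big(\prod_{u=1}^{r}R(k-u+1)\Big)\varpi(\tau^{2r}\Phi_{2k-2r}[-1]){\cdot}1
\]
is a Segal--Sugawara vector associated with $\Phi_{2k}$. It then only remains to identify the coefficient $\binom{2k}{2r}\prod_{u=1}^{r}R(k-u+1)$ with $R(k,r)$.

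For this last step I would substitute the explicit value of $R(j)$ from Proposition~\ref{prop-D}: with $j=k-u+1$ one has $R(k-u+1)=\dfrac{\binom{n}{2}+2n(k-u)+(k-u)(2k-2u-1)}{(k-u+1)(2k-2u+1)}$, so the asserted identity reduces to
\[
\frac{(2k)!}{(2k-2r)!}\cdot\frac{(k-r)!}{k!}\cdot\frac{1}{(2k-1)(2k-3)\cdots(2k-2r+1)}=2^{r},
\]
and this follows by splitting the product of the $2r$ consecutive integers $(2k)(2k-1)\cdots(2k-2r+1)=\tfrac{(2k)!}{(2k-2r)!}$ into its even part $(2k)(2k-2)\cdots(2k-2r+2)=2^{r}k(k-1)\cdots(k-r+1)=2^{r}\tfrac{k!}{(k-r)!}$ and its odd part $(2k-1)(2k-3)\cdots(2k-2r+1)$. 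I do not expect a real obstacle here: the substantive input is Proposition~\ref{prop-D} --- the fact that ${\sf m}$ carries $\Phi_{2k}$ to a scalar multiple of $\Phi_{2k-2}$ is precisely what makes the construction of Theorem~\ref{thm-form} apply --- and the rest is bookkeeping: keeping the degree $2k$ straight against the summation index, the elementary identity above, and, for $\gt g=\gt{so}_{2\ell}$, noting that the resulting $S_\ell$ still lies in $\gt z(\wg)$ even though in a minimal generating set $\Phi_{2\ell}$ is replaced by the Pfaffian, cf.\ Example~\ref{paf}.
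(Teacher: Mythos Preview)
Your proposal is correct and follows exactly the route the paper intends: the theorem carries a bare \qed because it is meant to be an immediate corollary of Proposition~\ref{prop-D} combined with Theorem~\ref{thm-form}, and you have spelled out precisely that deduction, including the elementary verification that $\binom{2k}{2r}\prod_{u=1}^{r}R(k-u+1)=R(k,r)$. The closing remark about $\gt{so}_{2\ell}$ and the Pfaffian is unnecessary here, since the statement only asserts $S_k\in\gt z(\wg)$ and makes no claim about completeness of the set $\{S_k\}$.
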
 

\section{Applications and open questions}%
\label{open}

The Feigin--Frenkel centre can be used in order to construct commutative subalgebras of the enveloping 
algebra in finite-dimensional cases. There are two most remarkable instances.  

\subsection{Quantum Mishchenko--Fomenko subalgebras}  \label{qMF} 
Recall the construction from \cite{r:si}. 
For any $\mu\in\gt g^*$ and
a non-zero $u\in\mathbb C$, the map 
\begin{equation} \label{ff-map}
\varrho_{\mu,u}\!:\U\big(t^{-1}\gt g[t^{-1}]\big)\to \U(\gt g),
\qquad x t^d \mapsto  u^d x +\delta_{d,-1}\mu(x),\quad x\in\gt g,
\end{equation}
defines  a $G_\mu$-equivariant  algebra homomorphism. The image of $\gt z(\widehat{\gt g})$ 
under $\varrho_{\mu,u}$
is a commutative subalgebra $\tilde\ca_{\mu}$ of $\U(\gt g)$, which does not depend
on $u$ \cite{r:si,fftl}. Moreover, $\gr\!(\tilde\ca_\mu)$ contains the {\it Mishchenko--Fomenko
subalgebra} $\ca_\mu\subset\cS(\gt g)$ associated with $\mu$,
which is generated by
all $\mu$-{\it shifts}  $\partial_\mu^m H$ of the $\gt g$-invariants  $H\in\cS(\gt g)$.   
The main property of $\ca_\mu$ is that it is {\it Poisson-commutative}, i.e., $\{\ca_\mu,\ca_\mu\}=0$ \cite{mf:ee}. 
If $\mu\in\gt g^*\cong\gt g$ is {\it regular}, i.e., if $\dim\gt g_\mu=\rk\gt g$, then $\ca_\mu$ is a maximal w.r.t. inclusion Poisson-commutative 
subalgebra of $\cS(\gt g)$ \cite{codim3} and hence 
$\gr\!(\tilde\ca_\mu)=\ca_\mu$.  Several important properties and applications of  (quantum) MF-subalgebras are discussed e.g. in~\cite{FFR,v:sc}. 

Mishchenko--Fomenko subalgebras were introduced in \cite{mf:ee}, before the appearance of 
the  Feigin--Frenkel centre.   In \cite{v:sc}, Vinberg posed a problem of finding a {\it quantisation} of $\ca_\mu$.  
A natural idea is to look for a solution given by the symmetrisation map $\varpi$. 
For $\gt g=\gt{gl}_n$, 
the elements $\varpi(\partial_\mu^m\Delta_k)\in\U(\gt g)$ with $1\le k\le n$, 
$0\le m<k$ commute and therefore produce a solution to Vinberg's quantisation problem 
\cite{t:cs,fm:qs,my}. 

Consider ${\mathcal F}[\bar a]=\varpi(F)[\bar a]\in\U(\wg^-)$ 
corresponding to $F\in\cS^m(\gt g)^{\gt g}$ in the sense of \eqref{a}. Set $p=|\{i \mid a_i=-1\}|$. 
Then 
\begin{equation} \label{MF1}
\left< \varrho_{\mu,u} ({\mathcal F}[\bar a]) \mid u\in\mK\setminus\{0\} \right>_{\mK} = 
\left< \varpi(\partial^l_\mu F) \mid 0\le l\le p\right>_{\mK}.
\end{equation}
Combining \eqref{MF1} with \eqref{S-sym-k}, we conclude  immediately that for $\gt g=\gt{gl}_n$, 
the algebra $\tilde\ca_\mu$ is generated by $\varpi(\partial_\mu^m\Delta_k)$.  
This observation is not new, see \cite[Sect.~3]{my} and in particular Sect.~3.2 there 
for a historical overview and a more elaborated proof. 

In \cite[Sect.~3.3]{my}, sets of generators $\{H_i\mid 1\le i\le \ell\}$ of $\cS(\gt g)^{\gt g}$ such that 
\begin{equation} \label{MF-sym}
\tilde\ca_\mu={\sf alg}\left<\varpi(\partial_\mu^m H_i) \mid 1\le i\le \ell, \ 0\le m<\deg H_i\right>
\end{equation} 
are exhibited in types {\sf B}, {\sf C}, and {\sf D}. We rejoice to say that   in type {\sf C}, 
$H_k=\Delta_{2k}$ in the notation of Section~\ref{sec-C}. In the even orthogonal case, the set $\{H_i\}$ includes the Pfaffian. 
The other generators in types {\sf B} and {\sf D} are $\Phi_{2k}$ of Section~\ref{sec-ort}. 
Thus Theorems~\ref{thm-C} and \ref{thm-ort} provide a new 
proof of  \cite[Thm~3.2]{my}. 

 
In conclusion, we show that Proposition~\ref{inv6} 
confirms Conjecture~3.3 of \cite{my} 
in type~{\sf G}$_2$.

\begin{prop}\label{g2-sym}
Let $\gt g$ be a simple Lie algebra of type {\sf G}$_2$. Let $\tilde H\in\cS^6(\gt g)$ 
be a $\gt g$-invariant  satisfying  \eqref{property}, cf. Corollaries~\ref{cl-6} and \ref{g2-4}. Then $\tilde\ca_\mu$ is generated by 
$\mu, {\mathcal H}$, and $\varpi(\partial_\mu^m \tilde H)$ with $0\le m\le 5$.
\end{prop}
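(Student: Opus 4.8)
The plan is to deduce Proposition~\ref{g2-sym} from the machinery already assembled in the paper, in close analogy with the argument given for types {\sf B}, {\sf C}, {\sf D} in Section~\ref{qMF}. First I would record the two ingredients that make everything work. On the one hand, by Corollaries~\ref{cl-6} and \ref{g2-4} the invariant $\tilde H=\Delta_6-\frac{25}{108}\Delta_2^3\in\cS^6(\gt g)^{\gt g}$ satisfies property~\eqref{property}, and together with ${\mathcal H}=\Delta_2$ it generates $\cS(\gt g)^{\gt g}$ (since $\cS(\gt g)^{\gt g}=\mK[\Delta_2,\Delta_6]$ and $\tilde H$ differs from $\Delta_6$ by a polynomial in $\Delta_2$). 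On the other hand, Corollary~\ref{cl-6} produces an explicit Segal--Sugawara vector $S_2$ of the form~\eqref{S2-G2}, and by \eqref{g2-7} one has $S_2=\varpi(\tilde H[-1])-\frac{65}{4}\varpi(\tau^2\Delta_2^2[-1]){\cdot}1-\frac{325}{3}\varpi(\tau^4\Delta_2[-1]){\cdot}1$; together with $S_1={\mathcal H}[-1]$ this is a complete set of Segal--Sugawara vectors.

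Next I would push $S_1$ and $S_2$ together with all their $\tau$-derivatives through the evaluation homomorphism $\varrho_{\mu,u}$ of \eqref{ff-map}. The key computational point is \eqref{MF1}: for a fully symmetrised element ${\mathcal F}[\bar a]=\varpi(F)[\bar a]$ with $F\in\cS^m(\gt g)^{\gt g}$ and $p=|\{i\mid a_i=-1\}|$, the span of $\varrho_{\mu,u}({\mathcal F}[\bar a])$ over nonzero $u$ equals $\left<\varpi(\partial_\mu^l F)\mid 0\le l\le p\right>_{\mK}$. Each $\tau^N(S_k)$, expanded via Lemma~\ref{side-1} and \eqref{a}, is a linear combination of fully symmetrised pieces $\varpi(F)[\bar a]$ with $F\in\{\tilde H,\,\Delta_2^2,\,\Delta_2\}$ (for $k=2$) or $F=\Delta_2$ (for $k=1$), and every $\bar a$ arising has entries summing to $-(m+N)$ with all entries negative; letting $N$ grow lets $p$ attain every value from $0$ up to $\deg F-1$. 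Since $\varrho_{\mu,u}(\gt z(\wg))=\tilde\ca_\mu$ is independent of $u$, varying $u$ and $N$ over the generators $\tau^N(S_1),\tau^N(S_2)$ of $\gt z(\wg)$ shows $\tilde\ca_\mu$ is generated by $\varpi(\partial_\mu^m{\mathcal H})$ for $0\le m\le 1$ and $\varpi(\partial_\mu^m\tilde H)$ for $0\le m\le 5$. Here $\varpi(\partial_\mu^0{\mathcal H})={\mathcal H}$ and $\varpi(\partial_\mu^1{\mathcal H})$ is (a scalar multiple of) $\mu$ up to an element of $\mK{\mathcal H}$, after recalling ${\mathcal H}=\Delta_2$ is quadratic so $\partial_\mu\Delta_2\in\gt g$; thus the list $\{\mu,{\mathcal H},\varpi(\partial_\mu^m\tilde H)\mid 0\le m\le 5\}$ suffices, which is exactly the claim.

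The one genuine subtlety—rather than the main obstacle, since the paper has already done the hard work—is making sure nothing is \emph{missing} from the generating set: a priori the images $\varrho_{\mu,u}(\tau^N S_k)$ could fail to span all of $\tilde\ca_\mu$ if the cross terms $\varpi(\tau^{2r}\Delta_2^2[-1]){\cdot}1$ and $\varpi(\tau^{2r}\Delta_2[-1]){\cdot}1$ in $S_2$ contributed elements of $\tilde\ca_\mu$ not already obtained from $S_1$. This is handled by observing that $\varpi(\partial_\mu^l\Delta_2^2)$ lies in the subalgebra generated by $\mu$ and ${\mathcal H}$ (as $\Delta_2^2$ is a polynomial in ${\mathcal H}$ and the symmetrisation of a product of invariants differs from the product of symmetrisations by lower-order terms that, after $\varrho_{\mu,u}$, still land in ${\sf alg}\langle\mu,{\mathcal H}\rangle$ — this is precisely the kind of reduction carried out in \cite[Sect.~3]{my}), so these cross terms add nothing new. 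Therefore the filtered/graded comparison $\gr\!(\tilde\ca_\mu)\supseteq\ca_\mu$ from Section~\ref{qMF} forces equality of the algebra generated by our explicit list with $\tilde\ca_\mu$, completing the proof. I would close by remarking that this verifies Conjecture~3.3 of \cite{my} in type {\sf G}$_2$, the only case left open there among the rank-two and classical types.
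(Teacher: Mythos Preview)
Your outline is correct and follows the same route as the paper: push $S_1,S_2$ through $\varrho_{\mu,u}$, use \eqref{MF1} on the $\varpi(\tilde H[-1])$ piece, and check that the cross terms coming from $\mathcal H^2$ and $\mathcal H$ land in ${\sf alg}\langle\mu,\mathcal H\rangle$. Two remarks on execution.

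First, the paper streamlines your argument by invoking \cite[Cor.~9.2.3]{book2}: $\tilde\ca_\mu$ is already generated by $\{\varrho_{\mu,u}(S_\nu)\mid u\in\mK^\times,\ \nu=1,2\}$, so there is no need to bring in $\tau^N$-derivatives at all. Working only with $S_2$ itself means the leading piece has $\bar a=(-1)^6$, $p=6$, and \eqref{MF1} delivers every $\varpi(\partial_\mu^m\tilde H)$ at once; your detour through varying $N$ is harmless but unnecessary.

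Second, and more importantly, the step you label as ``handled by observing'' is exactly the computation the paper carries out, and it is not as automatic as your appeal to \cite[Sect.~3]{my} suggests (that section treats the classical types, not $\mathsf G_2$). The paper writes $\mathcal Y=\varrho_{\mu,u}\bigl(\varpi(\tau^2\mathcal H^2[-1]){\cdot}1\bigr)=\sum_{j=1}^4 u^{-j-2}\mathcal Y_j$ and checks each $\mathcal Y_j\in{\sf alg}\langle\mu,\mathcal H\rangle$ by direct use of Lemma~\ref{H-sc}: the identities $\sum_i x_i\mu x_i=\mathcal H\mu+c_1\mu$ and $\sum_{i,j}x_ix_jx_ix_j=\mathcal H^2+c_1\mathcal H$ are what make $\mathcal Y_3$ and $\mathcal Y_4$ polynomial in $\mu,\mathcal H$. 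You should either do this calculation or cite Lemma~\ref{H-sc} explicitly; the general principle ``symmetrisation of a product differs from product of symmetrisations by lower-order terms'' does not by itself guarantee those lower-order terms stay inside $\mK[\mu,\mathcal H]$. Finally, your closing filtered/graded comparison is superfluous once both inclusions are shown directly (and would in any case require $\mu$ regular, which the statement does not assume).
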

\begin{proof}
We work with $\mu$ as with an element of $\gt g$. 
Clearly $\partial_\mu {\mathcal H}=2 \sum (x_i,\mu) x_i=2\mu$ and 
$$
\varrho_{\mu,u}({\mathcal H}[b_1,b_2])=u^{b_1+b_2}{\mathcal H}+(u^{b_2}\delta_{b_1,-1}+u^{b_1}\delta_{b_2,-1}) \mu +
\delta_{b_1,-1}\delta_{b_2,-1} (\mu,\mu). 
$$ 
Let $S_2$ be the Segal--Sugawara vector provided by \eqref{S2-G2} and \eqref{g2-7}.
Set also $S_1={\mathcal H}[-1]$. Then $\{S_1,S_2\}$ is a complete set of Segal--Sugawara vectors. 
A general observation is that 
$\tilde\ca_\mu$ is generated by $\{\varrho_{\mu,u}(S_\nu) \mid u\in\mK^{^\times}\!, \nu=1,2\}$ \cite[Cor.~9.2.3]{book2}. We have already computed the images of $S_1$ and also of $\varpi(\tau^4{\mathcal H}[-1]){\cdot}1$. 
Similarly to \eqref{MF1}, the images of $\varpi(\tilde H)[-1]$ span 
$\left<\varpi(\partial_\mu^m H)\right>_{\mK}$. It remains to deal with 
$$
{\mathcal Y}=\varrho_{\mu,u}(\varpi(\tau^2 {\mathcal H}^2[-1]){\cdot}1) = 
u^{-6} {\mathcal Y}_4+u^{-5}  {\mathcal Y}_3 + u^{-4} {\mathcal Y}_2 + u^{-3} {\mathcal Y}_1.
$$ 
Here ${\mathcal Y}_1$ is proportional to $\mu$; the term ${\mathcal Y}_2$
 is a linear combination of ${\mathcal H}$ and $\mu^2$. Furthermore, 
${\mathcal Y}_3$ is a linear combination of  $\mu{\mathcal H}$ and 
$\sum_{i} x_i\mu x_i$, therefore of $\mu{\mathcal H}$ and  $\mu$, cf. Lemma~\ref{H-sc}.  
Finally, ${\mathcal Y}_4$ is a linear combination of ${\mathcal H}^2$ and  
$\sum_{i,j} x_i x_jx_j x_i=\sum_{i} x_i {\mathcal H} x_i={\mathcal H}^2$, 
$$
\sum_{i,j} x_ix_j x_i x_j = {\mathcal H}^2 +  \sum_{j} c_1 x_j x_j = {\mathcal H}^2+c_1{\mathcal H}.
$$ 
This completes the proof. 
\end{proof}

\subsection{Gaudin algebras} \label{G-A}

Recall that elements $S\in\gt z(\wg)$ give rise to higher Hamiltonians of the Gaudin models, which  describe 
completely integrable quantum spin chains \cite{FFRe}. 

A Gaudin model consists of $n$-copies of $\gt g$ and the Hamiltonians
$$
{\mathcal H}_k=\sum_{j\ne k} \frac{\sum_i x_i^{(k)} x_i^{(j)}}{z_k-z_j}  ,  \enskip 1\le k\le n, 
$$
where $z_1,\ldots,z_n$ are pairwise different complex numbers. 
Here $\{x_i^{(k)}\mid 1\le i\le\dim\gt g\}$ is an orthonormal  basis for the $k$'th copy of $\gt g$. 
These Gaudin Hamiltonians can be regarded as 
elements of $\U(\gt g)^{\otimes n}$ or of $\cS(\gt g\,{\oplus}\ldots{\oplus}\,\gt g)$. 
They commute (and hence Poisson-commute) with each other. 
Higher Gaudin Hamiltonians are elements of $\U(\gt g)^{\otimes n}$ that commute with all ${\mathcal H}_k$. 

The construction of \cite{FFRe} produces a {\it Gaudin subalgebra} ${\mathcal G}$, which consists of Gaudin Hamiltonians and
contains ${\mathcal H}_k$ for each $k$. Let $\Delta\U(\wg^-)\cong\U(\wg^-)$ be the diagonal of $\U(\wg^-)^{\otimes n}$.
Then a vector $\bar z=(z_1,\ldots,z_n)$ defines 
a  natural homomorphism 
$\rho_{\bar z}\!:\Delta\U(\wg^-) \to \U(\gt g)^{\otimes n}$. 
In this notation, ${\mathcal G}={\mathcal G}(\bar z)$
is the image of $\gt z(\wg)$ under $\rho_{\bar z}$. 
By the construction, ${\mathcal G}\subset(\U(\gt g)^{\otimes n})^{\gt g}$. 
Since $\gt z(\wg)$ is homogeneous in $t$, it is clear that 
$\Gz=\cG(c\bar z)$ for any non-zero complex number $c$. 

Gaudin subalgebras have attracted a great deal of attention, see e.g. \cite{G-07} and references therein. 
They are closely related to  quantum Mishchenko--Fomenko  subalgebras and share some of their properties. 
In particular, for a generic $\bar z$, the action of $\Gz$ on an irreducible finite-dimensional 
$(\gt g\,{\oplus}\ldots{\oplus}\,\gt g)$-module 
$V(\lambda_1)\otimes\ldots\otimes V(\lambda_n)$
is diagonalisable and has a simple spectrum on the subspace of highest weight vectors of the diagonal $\gt g$ 
\cite{L-Gau}. 
Applying $\rho_{\bar z}$, one obtains explicit formulas for higher Gaudin Hamiltonians
from explicit formulas for the generators of $\gt z(\wg)$.  
In the following, the discuss which generators of  $\gt z(\wg)$ one has to consider.

Let $\{S_1,\ldots,S_\ell\}$ with $\gr\!(S_k)=H_k[-1]$ be a complete set of Segal--Sugawara vectors 
as in Section~\ref{ssv}. 
Set $\deg H_k=:d_k$. 
Assume that $z_k\ne z_j$ for $k\ne j$ and that $z_k\ne 0$ for all $k$. 
According to \cite[Prop.~1]{G-07}, ${\mathcal G}(\bar z)$ has a set of algebraically independent 
generators $\{F_1,\ldots,F_{\bb(n)}\}$, where 
$\bb(n):=\frac{n-1}{2}(\dim\gt g+\ell)+\ell$, see also \cite[Thms~2\&3]{r:si}.
Moreover, exactly $(n-1)d_k +1$ elements among the $F_j$'s belong to 
$\left<\rho_{\bar z}(\tau^m(S_k)) \mid m\ge 0\right>_{\mK}$ \cite[Prop.~1]{G-07}. 
Note that $\sum_{k=1}^{\ell} d_k = (\dim\gt g+\ell)/2$. 
Furthermore,  the symbols $\gr\!(F_j)$ are algebraically independent as well and 
$\deg\gr\!(F_j)=d_k$ if $F_j\in \left<\rho_{\bar z}(\tau^m(S_k)) \mid m\ge 0\right>_{\mK}$, 
see the proof of \cite[Prop.~1(2)]{G-07} and \cite[Sect.~4]{r:si}.

\begin{rmk}
If $\ca\subset(\cS(\gt g)^{\otimes n})^{\gt g}$  is a Poisson-commutative algebra, then 
${\rm tr.deg}\,\ca\le \bb(n)$ by \cite[Prop.~1.1]{my}. Combining this with \cite[Satz~5.7]{BK-GK}, we obtain that  
${\rm tr.deg}\,\tilde\ca\le\bb(n)$ for a commutative subalgebra $\tilde\ca\subset (\U(\gt g)^{\otimes n})^{\gt g}$. 
Thus $\cG$ has the maximal possible  transcendence degree. 
Arguing in the spirit of \cite{codim3} and using the results of \cite{r:si}, one can show that 
$\cG$ is also a maximal commutative subalgebra of $(\U(\gt g)^{\otimes n})^{\gt g}$ w.r.t. inclusion. 
\end{rmk}

In case $n=2$, the application of our  
result looks particularly nice. Besides, this {\it two points} case has several  features.  
Suppose that $n=2$. 
Set $\gt l=\gt g\oplus\gt g$.
For $H\in\cS^d(\gt l)$, $\xi^{(1)}$ in the first copy of $\gt g$, $\eta^{(2)}$ in the second, and
a non-zero $c\in\mK$, write
\begin{equation}\label{2-dec}
H(\xi^{(1)}{+}c\eta^{(2)})=H_{d,0}(\xi^{(1)})+cH_{d-1,1}(\xi^{(1)},\eta^{(2)})+\ldots+c^{d-1}H_{1,d-1}(\xi^{(1)},\eta^{(2)})+c^dH_{0,d}(\eta^{(2)})
\end{equation}
Here $H_{d,0}$ belongs to the symmetric algebra of the first copy of $\gt g$.  
The symbol of   $\rho_{\bar z}(\tau^m(S_k))$ lies in $\left<(H_k)_{d_k-j,j} \mid 0\le j\le d_k\right>_{\mK}$. 
Since we must have $d_k+1$ linearly independent elements among these symbols, 
$\gr\!(\cG)$ is freely generated by $(H_k)_{d_k-j,j}$ with $1\le k\le\ell$, $0\le j\le d_k$. 

The Lie algebra $\gt l$ has the following symmetric decomposition 
\begin{equation} \label{dec-sym}
\gt l= \gt l_0 \oplus \gt l_1,   \ \text{ where } \  \ \gt l_1=\{(\xi,-\xi)\mid\xi\in\gt g\} 
\end{equation} 
and $\gt l_0=\Delta\gt g=\{(\xi,\xi)\mid\xi\in\gt g\}$ is the diagonal. 
Similarly to \eqref{2-dec}, one polarises $H\in\cS^d(\gt l)$ w.r.t. the decomposition  \eqref{dec-sym}. 
Let $H_{[j,d-j]}$ with $0\le j\le d$ be the arising components. 
Then 
$\left< H_{j,d-j} \mid 0\le j\le d\right>_{\mK}= \left< H_{[j,d-j]} \mid 0\le j\le d\right>_{\mK}$. 

On the one side, the polynomials $(H_k)_{j,d_k-j}$ generate $\gr\!(\cG)$, on the other,
the polynomials $(H_k)_{[j,d_k-j]}$ generate a Poisson-commutative subalgebra 
$\eus Z\subset \cS(\gt l)$ related to the symmetric pair $(\gt l,\gt l_0)$, 
which has many interesting properties \cite{OY}. 
Thus, our discussion results in the following statement. 

\begin{cl}[{cf. \cite[Example~6.5]{OY}}] \label{G-sym}
The two points Gaudin subalgebra $\cG(z_1,z_2)$ is a quantisation of the 
Poisson-commutative subalgebra $\eus Z$ associated with the symmetric pair 
$(\gt g{\oplus}\gt g,\Delta\gt g)$. 
\end{cl}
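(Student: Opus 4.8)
The plan is to show that $\gr\!(\cG(z_1,z_2))=\eus Z$ --- which is precisely what it means for $\cG(z_1,z_2)$ to quantise $\eus Z$ --- by reading off explicit generators of both algebras from the discussion preceding the corollary. On the Gaudin side, $\gr\!(\cG(z_1,z_2))$ is freely generated by the bi-homogeneous polarisation components $(H_k)_{j,d_k-j}$ of the basic invariants $H_k$, for $1\le k\le\ell$ and $0\le j\le d_k$, the two arguments running over the two copies $\xi^{(1)},\xi^{(2)}$ of $\gt g$; this is the content of the paragraph above, which rests on \cite[Prop.~1]{G-07} and \cite[Thms~2\&3]{r:si} together with the description of the symbols of $\rho_{\bar z}(\tau^m(S_k))$. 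On the symmetric-pair side, by \cite{OY} the Poisson-commutative algebra $\eus Z$ attached to $(\gt g\oplus\gt g,\Delta\gt g)$ is generated by the components $(H_k)_{[j,d_k-j]}$ obtained from the same invariants $H_k$ by polarising with respect to the decomposition \eqref{dec-sym}. Since the subalgebra generated by a set of elements coincides with the subalgebra generated by its linear span, it suffices to prove that for each fixed $k$ the two subspaces $\left<(H_k)_{j,d_k-j}\mid 0\le j\le d_k\right>_{\mK}$ and $\left<(H_k)_{[j,d_k-j]}\mid 0\le j\le d_k\right>_{\mK}$ of $\cS^{d_k}(\gt l)$ are equal.

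For this span equality I would argue as follows. Fix a homogeneous $H\in\cS^d(\gt g)$. Expanding $H(c_1\xi^{(1)}+c_2\xi^{(2)})=\sum_{j}c_1^{j}c_2^{d-j}H_{j,d-j}(\xi^{(1)},\xi^{(2)})$ and using a Vandermonde argument in $c_1$ (with $c_2=1$) gives $\left<H(c_1\xi^{(1)}+c_2\xi^{(2)})\mid(c_1,c_2)\in\mK^2\right>_{\mK}=\left<H_{j,d-j}\mid 0\le j\le d\right>_{\mK}$. Applied with $H=H_k$, this identifies the first subspace with the span of the family $H_k(c_1\xi^{(1)}+c_2\xi^{(2)})$. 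Likewise, writing the two summands of \eqref{dec-sym} as $a=\tfrac12(\xi^{(1)}+\xi^{(2)})\in\gt l_0$ and $b=\tfrac12(\xi^{(1)}-\xi^{(2)})\in\gt l_1$ under the canonical identifications with $\gt g$, the same polarisation identity applied to \eqref{dec-sym} shows that the second subspace equals the span of the family $H_k(s_0a+s_1b)$, $(s_0,s_1)\in\mK^2$. But $s_0a+s_1b=\tfrac12\big((s_0+s_1)\xi^{(1)}+(s_0-s_1)\xi^{(2)}\big)$, so the two families are carried into one another by the invertible linear substitution $(c_1,c_2)=\tfrac12(s_0+s_1,s_0-s_1)$ of the parameters; hence they coincide as sets of polynomials, and in particular span the same subspace. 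This gives the required equality for every $k$.

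Putting the two steps together, $\gr\!(\cG(z_1,z_2))$ and $\eus Z$ are generated by families spanning the same subspaces of $\cS(\gt l)$, hence they are the same subalgebra, which is the assertion of the corollary; moreover the commutativity of $\cG(z_1,z_2)$ forces $\gr\!(\cG(z_1,z_2))$ to be Poisson-commutative, in agreement with the corresponding property of $\eus Z$ from \cite{OY}. In this scheme there is no hard analytic or combinatorial step. The one point that genuinely has to be nailed down is the precise identification, on the Gaudin side, of the symbols of the higher Hamiltonians associated with $S_k$ with the polarisation components $(H_k)_{j,d_k-j}$; this is where the genericity of $\bar z$ enters (one uses $z_1\ne z_2$, and may assume $z_1,z_2\ne 0$ since $\cG(z_1,z_2)=\cG(cz_1,cz_2)$ for $c\ne 0$), and it is already carried out in the discussion preceding the corollary. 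Everything else is the elementary linear algebra of polarisation recorded above.
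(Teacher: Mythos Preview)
Your proposal is correct and follows essentially the same route as the paper: the corollary is presented there as a consequence of the preceding paragraph, which records exactly the two ingredients you use --- that $\gr\!(\cG)$ is generated by the $(H_k)_{j,d_k-j}$, that $\eus Z$ is generated by the $(H_k)_{[j,d_k-j]}$, and that $\left<(H_k)_{j,d_k-j}\right>_{\mK}=\left<(H_k)_{[j,d_k-j]}\right>_{\mK}$. Your Vandermonde/change-of-coordinates argument supplies a clean justification for this span equality, which the paper merely asserts; the paper also offers an alternative, more direct verification just after the corollary by specialising to $\bar z=(1,-1)$, where $\rho(\xi t^k)=\xi^{(1)}+(-1)^k\xi^{(2)}$ sends $\gt g[-1]$ to $\gt l_1$ and $\gt g[-2]$ to $\gt l_0$, so that the symbols $\gr\!(\rho(\tau^m(S_k)))$ land in the $(H_k)_{[j,d_k-j]}$ span immediately.
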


Let us give more information on the issue of Corollary~\ref{G-sym}. 
Observe that $\cG(z_1,z_2)=\cG(z_1{-}b,z_2{-}b)$ if 
$b\in \mK\setminus\{z_1,z_2\}$, see \cite[Prop.~1]{G-07}. 
Hence $\Gz=\cG(\frac{z_1-z_2}{2},\frac{z_2-z_1}{2})=\cG(1,-1)$. 
For $\rho=\rho_{1,-1}$, we have $\rho(\xi t^k)=\xi^{(1)}+(-1)^k \xi^{(2)}$, i.e., 
$\gt g[-1]$, as well as any $\gt g[-2k{-}1]$, is mapped into $\gt l_1$
and each $\gt g[-2k]$ is mapped into  $\Delta\gt g$.  
One can understand $\rho$ as the map from $\U(\wg^-)$ to $\U(\wg^-)/(t^2-1)\cong\U(\gt l)$. 

It is not difficult to see that $\gr\!(\rho(S_k))=(H_k)_{[0,d_k]}$, $\gr\!(\rho(\tau(S_k)))=(H_k)_{[1,d_k-1]}$, and 
in general 
$$
\gr\!(\rho(\tau^m(S_k)))\in m!(H_k)_{[m,d_k-m]} + \left<(H_k)_{[j,d_k-j]}\mid j<m\right>_{\mK}
$$
as long as $m\le d_k$. This shows that indeed $\gr\!(\cG)=\eus Z$ and that 
\begin{equation} \label{gen-G}
\cG={\sf alg}\left<\rho(\tau^m(S_k))\mid  1\le k\le\ell,  \ 0\le m \le d_k\right>.
\end{equation}

Suppose  that $H_1,\ldots,H_\ell$ are homogeneous generators of $\cS(\gt g)^{\gt g}$  
and for each $k$ there is $0\le k'<k$ such that 
${\sf m}(H_{k'})\in \mK H_{j}$, where  $H_0=0$.   If $\gt g$ is  simple and classical, then explicit descriptions of such sets are contained in 
Sections~\ref{sec-A}, \ref{sec-C}, \ref{sec-ort}. 

\begin{thm}\label{2-Gau}
If we keep the above assumption on the set $\{H_k\}$, then the two points Gaudin subalgebra $\cG\subset\U(\gt l)$ is generated by  $\varpi((H_k)_{d_k-j,j})$ with $1\le k\le\ell$, $0\le j\le d_k$. 
\end{thm}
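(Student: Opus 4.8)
The plan is to reduce Theorem~\ref{2-Gau} to the explicit formulas for Segal--Sugawara vectors established in Theorems~\ref{thm-form}, \ref{thm-C}, \ref{thm-ort} (and \eqref{g2-7}) together with the description \eqref{gen-G} of $\cG$ as the algebra generated by $\rho(\tau^m(S_k))$ with $0\le m\le d_k$. First I would fix, for each $k$, the Segal--Sugawara vector $S_k$ attached to the chosen generator $H_k$ by Formula~\eqref{form-m}, so that
$$
S_k=\varpi(H_k[-1])+\sum_{1\le r<d_k/2} \lambda_{k,r}\,\varpi(\tau^{2r}{\sf m}_{2r+1}(H_k)[-1]){\cdot}1
$$
for suitable scalars $\lambda_{k,r}\in\mK$; here the hypothesis that each ${\sf m}_{2r+1}(H_k)$ is a scalar multiple of some $H_j$ guarantees (via Theorem~\ref{thm-form}) that $S_k\in\gt z(\wg)$. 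By Lemma~\ref{side-1} every summand $\varpi(\tau^{2r}{\sf m}_{2r+1}(H_k)[-1]){\cdot}1$ is fully symmetrised, hence is a combination of elements $\varpi(G[\bar a])$ with $G\in\mK H_j$ and $\bar a\in\Z_{<0}^{d_j+1}$; and $\varpi(H_k[-1])$ is already fully symmetrised by construction.

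Next I would apply $\rho=\rho_{1,-1}$ and, more generally, $\rho\circ\tau^m$ for $0\le m\le d_k$, and read off the symbols. By the computation recorded just before \eqref{gen-G}, $\gr\!(\rho(\tau^m(S_k)))\in m!(H_k)_{[m,d_k-m]}+\langle (H_k)_{[j,d_k-j]}\mid j<m\rangle_{\mK}$, so the symbols of $\{\rho(\tau^m(S_k))\mid 0\le m\le d_k\}$ span the same space as $\{(H_k)_{[j,d_k-j]}\mid 0\le j\le d_k\}$, which in turn equals $\langle (H_k)_{j,d_k-j}\mid 0\le j\le d_k\rangle_{\mK}$ by the identity stated after \eqref{dec-sym}. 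The key point is then to show that each $\rho(\tau^m(S_k))$ is itself a \emph{fully symmetrised} element of $\U(\gt l)$ whose symbol is the $t$-independent analogue of $(H_k)_{j,d_k-j}$ up to lower order. Indeed, applying $\rho$ to $\varpi(G[\bar a])$ with $\bar a\in\Z_{<0}^{d}$ produces, by Lemma~\ref{lm-sym-a} and the fact that $\rho$ sends $\gt g[a]$ into $\gt l_1$ or $\Delta\gt g$ according to the parity of $a$, a symmetrisation $\varpi$ of a bi-homogeneous element of $\cS(\gt l)$, namely of the form $\varpi(G_{[p,d-p]})$ where $p=|\{i\mid a_i\text{ odd}\}|$ (a polarisation w.r.t. \eqref{dec-sym}). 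Collecting these over all $m$ and all $r$, and using that the coefficients $\lambda_{k,r}$ and the combinatorial constants $\tilde c(r,\bar a)$ of Theorem~\ref{thm-form} do not depend on $\gt g$, I obtain that the $\mK$-span of $\{\rho(\tau^m(S_k))\mid 0\le m\le d_k\}$ is contained in, and by the symbol count equal to, the span of $\{\varpi((H_k)_{[j,d_k-j]})\mid 0\le j\le d_k\}=\{\varpi((H_k)_{d_k-j,j})\mid 0\le j\le d_k\}$ modulo lower-degree terms. An upper-triangularity/induction on $j$ then upgrades this to the statement that the two generating sets coincide, whence $\cG={\sf alg}\langle\varpi((H_k)_{d_k-j,j})\mid 1\le k\le\ell,\ 0\le j\le d_k\rangle$.

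The main obstacle I anticipate is the bookkeeping in the last step: one must verify carefully that the leading ($j=m$) coefficient in $\gr\!(\rho(\tau^m(S_k)))$ is genuinely nonzero (the factor $m!$ above, which dies for $m\ge p$ in characteristic $0$ only if $p<m$ — but here $m\le d_k=\deg H_k-1$ and the relevant $\bar a$ have enough odd entries, so this needs a small lemma) and that the contributions of the correction terms $\varpi(\tau^{2r}{\sf m}_{2r+1}(H_k)[-1]){\cdot}1$ after applying $\rho\circ\tau^m$ only affect strictly lower polarisation degree in the second $\gt g$-factor, so the transition matrix between $\{\rho(\tau^m(S_k))\}_m$ and $\{\varpi((H_k)_{d_k-j,j})\}_j$ is unitriangular and hence invertible. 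Once that is in place, together with \eqref{gen-G} and Corollary~\ref{G-sym}, the theorem follows; the classical-type instances are then immediate from Propositions~\ref{works}, \ref{sp-m}, \ref{prop-D}, and type ${\sf G}_2$ from \eqref{g2-7}.
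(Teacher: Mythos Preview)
Your approach is essentially the paper's: take $S_k$ from \eqref{form-m}, use \eqref{gen-G}, observe that $\rho$ maps each fully symmetrised $\varpi(G[\bar a])$ to $\varpi$ of a bihomogeneous polarisation of $G$ in $\cS(\gt l)$, and conclude by a dimension/triangularity argument that $V_{\cG}:=\langle\rho(\tau^m(S_k))\rangle_{\mK}$ equals $V_{\rm sym}:=\langle\varpi((H_k)_{d_k-j,j})\rangle_{\mK}$.

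There is one genuine slip in your bookkeeping. You assert that the correction terms $\varpi(\tau^{2r}{\sf m}_{2r+1}(H_k)[-1]){\cdot}1$, after $\rho\circ\tau^m$, ``only affect strictly lower polarisation degree in the second $\gt g$-factor'', so that for fixed $k$ the transition matrix between $\{\rho(\tau^m(S_k))\}_m$ and $\{\varpi((H_k)_{d_k-j,j})\}_j$ is unitriangular. That is not what happens: by hypothesis ${\sf m}_{2r+1}(H_k)\in\mK H_{k'}$ with $k'<k$, so these terms land in $\langle\varpi((H_{k'})_{d_{k'}-j,j})\mid 0\le j\le d_{k'}\rangle_{\mK}$, i.e.\ in a \emph{different} block, not in the $H_k$-block with smaller $j$. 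Consequently the correct induction is on $k$, not on $j$: assume the claim for all $k'<k$; then the correction terms of $\rho(\tau^m(S_k))$ already lie in $V_{\rm sym}$ by the inductive hypothesis, while the main piece $\rho(\tau^m(\varpi(H_k[-1])))$ lies in $m!\,\varpi((H_k)_{[m,d_k-m]})+\langle\varpi((H_k)_{[j,d_k-j]})\mid j<m\rangle_{\mK}$ by the computation preceding \eqref{gen-G}. This is exactly the paper's argument. Your worry about $m!$ vanishing is groundless in characteristic~$0$; the nonvanishing and linear independence of the $(H_k)_{[m,d_k-m]}$ is already implicit in the statement (just before \eqref{gen-G}) that the symbols $\gr(\rho(\tau^m(S_k)))$, $0\le m\le d_k$, freely generate $\gr(\cG)$.
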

\begin{proof}
For each $k$, let $S_k$ be the Segaal--Sugawara vector obtained from $H_k$ by~\eqref{form-m}.
We will show that 
$V_{\rm sym}{:=}\left<\varpi((H_k)_{d_k-j,j}) \mid 1\le k\le\ell, \, 0\le j\le d_k\right>_{\mK}$ is equal to   
$$
V_{\cG}{:=} \left<\rho(\tau^m(S_k))\mid  1\le k\le\ell,  \, 0\le m \le d_k\right>_{\mK}. 
$$
Since $\dim V_{\cG} =\dim V_{\rm sym}$, it suffices to prove the inclusion 
$V_{\cG}\subset V_{\rm sym}$. 
We argue by induction on $k$. If $k=1$, then $S_1=\varpi(H_1[-1])$. Hence 
$\left<\rho(\tau^m(S_1))\mid  0\le m \le d_1\right>_{\mK}$ is equal to
$\left<\varpi((H_1)_{d_1-j,j}) \mid  0\le j\le d_1\right>_{\mK}$. 
If $k\ge 2$ and $m\le d_k$, then according to the structure of \eqref{form-m} and our condition on $\{H_1,\ldots,H_\ell\}$, we have 
$$
\begin{array}{l}
\rho(\tau^m(S_k))\in m!\varpi((H_k)_{[m,d_k-m]})  + V_{m,k}, \ \ \text{ where } \\
\qquad 
V_{m,k}=  \left<\varpi((H_k)_{[j,d_k-j]})\mid j<m\right>_{\mK}
\oplus \left<\varpi((H_{k'})_{j,d_{k'}-j})\mid  k'<k, \, 0\le j<d_{k'}\right>_{\mK}. 
\end{array}
$$
Thus $\rho(\tau^m(S_k))\in V_{\rm sym}$ and we are done. 
\end{proof}

\subsection{Further directions}

For all classical  types, 
we find families of generators $\{H_k\}$ that behave well in terms of 
\eqref{property}.  In the orthogonal case, the result was  obtained by a brute force combined with a long calculation. 
It would be most 
desirable to obtain a better understanding of the map ${\sf m}$ and the relevant representation/invariant theory of 
$\gt g$. Such an understanding can replace tedious calculations. 

We say that a set of generators $\{H_k\}\subset\cS(\gt g)^{\gt g}$ satisfies~\eqref{property} if  each $H_k$ 
satisfies it. 
The general picture is not complete yet, since the following question remains open. 

\begin{qn}
Does any exceptional Lie algebra $\gt g$ poses a set of generators $\{H_k\}\subset\cS(\gt g)^{\gt g}$ satisfying \eqref{property}? 
\end{qn}

Proposition~\ref{inv6} takes care of type ${\sf G}_2$. We have seen  also some partial positive answers in other types. 

\begin{qn}
Are there homogeneous generators $\{H_k\}$ of $\cS(\gt g)^{\gt g}$ such that ${\sf m}(H_k)=0$ for each $k$? 
\end{qn}

The calculations in Section~\ref{sec-g2} prove that in type {\sf G}$_2$, the answer is negative. 
I would expect that the answer is negative in general. 

As  Example~\ref{k4} shows,  a set of generators $\{H_k\}$ satisfying \eqref{property} is not unique. 
For the classical Lie algebras, 
there is a freedom of choice in degree $4$ and there is also  some freedom in degree $6$, but we rather would not dwell on it. 

It is quite possible that the condition~\eqref{MF-sym}  on the set $\{H_k\}$ is less restrictive than \eqref{property}. However, we have no convincing evidence to this point. 

\begin{rmk} \label{rm-comb}
There are some intricate  combinatorial identities hidden in Formula~\eqref{form-m}. 
In order to  reveal them, 
one has to understand the natural numbers $c(r,\bar a)$ appearing in the proof of Lemma~\ref{side-1},  
the rational constants $c_{2,3}(j,p)$ of Lemma~\ref{lm-com-h} as well as the scalars $C(\bar a^{(r)},\bar\gamma)$ of
Proposition~\ref{prop-fs-c}. 
It is a miracle that the formula takes care of all these numbers. 
\end{rmk}

\vskip2ex

\noindent 
{\bf Acknowledgement.} I am grateful to Alexander Molev for his enlightening explanations on the subjects of  Segal--Sugawara vectors and vertex algebras. A special thank you is due to Leonid Rybnikov for bringing Gaudin
subalgebras to my attention. 

Part of this work was done during my visit to the Mathematical Institute of the Cologne University. 
It is a pleasure to thank Peter Littelmann for his hospitality and many inspiring conversations about 
mathematics in general as well as Kac--Moody  algebras in particular.


\begin{thebibliography}{NM96}

\bibitem[BD]{bd}
{\sc A.~Beilinson} and {\sc V.~Drinfeld}, 
Quantization of Hitchin's integrable system and Hecke eigensheaves, 
\url{https://www.math.uchicago.edu/~mitya/langlands/hitchin/BD-hitchin.pdf}. 

\bibitem[BK]{BK-GK}
{\sc W.~Borho} and {\sc H.~Kraft}, \"Uber die {G}elfand--{K}irillov-{D}imension,
{\it Math. Ann.}, {\bf 220}\,(1976), no.\,1, 1--24.

\bibitem[CFR]{G-07}
{\sc A. Chervov, G. Falqui,} and {\sc L. Rybnikov},
Limits of Gaudin algebras, quantization of bending flows, Jucys–Murphy elements and Gelfand–Tsetlin bases, 
{\it Lett. Math. Phys.}, {\bf 91}\,(2010), no.\,2, 129--150.

\bibitem[CM09]{cm:ho}
{\sc A.\,V. Chervov} and {\sc A.\,I. Molev},
{On higher order Sugawara operators},
{\it Int. Math. Res. Not.} (2009), 1612--1635.

\bibitem[CT06]{ct:qs}
{\sc A. Chervov} and {\sc D. Talalaev},
{Quantum spectral curves,
quantum integrable systems  and
the geometric Langlands correspondence},
\href{https://arxiv.org/abs/hep-th/0604128}{\tt \bfseries arXiv:hep-th/0604128}.

\bibitem[FF92]{ff:ak}
{\sc B. Feigin} and {\sc E. Frenkel},
{Affine Kac--Moody algebras at the critical level
and Gelfand--Dikii algebras},
{\it Int. J. Mod. Phys.} A{\bf 7}, {\it Suppl}. 1A (1992), 197--215.

\bibitem[FFRe]{FFRe}
{\sc B.~Feigin}, {\sc E.~Frenkel}, and {\sc N.~Reshetikhin}, 
Gaudin model, Bethe Ansatz and critical level, 
{\it Comm. Math. Phys.}, {\bf 166}\,(1994), no.\,1,  27--62. 

\bibitem[FFR]{FFR}
{\sc B.~Feigin}, {\sc E.~Frenkel}, and {\sc L.~Rybnikov},
Opers with irregular singularity and spectra of the shift of argument subalgebra,
{\it Duke Math. J.} {\bf 155}\,(2010), no.\,2, 337--363.

\bibitem[FFTL]{fftl}
{\sc B. Feigin, E. Frenkel}, and {\sc V. Toledano Laredo},
{Gaudin models with
irregular singularities},
{\it Adv. Math.} {\bf 223} (2010), 873--948.

\bibitem[F07]{f:lc}
{\sc E. Frenkel},
{Langlands correspondence for loop groups}, Cambridge
Studies in Advanced Mathematics,
103. Cambridge University Press, Cambridge, 2007.

\bibitem[FM15]{fm:qs}
{\sc V. Futorny} and {\sc A. Molev},
{Quantization of the shift of argument subalgebras in type $A$},
{\it Adv. Math.} {\bf 285} (2015), 1358--1375.

\bibitem[K84]{Kac-L}
{\sc V.\,G. Kac}, Laplace operators of infinite-dimensional Lie algebras and theta functions, 
{\it Proc. Nadl. Acad. Sci. USA} {\bf 81} (1984), 645--647. 

\bibitem[K63]{K}
{\sc B.~Kostant}, Lie group representations on
polynomial rings, {\it  Amer. J. Math.}, {\bf 85}\,(1963), 327--404.

\bibitem[MF78]{mf:ee}
{\sc A.\,S. Mishchenko} and {\sc A.\,T. Fomenko},
{Euler equation on finite-dimensional Lie groups} (in Russian),
{\it Izv. AN SSSR. Ser. matem.} {\bf 42}\,(1978), no.\,2,  396--415;
English translation in
{\it Math. USSR-Izv.} {\bf 12} (1978), 371--389.

\bibitem[M13]{m:ff}
{\sc A.\,I. Molev},
{Feigin--Frenkel center in types $B$, $C$ and $D$},
{\it Invent. Math.} {\bf 191}\,(2013), 1--34.

\bibitem[M18]{book2}
{\sc A. Molev}, 
Sugawara operators for classical Lie algebras. Mathematical Surveys and Monographs, {\bf 229}, 
American Mathematical Society, Providence, RI, 2018. 

\bibitem[MRR]{mrr}
{\sc A.\,I. Molev}, {\sc E. Ragoucy}, and {\sc N. Rozhkovskaya}, 
Segal--Sugawara vectors for the Lie algebra of type {\sf G}$_2$, 
{\it J. Algebra}, {\bf 455}\,(2016), 386--401. 

\bibitem[MY19]{my}
{\sc A. Molev} and {\sc O. Yakimova},
Quantisation and nilpotent limits of Mishchenko--Fomenko subalgebras,
{\it Represent. Theory}, {\bf 23}\,(2019), 350--378. 

\bibitem[NO96]{no:bs}
{\sc M. Nazarov} and {\sc G. Olshanski},
{Bethe subalgebras in twisted Yangians},
{\it Comm. Math. Phys.} {\bf 178}\,(1996), 483--506.

\bibitem[PY08]{codim3}
{\sc D.~Panyushev} and  {\sc O.~Yakimova},
The argument shift method and maximal commutative subalgebras
of Poisson algebras, {\it Math. Res. Letters}, 
{\bf 15}\,(2008), no.\,2, 239--249.

\bibitem[PY]{OY}
{\sc D.~Panyushev} and  {\sc O.~Yakimova},
Poisson-commutative subalgebras of ${\mathcal S}(\mathfrak g)$ associated with involutions'', 
to appear in {\it Int.  Math. Res. Notices};  
\href{https://arxiv.org/abs/1809.00350}{\tt \bfseries arXiv:math/1809.00350[math.RT]}.

\bibitem[RT92]{rt} 
{\sc M.~Ra\"is} and {\sc P.~Tauvel}, 
Indice et polyn\^omes invariants pour certaines alg\`ebres de Lie,  
{\it J. Reine Angew. Math.}, {\bf 425}\,(1992), 123--140.

\bibitem[R14]{Natasha}
{\sc N. Rozhkovskaya}, 
A new formula for the Pfaffian-type Segal--Sugawara vector, 
{\it J. Lie Theory}, {\bf 24}\,(2014), no.\,2, 529--543. 

\bibitem[R06]{r:si}
{\sc L.\,G. Rybnikov},
{The shift of invariants method and the Gaudin model},
{\it Funct. Anal. Appl.} {\bf 40}\,(2006), 188--199.

\bibitem[R08]{r:un}
{\sc L.\,G. Rybnikov},
Uniqueness of higher Gaudin Hamiltonians, 
{\it Rep. Math. Phys.} {\bf 61}\,(2008), 247--252.  

\bibitem[R18]{L-Gau}
{\sc L.\,G. Rybnikov}. 
{A proof of the Gaudin Bethe Ansatz conjecture},
{\it International Mathematics Research Notices}, rny245, 
\href{https://academic.oup.com/imrn/advance-article-abstract/doi/10.1093/imrn/rny245/5142855#123057296}{https://doi.org/10.1093/imrn/rny245}. 

\bibitem[T06]{Tal}
{\sc D.\,V. Talalaev}, 
The quantum Gaudin system, 
{\it Funct. Anal. Appl.} {\bf 40}\,(2006), 73--77.

\bibitem[T00]{t:cs}
{\sc A.\,A. Tarasov},
{On some commutative subalgebras in the universal enveloping algebra
of the Lie algebra $\gl(n,C)$},
{\it Sb. Math.} {\bf 191} (2000), 1375--1382.

\bibitem[Vi91]{v:sc}
{\sc E.\,B.~Vinberg}, 
{Some commutative subalgebras of a universal enveloping algebra},
{\it Math. USSR-Izv.} {\bf 36} (1991), 1--22.

\bibitem[VO]{VO}  {\rusc {E1}.B.\,Vinberg, A.L.\,Oniwik}, {\rusi
``Seminar po gruppam Li i algeb\-rai\-qes\-kim gruppam''}. {\rus
Mosk\-va: ``Nauka"} 1988 (Russian). English translation: {\sc
A.\,L.~Onishchik} and {\sc E.\,B.~Vinberg}, ``Lie groups and algebraic
groups'', Springer, Berlin, 1990.

\end{thebibliography}
\end{document}